\tikzset{blackv/.style={circle,fill=black,inner sep=3pt,outer sep=3pt},
         whitev/.style={circle,fill=white,draw=black,inner sep=3pt,outer sep=3pt},
         blabel/.style={circle,draw=black,inner sep=1.5pt,outer sep=0pt},
         redv/.style={circle,fill=red,inner sep=3pt,outer sep=3pt},
         block/.style={draw,rectangle split,rectangle split horizontal,rectangle split parts=#1},
         symbol/.style={
           draw=none,
           every to/.append style={
             edge node={node [sloped, allow upside down, auto=false]{$#1$}}}}
}
\newtheorem{theorem}{Theorem}[section]
\newtheorem{proposition}[theorem]{Proposition}
\newtheorem{corollary}[theorem]{Corollary}
\newtheorem{lemma}[theorem]{Lemma}
\theoremstyle{definition}
\newtheorem{remark}[theorem]{Remark}
\newtheorem{example}[theorem]{Example}
\newtheorem{definition}[theorem]{Definition}
\newtheorem{question}[theorem]{Question}
\def\bb{\mathbf{b}}
\def\cc{\mathbf{c}}
\def\ee{\mathbf{e}}
\def\gg{\mathbf{g}}
\def\xx{\mathbf{x}}
\def\yy{\mathbf{y}}
\def\TT{\mathbb{T}}
\def\PP{\mathbb{P}}
\def\ZZ{\mathbb{Z}}
\def\Acal{\mathcal{A}}
\def\Fcal{\mathcal{F}}
\def\Xcal{\mathcal{X}}
\def\mod{\opname{mod}\nolimits}
\def\Hasse{\opname{Hasse}\nolimits}
\newcommand{\opname}[1]{\operatorname{\mathsf{#1}}}
\newcommand{\Hom}{\opname{Hom}}
\newcommand{\End}{\opname{End}}
\newcommand{\Ext}{\opname{Ext}}
\newcommand{\Fac}{\opname{Fac}}
\newcommand{\add}{\opname{add}\nolimits}
\newcommand{\ind}{\opname{ind}\nolimits}
\newcommand{\sst}[1]{\substack{#1}}
\newcommand{\myboxa}[1]{\framebox[8em]{\rule{0pt}{4.5ex}#1}}
\newcommand{\myboxb}[1]{\framebox[9.5em]{\rule{0pt}{4.5ex}#1}}
\newcommand{\myboxc}[1]{\framebox[9.5em]{\rule{0pt}{4.5ex}#1}}
\DeclareMathOperator{\stautilt}{\mathsf{s}\tau\textrm{-}\mathsf{tilt}}
\def\stautiltminus{\mathsf{s}\tau^{-}\textrm{-}\mathsf{tilt}}
\def\apPhi{\Phi_{\geq -1}}
\title[{Lattice structure in cluster algebra of finite type}]{Lattice structure in cluster algebra of finite type and non-simply-laced Ingalls-Thomas bijection}
\author{Yasuaki Gyoda}
\keywords{}
\subjclass[2020]{13F60, 16G20, 17B22}
\address{Graduate School of Mathematical Sciences, The University of Tokyo, 3-8-1 Komaba Meguro-ku Tokyo 153-8914, Japan}
\email{gyoda-yasuaki@g.ecc.u-tokyo.ac.jp}
\begin{document}
\begin{abstract}
In this paper, we demonstrate that the lattice structure of a set of clusters in a cluster algebra of finite type is anti-isomorphic to the torsion lattice of a certain Gei\ss-Leclerc-Schr\"oer (GLS) path algebra and to the $c$-Cambrian lattice. We prove this by explicitly describing the exchange quivers of cluster algebras of finite type. Specifically, we prove that these quivers are anti-isomorphic to those formed by support $\tau$-tilting modules in GLS path algebras and to those formed by $c$-clusters consisting of almost positive roots.
\end{abstract}
\maketitle
\tableofcontents
\section{Introduction}
\subsection{Backgrounds}
A \emph{cluster algebra} is a commutative algebra introduced by Fomin and Zelevinsky in \cite{fzi} and is generated by special elements called \emph{cluster variables}. Several cluster variables form a set referred to as a \emph{cluster}, and all cluster variables can be generated inductively by repeating \emph{mutation} operations, which remove one variable from a cluster and add another variable to it.

Since the discovery of cluster algebras, a flurry of research has been conducted on their combinatorial structure. Consequently, it has been discovered that this structure also appears in Lie theory, representation theory of algebras, Teichm\"uller theory, and other areas. In particular, cluster algebras of finite type are strongly related to Dynkin diagrams due to their classification method \cite{fzii}, and they have a good relation with root systems of finite type and path algebras of finite representation type, which are also classified using Dynkin diagrams. The relationship between the three has been widely researched.

The famous Gabriel's theorem, which examines the relation between path algebras of finite representation type and Dynkin root systems, was first stated by Gabriel in the 1970s \cite{gab}. This theorem establishes a bijection between the isomorphism classes of indecomposable modules of path algebras of finite representation type and the positive roots in the corresponding simply-laced root system.

According to \cite{fzii}, there exists a canonical bijection between cluster variables and almost positive roots (elements of the union set of positive roots and negative simple roots) for a relation between cluster algebras and root systems. It also induces the bijection between clusters and maximal compatible sets of almost positive roots.

Around the same time, \cite{bmrrt} defined the cluster category in the representation theory of algebras. The cluster category is a triangulated category that includes the finitely generated module category of a path algebra. Isomorphism classes of indecomposable objects of a cluster category correspond bijectively with the almost positive roots of the corresponding simply-laced root system (or cluster variables of the corresponding cluster algebra). In \cite{bmrrt}, cluster-tilting objects that have a bijective correspondence with clusters in the corresponding cluster algebra are also introduced. The purpose of the cluster category is to extend the module category in order to provide a complete correspondence with cluster variables and almost positive roots. On the other hand, pairs of objects in the module category that have a bijective correspondence with cluster variables and with almost positive roots have also been discovered. They are introduced as $\tau$-tilting pairs in \cites{ing-tho,air}.

These studies have demonstrated that cluster algebras, root systems, and path algebras are strongly linked through a common combinatorial structure.

\subsection{Main results: Isomorphisms between three lattice structures}

The \emph{exchange quiver} of a cluster algebra is an oriented graph with non-labeled clusters as vertices and edges connecting two clusters that are switched by a mutation. The orientation of edges are determined by the sign of the vectors associated with each cluster, called the \emph{$c$-vectors}. The longest path of the exchange quiver is known as the \emph{maximal green sequence}, and it is used to calculate the refined Donaldson-Thomas invariant \cite{kont-soib} when a cluster algebra is of skew-symmetric type (for details, refer to \cite{kel-dem}). This major finding of this study is that a set of clusters has a lattice structure resulting from an exchange quiver.

Let $\Phi$ represent a root system and $c$ a Coxeter element of $\Phi$. The primary contribution of this study is the following theorem:

\begin{theorem}\label{lattice-main}\noindent
\begin{itemize}\setlength{\leftskip}{-0.7cm}
\item [(1)]The set of clusters in a cluster algebra of finite type $\Acal(\Phi,c)$\footnote{The symbol $\Acal(\Phi,c)$ refers to $\Acal(B^c;t_0)$ in Section 2 or later.} has a lattice structure whose Hasse quiver is the exchange quiver of $\Acal(\Phi,c)$; 
\item[(2)]it is anti-isomorphic to the torsion lattice of the GLS path algebra $H(C_\Phi, D_\Phi, c)$;
\item[(3)]it is anti-isomorphic to the $c$-Cambrian lattice.
\end{itemize}
\end{theorem}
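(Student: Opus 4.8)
The plan is to deduce all three statements from a single structural result: an explicit anti-isomorphism of quivers between the exchange quiver of $\Acal(\Phi,c)$ and the Hasse quiver of the support $\tau$-tilting poset of the GLS path algebra $H=H(C_\Phi,D_\Phi,c)$, after which the lattice-theoretic content is supplied by known inputs. First I would set up the combinatorial bridge on the cluster side. In finite type the $c$-vectors attached to the seeds of $\Acal(B^c;t_0)$ are, by sign-coherence, positive or negative roots of $\Phi$; a seed together with its $C$-matrix is recovered from its family of $c$-vectors; and the orientation of an edge of the exchange quiver is read off from the sign of the $c$-vector produced by the corresponding mutation. I would record the $c$-vector mutation rule in finite type in a form directly comparable with $\tau$-tilting mutation, and note that the initial seed $t_0$ is the one whose $c$-vectors are the negative simple roots.

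Next I would describe the $\tau$-tilting side. The algebra $H$ is $\tau$-tilting finite, being representation finite in the locally free sense of Gei\ss-Leclerc-Schr\"oer, so by Adachi-Iyama-Reiten \cite{air} its basic support $\tau$-tilting modules, together with mutation and $g$-vectors, are well behaved, and by Demonet-Iyama-Jasso the support $\tau$-tilting poset $\stautilt H$ is a finite lattice whose Hasse quiver is the mutation quiver and which is isomorphic to the lattice $\opname{tors} H$ of torsion classes. The core of the argument is a bijection sending a support $\tau$-tilting pair $(M,P)$ of $H$ to a seed of $\Acal(\Phi,c)$ under which indecomposable summands go to cluster variables, the $g$-vectors of $(M,P)$ go to the $c$-vectors of the seed up to the sign conventions, and left mutation of $(M,P)$ corresponds to a downward edge of the exchange quiver; the $\tau$-rigid indecomposables and shifted projectives should be indexed by almost positive roots via their rank vectors, matching the Fomin-Zelevinsky indexing of cluster variables. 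This is the step I expect to be the main obstacle: with no hereditary algebra and no cluster category available in the non-simply-laced setting, the bijection must be built directly from the combinatorics of GLS locally free modules and their $g$- and $c$-vectors, and one has to check case by case that mutation is intertwined and, crucially, that the two a priori unrelated orientation conventions---the sign of $c$-vectors versus the direction of $\tau$-tilting mutation---coincide after the anti-isomorphism. Granting this, transporting the lattice structure of $\stautilt H$ along the anti-isomorphism proves part (1), and the identification $\stautilt H\cong\opname{tors} H$ proves part (2).

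For part (3) I would compare $\opname{tors} H$ with the $c$-Cambrian lattice through a non-simply-laced Ingalls-Thomas bijection \cite{ing-tho}. Concretely, I would send a support $\tau$-tilting module of $H$ (equivalently, a functorially finite torsion class) to a set of almost positive roots of $\Phi$: the rank vector of a locally free summand determines a positive root, a removed indecomposable projective contributes a negative simple root, and the resulting set is a $c$-cluster. I would then check that this map intertwines $\tau$-tilting mutation with the $c$-cluster mutation of Fomin and Reading, hence identifies the Hasse quivers, and that the poset of $c$-clusters so obtained is exactly the $c$-Cambrian lattice, using Reading and Speyer's description via $c$-sortable elements together with its compatibility with the folding from the simply-laced case. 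Composing the two anti-isomorphisms then yields part (3). The subsidiary difficulty here is again purely combinatorial: matching the $c$-vector mutation of $\Acal(\Phi,c)$, the $g$-vector mutation of $H$, and the root-theoretic mutation of $c$-clusters simultaneously and on the nose, including the behaviour at the negative simple roots, which mark the initial seed on the cluster side and the zero module on the algebra side.
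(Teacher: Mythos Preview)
Your architecture is essentially the paper's: build a quiver anti-isomorphism $\overrightarrow{\Gamma}(\stautilt H)\to\overrightarrow{\Gamma}(B^c)$, use $\tau$-tilting finiteness of $H$ together with the Adachi--Iyama--Reiten and Demonet--Iyama--Jasso identification $\stautilt H\cong\mathsf{tors}\,H$ to get (1) and (2), and then link $\stautilt H$ to $c$-clusters and invoke Reading--Speyer for (3). The inputs you list (the GLS bijection between indecomposable $\tau$-rigid modules and positive roots via rank vectors, the $\tau$-tilting finiteness, the $c$-cluster/Cambrian identification) are exactly the ones the paper uses.

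There is, however, a genuine gap at the orientation step for (3). You propose to ``check that this map intertwines $\tau$-tilting mutation with the $c$-cluster mutation'' and appeal vaguely to folding, but the orientation on the quiver of $c$-clusters is defined via the depth function $R_c(\alpha)=\min\{n\geq 0\mid \tau_c^{-n}(\alpha)\in-\Delta\}$, which has no a priori relation to signs of $c$-vectors or to inclusions of $\Fac M$. Bridging this is precisely the paper's main technical contribution, and it is not a routine verification: the paper goes through the standard duality $D\colon\mod H\to\mod H^{\mathrm{op}}$, shows that $D$ induces a graph isomorphism on support $\tau$-tilting quivers which \emph{reverses} arrows between modules with $|M|=|M'|$ and \emph{preserves} arrows when $|M|\neq|M'|$ (using that $\tau$-tilting coincides with tilting here and a Happel--Unger exchange-sequence argument), then transfers this via $\tilde\psi_c$ to a comparison of $\overrightarrow{\Gamma}(B^c)$ and $\overrightarrow{\Gamma}(-B^c)$, and finally uses the identity $\mu_{c_1}\cdots\mu_{c_n}(\tilde B^c_{t_0})=-(\widetilde{-B^c}_{t_0})$ to obtain a ``green-preservation'' statement that exactly matches the $R_c$-orientation. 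Your proposal contains no substitute for this chain of reductions; without it, the claim that $\tilde\phi_c$ respects orientation is unproved.

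Two smaller points. First, the matching is $g$-vectors to $g$-vectors, not $g$-vectors of $(M,P)$ to $c$-vectors of the seed: the paper proves $G_{(M,P)}=-G^{B^c;t_0}_{\tilde\psi_c(M,P)}$ (via the GLS computation for injective presentations, after applying $D$ and passing to $H^{\mathrm{op}}$), and only then deduces the sign relation on $c$-vectors through the two $C$--$G$ dualities; the $c$-vectors themselves agree only up to diagonal matrices $S,D,D'$. Second, the initial seed has $C$-matrix $I_n$, i.e.\ positive simple roots, not negative; this is harmless but worth correcting in your write-up.
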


Theorem \ref{lattice-main} (1) and (3) have already been proved implicitly by \cite{resp2}*{Lemma 5.11}. Here, we prove them in a different way, not relying on framework theory. Here, we will give this relation by a different way.  

In Theorem \ref{lattice-main} (2) and (3), we provide explicit descriptions of the lattice structure introduced into the set of clusters. The \emph{torsion lattice} is composed of torsion classes of an algebra, and the algebra $H(C_\Phi, D_\Phi, c)$ is the \emph{Gei\ss-Leclerc-Schr\"oer (GLS) path algebra} associated with $\Phi$ and $c$, which generalizes the path algebra described in \cite{gls-i}. The \emph{$c$-Combrian lattice} is the lattice determined by the Coxeter element $c$ of the root system $\Phi$ and composed of certain elements in the corresponding Weyl group. This is explained by \cite{reading}.

We will state the key theorem in order to demonstrate Theorem \ref{lattice-main}. Two quivers are introduced: the \emph{support $\tau$-tilting quiver} and the \emph{quiver of $c$-clusters}. The former is a quiver defined by a root system, while the latter is defined by an algebra.

In the representation theory of algebras, for an algebra $A$, special pairs of modules referred to as \emph{basic $\tau$-tilting pairs} of $A$ have a mechanism similar to the mutation of clusters. To be precise, a new basic $\tau$-tilting pair can be obtained by switching indecomposable direct summands. A \emph{support $\tau$-tilting quiver} was defined in the same way as the exchange quiver, in \cite{air}, where the vertex set consists of basic $\tau$-tilting pairs (or their support $\tau$-tilting components). When $A$ has finitely numerous isomorphism classes of basic $\tau$-tilting pairs, the support $\tau$-tilting quiver is quiver-isomorphic to the Hasse quiver of the torsion lattice (see Theorem \ref{supp-tau-torsion}).

For a root system $\Phi$ and a Coxeter element $c$, there are sets of roots called \emph{$c$-clusters}, which possess the same mutation structure as clusters in cluster algebras, and there is a ``$c$-cluster version of the exchange quiver" called the \emph{quiver of $c$-clusters}, whose vertex set consists of $c$-clusters. This quiver was defined in \cite{re-sp}. It is quiver-isomorphic to the Hasse quiver of the $c$-Cambrian lattice (see Theorem \ref{cambrian-cluster}).

The following theorem implies the coincidence of these quivers:
\begin{theorem}[Theorems \ref{lattice-op-iso-psi}, \ref{lattice-op-iso}, \ref{lattice-iso-phi}]\label{main-intro}
We fix a root system $\Phi$ and a Coxeter element $c$, and we denote their corresponding cluster algebra and GLS path algebra to $\Phi$ and $c$ by $\Acal(\Phi,c)$ and $H=H(C_\Phi,D_{\Phi},c)$, respectively.
The maps $\tilde{\phi}_c,\tilde{\psi}_c,\tilde{\theta}_c$ are quiver isomorphisms that make the following diagram commutative:
  \begin{align*}
   \begin{xy}
   (-20,0)*{\text{\myboxa{}}},(40,15)*{\text{\myboxb{}}},(100,15)*{\text{\myboxc{}}},(100,-15)*{\text{\myboxc{}}},
   (-20,2.3)*{\text{Exchange quiver}}="A1",(-20,-2.3)*{\text{of $\Acal(\Phi,c)$}}="A2",(0,2.3)*{^{\mathrm{op}}},(-20,6)*{}="A'",(-20,-6)*{}="A''",(17,15)*{}="B'",(17,-15)*{}="D'",(40,8)*{}="B''",(40,-10)*{}="D''",(40,-10)*{}="D''",(62,15)*{}="E",(78,15)*{}="F",(62,-15)*{}="G",(78,-15)*{}="H",(40,17.3)*{\text{Support $\tau$-tilting}}, (40,12.7)*{\text{quiver of $H$}}="B",(40,-15)*{\text{\fbox{ Quiver of $c$-clusters }}}="D",(100,17.3)*{\text{Hasse quiver of}}, (100,12.7)*{\text{torsion lattice of $H$}},(100,-12.7)*{\text{Hasse quiver of}},(100,-17.3)*{\text{$c$-Cambrian lattice }},\ar^{\tilde\psi_c}@{<-} "A'";"B'"\ar_{\tilde\theta_c}@{->} "A''";"D'"\ar^{\tilde\phi_c}@{->}"B''";"D''"\ar^{\sim}@{<->}"E";"F"\ar_{\sim}@{<->}"G";"H"
\end{xy}
\end{align*}
where $-^{\mathrm{op}}$ means the opposite quiver.
\end{theorem}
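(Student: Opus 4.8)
The plan is to construct the three maps $\tilde\phi_c$, $\tilde\psi_c$, $\tilde\theta_c$ explicitly on vertices, verify that each is a bijection, check that each intertwines the arrow structures of the respective quivers, and finally confirm commutativity of the triangle. The backbone is the classical dictionary from \cite{fzii}: cluster variables of $\Acal(\Phi,c)$ are in bijection with the almost positive roots $\apPhi$ via the $d$-vector (denominator) parametrization, and clusters correspond to maximal compatible subsets. The first step is therefore to fix, once and for all, the compatibility-degree combinatorics attached to the Coxeter element $c$, so that ``$c$-clusters'' on the one side and the indexing of cluster-algebra seeds on the other are phrased in a single common language of subsets of $\apPhi$. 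With that language in place, $\tilde\theta_c$ (from the opposite exchange quiver to the quiver of $c$-clusters) is essentially the identity on underlying vertex sets, and the content is that the orientation of edges matches: an exchange quiver arrow points in the direction dictated by the sign of the relevant $c$-vector, and one must check this agrees with the covering relation defining the quiver of $c$-clusters. This is where I expect to lean on the sign-coherence of $c$-vectors and on the explicit description of exchange quivers that the paper advertises having established earlier.

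Next I would build $\tilde\psi_c$, from the opposite exchange quiver to the support $\tau$-tilting quiver of $H = H(C_\Phi, D_\Phi, c)$. Here the natural recipe is to send a cluster, viewed via its $c$-vectors (equivalently $g$-vectors on the opposite side), to the basic $\tau$-tilting pair of $H$ whose $g$-vectors (in the sense of \cite{air}) realize the same integer vectors in the shared lattice $K_0$. One needs: (i) that the $c$/$g$-vector data of seeds of $\Acal(\Phi,c)$ exhausts exactly the $g$-vector fans of support $\tau$-tilting pairs of $H$ — this is the GLS generalization of the simply-laced statement and should follow from the known description of the $g$-vector fan of $H$ together with the finite-type $g$-vector fan of the cluster algebra; (ii) that mutation of seeds corresponds to mutation of $\tau$-tilting pairs, which at the level of fans is just sharing a wall, so the arrow sets match up to the opposite convention; and (iii) sign-coherence on both sides to pin down orientations. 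Then $\tilde\phi_c$ is forced: define $\tilde\phi_c := \tilde\theta_c \circ \tilde\psi_c^{-1}$, so that commutativity of the triangle holds by construction, and the only remaining task is to check that this composite admits the expected intrinsic description (matching $\tau$-tilting pairs of $H$ to $c$-clusters via their shared root-theoretic data) and is a quiver isomorphism — which it is, being a composite of quiver isomorphisms. The lower row equivalences with the two Hasse quivers are then quoted from Theorems \ref{supp-tau-torsion} and \ref{cambrian-cluster}.

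The main obstacle, I expect, is step (i) above for $\tilde\psi_c$ in the \emph{non-simply-laced} case: one must show that the GLS path algebra $H(C_\Phi, D_\Phi, c)$ has precisely the right set of support $\tau$-tilting pairs, with $g$-vectors matching the finite-type $c$-vector combinatorics attached to $c$, including the folding relations among roots. In the simply-laced case this is essentially \cites{ing-tho,air}, but for $H$ built from a symmetrizable Cartan $(C_\Phi, D_\Phi)$ the $\tau$-tilting theory — while developed in \cite{gls-i} — needs to be matched to the $c$-vector fan of $\Acal(\Phi,c)$ root by root, and the bookkeeping of the symmetrizer $D_\Phi$ against the multiplicities appearing in non-simply-laced $c$-vectors is delicate. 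A secondary, more routine obstacle is the orientation bookkeeping: each of the three quivers orients its edges by a sign rule (sign of a $c$-vector, sign of a $g$-vector difference, or the Cambrian covering order), and reconciling these — together with the three ``$\mathrm{op}$'' / non-``$\mathrm{op}$'' conventions in the diagram — requires care but no new ideas. I would handle it by reducing everything to the statement ``the common $g$-vector fan, with its canonical orientation by a generic linear functional, underlies all three quivers,'' after which the diagram commutes on the nose.
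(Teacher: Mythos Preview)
Your plan for $\tilde\psi_c$ is close to what the paper does: the bijection on vertices is pinned down by matching $g$-vectors, and the orientation check reduces to comparing signs of $c$-vectors on both sides via the $C$-$G$ dualities (Theorems~\ref{c-g-duality} and~\ref{c-g-duality-tilting}). The non-simply-laced bookkeeping you flag is not handled by a direct root-by-root matching but by citing Gei\ss--Leclerc--Schr\"oer's computation of $g$-vectors via minimal \emph{injective} presentations (Theorem~\ref{g-matrix-correspondence-gls}) and then applying the standard duality $D\colon\mod H\to\mod H^{\mathrm{op}}$ to convert this into the needed statement about minimal projective presentations (Theorem~\ref{g-matrix-correspondence}); this costs a short lemma (Lemma~\ref{DM-tau-tilting}) showing that $D$ preserves $\tau$-rigidity for GLS algebras.

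The genuine gap is your treatment of $\tilde\theta_c$. You propose to check directly that the sign of the $c$-vector agrees with the covering relation in the quiver of $c$-clusters, appealing to sign-coherence and to ``the explicit description of exchange quivers that the paper advertises having established earlier.'' But that explicit description \emph{is} the theorem being proved, so this is circular; and sign-coherence by itself only says each $c$-vector has a definite sign, not \emph{which} sign relative to the $R_c$-ordering that orients $\overrightarrow{\Gamma}(\apPhi,c)$. Concretely, for $[\xx]=X\cup\{x\}$ and $[\xx']=X\cup\{x'\}$ you must show $\cc^{B^c;t_0}_{x,[\xx]}>0 \Longleftrightarrow R_c(\theta_c(x))<R_c(\theta_c(x'))$, and nothing in your outline connects the two sides. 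The paper's route is indirect and is the actual content of the argument: it reformulates the claim as a ``green-preservation condition'' under the $\tau_c^{-1}$-action (Lemma~\ref{green-preserve-lemma}), interprets $\tau_c^{-1}$ as the mutation sequence $\mu_{c_1}\circ\cdots\circ\mu_{c_n}$ (Proposition~\ref{tau-interpretation}), deduces from this that $C^{B^c;t_0}_{\tau_c^{-1}([\xx])}=-C^{-B^c;t_0}_{[\xx]}$ (Lemma~\ref{c-mat-trick}), and so reduces everything to comparing arrow orientations in $\overrightarrow{\Gamma}(B^c)$ and $\overrightarrow{\Gamma}(-B^c)$ at the same underlying edge. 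That comparison (Theorem~\ref{condition-true}) is in turn proved by transporting through the already-established $\tilde\psi_c$ to a statement about $\overrightarrow{\Gamma}(\stautilt H)$ versus $\overrightarrow{\Gamma}(\stautilt H^{\mathrm{op}})$, where it becomes the Happel--Unger exchange-sequence criterion applied after taking $D$ (Lemma~\ref{ex-seq-equiv}, Theorem~\ref{condition-tilting-true}). None of these ingredients --- the $\tau_c$-as-mutation interpretation, the $B^c$ versus $-B^c$ comparison, or the detour through the opposite algebra --- appear in your plan, and a ``common $g$-vector fan oriented by a generic linear functional'' does not supply them: you would still have to prove that the $R_c$-ordering on almost positive roots is realized by that functional, which is essentially the original problem.
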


Theorem \ref{main-intro} immediately follows Theorem \ref{lattice-main}. Moreover, we have the following corollary:

\begin{corollary}[Corollary \ref{main}]\label{main2-intro}
For any root system $\Phi$ and any Coxeter element $c$, the torsion lattice of the GLS path algebra $H(C_\Phi,D_{\Phi},c)$ is isomorphic to the $c$-Cambrian lattice.
\end{corollary}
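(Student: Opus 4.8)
The plan is to derive Corollary \ref{main2-intro} purely formally from Theorem \ref{main-intro} together with the auxiliary correspondences stated in the excerpt, namely Theorem \ref{supp-tau-torsion} (the support $\tau$-tilting quiver of an algebra with finitely many basic $\tau$-tilting pairs is quiver-isomorphic to the Hasse quiver of its torsion lattice) and Theorem \ref{cambrian-cluster} (the quiver of $c$-clusters is quiver-isomorphic to the Hasse quiver of the $c$-Cambrian lattice). The key observation is that a finite lattice is determined up to isomorphism by its Hasse quiver, since the order is recovered as the transitive closure of the edge relation; hence a quiver isomorphism between the Hasse quivers of two finite lattices upgrades automatically to a lattice isomorphism. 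So the whole task reduces to assembling the commutative diagram of Theorem \ref{main-intro} and checking finiteness.

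First I would record that, since $\Phi$ is a root system of finite type, the cluster algebra $\Acal(\Phi,c)$ has finitely many clusters, the GLS path algebra $H=H(C_\Phi,D_\Phi,c)$ is $\tau$-tilting finite (it has finitely many basic $\tau$-tilting pairs), and there are finitely many $c$-clusters; thus all four quivers in the diagram are finite, and the torsion lattice of $H$ and the $c$-Cambrian lattice are both finite lattices. Next I would invoke Theorem \ref{main-intro}: the map $\tilde\phi_c$ is a quiver isomorphism from the support $\tau$-tilting quiver of $H$ to the quiver of $c$-clusters. Composing with the quiver isomorphisms supplied by Theorem \ref{supp-tau-torsion} and Theorem \ref{cambrian-cluster} (the horizontal double-headed arrows labelled $\sim$ in the diagram), I obtain a quiver isomorphism
\begin{equation*}
\Hasse(\text{torsion lattice of }H)\;\xrightarrow{\ \sim\ }\;\Hasse(c\text{-Cambrian lattice}).
\end{equation*}

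Finally I would promote this quiver isomorphism to a lattice isomorphism. Because both sides are Hasse quivers of finite posets, the partial order on each is exactly the reflexive–transitive closure of the covering relation encoded by the quiver; a bijection of vertices that preserves and reflects covers therefore preserves and reflects the order, i.e.\ is an isomorphism of posets, and a poset isomorphism between lattices is automatically a lattice isomorphism (it commutes with meets and joins, these being characterized order-theoretically). This yields the asserted isomorphism between the torsion lattice of $H(C_\Phi,D_\Phi,c)$ and the $c$-Cambrian lattice. I do not expect any genuine obstacle here: the corollary is a formal consequence, and the only points needing a line of justification are the finiteness of the relevant combinatorial sets (standard for finite type) and the elementary lattice-theoretic fact that a finite lattice is reconstructed from its Hasse diagram—both of which can be dispatched briefly, with the bulk of the work already carried out in establishing Theorem \ref{main-intro}.
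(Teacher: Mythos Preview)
Your proposal is correct and follows essentially the same approach as the paper: the paper composes the quiver isomorphisms $\Psi_H$, $\tilde\phi_c^{-1}$, and $\mathrm{cl}_c$ (from Theorems \ref{supp-tau-torsion}, \ref{main-intro}, and \ref{cambrian-cluster}) to obtain a quiver isomorphism between the Hasse quivers, and then concludes the lattice isomorphism. You are somewhat more explicit about the finiteness hypotheses and the passage from Hasse-quiver isomorphism to lattice isomorphism, but the argument is the same.
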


\begin{remark}
This corollary is not a new result. We can prove it by combining previous studies. See Remark \ref{important-remark}.      
\end{remark}

Let us examine the relation between Corollary 1.3 and the following famous result by Ingalls and Thomas (see also \cite{dirrt}*{Theorem 7.1}):

\begin{theorem}[\cite{ing-tho}*{Theorem 4.3}]\label{ingalls-thomas}
For any Dynkin quiver $Q$ and the corresponding Coxeter element $c$, the torsion lattice of the path algebra $KQ$ is isomorphic to the $c$-Cambrian lattice.
\end{theorem}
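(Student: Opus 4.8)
The plan is to obtain Theorem~\ref{ingalls-thomas} as the simply-laced specialization of Corollary~\ref{main2-intro}. The only point requiring attention is that, when $\Phi$ is of type $ADE$, the GLS path algebra $H(C_\Phi, D_\Phi, c)$ degenerates to an ordinary path algebra, and that the two roles played by $c$ --- as a Coxeter element indexing the GLS algebra and as the Coxeter element indexing the Cambrian lattice --- are matched by the classical correspondence between Coxeter elements and acyclic orientations of the Dynkin diagram. (One could alternatively reprove the Ingalls--Thomas theorem from scratch via the original bijection between torsion classes and the $c$-sortable combinatorics, but with the machinery already in place the specialization route is by far the cheapest.)

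First I would recall the construction of the GLS path algebra $H(C,D,\Omega)$ attached to a symmetrizable Cartan matrix $C$, a symmetrizer $D$, and an acyclic orientation $\Omega$. When $\Phi$ is simply-laced, the Cartan matrix $C_\Phi$ is already symmetric, so the minimal symmetrizer $D_\Phi$ is the identity matrix. In this case the loop generators $\varepsilon_i$ and the nilpotency relations $\varepsilon_i^{d_i}=0$ all disappear, and $H(C_\Phi, D_\Phi, c)$ is, by definition, the path algebra $K\vec{Q}$ of the quiver $\vec{Q}$ obtained by orienting the Dynkin diagram of $\Phi$ according to $c$. Conversely, every Dynkin quiver $Q$ arises in this way from a unique Coxeter element $c=c(Q)$, namely the product of the simple reflections taken in any order refining the partial order given by the orientation of $Q$; this is the standard bijection between Coxeter elements of $\Phi$ and acyclic orientations of its Dynkin diagram.

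With this identification in hand the theorem is immediate: for the Dynkin quiver $Q$ and $c=c(Q)$ we have $KQ = H(C_\Phi, D_\Phi, c)$, hence the torsion lattice of $KQ$ equals the torsion lattice of $H(C_\Phi, D_\Phi, c)$, which by Corollary~\ref{main2-intro} is isomorphic to the $c$-Cambrian lattice.

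The only genuine subtlety --- hence the main obstacle --- is bookkeeping: one must check that the orientation convention used to define $c(Q)$, the direction of the arrows in the GLS algebra, and the sign/orientation convention used in the definition of the $c$-Cambrian lattice in Corollary~\ref{main2-intro} are mutually consistent (an opposite choice would replace $c$ by $c^{-1}$ and the Cambrian lattice by its opposite). Since all of these conventions were already fixed in the course of proving Corollary~\ref{main2-intro}, the remaining work is purely a matter of tracing through definitions, with no new representation-theoretic or combinatorial input required.
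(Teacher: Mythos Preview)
Your proposal is correct and matches the paper's own treatment exactly: the paragraph immediately following Theorem~\ref{ingalls-thomas} in the paper observes that a Dynkin quiver $Q$ encodes a simply-laced root system $\Phi$ together with a Coxeter element $c$, that $KQ \simeq H(C_\Phi, I_n, c)$, and hence that Theorem~\ref{ingalls-thomas} is the specialization of Corollary~\ref{main2-intro}. Your additional remarks about the correspondence between Coxeter elements and acyclic orientations, and about checking the orientation conventions, simply spell out in more detail what the paper takes for granted in that one-line deduction.
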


The quiver $Q$ contains information concerning a simply-laced Dynkin diagram and its orientation; therefore, it is equivalent to two pieces of information on the simply-laced root system $\Phi$ and the Coxeter element $c$. Moreover, the ordinary path algebra $KQ$ is isomorphic to the GLS algebra $H(C_\Phi, I_n, c)$, where $I_n$ is the identity matrix. Therefore, Theorem \ref{ingalls-thomas} is the specialization of Corollary \ref{main2-intro}.

Back to the topic of Theorem \ref{main-intro}. The key to the proof of Theorem \ref{main-intro} is based a comparison between the $\tau$-tilting quiver of GLS path algebra $H$ and its opposite algebra $H^\mathrm{op}$. We provide an explicit relation between these two quivers by the following proposition:
\begin{proposition}[(1) Lemma \ref{DM-tau-tilting}, (2) Propostion \ref{graph-iso-dual}, (3) Theorem \ref{condition-tilting-true}]\label{comparing-dual}
\noindent
\begin{itemize}\setlength{\leftskip}{-0.7cm}
 \item[(1)] The standard duality $D\colon\mod H\to\mod H^{\mathrm{op}}$ provides a bijection between the support $\tau$-tilting modules of $H$ and $H^\mathrm{op}$;
 \item[(2)] it induces a graph isomorphism between the support $\tau$-tilting quivers of $H$ and $H^{\mathrm{op}}$;
 \item[(3)] when there exists an arrow $M\to M'$ in the support $\tau$-tilting quiver of $H$,
 \begin{itemize}\setlength{\leftskip}{-0.7cm}
    \item [(i)] $|M|\neq|M'|$ holds if and only if $H^{\mathrm{op}}$ has an arrow $DM\to DM'$,
    \item [(ii)] $|M|=|M'|$ holds if and only if $H^{\mathrm{op}}$ has an arrow $DM'\to DM$,
\end{itemize}
where $|M|$ is the number of isomorphism classes of indecomposable summands of $M$.
\end{itemize}
\end{proposition}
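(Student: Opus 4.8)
Everything is driven by the standard duality $D=\Hom_k(-,k)\colon\mod H\to\mod H^{\mathrm{op}}$, together with the Adachi--Iyama--Reiten correspondence between support $\tau$-tilting modules and functorially finite torsion classes (\cite{air}), and its dual for torsion-free classes.

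For part (1): since $D$ is an exact contravariant functor inducing an anti-equivalence, it carries functorially finite torsion classes of $\mod H$ bijectively onto functorially finite torsion-free classes of $\mod H^{\mathrm{op}}$, sending $\Fac X$ to $\mathsf{Sub}(DX)$. Composing with the correspondence $M\mapsto\Fac M$ and its torsion-free dual, $M\mapsto DM$ becomes a bijection from support $\tau$-tilting $H$-modules to support $\tau^{-1}$-tilting $H^{\mathrm{op}}$-modules. For a GLS path algebra the latter class equals the class of support $\tau$-tilting modules — this is where the structure of $H$ from \cite{gls-i} enters (in particular that $H$ is $1$-Iwanaga--Gorenstein and its $\tau$-rigid modules are locally free), and it is Lemma \ref{DM-tau-tilting} — so $M\mapsto DM$ is a bijection between the support $\tau$-tilting modules of $H$ and of $H^{\mathrm{op}}$. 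At the same time one records which torsion class of $\mod H^{\mathrm{op}}$ is attached to $DM$: since the quotients of $(DM)^n$ are exactly the duals of submodules of $M^n$, one has $\Fac_{H^{\mathrm{op}}}(DM)=D(\mathsf{Sub}_H M)$.

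For parts (2) and (3): because $D$ preserves inclusions of subcategories, the formula $\Fac_{H^{\mathrm{op}}}(DM)=D(\mathsf{Sub}_H M)$ shows that $M\mapsto DM$ identifies the order ``$\mathsf{Sub}_H M\subseteq\mathsf{Sub}_H M'$'' on support $\tau$-tilting $H$-modules with the torsion-class order on support $\tau$-tilting $H^{\mathrm{op}}$-modules. Hence (2) reduces to showing that the $\mathsf{Sub}$-order and the $\Fac$-order on the support $\tau$-tilting modules of $H$ have the same Hasse diagram as undirected graphs — two modules are related by a cover in one order iff in the other — which is the comparison of mutations of $H$ with those of $H^{\mathrm{op}}$ in Proposition \ref{graph-iso-dual}: each indecomposable (co)summand yields a single Adachi--Iyama--Reiten mutation that is a cover in both orders simultaneously. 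For (3) one then compares orientations. An arrow $M\to M'$ of the support $\tau$-tilting quiver of $H$ is a left mutation, so $\Fac_H M\supsetneq\Fac_H M'$, and by the above the corresponding arrow of the $H^{\mathrm{op}}$-quiver points $DM\to DM'$ exactly when $\mathsf{Sub}_H M\supsetneq\mathsf{Sub}_H M'$. It thus suffices to track $\mathsf{Sub}_H$ along a left mutation. If it is a mutation at a summand $X$ of $M$ with injective minimal left $\add(M/X)$-approximation — equivalently $|M|=|M'|$, with exchange sequence $0\to X\to E\to X'\to 0$ and $E\in\add(M/X)=\add(M'/X')$ — then $X$ embeds into $\add(M/X)$, so $\mathsf{Sub}_H M=\mathsf{Sub}_H(M/X)\subsetneq\mathsf{Sub}_H M'$ (strictly, since $DM\neq DM'$ forces $\mathsf{Sub}_H M\neq\mathsf{Sub}_H M'$), yielding the arrow $DM'\to DM$ of (3)(ii); if the approximation is not injective — equivalently $|M'|=|M|-1$ and $M=M'\oplus X$ — then $\mathsf{Sub}_H M\supsetneq\mathsf{Sub}_H M'$ at once, yielding the arrow $DM\to DM'$ of (3)(i). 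Assembling the two cases is Theorem \ref{condition-tilting-true}.

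\textbf{Main obstacle.} The formal skeleton above rests on two genuinely non-routine inputs. The first is the coincidence of support $\tau$-tilting and support $\tau^{-1}$-tilting modules over the GLS path algebra (part (1)); without it $M\mapsto DM$ only lands among support $\tau^{-1}$-tilting modules. The second, and harder, is the orientation analysis in (3): one must fix — consistently on both the $H$- and the $H^{\mathrm{op}}$-side — the direction convention of the support $\tau$-tilting quiver (equivalently of the Hasse quiver of the torsion lattice, via Theorem \ref{supp-tau-torsion}), keep careful track of how the contravariance of $D$ interchanges minimal left and minimal right $\add(-)$-approximations and of which Adachi--Iyama--Reiten mutation type occurs, and verify that the resulting dichotomy ``direction preserved versus reversed'' matches ``$|M|\neq|M'|$ versus $|M|=|M'|$'' rather than being an overall reversal. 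Everything else is bookkeeping once the vertex bijection of (1) and the identity $\Fac_{H^{\mathrm{op}}}(DM)=D(\mathsf{Sub}_H M)$ are in hand.
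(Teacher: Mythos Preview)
The gap is in Part (2). Your reduction of (2) to ``the $\mathsf{Sub}$-Hasse and $\Fac$-Hasse diagrams on support $\tau$-tilting $H$-modules coincide as undirected graphs'' is correct, but your one-line justification --- ``each indecomposable (co)summand yields a single AIR mutation that is a cover in both orders simultaneously'' --- does not go through. The $\tau$-tilting pair $(M,P)$ (with $P$ projective) and the $\tau^{-1}$-tilting pair $(M,I)$ (with $I$ injective) have different second components, so there is no a priori identification of their mutations; and your Part (3) analysis only shows that $\mathsf{Sub}_H M$ and $\mathsf{Sub}_H M'$ are \emph{comparable} along a $\Fac$-edge, not that they are \emph{covers}. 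Since you then invoke ``the corresponding arrow of the $H^{\mathrm{op}}$-quiver'' in proving (3), you are assuming (2) there, and the argument is circular as written. The paper avoids this entirely by proving (2) through the cluster algebra: it composes the graph isomorphisms $\overrightarrow{\Gamma}(\stautilt H)\xrightarrow{\tilde\psi_c}\overrightarrow\Gamma(B^c)=\overrightarrow\Gamma(-B^c)\xrightarrow{\tilde\psi_{c^{-1}}^{-1}}\overrightarrow{\Gamma}(\stautilt H^{\mathrm{op}})$ coming from Corollary \ref{graphiso-psi} and Proposition \ref{same-graph}, and checks via rank vectors (Lemma \ref{minus-dual}) that the composite is induced by $D$.

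Your Part (3) argument, on the other hand, is essentially the paper's (Theorem \ref{condition-tilting-true}): the paper obtains the exchange sequence $0\to X\to\tilde L\to Y\to0$ from Happel--Unger (Lemma \ref{ex-seq-equiv}), dualises it, and applies Happel--Unger in $H^{\mathrm{op}}$ to get $\Fac DM'\supsetneq\Fac DM$ --- this is your $\mathsf{Sub}$ computation with the duality unwound once more. You should make explicit that the exchange sequence is \emph{not} automatic from AIR theory: it needs $M,M'$ to be tilting, which the paper gets from Theorem \ref{facts-generalized-path-alg}(2) after passing to $H/HeH$. In fact this argument already yields (2) without the cluster-algebra detour, once one observes (as the paper does) that $DM=DL\oplus DX$ and $DM'=DL\oplus DY$ are two tilting completions of the same almost complete tilting module $DL$, hence automatically adjacent in $\overrightarrow{\Gamma}(\stautilt H^{\mathrm{op}})$ by Theorem \ref{thmair}; handling the $|M|\neq|M'|$ case similarly and counting edges ($n$-regularity on a common finite vertex set) then gives (2). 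So your strategy can be made to work, but only if you reverse the order: establish the edge and its orientation together via Happel--Unger, and deduce Part (2) from that.
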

For example, the following two quivers are support $\tau$-tilting quivers of $H$ (left one) and $H^{\mathrm{op}}$ (right one) for $H=K(1\leftarrow 2\leftarrow3)$, and they are arranged so that their corresponding vertices are at the same position by standard duality $D$.
 \[\begin{tikzpicture}
      \node[block=3] (121321) at (0,10) {$\sst{1}$\nodepart{two}$\sst{2 \\ 1}$\nodepart{three}$\sst{3 \\ 2 \\ 1}$};
      \node[block=2] (121) at (-3,9) {\nodepart{one}$\sst{1}$\nodepart{two}$\sst{2 \\ 1}$};
      \node[block=3] (221321) at (0,8.5) {\nodepart{one}$\sst{2}$\nodepart{two}$\sst{2 \\ 1}$\nodepart{three}$\sst{3 \\ 2 \\ 1}$};
      \node[block=3] (13321) at (3,9) {\nodepart{one}$\sst{1}$\nodepart{two}$\sst{3}$\nodepart{three}$\sst{3 \\ 2 \\ 1}$};
      \node[block=2] (221) at (-2,6) {\nodepart{one}$\sst{2}$\nodepart{two}$\sst{2 \\ 1}$};
      \node[block=3] (232321) at (0,7) {\nodepart{one}$\sst{2}$\nodepart{two}$\sst{3 \\ 2}$\nodepart{three}$\sst{3 \\ 2 \\ 1}$};
      \node[block=2] (232) at (0,5.5) {\nodepart{one}$\sst{2}$\nodepart{two}$\sst{3 \\ 2}$};
      \node[block=3] (332321) at (2,6) {\nodepart{one}$\sst{3}$\nodepart{two}$\sst{3 \\ 2}$\nodepart{three}$\sst{3 \\ 2 \\ 1}$};
      \node[block=2] (13) at (3,4) {\nodepart{one}$\sst{1}$\nodepart{two}$\sst{3}$};
      \node[rectangle,draw] (2) at (0,3) {\nodepart{one}$\sst{2}$};
      \node[block=2] (332) at (1.5,4.5) {\nodepart{one}$\sst{3}$\nodepart{two}$\sst{3 \\ 2}$};
      \node[rectangle,draw] (3) at (3,3) {\nodepart{one}$\sst{3}$};
      \node[rectangle,draw] (1) at (-3,3) {\nodepart{one}$\sst{1}$};
      \node (0) at (0,2) {$\sst{0}$};

      \draw[->] (121321)--(121);
      \draw[->] (121321)--(221321);
      \draw[->] (121321)--(13321);
      \draw[->] (121)--(221);
      \draw[->] (121)--(1);
      \draw[->] (221321)--(221);
      \draw[->] (221321)--(232321);
      \draw[->] (13321)--(332321);
      \draw[->] (13321)--(13);
      \draw[->] (221)--(2);
      \draw[->] (232321)--(232);
      \draw[->] (232321)--(332321);
      \draw[->] (232)--(332);
      \draw[->] (232)--(2);
      \draw[->] (332321)--(332);
      \draw[->] (13)--(1);
      \draw[->] (13)--(3);
      \draw[->] (332)--(3);
      \draw[->] (1)--(0);
      \draw[->] (2)--(0);
      \draw[->] (3)--(0);
    \end{tikzpicture}
    \hspace{3mm}
    \begin{tikzpicture}
      \node[block=3] (121321) at (0,10) {$\sst{1}$\nodepart{two}$\sst{1 \\ 2}$\nodepart{three}$\sst{1 \\ 2 \\ 3}$};
      \node[block=2] (121) at (-3,9) {\nodepart{one}$\sst{1}$\nodepart{two}$\sst{1 \\ 2}$};
      \node[block=3] (221321) at (0,8.5) {\nodepart{one}$\sst{2}$\nodepart{two}$\sst{1 \\ 2}$\nodepart{three}$\sst{1 \\ 2 \\ 3}$};
      \node[block=3] (13321) at (3,9) {\nodepart{one}$\sst{1}$\nodepart{two}$\sst{3}$\nodepart{three}$\sst{1 \\ 2 \\ 3}$};
      \node[block=2] (221) at (-2,6) {\nodepart{one}$\sst{2}$\nodepart{two}$\sst{1 \\ 2}$};
      \node[block=3] (232321) at (0,7) {\nodepart{one}$\sst{2}$\nodepart{two}$\sst{2 \\ 3}$\nodepart{three}$\sst{1 \\ 2 \\ 3}$};
      \node[block=2] (232) at (0,5.5) {\nodepart{one}$\sst{2}$\nodepart{two}$\sst{2 \\ 3}$};
      \node[block=3] (332321) at (2,6) {\nodepart{one}$\sst{3}$\nodepart{two}$\sst{2 \\ 3}$\nodepart{three}$\sst{1 \\ 2 \\ 3}$};
      \node[block=2] (13) at (3,4) {\nodepart{one}$\sst{1}$\nodepart{two}$\sst{3}$};
      \node[rectangle,draw] (2) at (0,3) {\nodepart{one}$\sst{2}$};
      \node[block=2] (332) at (1.5,4.5) {\nodepart{one}$\sst{3}$\nodepart{two}$\sst{2 \\ 3}$};
      \node[rectangle,draw] (3) at (3,3) {\nodepart{one}$\sst{3}$};
      \node[rectangle,draw] (1) at (-3,3) {\nodepart{one}$\sst{1}$};
      \node (0) at (0,2) {$\sst{0}$};

      \draw[->] (121321)--(121);
      \draw[<-,red] (121321)--(221321);
      \draw[<-,red] (121321)--(13321);
      \draw[<-,red] (121)--(221);
      \draw[->] (121)--(1);
      \draw[->] (221321)--(221);
      \draw[<-,red] (221321)--(232321);
      \draw[<-,red] (13321)--(332321);
      \draw[->] (13321)--(13);
      \draw[->] (221)--(2);
      \draw[->] (232321)--(232);
      \draw[<-,red] (232321)--(332321);
      \draw[<-,red] (232)--(332);
      \draw[->] (232)--(2);
      \draw[->] (332321)--(332);
      \draw[->] (13)--(1);
      \draw[->] (13)--(3);
      \draw[->] (332)--(3);
      \draw[->] (1)--(0);
      \draw[->] (2)--(0);
      \draw[->] (3)--(0);
    \end{tikzpicture}
    \]
In the above diagrams, we abbreviate $\sst{1}\oplus\sst{2 \\ 1}\oplus \sst{3\\ 2 \\ 1}$ to $\begin{tikzpicture}[baseline=-2mm]
      \node[block=3] (121321) at (0,0) {$\sst{1}$\nodepart{two}$\sst{2 \\ 1}$\nodepart{three}$\sst{3 \\ 2 \\ 1}$};\end{tikzpicture}$ (The small number $i$ in the box indicates that a one-dimensional vector space is placed on the quiver's vertex $i$).
The arrows connecting the modules are shown in black when two modules at either ends satisfy $|M|\neq |M'|$, and the arrows are shown in red when $|M|=|M'|$. According to theorem \ref{comparing-dual}, arrows in two quivers that are in the same position have the same orientation if they are black and opposite orientation if they are red. It is proven that $\tilde{\theta}_c$ is a quiver isomorphism using Proposition \ref{comparing-dual}.

\subsection*{Organization}In Section 2, we recall the definitions and properties of $c$-clusters, cluster algebras, $\tau$-tilting theory, GLS path algebras, the mathematical objects discussed in this paper. In Section 3, we describe the graph isomorphisms $\tilde{\phi}_c,\tilde{\psi}_c,\tilde{\theta}_c$ in the diagram above. In Section 4, we prove that $\tilde{\psi}_c$ is a quiver isomorphism. In Section 5, we compare quivers of one algebra and its opposite. This is the key to the subsequent section. In Section 6, we prove that $\tilde{\phi}_c$ is a quiver isomorphism. In Section 7, we prove that $\tilde{\phi}_c$ is a quiver isomorphism. This can be proven by using other two isomorphisms immediately, but we will give a direct proof that does not rely on these isomorphisms. In Section 8, after recalling the definitions and properties of the torsion lattice and $c$-Combrian lattice, we present Corollary \ref{main2-intro}.
\subsection*{Acknowledgments}
The author thanks Haruhisa Enomoto for raising the important issues that inspired him to write this paper. The author thanks Yoshiyuki Kimura for introducing him to the important previous studies. The author thanks Arashi Sakai and Haruhisa Enomoto for helpful advice on Section 7. The author thanks Osamu Iyama for helpful comment about the oganization of the paper. The author thanks Nathan Reading for helpful comment about previous studies. The author thanks Kota Murakami for useful comment for the GLS path algebra. The author thanks Riku Fushimi for careful reading and useful comments. This work was supported by JSPS KAKENHI Grant Number JP22J00523. 

\section{Preliminaries}
\subsection{Cluster algebras}
We recall that a \emph{semifield} $\PP$ is an abelian multiplicative group with an distributive addition $\oplus$ over the multiplication.

Let Trop$(u_1,\dots, u_\ell)$ be the free (multiplicative) abelian group generated by $u_1,\dots,u_\ell$. Then, $\text{Trop}(u_1,\dots,u_{\ell})$ is a semifield with the addition defined by
\begin{align}
\prod_{j=1}^\ell u_j^{a_j} \oplus \prod_{j=1}^{\ell} u_j^{b_j}=\prod_{j=1}^{\ell} u_j^{\min(a_j,b_j)}.\nonumber
\end{align}
We refer to it as the \emph{tropical semifield} of $u_1,\dots,u_\ell$.

Let $B=(b_{ij})$ be an $n \times n$ integer matrix. If there exists a positive integer diagonal matrix $S=\mathrm{diag}(s_1,\dots,s_n)$ such that $SB$ is skew-symmetric, then we say that $B$ is {\em skew-symmetrizable}. The matrix $S$ is called a \emph{skew-symmetrizer} of $B$.

We fix a positive integer $n$ and a semifield $\PP$. Let $\mathbb{Z}\PP$ be the group ring of the multiplicative group $\PP$. Then $\mathbb{Z}\PP$ is a domain (\cite{fzi}*{Section 5}). Thus its total quotient ring is a field $\mathbb{Q}\PP$. Let $\mathcal{F}$ be the field of rational functions in $n$ indeterminates with coefficients in $\mathbb{Q}\PP$.

A \emph{(labeled) seed} of rank $n$ over $\PP$ is a triplet $(\mathbf{x}, \mathbf{y}, B)$, where
\begin{itemize}
\item $\mathbf{x}=(x_1, \dots, x_n)$ represents an $n$-tuple of elements of $\mathcal F$ forming a free generating set of $\mathcal F$. $\mathbf{x}$ is called a {\em cluster}, and the elements in $\mathbf{ x}$ are referred to as \emph{cluster variables}.
\item $\mathbf{y}=(y_1, \dots, y_n)$ represents an $n$-tuple of elements of $\PP$. The elements in $\mathbf{ y}$ are called \emph{coefficients}.
\item $B=(b_{ij})$ is an $n \times n$ skew-symmetrizable integer matrix that is referred to as the {\em exchange matrix} of $(\xx,\yy,B)$.
\end{itemize}

For an integer $b$, we denote $[b]_+=\max(b,0)$. Clearly, we have $b=[b]_+-[-b]_+$.

\begin{definition}[Matrix mutation] Let $A=(a_{ij})$ be an $m\times n$ integer matrix with $m\geq n>0$ and $k\in\{1,\cdots,n\}$. The {\em mutation} $\mu_k(A)$ of $A$ in direction $k$ is the new matrix $A^\prime=(a_{ij}^\prime)$ given by
\begin{eqnarray}
\label{eq:matrix mutation}
a'_{ij}=\begin{cases}-a_{ij} ,&\text{if $i=k$ or $j=k$;} \\
a_{ij}+\left[ a_{ik}\right] _{+}a_{kj}+a_{ik}\left[ -a_{kj}\right]_+ ,&\text{otherwise.}\nonumber
\end{cases}
\end{eqnarray}
\end{definition}
It can be verified that if the upper $n\times n$ submatrix of $A_{m\times n}$ is skew-symmetrizable, then $$\mu_k(\mu_k(A))=A.$$
We fix $k\in\{1,\dots,n\}$. If $a_{ki}$ is negative for each $i\in\{1,\dots,n\}$ then $\mu_k(A)$ is called the \emph{sink mutation}, and if it is positive, then $\mu_k(A)$ is called the \emph{source mutation}.
Let $(\mathbf{x}, \mathbf{y}, B)$ be a seed of rank $n$ over $\PP$ and $k \in\{1,\dots, n\}$. The \emph{seed mutation} $\mu_k$ in direction $k$ transforms $(\mathbf{x}, \mathbf{y}, B)$ to a new seed $\mu_k(\mathbf{x}, \mathbf{y}, B)=(\mathbf{x'}, \mathbf{y'}, B')$ defined as follows:
\begin{itemize}
\item $B^\prime$ is the matrix mutation of $B$ in direction $k$, that is, $B^\prime=\mu_k(B)$;
\item the coefficients in $\mathbf{y'}=(y'_1, \dots, y'_n)$ are given by
\begin{eqnarray}
\label{eq:y-mutation}
y'_j=
\begin{cases}
y_{k}^{-1}, &\text{if $j=k$;} \\
y_j y_k^{[b_{kj}]_+}(y_k \oplus 1)^{-b_{kj}} ,&\text{otherwise.}
\end{cases}\nonumber
\end{eqnarray}
\item The cluster variables in $\mathbf{x'}=(x'_1, \dots, x'_n)$ are given by
\begin{align}\label{eq:x-mutation}
x'_j=\begin{cases}\dfrac{y_k\mathop{\prod}\limits_{i=1}^{n} x_i^{[b_{ik}]_+}+\mathop{\prod}\limits_{i=1}^{n} x_i^{[-b_{ik}]_+}}{(y_k\oplus 1)x_k}, &\text{if $j=k$;}\\
x_j, &\text{otherwise.}
\end{cases}
\end{align}
\end{itemize}

It can be verified that the seed mutation $\mu_k$ is an involution, that is, $\mu_k^2({\bf x},{\bf y}, B)=({\bf x},{\bf y}, B)$.

Let $\mathbb{T}_n$ be the \emph{$n$-regular tree} whose edges are labeled with the numbers $1, \dots, n$ such that the $n$ edges emanating from each vertex have different labels. A graph \begin{xy}(0,1)*+{t}="A",(10,1)*+{t'}="B",\ar@{-}^k"A";"B" \end{xy}
indicates that the two vertices $t,t'\in \mathbb{T}_n$ are joined by an edge labeled by $k$.

\begin{definition}[Cluster pattern]
A \emph{cluster pattern} $\Sigma=\{t\mapsto \Sigma_t\}_{t\in\mathbb T_n}$ of rank $n$ over $\PP$ is an assignment of a seed $\Sigma_t=(\mathbf{x}_t, \mathbf{y}_t,B_t)$ of rank $n$ over $\PP$ to each vertex $t\in \mathbb{T}_n$ such that
for any edge \begin{xy}(0,1)*+{t}="A",(10,1)*+{t'}="B",\ar@{-}^k"A";"B" \end{xy} of $\mathbb T_n$, we have $\Sigma_{t^\prime}=\mu_k(\Sigma_t)$.
\end{definition}

For a seed $\Sigma_t=(\mathbf{x}_t, \mathbf{y}_t,B_t)$, we always denote by:
$$
\mathbf{x}_t=(x_{1;t},\dots,x_{n;t}),\ \mathbf{y}_t=(y_{1;t},\dots,y_{n;t}),\ B_t=(b_{ij;t}).
$$

Sometimes, we select a vertex $t_0$ as the \emph{rooted vertex} of $\mathbb T_n$. The seed at the rooted vertex $t_0$ would be called an \emph{initial seed}. In this case, we denote by:
\begin{align}
\mathbf{x}_{t_0}=(x_1,\dots,x_n),\ \mathbf{y}_{t_0}=(y_1,\dots,y_n),\ B=B_{t_0}=(b_{ij}).\nonumber
\end{align}

\begin{definition}[Cluster algebra]
Let $\Sigma=\{t\mapsto \Sigma_t\}_{t\in\mathbb T_n}$ be a cluster pattern of rank $n$ over $\mathbb P$. The \emph{cluster algebra} $\Acal$ associated with $\Sigma$ is the $\ZZ\PP$-subalgebra of $\Fcal$ given by
 $$\mathcal A=\mathbb{ZP}[x_{1;t},\cdots,x_{n;t}|\;t\in\mathbb T_n].$$
\end{definition}
We always use $\mathcal{A}(B;t_0)$ to denote a cluster algebra with initial exchange matrix $B$ at the rooted vertex $t_0\in\mathbb T_n$.

\begin{example}[Cluster algebra of Type $A_2$] \label{A2} Considering $n=2$, the $2$-regular tree $\TT_2$ is denoted as follows:
\begin{align*}\label{A2tree}
\begin{xy}
(-10,0)*+{\dots}="a",(0,0)*+{t_0}="A",(10,0)*+{t_1}="B",(20,0)*+{t_2}="C", (30,0)*+{t_3}="D",(40,0)*+{t_4}="E",(50,0)*+{t_5}="F", (60,0)*+{\dots}="f"
\ar@{-}^{1}"a";"A"
\ar@{-}^{2}"A";"B"
\ar@{-}^{1}"B";"C"
\ar@{-}^{2}"C";"D"
\ar@{-}^{1}"D";"E"
\ar@{-}^{2}"E";"F"
\ar@{-}^{1}"F";"f"
\end{xy}.
\end{align*}
We set the initial exchange matrix $B=\begin{bmatrix}
 0 & -1 \\
 1 & 0
\end{bmatrix}
$ at the vertex $t_0$. In this case, there are a finite number of coefficients and cluster variables, which are illustrated in Table \ref{A2seed} \cite{fziv}*{Example 2.10}.
\begin{table}[ht]
\begin{equation*}
\begin{array}{|c|cc|cc|}
\hline
&&&&\\[-4mm]
t& \hspace{25mm}\yy_t &&& \xx_t \hspace{30mm}\\
\hline
&&&&\\[-3mm]
0 &y_1 & y_2& x_1& x_2 \\[1mm]
\hline
&&&&\\[-3mm]
1& \dfrac{y_1y_2}{y_2\oplus 1}& \dfrac{1}{y_2} & x_1& \dfrac{x_1+y_2}{(y_2\oplus 1)x_2} \\[3mm]
\hline
&&&&\\[-3mm]
2& \dfrac{1\oplus y_2}{y_1y_2} & \dfrac{y_1}{y_1y_2\oplus y_2\oplus 1} & \dfrac{x_2y_1y_2 + y_2+ x_1}{(y_1y_2\oplus y_2\oplus 1)x_1x_2} & \dfrac{x_1+y_2}{(y_2\oplus 1)x_2} \\[3mm]
\hline
&&&&\\[-3mm]
3& \dfrac{1}{y_1y_2\oplus y_2} & \dfrac{y_1y_2\oplus y_1\oplus 1}{y_1} & \dfrac{x_2y_1y_2+y_2+x_1}{(y_1y_2\oplus y_2\oplus 1)x_1x_2} & \dfrac{x_2y_1+1}{x_1(y_1\oplus 1)} \\[3mm]
\hline
&&&&\\[-2mm]
4& y_1y_2\oplus y_2 &\dfrac{1}{y_1} & x_2 & \dfrac{x_2y_1+1}{x_1(y_1\oplus 1)} \\[3mm]
\hline
&&&&\\[-2mm]
5& y_2 & y_1 & x_2 & x_1\\[1mm]
\hline
\end{array}
\end{equation*}
\caption{Coefficients and cluster variables in type~$A_2$: general coefficients\label{A2seed}}
\end{table}

Therefore, we have
\begin{align*}
\Acal(B;t_0)=\ZZ\PP\left[x_1,x_2,\dfrac{x_1+y_2}{(y_2\oplus 1)x_2},\dfrac{x_2y_1y_2+y_2+x_1}{(y_1y_2\oplus y_2\oplus 1)x_1x_2}, \dfrac{x_2y_1+1}{x_1(y_1\oplus 1)}\right].
\end{align*}
\end{example}

Let $\Sigma_t=(\xx_t,\yy_t,B_t)$ be a seed and $\sigma$ a permutation on $\{1,\cdots,n\}$. We use $\sigma(\Sigma_t)$ to denote a new seed, which is defined as follows:
\begin{align*}
\sigma(\Sigma_t)&=(\sigma\xx_t,\sigma\yy_t,\sigma B_t),\\
    \sigma\xx_t&=(x_{\sigma(1);t},\dots,x_{\sigma(n);t}),\\
    \sigma\yy_t&=(y_{\sigma(1);t},\dots,y_{\sigma(n);t}),\\
    \sigma B_t&=(b'_{ij}),\quad b'_{ij}=b_{\sigma(i)\sigma(j);t}.
\end{align*}

The two seeds $\Sigma_t$ and $\Sigma_{t^\prime}$ are said to be \emph{equivalent} if there exists a permutation $\sigma$ such that $\Sigma_{t^\prime}=\sigma(\Sigma_t)$. The equivalence class of seeds defined by this equivalence relation is called a \emph{non-labeled seed}. A non-labeled seed with the representation is $\Sigma$ is denoted by $[\Sigma]$.
\begin{remark}\label{non-labeled-remark}
Notice that the permutation of a seed is compatible with mutations, that is, we have
\begin{align}
    \mu_{k}(\sigma(\Sigma_t))=\sigma\mu_{\sigma(k)}(\Sigma_t).\nonumber
\end{align}
This property makes non-labeled seed mutations well-defined.
\end{remark}

\begin{theorem} [\cite{cl2}*{Proposition 3 (ii)}] \label{non-labeled-cluster-thm}
Let $\Sigma_t$ and $\Sigma_{t^\prime}$ be two seeds of a cluster algebra $\mathcal A$. If there exists a permutation $\sigma$ such that $\xx_{t^\prime}=\sigma\xx_{t}$, then $\Sigma_{t^\prime}=\sigma(\Sigma_t)$.
\end{theorem}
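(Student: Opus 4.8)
The statement asserts that if two seeds of a cluster algebra have clusters agreeing up to a permutation $\sigma$, then in fact the entire seeds agree via $\sigma$; i.e.\ the cluster determines the whole seed (coefficient tuple and exchange matrix) up to simultaneous relabelling. The plan is to reduce to the labelled statement by first replacing $\Sigma_t$ with $\sigma(\Sigma_t)$: by Remark~\ref{non-labeled-remark} the assignment $t\mapsto\sigma(\Sigma_t)$ is again (part of) a cluster pattern on $\TT_n$, and the hypothesis becomes $\xx_{t'}=\sigma\xx_t = \xx_{\sigma(t)}$ where by $\sigma(t)$ I mean the vertex carrying the permuted seed. So it suffices to show: if two seeds in a cluster algebra have the \emph{same} cluster $\xx_{t'}=\xx_t$ (as ordered tuples), then $\yy_{t'}=\yy_t$ and $B_{t'}=B_t$. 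This is precisely the content I would attribute to \cite{cl2}*{Proposition 3(ii)} and I would follow that reference's argument.

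For the exchange matrix: the key input is the separation-of-additions formula of Fomin--Zelevinsky, which expresses every cluster variable $x_{i;t}$ as a Laurent polynomial in the initial cluster $\xx_{t_0}$ with exponents governed by the $\gg$-vectors, together with the fact that the $B$-matrix $B_t$ can be recovered from the $F$-polynomials / $\gg$-vectors attached to the seed at $t$, or more directly from the exchange relation out of $t$. Concretely, mutating at direction $k$ from $t'$ produces a cluster variable $x'_{k;t'}$ satisfying the binomial exchange relation \eqref{eq:x-mutation} with exponents $[b_{ik;t'}]_+$ and $[-b_{ik;t'}]_+$; since $\xx_{t'}=\xx_t$ as tuples and cluster variables are algebraically independent, the Laurent monomials appearing force $b_{ik;t'}=b_{ik;t}$ for all $i,k$, hence $B_{t'}=B_t$. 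For the coefficients: one uses that the coefficient tuple $\yy_t$ is itself determined by the seed's combinatorial data (the $C$-matrix / $c$-vectors), which in turn is read off from $B_t$ and the position of $t$ relative to $t_0$ in $\TT_n$; once $B_{t'}=B_t$ is known and the clusters coincide, the $\yy$'s must coincide as well. Alternatively, and perhaps more cleanly, invoke the known statement that a seed is determined by its cluster together with its exchange matrix.

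The main obstacle is handling the case where $\xx_t$ and $\xx_{t'}$ happen to coincide but $t\neq t'$ sit far apart in $\TT_n$ — i.e.\ genuinely ruling out that two different combinatorial seeds could share a cluster while carrying different $B$ or $\yy$. This is exactly the subtlety that makes non-labelled mutation well-defined, and it is nontrivial because it relies on the structural theory (Laurent phenomenon, algebraic independence of clusters, and the recovery of $B_t$ from exchange relations). I would not reprove these from scratch but cite them; the write-up then amounts to: (i) reduce to $\sigma=\mathrm{id}$ via Remark~\ref{non-labeled-remark}; (ii) from the two exchange relations out of $t$ and $t'$, using algebraic independence of the common cluster in $\Fcal$, deduce $B_t=B_{t'}$; (iii) conclude $\yy_t=\yy_{t'}$ from the determination of coefficients by $(B_t, \text{position in }\TT_n)$, or directly from the $\yy$-part of \eqref{eq:y-mutation} read backwards. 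Since the paper explicitly cites \cite{cl2}*{Proposition 3(ii)} for this theorem, the "proof" in the paper is likely just this citation, and my proposal is really a sketch of why that proposition holds.
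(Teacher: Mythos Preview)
You are right that the paper offers no proof: the theorem is imported from \cite{cl2} and used as a black box, so there is nothing in the paper to compare against.

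Your sketch of the underlying argument has the right ingredients (reduction to $\sigma=\mathrm{id}$, separation formula, $g$-vectors), but the concrete step (ii) does not work as written. Comparing the exchange relations at $t$ and at $t'$ in direction $k$ would let you read off $B_t$ only if you already knew the mutated variables $x'_{k;t}$ and $x'_{k;t'}$ coincide, which is not given; and even then the right-hand side of \eqref{eq:x-mutation} mixes the $B$-exponents with the coefficient $y_k$, so algebraic independence of the $x_i$ alone does not isolate $B$. Step (iii) as phrased is also circular: the tuple $\yy_t$ is determined by the initial seed and the tree position, not by $(B_t,\text{position})$, and $t\neq t'$ is exactly the difficulty. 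The actual route in \cite{cl2} is closer to what you allude to earlier in your proposal: reduce to principal coefficients (cf.\ Proposition~\ref{proindepen}), recover the $g$-vector of each $x_{i;t}$ from its Laurent expansion, and use that the $G$-matrix determines both $B_t$ and, via Theorem~\ref{c-g-duality}, the $C$-matrix and hence $\yy_t$. None of this affects the present paper, which only consumes the statement.
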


By the aforementioned theorem, if $\xx_t=\xx_{t'}$, then we have $B_t=B_{t'}$ and $\yy_t=\yy_{t'}$. Therefore, the mutation of cluster $\mu_k(\xx)$ is well-defined. Moreover, if two clusters are identical as unordered sets (i.e., identical as \emph{non-labeled clusters}), then the two seeds are also identical as non-labeled seeds. Therefore, the mutation of non-labeled cluster $\mu_x([\xx])$ is also well-defined, where $x$ is a cluster variable in $[\xx]$.

The following proposition demonstrates that the coefficients does not affect combinatorial structure of clusters.

\begin{proposition}[\cite{cl2}*{Proposition 3 (i)}] \label{proindepen}
Let $\mathcal A_1$ be a cluster algebra with coefficient semifield $\mathbb P_1$ and $\mathcal A_2$ with coefficient semifield $\mathbb P_2$. Let $(\xx_t(k),\yy_t(k), B_t(k))$ be a seed of $\Acal_k$ at $t\in\TT_n$, $k= 1,2$. If $\Acal_1$ and $\Acal_2$ have the same initial exchange matrix at the rooted vertex $t_0$, then $x_{i;t}(1) =x_{j;t'}(1)$ if and only if $x_{i;t}(2) =x_{j;t'}(2)$, where $t, t'\in\TT_n$ and $i, j\in \{1,2,\cdots, n\}$.
\end{proposition}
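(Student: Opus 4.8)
The plan is to derive Proposition~\ref{proindepen} from Fomin--Zelevinsky's \emph{separation of additions} \cite{fziv}. Write $B$ for the common initial exchange matrix at $t_0$, and let $\Acal^{\bullet}=\Acal^{\bullet}(B;t_0)$ denote the cluster algebra with \emph{principal} coefficients at $t_0$; its cluster variables $X_{\ell;t}\in\ZZ[x_1^{\pm1},\dots,x_n^{\pm1},y_1,\dots,y_n]$ and its $F$-polynomials $F_{\ell;t}=X_{\ell;t}|_{x_1=\dots=x_n=1}\in\ZZ[y_1,\dots,y_n]$ depend only on $(B,t_0,\ell,t)$, not on any coefficient semifield. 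The separation formula asserts that for a cluster algebra $\Acal$ over an arbitrary $\PP$ with initial exchange matrix $B$ at $t_0$,
\begin{align*}
x_{\ell;t}=\frac{\left.X_{\ell;t}\right|_{\Fcal}}{\left.F_{\ell;t}\right|_{\PP}},
\end{align*}
where the numerator is $X_{\ell;t}$ with $x_i,y_j$ replaced by the initial data of $\Acal$ and evaluated using the ordinary sum of $\Fcal$, and the denominator is $F_{\ell;t}$ with $y_j$ replaced by the initial coefficients of $\Acal$ and evaluated using the sum $\oplus$ of $\PP$. I would then reduce the Proposition to the claim that, over any $\PP$,
\begin{align*}
x_{i;t}=x_{j;t'}\ \text{in}\ \Acal\qquad\Longleftrightarrow\qquad X_{i;t}=X_{j;t'}\ \text{in}\ \ZZ[x_1^{\pm1},\dots,x_n^{\pm1},y_1,\dots,y_n].
\end{align*}
The right-hand side involves no semifield, so this equivalence yields directly $x_{i;t}(1)=x_{j;t'}(1)\Leftrightarrow X_{i;t}=X_{j;t'}\Leftrightarrow x_{i;t}(2)=x_{j;t'}(2)$, which is the Proposition.

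The implication ``$\Leftarrow$'' is purely formal: if $X_{i;t}=X_{j;t'}$, then setting all $x_i=1$ also gives $F_{i;t}=F_{j;t'}$, so for every semifield and every choice of initial coefficients both the numerator and the denominator of the separation formula coincide for $(i,t)$ and $(j,t')$; hence $x_{i;t}=x_{j;t'}$. In particular this already proves ``$X_{i;t}=X_{j;t'}\Rightarrow x_{i;t}(k)=x_{j;t'}(k)$'' for $k=1,2$, so only the converse requires real work.

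For ``$\Rightarrow$'', assume $x_{i;t}=x_{j;t'}$ over $\PP$. In $\Acal^{\bullet}$ one has $X_{\ell;t}=\left(\prod_{i}x_i^{g_{i\ell;t}}\right)F_{\ell;t}(\hat y_1,\dots,\hat y_n)$, where $\hat y_j=y_j\prod_{i}x_i^{b_{ij}}$ and $(g_{1\ell;t},\dots,g_{n\ell;t})$ is the $g$-vector; since $F_{\ell;t}$ has constant term $1$, setting $y_1=\dots=y_n=0$ in $X_{\ell;t}$ leaves exactly the monomial $\prod_{i}x_i^{g_{i\ell;t}}$, and, $B$ being fixed, $X_{\ell;t}$ is in turn determined by the pair $(g\text{-vector},F_{\ell;t})$. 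Thus the claim reduces to showing that $x_{i;t}=x_{j;t'}$ over $\PP$ forces equality of the $g$-vectors (whence also of the $F$-polynomials). This is the substantive step, and I expect it to be the main obstacle: over a general $\PP$ one cannot recover the $g$-vector from $x_{\ell;t}$ by a naive specialization, and the assertion is in fact equivalent to Fomin--Zelevinsky's conjecture that the exchange graph of a cluster algebra depends only on $(B,t_0)$. Its proof goes through the sign-coherence of the $c$-vectors---known for every skew-symmetrizable $B$ by the work of Gross--Hacking--Keel--Kontsevich---which makes the $g$-vectors $g^{B;t_0}_{\ell;t}$ well defined, arranges their rays into a rigid simplicial fan, and lets one read off $g_{i;t}=g_{j;t'}$ (and then $F_{i;t}=F_{j;t'}$) from the equality $x_{i;t}=x_{j;t'}$; the details are carried out in \cite{cl2}*{Proposition 3}. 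Alternatively one may package the whole argument through the universal semifield $\QQsf(y_1,\dots,y_n)$, which surjects onto every $\PP$ compatibly with cluster variables: over it the separation formula reads $x^{\mathrm{univ}}_{\ell;t}=X_{\ell;t}/F_{\ell;t}$ with honest polynomials, so ``$\Rightarrow$'' is elementary there, and the remaining content is precisely that this specialization creates no new coincidences among cluster variables.
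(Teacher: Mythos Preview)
The paper does not give its own proof of Proposition~\ref{proindepen}; it is quoted as a known result from \cite{cl2}*{Proposition~3~(i)}, with no argument supplied. So there is no ``paper's proof'' to compare against.

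Your outline is accurate: the separation formula from \cite{fziv} reduces the statement to the principal-coefficient pattern, the implication $X_{i;t}=X_{j;t'}\Rightarrow x_{i;t}=x_{j;t'}$ is formal, and the converse is the real content, which you correctly trace back to sign-coherence \cite{GHKK}. But note that your proposal is not a self-contained proof either --- at the decisive step you write ``the details are carried out in \cite{cl2}*{Proposition~3}'', which is precisely the source the paper is citing. Functionally, then, you and the paper are doing the same thing: invoking \cite{cl2}.

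One minor point: your parenthetical ``(whence also of the $F$-polynomials)'' is not immediate from equality of $g$-vectors by itself; it implicitly uses the further fact that the map $X_{\ell;t}\mapsto\gg_{\ell;t}$ is injective on cluster variables of the principal-coefficient pattern (again a consequence of sign-coherence), so that $\gg_{i;t}=\gg_{j;t'}$ already forces $X_{i;t}=X_{j;t'}$ and hence $F_{i;t}=F_{j;t'}$. You should make that dependence explicit rather than leave it as a parenthesis.
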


\begin{definition}[Exchange graph]
The \emph{exchange graph} $\Gamma(B)$ of a cluster algebra $\Acal(B;t_0)$ is a graph whose vertices correspond to the non-labeled clusters of $\Acal(B;t_0)$ and whose edges correspond to the mutations of non-labeled clusters.
\end{definition}
According to Proposition \ref{proindepen}, we know that the exchange graph $\Gamma(B)$ of a cluster algebra $\mathcal A(B;t_0)$ is independent of the choice of the coefficient semifield. It only depends on the initial exchange matrix $B$.
\subsection{$c$-vectors, $g$-vectors and exchange quiver in cluster algebras} In this subsection, we recall the $c$-vectors, $g$-vectors and exchange quiver in cluster algebras.

\begin{definition}[Principal coefficient]
We state that a cluster pattern $v\mapsto \Sigma_v$ or a cluster algebra $\Acal(B;t_0)$ of rank $n$ has the \emph{principal coefficients} at the rooted vertex $t_0$ if $\mathbb{P}=\text{Trop}(y_1,\dots,y_n)$ and $\mathbf{y}_{t_0}=(y_1,\dots,y_n)$. In this case, we denote $\Acal=\Acal_\bullet(B;t_0)$.
\end{definition}
First, we define the $c$-vectors. For $\bb=(b_1,\dots,b_n)^{\mathrm{T}}$, we use the notation $[\bb]_+=([b_1]_+,\dots,[b_n]_+)^{\mathrm{T}}$, where $\mathrm{T}$ stands for transpose.
\begin{definition}[$c$-vector and $C$-matrix]
Let $\Acal_{\bullet}(B;t_0)$ be a cluster algebra with principal coefficients at $t_0$. We define the \emph{$c$-vector}\footnote{The letter ``$c$" of the $c$-vector is derived from "coefficients," not ``Coxeter."} $\cc_{j;t}$ as the degree of $y_i$ in $y_{j;t}$, that is, if $y_{j;t}=y_1^{c_{1j;t}}\cdots y_n^{c_{nj;t}}$, then
\begin{align}
\cc_{j;t}^{B;t_0}=\cc_{j;t}=\begin{bmatrix}c_{1j;t}\\ \vdots \\ c_{nj;t} \end{bmatrix}.
\end{align}
We define the \emph{$C$-matrix} $C_t^{B;t_0}$ as
\begin{align}
C_t^{B;t_0}:=(\cc_{1;t},\dots,\cc_{n;t}).
\end{align}
\end{definition}

\begin{remark}\label{c-recursion-remark}
The $c$-vectors are the same as those defined by the following recursion: For any $j \in\{1,\dots,n\}$,
\begin{align*}
\cc_{j;t_0}=\ee_j\quad \text{(canonical basis)},
\end{align*}
and for any $\begin{xy}(0,0)*+{t}="A",(10,0)*+{t'}="B",\ar@{-}^k"A";"B" \end{xy}$,
\begin{align*}
\cc_{j;t'} =
\begin{cases}
-\cc_{j;t} & \text{if $j=k$;} \\[.05in]
\cc_{j;t} + [b_{kj;t}]_+ \ \cc_{k;t} +b_{kj;t} [-\cc_{k;t}]_+
 & \text{if $j\neq k$.}
 \end{cases}
\end{align*}
This recursion demonstrates that the $C$-matrix can also be computed as follows: We set
$\tilde {B}_{t_0}:=\begin{bmatrix}B\\I_n\end{bmatrix}$, and define $\tilde{B}_{t'}=\mu_k(\tilde B_t)$ for any edge \begin{xy}(0,1)*+{t}="A",(10,1)*+{t'}="B",\ar@{-}^k"A";"B" \end{xy} of $\mathbb T_n$. The bottom $n\times n$ matrix of $\tilde B_t$ then coincides with $C^{B;t_0}_t$.
As the recursion formula only depends on exchange matrices,
we can regard $c$-vectors as vectors associated with $\TT_n$'s vertices.
\end{remark}

Now, we introduce the non-labeled version of $c$-vectors and $C$-matrices. To define them, we recall an important theorem:

\begin{theorem}[\cite{nak20}]\label{c-permutation-thm}
For any cluster algebra $\Acal(B;t_0)$ (not necessarily one with principal coefficients) and clusters $\xx_t,\xx_{t'}$ in $\Acal(B;t_0)$, if there exists a permutation $\sigma$ such that $\xx_{t^\prime}=\sigma\xx_{t}$, then $C^{B;t_0}_{t^\prime}=\sigma C^{B;t_0}_{t}=(\cc_{\sigma(1);t},\dots,\cc_{\sigma(n);t})$.
\end{theorem}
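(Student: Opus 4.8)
The plan is to reduce at once to the case of principal coefficients, where by definition the $c$-vectors are the exponent vectors of the coefficients $y_{j;t}$, and then to upgrade the hypothesis $\xx_{t'}=\sigma\xx_t$ from an equality of clusters to an equality of whole seeds; once $\yy_{t'}=\sigma\yy_t$ is known, reading off exponents finishes the proof.

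First I would fix the cluster algebra $\Acal(B;t_0)$ of the statement together with its principal-coefficient companion $\Acal_\bullet(B;t_0)$, which has the same rank, the same $n$-regular tree $\TT_n$, and the same rooted vertex $t_0$. Recall from Remark \ref{c-recursion-remark} that $C^{B;t_0}_t$ depends only on $B$, $t_0$ and $t$, so it is legitimate to compute it inside $\Acal_\bullet(B;t_0)$. Since $\Acal(B;t_0)$ and $\Acal_\bullet(B;t_0)$ share the initial exchange matrix, Proposition \ref{proindepen} applies with $\Acal_1=\Acal(B;t_0)$ and $\Acal_2=\Acal_\bullet(B;t_0)$: the equalities $x_{\sigma(i);t}=x_{i;t'}$ for $i=1,\dots,n$ hold in $\Acal(B;t_0)$ if and only if they hold in $\Acal_\bullet(B;t_0)$. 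Hence $\xx_{t'}=\sigma\xx_t$ in $\Acal(B;t_0)$ forces $\xx_{t'}=\sigma\xx_t$ in $\Acal_\bullet(B;t_0)$.

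Next I would work entirely inside $\Acal_\bullet(B;t_0)$. By Theorem \ref{non-labeled-cluster-thm}, the equality of clusters $\xx_{t'}=\sigma\xx_t$ upgrades to the equality of seeds $\Sigma_{t'}=\sigma(\Sigma_t)=(\sigma\xx_t,\sigma\yy_t,\sigma B_t)$; comparing the middle entries gives $\yy_{t'}=\sigma\yy_t$, that is, $y_{j;t'}=y_{\sigma(j);t}$ for every $j$. Because $\PP=\text{Trop}(y_1,\dots,y_n)$, each coefficient $y_{j;t}$ is the Laurent monomial $\prod_i y_i^{c_{ij;t}}$ whose exponent vector is exactly $\cc_{j;t}$; equating the two Laurent monomials $y_{j;t'}$ and $y_{\sigma(j);t}$ therefore yields $\cc_{j;t'}=\cc_{\sigma(j);t}$ for all $j$, which is precisely $C^{B;t_0}_{t'}=\sigma C^{B;t_0}_t=(\cc_{\sigma(1);t},\dots,\cc_{\sigma(n);t})$.

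The argument above is short only because it rests on two nontrivial inputs, and that is where the real difficulty lies: Theorem \ref{non-labeled-cluster-thm}, to the effect that a permutation carrying one cluster to another carries the whole seed along, and the coefficient-independence statement of Proposition \ref{proindepen}; both are quoted from the literature. If one wished to bypass Theorem \ref{non-labeled-cluster-thm}, a natural alternative is to argue with $g$-vectors: the $g$-vector of a cluster variable in the principal-coefficient algebra is intrinsic to that variable (its multidegree for the standard $\ZZ^n$-grading), so $\xx_{t'}=\sigma\xx_t$ gives at once a permutation relation between the $G$-matrices, which one then transports to $C$-matrices via tropical duality. That route, however, requires a small extra bookkeeping step to absorb the skew-symmetrizer appearing in tropical duality in the non-skew-symmetric case, so the seed-rigidity argument above is the cleaner one.
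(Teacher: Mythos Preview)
The paper does not give its own proof of this statement: Theorem~\ref{c-permutation-thm} is quoted from \cite{nak20} and used as a black box, so there is nothing in the paper to compare your argument against line by line.

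That said, your argument is correct and is the natural way to derive the statement from the tools the paper already imports. Reducing to principal coefficients via Proposition~\ref{proindepen}, then invoking Theorem~\ref{non-labeled-cluster-thm} to upgrade $\xx_{t'}=\sigma\xx_t$ to $\Sigma_{t'}=\sigma(\Sigma_t)$ in $\Acal_\bullet(B;t_0)$, and finally reading off exponent vectors from $\yy_{t'}=\sigma\yy_t$ in the tropical semifield, is a clean and complete derivation. Your remark that the substance lies in the two cited inputs is accurate; the only caveat is that one should be sure those inputs (both from \cite{cl2}) do not themselves depend on \cite{nak20}, which they do not. The alternative route you sketch through $g$-vectors and tropical duality (Theorem~\ref{c-g-duality}) also works and is closer in spirit to how such statements are often handled in the literature, but as you note it requires the extra bookkeeping with the skew-symmetrizer $S$, so your seed-rigidity argument is indeed the more direct one here.
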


\begin{definition}[Non-labeled version of $c$-vector and $C$-matrix]\label{non-labeled-c}
For any cluster variable $x$ and non-labeled cluster $[\xx]$ containing $x$ in $\Acal(B;t_0)$, when $[\xx]=[\xx_t]$ and $x=x_{i;t}$, we define
\[\cc^{B;t_0}_{x,[\xx]}:=\cc_{i;t}^{B;t_0},\]
as the \emph{$c$-vector associated with $x$ in $[\xx]$}.
Furthermore, we define \[C^{B;t_0}_{[\xx]}:=\{\cc_{i;t}^{B;t_0}\mid 1\leq i \leq n\}\]
and refer to it as the \emph{$C$-matrix associated with $[\xx]$}.
\end{definition}
\begin{remark}
In Definition \ref{non-labeled-c}, the $C$-matrix associated with $[\xx]$ is a set. It may be considered as matrices if necessary, given the appropriate order. The $G$-matrix associated with $[\xx]$ that will be defined later follows the same rules.
\end{remark}
We note that $\cc^{B;t_0}_{x,[\xx]}$ and $C^{B;t_0}_{[\xx]}$ are well-defined by Proposition \ref{c-permutation-thm}, and that $\Acal(B;t_0)$ in Definition \ref{non-labeled-c} does not need to be a cluster algebra with principal coefficients according to Remark \ref{c-recursion-remark}.

The following theorem defines positive and negative signs for $c$-vectors.

\begin{theorem}[Sign-coherence, \cite{GHKK}]\label{thm:signs-ci}
Each column vector of a $C$-matrix is either a non-negative vector or a non-positive vector.
\end{theorem}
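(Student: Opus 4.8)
The plan is to prove this by the scattering-diagram method of Gross--Hacking--Keel--Kontsevich, which gives sign-coherence in full skew-symmetrizable generality. By Remark \ref{c-recursion-remark} the $C$-matrices are produced by a mutation recursion that depends only on the exchange matrices, so it suffices to realize the vectors $\cc_{k;t}$ geometrically inside a single scattering diagram attached to $B$. Fix a skew-symmetrizer $S=\mathrm{diag}(s_1,\dots,s_n)$ of $B$, put $N=\ZZ^n$ equipped with the skew-symmetric form determined by $B$ and $S$, and let $N^+=\{\sum a_ie_i : a_i\geq 0,\ \sum a_i>0\}$ be the positive monoid (generated by the initial $c$-vectors $\cc_{i;t_0}=e_i$). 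I would form the initial scattering diagram consisting of the $n$ walls along the hyperplanes $e_i^{\perp}$ with the standard wall-functions $1+(\text{a monomial in direction }e_i)$, and then invoke the Kontsevich--Soibelman consistency lemma to complete it, uniquely up to equivalence, to a consistent scattering diagram $\mathfrak{D}$. The single structural input that drives the proof --- and is an output of the completion algorithm --- is that \emph{every wall of $\mathfrak{D}$ has normal direction spanned by an element of $N^+$}: this holds for the initial walls, and a wall created when resolving an inconsistency is supported in a hyperplane whose normal is a non-negative combination of the normals of the two crossed walls, so positivity propagates.

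The second step is the dictionary between seeds and the geometry of $\mathfrak{D}$, proved by induction on the distance of $t$ from $t_0$ in $\TT_n$. One checks: the chambers of $\mathfrak{D}$ reachable by iterated wall-crossing from the positive chamber are exactly the cones $\operatorname{Cone}(g_{1;t},\dots,g_{n;t})$ cut out by the $g$-vectors at $t$; crossing the common facet of the chambers at $t$ and at its $k$-th neighbour realizes the mutation $\mu_k$; and that facet lies in a single wall of $\mathfrak{D}$ whose primitive normal covector, under the standard $G/C$-duality of Nakanishi--Zelevinsky, is the $c$-vector $\cc_{k;t}$. Combining the two steps: the line spanned by $\cc_{k;t}$ is the normal line of an actual wall of $\mathfrak{D}$, and every such line is spanned by an element of $N^+$; hence $\cc_{k;t}$ or $-\cc_{k;t}$ lies in $N^+$, i.e.\ $\cc_{k;t}$ is either a non-negative or a non-positive vector. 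Since $k$ and $t$ are arbitrary and the column vectors of $C_t$ are precisely the $\cc_{k;t}$, this is the assertion of the theorem.

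The entire difficulty sits in the first step: the consistency of the completed scattering diagram together with the positivity of all its wall normals. This is the technical core of \cite{GHKK}, resting on the formal structure of the tropical vertex group and the Kontsevich--Soibelman factorization lemma; by contrast the seed-to-chamber dictionary of the second step is a bookkeeping induction, its only delicate point being to keep the $g$- and $c$-vector conventions aligned with the recursion fixed in Remark \ref{c-recursion-remark}. For completeness I record two alternatives. In the skew-symmetric case one can attach to $B$ a non-degenerate quiver with potential and invoke Derksen--Weyman--Zelevinsky's mutation of decorated representations, where the vectors $\cc_{k;t}$ are $\pm$(dimension vectors) of the exchange modules, so that sign-coherence becomes the statement that each exchange module is either ``positive'' or ``negative''; and for a cluster algebra arising from a finite-dimensional algebra one may instead cite sign-coherence of the $g$-vectors of $\tau$-tilting pairs (Adachi--Iyama--Reiten, Demonet--Iyama--Jasso) together with the identification of those with the cluster-algebraic $g$-vectors. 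Since the theorem is needed for arbitrary skew-symmetrizable $B$, the scattering-diagram argument is the one I would write out.
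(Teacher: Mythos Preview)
The paper does not give its own proof of this statement: Theorem~\ref{thm:signs-ci} is quoted as a result of \cite{GHKK} and used as a black box. So there is no in-paper argument to compare against; your proposal is effectively a sketch of the cited source, and as such it is a faithful high-level outline of the GHKK scattering-diagram proof (build the cluster scattering diagram, note that the completion procedure only produces walls with normals in $N^+$, identify $g$-vector chambers with seeds so that facet normals are $c$-vectors, conclude sign-coherence). One small point of phrasing: in GHKK the identification of the $c$-vector with the primitive wall normal is direct from the chamber geometry rather than via the Nakanishi--Zelevinsky $G/C$-duality you invoke, though the two descriptions are of course compatible. Your closing remarks about the DWZ and $\tau$-tilting alternatives are accurate and appropriately scoped to the skew-symmetric and algebra-specific cases, respectively.
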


Consider incorporating a $c$-vector orientation along the edge of the exchange graph.

\begin{definition}[Green and red mutations] Let $\mathcal A(B; t_0)$ be a cluster algebra with initial exchange matrix $B$ at $t_0$.
A mutation of non-labeled cluster $\mu_x([\xx])$ in $\mathcal A(B; t_0)$ is called a {\em green mutation}, if $\cc^{B;t_0}_{x,[\xx]}$is a non-negative vector. Otherwise, it is called a {\em red mutation}.
\end{definition}

\begin{remark}
In \cite{hps18}, the green mutation is defined by using the sign of $\cc^{-B^T;t_0}_{i;t}$ instead of $\cc^{B;t_0}_{i;t}(=\cc^{B;t_0}_{x,[\xx]})$. This definition is equivalent to the current definition of the green mutation. It follows the equation \[C^{-B^T;t_0}_t=SC^{B;t_0}_tS^{-1}\] provided by \cite{nz}*{(2.7)}.
\end{remark}

\begin{definition}[Exchange quiver] Let $\mathcal A(B; t_0)$ be a cluster algebra with initial exchange matrix $B$ at $t_0$.
The {\em exchange quiver} of $\mathcal A(B; t_0)$ is the quiver $\overrightarrow{\Gamma}(B)$ whose vertices correspond to non-labeled clusters in $\mathcal A(B; t_0)$ and whose arrows correspond to green mutations.
\end{definition}

\begin{example}[Type $A_2$]\label{ex-exchange-quiver-A2}
We set $\PP=\{1\}$. The exchange quiver in the configuration of Example \ref{A2} is as follows:
 \[\begin{tikzpicture}
      \node (1) at (0,9) {$\left\{\dfrac{x_2+1}{x_1},\dfrac{x_1+x_2+1}{x_1x_2}\right\}$};
       \node (1') at (0,7.9) {$\left\{\begin{bmatrix}-1\\0\end{bmatrix},\begin{bmatrix}0\\-1\end{bmatrix} \right\}$};
      \node(2) at (-6,7.5) {$\left\{\dfrac{x_1+1}{x_2},\dfrac{x_1+x_2+1}{x_1x_2}\right\}$};
      \node (2') at (-6,6.4) {$\left\{\begin{bmatrix}1\\0\end{bmatrix},\begin{bmatrix}-1\\-1\end{bmatrix} \right\}$};
      \node (3) at (6,5.6) {$\left\{\dfrac{x_2+1}{x_1},x_2\right\}$};
      \node (3') at (6,4.5) {$\left\{\begin{bmatrix}-1\\0\end{bmatrix},\begin{bmatrix}0\\1\end{bmatrix} \right\}$};
      \node (4) at (-6,3.5) {$\left\{x_1,\dfrac{x_1+1}{x_2}\right\}$};
      \node (4') at (-6,2.4) {$\left\{\begin{bmatrix}1\\1\end{bmatrix},\begin{bmatrix}0\\-1\end{bmatrix} \right\}$};
      \node (5) at (0,1) {$\{x_1,x_2\}$};
      \node (5') at (0,0) {$\left\{\begin{bmatrix}1\\0\end{bmatrix},\begin{bmatrix}0\\1\end{bmatrix} \right\}.$};
      \draw[<-] (-2.2,8.5)--(2);
      \draw[<-] (2.2,8.5)--(3);
      \draw[<-] (2')--(4);
      \draw[<-] (3')--(5);
      \draw[<-] (4')--(5);
    \end{tikzpicture}\]
Here, the upper row for each vertex displays the non-labeled cluster, while the lower row displays the associated $C$-matrix.
\end{example}

Next, we will define the $g$-vector and $G$-matrix.
In fact, cluster variables can be viewed as homogeneous Laurent polynomials of a certain degree.
\begin{theorem}[\cite{fziv}*{Proposition 6.1}]\label{thmfziv}
Let $\Acal(B;t_0)$ be a cluster algebra with principal coefficients at seed $\Sigma_{t_0}$. Then each cluster variable $x_{k;t}$ is a homogeneous Laurent polynomial in $x_1,\dots,x_n,y_1,\dots,y_n$ under the following $\ZZ^n$-grading:
\begin{align}\label{grading}
\deg x_{i}=\ee_i,\quad \deg y_{i}=-\mathbf{b}_i,
\end{align}
where $\ee_i$ is the $i$th canonical basis of $\ZZ^n$ and $\mathbf{b}_i$ is the $i$th column vector of $B$.
\end{theorem}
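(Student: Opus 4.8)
The plan is to prove homogeneity by induction on the distance $d(t_0,t)$ in $\TT_n$, strengthening the statement so that the induction carries. For a seed $\Sigma_t=(\xx_t,\yy_t,B_t)$ of $\Acal_\bullet(B;t_0)$, set $\hat{y}_{j;t}:=y_{j;t}\prod_{i=1}^{n} x_{i;t}^{b_{ij;t}}\in\Fcal$. I would prove simultaneously, for every $t\in\TT_n$: (a) each cluster variable $x_{i;t}$ is homogeneous for the $\ZZ^n$-grading with $\deg x_i=\ee_i$ and $\deg y_j=-\bb_j$; and (b) each $\hat{y}_{j;t}$ is homogeneous of degree $\mathbf{0}$. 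Part (a) is the theorem, while part (b) is precisely what keeps the induction alive, because the combination $1+\hat{y}_{k;t}$ is what appears in the exchange relation \eqref{eq:x-mutation} after the obvious simplification.

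The one non-formal ingredient I would isolate first is the mutation rule for the $\hat{y}$-variables: along an edge $t\xrightarrow{\ k\ }t'$ one has $\hat{y}_{k;t'}=\hat{y}_{k;t}^{-1}$ and, for $j\ne k$, $\hat{y}_{j;t'}=\hat{y}_{j;t}\,\hat{y}_{k;t}^{[b_{kj;t}]_+}\,(1+\hat{y}_{k;t})^{-b_{kj;t}}$. This is a purely algebraic identity in $\Fcal$ — no semifield operation and no sign-coherence is involved. The case $j=k$ is immediate from $b_{kk;t}=0$, from $b_{ik;t'}=-b_{ik;t}$ for all $i$, and from $x_{i;t'}=x_{i;t}$ for $i\ne k$; the case $j\ne k$ is obtained by substituting the coefficient mutation, the matrix mutation, and the exchange relation \eqref{eq:x-mutation} into the definition of $\hat{y}_{j;t'}$ and simplifying. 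I expect this $j\ne k$ computation to be the main — though entirely routine — piece of work; it is classical (see \cite{fziv}). Worth noting: since it needs neither the Laurent phenomenon nor Theorem~\ref{thm:signs-ci}, the whole theorem is independent of sign-coherence, which enters the theory only later, when one wants the degrees $\gg_{i;t}:=\deg x_{i;t}$ (the $g$-vectors introduced next) to satisfy a clean recursion.

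Granting the rule, the induction is short. For the base case $t=t_0$: $x_i$ is homogeneous of degree $\ee_i$, and $\hat{y}_j=y_j\prod_i x_i^{b_{ij}}$ has degree $-\bb_j+\sum_i b_{ij}\ee_i=-\bb_j+\bb_j=\mathbf{0}$. For the inductive step along $t\xrightarrow{\ k\ }t'$, assuming (a) and (b) at $t$: the variables $x_{j;t'}=x_{j;t}$ for $j\ne k$ are homogeneous by hypothesis; the rule gives $\hat{y}_{k;t'}=\hat{y}_{k;t}^{-1}$, homogeneous of degree $\mathbf{0}$, and since $\deg\hat{y}_{k;t}=\mathbf{0}$ the element $1+\hat{y}_{k;t}$ is homogeneous of degree $\mathbf{0}$, whence $\hat{y}_{j;t'}=\hat{y}_{j;t}\,\hat{y}_{k;t}^{[b_{kj;t}]_+}\,(1+\hat{y}_{k;t})^{-b_{kj;t}}$ has degree $\mathbf{0}$ for $j\ne k$ as well. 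It remains to treat $x_{k;t'}$: writing $b_{ik;t}=[b_{ik;t}]_+-[-b_{ik;t}]_+$ one factors
\[
y_{k;t}\prod_{i=1}^{n} x_{i;t}^{[b_{ik;t}]_+}=\Bigl(y_{k;t}\prod_{i=1}^{n} x_{i;t}^{b_{ik;t}}\Bigr)\prod_{i=1}^{n} x_{i;t}^{[-b_{ik;t}]_+}=\hat{y}_{k;t}\prod_{i=1}^{n} x_{i;t}^{[-b_{ik;t}]_+},
\]
so that the exchange relation \eqref{eq:x-mutation} reads
\[
x_{k;t'}=\frac{(1+\hat{y}_{k;t})\prod_{i=1}^{n} x_{i;t}^{[-b_{ik;t}]_+}}{(y_{k;t}\oplus 1)\,x_{k;t}}.
\]
On the right, $1+\hat{y}_{k;t}$ is homogeneous of degree $\mathbf{0}$ by (b), $\prod_i x_{i;t}^{[-b_{ik;t}]_+}$ is a monomial in the homogeneous $x_{i;t}$, the coefficient $y_{k;t}\oplus 1=\prod_i y_i^{\min(c_{ik;t},0)}$ is a Laurent monomial in $y_1,\dots,y_n$ (here one uses $\PP=\text{Trop}(y_1,\dots,y_n)$), and $x_{k;t}$ is homogeneous by hypothesis; hence $x_{k;t'}$ is homogeneous, and the induction closes. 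The only real obstacle in the whole argument is thus the $\hat{y}$-mutation identity of the second paragraph; everything else is bookkeeping.
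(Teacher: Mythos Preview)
Your argument is correct and is essentially the proof given in the original reference \cite{fziv}; the present paper does not supply its own proof of this statement but simply cites it as \cite{fziv}*{Proposition 6.1}. The $\hat y$-mutation identity you isolate is exactly \cite{fziv}*{Proposition 3.9}, and the induction you run on $d(t_0,t)$, carrying along $\deg\hat y_{j;t}=\mathbf{0}$, is the argument of \cite{fziv}*{Proposition 6.1}.
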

\begin{definition}[$g$-vector and $G$-matrix]
Retain the notations in Theorem \ref{thmfziv}. The degree $\deg x_{k;t}$ of $x_{k;t}$ under the $\ZZ^n$-grading \eqref{grading} is called the \emph{$g$-vector} of $x_{k;t}$ and we denote it by $$\gg_{k;t}^{B;t_0}=\gg_{k;t}=\deg x_{k;t}.$$ The matrix $G_t^{B;t_0}=(\gg_{1;t},\dots,\gg_{n;t})$ is called a \emph{$G$-matrix} of $B$.
\end{definition}
\begin{remark}
The $g$-vectors are the same as those defined by the following recursion: For any $j\in\{1,\dots,n\}$,
\begin{align*}
\gg_{j;t_0}=\ee_j\quad \text{(canonical basis)},
\end{align*}
and for any $\begin{xy}(0,0)*+{t}="A",(10,0)*+{t'}="B",\ar@{-}^k"A";"B" \end{xy}$,
\begin{align}\label{g-recursion}
\gg_{j;{t'}}&=\begin{cases}
\gg_{j;t} \ \ & \text{if } j\neq k;\\
-\gg_{k;t}+\mathop\sum\limits_{i=1}^{n}[b_{ik;t}]_+\gg_{i;t}-\mathop\sum\limits_{i=1}^{n}[c_{ik;t}]_+\mathbf{b}_j &\text{if } j=k
\end{cases}
\end{align}
(see \cite{fziv}). As the recursion formula depends on exchange matrices,
we can regard $g$-vectors as vectors associated with vertices of $\TT_n$.
\end{remark}

According to the definition of $g$-vectors, the corresponding $g$-vector for each cluster variable can be determined in a cluster algebra with principal coefficients, but according to Proposition \ref{proindepen}, the same can be performed with coefficients that are not the principal ones.
Define the $g$-vectors and $G$-matrices associated with cluster variables and non-labeled clusters similarly to $c$-vectors and $C$-matrices.
\begin{definition}[Non-labeled version of $g$-vector and $G$-matrix]\label{non-labeled-g}
For any cluster variable $x$ in $\Acal(B;t_0)$, when $x=x_{i;t}$, we denote
\[\gg^{B;t_0}_{x}:=\gg_{i;t}^{B;t_0},\]
and refer to it as the \emph{$g$-vector associated with $x$}.
Furthermore, for any non-labeled cluster $[\xx]$ in $\Acal(B;t_0)$, when $[\xx]=[\xx_t]$, we denote
\[G^{B;t_0}_{[\xx]}:=\{\gg_{i;t}^{B;t_0}\mid 1\leq i \leq n\}\]
and refer to it as the \emph{$G$-matrix associated with $[\xx]$}.
\end{definition}

The following theorem is an important property about the relation between $C$-matrices and $G$-matrices:

\begin{theorem}[\cite{nz}]\label{c-g-duality}
For any exchange matrix $B$ and $t_0,t\in \TT_n$, we have
\[((G_t^{B;t_0})^\mathrm{T})^{-1}=SC_t^{B;t_0}S^{-1}.\]
\end{theorem}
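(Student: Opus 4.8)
The plan is to proceed by induction on the distance from $t_0$ in $\TT_n$, using the recursion formulas for $C$- and $G$-matrices recalled in the excerpt. The base case $t = t_0$ is immediate: $C_{t_0}^{B;t_0} = G_{t_0}^{B;t_0} = I_n$, and $((I_n)^{\mathrm T})^{-1} = I_n = S I_n S^{-1}$. For the inductive step, suppose the identity holds at $t$ and let $\begin{xy}(0,0)*+{t}="A",(10,0)*+{t'}="B",\ar@{-}^k"A";"B"\end{xy}$ be an edge of $\TT_n$. I would write $\hat G := G_{t'}^{B;t_0}$, $G := G_t^{B;t_0}$, $\hat C := C_{t'}^{B;t_0}$, $C := C_t^{B;t_0}$, $B_t =: (b_{ij;t})$, and express both $\hat G$ and $\hat C$ in terms of $G$, $C$, and $B_t$ via \eqref{g-recursion} and the $c$-vector recursion in Remark \ref{c-recursion-remark}. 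Concretely, both recursions have the shape "negate the $k$th column, then add to it a correction built from the other columns," which can be packaged as right-multiplication by an elementary-type matrix plus a rank-one correction term; the cleanest bookkeeping is to introduce the two $n\times n$ matrices
\[
E_{k,+} := I_n + \text{(column $k$ replaced appropriately using $[b_{ik;t}]_+$)},\qquad
E_{k,-} := I_n + \text{(using $[-b_{kj;t}]_+$)},
\]
so that $\hat C = C\,E'$ for a suitable sign-dependent matrix $E'$ and $\hat G = G\,E'' + (\text{rank-one }B\text{-correction})$.

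Rather than re-deriving these matrix identities from scratch, the efficient route is to invoke the $g$-vector/$c$-vector compatibility already implicit in the recursion formulas together with the identity $C^{-B^{\mathrm T};t_0}_t = S C^{B;t_0}_t S^{-1}$ cited from \cite{nz}*{(2.7)} in the Remark after the exchange quiver definition. Indeed, the $G$-matrix recursion \eqref{g-recursion} for $B$ is formally the same as the $C$-matrix recursion for $-B^{\mathrm T}$ after transposing and inverting, because the mutation rule for $\tilde B_{t_0} = \begin{bmatrix}B\\ I_n\end{bmatrix}$ governs $C$, while the tropical dual governs $G$; so I would set up the induction to prove simultaneously that $(G_t^{B;t_0})^{\mathrm T}$ and $(C_t^{-B^{\mathrm T};t_0})^{-1}$ obey the same recursion with the same initial value, hence are equal, and then combine with \cite{nz}*{(2.7)} to get $((G_t^{B;t_0})^{\mathrm T})^{-1} = C_t^{-B^{\mathrm T};t_0} = S C_t^{B;t_0} S^{-1}$.

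The main obstacle is the bookkeeping in the inductive step: one must check that the rank-one term $-\sum_i [c_{ik;t}]_+ \mathbf b_j$ appearing in \eqref{g-recursion} is exactly what is needed to match the transpose-inverse of the $c$-vector recursion, and this requires using the sign-coherence theorem (Theorem \ref{thm:signs-ci}) to control whether $[c_{ik;t}]_+$ picks out the whole column $\cc_{k;t}$ or vanishes — i.e.\ one splits into the two cases "$\cc_{k;t}\ge 0$" and "$\cc_{k;t}\le 0$" and verifies the matrix identity in each. The verification also uses the standard fact that the pair $(B_t, C_t)$ satisfies the tropical sign rule $[b_{ik;t}]_+ - b_{ik;t}[c_{kk;t}]_+$-type cancellations; in the case $\cc_{k;t}\ge 0$ one gets $[c_{ik;t}]_+ = c_{ik;t}$ and the correction is genuinely present, while in the case $\cc_{k;t}\le 0$ it drops out and the $G$-recursion reduces to the "same shape as $C$" form directly. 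Once both cases are dispatched, the induction closes and the theorem follows; alternatively, if one prefers, the whole argument can be replaced by citing \cite{nz} directly, but the inductive proof above makes the excerpt self-contained modulo \cite{nz}*{(2.7)}.
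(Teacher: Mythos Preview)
The paper does not give a proof of this statement at all: Theorem~\ref{c-g-duality} is quoted from Nakanishi--Zelevinsky \cite{nz} and used as a black box, so there is no ``paper's own proof'' to compare your proposal against.

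That said, your sketch is essentially the argument Nakanishi--Zelevinsky actually give: induction along $\TT_n$, with the inductive step split into the two cases $\cc_{k;t}\ge 0$ and $\cc_{k;t}\le 0$ using sign-coherence (Theorem~\ref{thm:signs-ci}), and a direct check that the $G$-recursion \eqref{g-recursion} and the $C$-recursion of Remark~\ref{c-recursion-remark} are transpose-inverse to one another after conjugation by $S$. One small point: routing the argument through the auxiliary identity $C_t^{-B^{\mathrm T};t_0}=SC_t^{B;t_0}S^{-1}$ from \cite{nz}*{(2.7)} does not really buy you anything, because that identity is established in \cite{nz} by the \emph{same} induction and the \emph{same} case split on the sign of $\cc_{k;t}$; so your ``self-contained modulo \cite{nz}*{(2.7)}'' is not a genuine reduction. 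The cleaner route is to verify directly that $(G_t^{B;t_0})^{\mathrm T}\,S\,C_t^{B;t_0}=S$ is preserved under mutation in direction $k$, using that $S$ is a common skew-symmetrizer for every $B_t$ (so $s_ib_{ik;t}=-s_kb_{ki;t}$), which is exactly what makes the $[b_{ik;t}]_+$ appearing in the $G$-recursion line up with the $[b_{kj;t}]_+$ in the $C$-recursion once you fix the sign of $\cc_{k;t}$. Your description of the ``rank-one correction'' and the matrices $E_{k,\pm}$ is on the right track but vague; in an actual write-up you would want to make those matrices explicit and carry out the two cases.
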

\subsection{Quiver of $c$-clusters}
We define the quiver of $c$-clusters. The set of almost positive roots is denoted by $\Phi_{\geq -1}$, which is the union of the set $-\Delta$ of negative simple roots and the set $\Phi^+$ of positive roots. We define a map
\[\sigma_i\colon \Phi_{\geq -1} \to \Phi_{\geq -1},\quad \sigma_i(\alpha)=\begin{cases} \alpha \quad&\text{if $\alpha \in -\Delta\setminus \{-\alpha_i\}$}\\ s_i(\alpha)&\text{otherwise.}\end{cases}\]
This map is a self-bijection of $\apPhi$. For any Coxeter element $c=c_1\dots,c_n$, we define a self-bijection $\tau_c$ of $\apPhi$ as $\tau_c=\sigma_{c_1}\circ\sigma_{c_{2}}\circ\cdots\circ\sigma_{c_n}$. The following proposition is necessary to define the degree of $c$-compatibility.
\begin{proposition}[\cite{re-sp}*{Proposition 8.2}]
For any almost positive root $\alpha$ and Coxeter element $c$, there exists a non-negative integer $R$ such that $\tau_c^{-R}(\alpha)$ is a negative simple root.
\end{proposition}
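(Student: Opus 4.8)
The statement to prove is: for every almost positive root $\alpha \in \apPhi$ and every Coxeter element $c$, there exists a non-negative integer $R$ with $\tau_c^{-R}(\alpha) \in -\Delta$. The natural strategy is to track the \emph{orbits} of the permutation $\tau_c$ on the finite set $\apPhi$ and show that every orbit contains a negative simple root. Since $\apPhi$ is finite (the root system $\Phi$ is of finite type, as we are in the finite-type setting throughout), $\tau_c$ is a permutation of finite order, so each orbit is a cycle; the claim is then equivalent to saying each $\tau_c$-cycle meets $-\Delta$.

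First I would recall the combinatorics of the building blocks $\sigma_i$ and the well-known fact (from Fomin–Zelevinsky's $Y$-system/cluster combinatorics, e.g.\ \cite{fzii}, or Reading–Speyer \cite{re-sp}) that $\tau_c$ realizes the rotation of the ``snake'' / the bijection implementing the numbers game on $\apPhi$. Concretely, write $c = c_1 c_2 \cdots c_n$ with the $c_i$ a chosen ordering of the simple reflections. The key structural input is the following: decompose $\Phi_{\geq -1}$ using the two ``half'' sign functions determined by $c$ — write $c = c_+ c_-$ where $c_+$ is a product of sources and $c_-$ a product of sinks in the corresponding orientation. One shows (this is essentially \cite{re-sp}*{Section 8}, building on \cite{fzii}) that $\tau_c$ is conjugate, via the natural identification of $\apPhi$ with the set of cluster variables / c-vectors, to the Auslander–Reiten translation on the cluster category, whose orbits are exactly the $\tau$-orbits and each of which contains a shifted projective, i.e.\ an object corresponding to a negative simple root.

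Rather than invoking the cluster category, the cleanest self-contained route is the original Fomin–Zelevinsky argument: (i) define, for $\alpha \in \apPhi$, its compatibility-degree vector or simply track the coordinate of $\alpha$ in the root lattice and show $\tau_c$ decreases a suitable height-like statistic until a negative simple root is reached; more robustly, (ii) use the known description of $\tau_c$-orbits in terms of the \emph{Coxeter number} $h$: for each orbit one shows $\tau_c^{h+2}$ or a similar power fixes things and the orbit has a negative simple root because the forward orbit $\{\tau_c^k(-\alpha_i) : k \geq 0\}$ of each negative simple root eventually returns to $-\Delta$, and the union of these forward orbits exhausts $\apPhi$ (this exhaustion is the content of \cite{fzii}*{Theorem 2.6 and Proposition 3.5} type statements). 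Thus every $\alpha$ lies on the forward $\tau_c$-orbit of some $-\alpha_i$, which is precisely the assertion with $R$ the number of steps needed.

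The main obstacle is establishing the \emph{exhaustion}: that $\apPhi = \bigcup_{i} \{\tau_c^{-k}(\alpha) \text{ reaches } -\alpha_i\}$, equivalently that no $\tau_c$-orbit is disjoint from $-\Delta$. For the bipartite Coxeter element this is the classical alternating-orbit computation of Fomin–Zelevinsky; for a general Coxeter element $c$ it follows from the fact that all Coxeter elements are conjugate and $\tau_c$ for different $c$ are related by the bijections $\sigma_i$, together with the observation that $\tau_c$ is, up to the identification $\apPhi \leftrightarrow$ c-vectors at the bipartite seed, conjugate to $\tau_{c'}$ — so it suffices to verify one case. I would present the proof by first reducing to showing each orbit meets $-\Delta$, then either cite \cite{re-sp}*{Proposition 8.2} machinery directly or reproduce the bipartite computation and transport it along the conjugation $\sigma_{c_1}\cdots\sigma_{c_k}$ relating $\tau_c$ to the bipartite $\tau$. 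The remaining steps (finiteness of order, existence of minimal such $R$) are then immediate.
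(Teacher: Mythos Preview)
The paper does not give its own proof of this proposition: it is stated with the citation \cite{re-sp}*{Proposition 8.2} and used immediately afterwards to make the $c$-compatibility degree well-defined. There is therefore no argument in the paper to compare your proposal against; the result is imported wholesale from Reading--Speyer.

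As for the proposal itself: the reduction ``every $\tau_c$-orbit meets $-\Delta$'' is correct and is indeed the heart of the matter, and the strategies you list (bipartite Fomin--Zelevinsky computation plus transport, or the cluster-categorical identification of $\tau_c$ with the AR-translate) are the standard ones. However, what you have written is a survey of approaches rather than a proof, and the transport step is more delicate than you suggest. You note that $\tau_{c'} = \sigma_{i_1}^{-1}\tau_c\,\sigma_{i_1}$ when $c' = s_{i_2}\cdots s_{i_n}s_{i_1}$, so the orbit structures are conjugate; but $\sigma_{i_1}$ does \emph{not} preserve $-\Delta$ (it swaps $-\alpha_{i_1}$ and $\alpha_{i_1}$), so ``each $\tau_c$-orbit meets $-\Delta$'' does not transfer formally to $\tau_{c'}$ by conjugation alone. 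One must additionally check that the single displaced orbit---the $\tau_{c'}$-orbit through $\alpha_{i_1}$---still meets $-\Delta$, which requires a short extra argument (e.g.\ computing $\tau_{c'}^{-1}(\alpha_{i_1})$ directly, or tracking where $-\alpha_{i_1}$ goes). This gap is fixable, but as written the proposal asserts the exhaustion without actually establishing it, and at one point even offers to ``cite \cite{re-sp}*{Proposition 8.2} machinery directly,'' which is the statement you are trying to prove.
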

\begin{definition}[$c$-compatible]
The \emph{$c$-compatibility degree} on the set of almost positive roots is the unique function
\[\apPhi \times \apPhi \to \ZZ_{\geq 0},\quad (\alpha, \beta) \mapsto (\alpha \parallel_c \beta)\]
characterized by the following two properties:
\begin{itemize}
    \item[(i)] $(-\alpha_i \parallel_c \beta) = \max\{0, b_i\}$, for all $i \in \{1,\dots,n\}$ and $\beta = \sum b_i\alpha_i \in \apPhi$
    \item[(ii)]$(\alpha \parallel_c \beta) = (\tau_c (\alpha) \parallel_c \tau_c(\beta))$, for all $\alpha,\beta\in\apPhi$.
\end{itemize}
Furthermore, if $(\alpha\parallel_c \beta)=(\beta\parallel_c \alpha)=0$, then we say that $\alpha$ and $\beta$ is \emph{$c$-compatible}.
\end{definition}
\begin{remark}
The degree of the $c$-compatibility is well-defined. It is proven by \cite{cp}*{Theorem 3.1}.
\end{remark}
A \emph{$c$-compatible subset} of $\apPhi$ is a set of pairwise $c$-compatible almost positive roots, and \emph{$c$-cluster} is the maximal $c$-compatible subset.
\begin{theorem}[\cites{fzii,ce-la-st}]\label{cluster-cardinality}
Let $\Phi$ be a Dynkin-type root system of rank $n$. We fix a Coxeter element $c$ of $W(\Phi)$.
\begin{itemize}
    \item [(1)] Each $c$-cluster of $\apPhi$ has a cardinality of $n$,
    \item [(2)] For any $c$-cluster $C$ and $\alpha \in C$, there exists a unique $\beta\in \apPhi$ such that $C\cup\{\beta\}\setminus\{\alpha\}$ is a $c$-cluster.
\end{itemize}
\end{theorem}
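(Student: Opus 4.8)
The plan is to regard both statements as properties of the \emph{$c$-cluster complex}, the abstract simplicial complex on vertex set $\apPhi$ whose faces are the $c$-compatible subsets: assertion (1) says it is pure of dimension $n-1$, and assertion (2) says it is a pseudomanifold, every codimension-one face lying in exactly two facets. I would prove these by induction on the rank $n$. The base case $n=1$ is immediate: then $c=s_1$, the map $\tau_c$ interchanges $\alpha_1$ and $-\alpha_1$, so property (ii) gives $(\alpha_1\parallel_c\alpha_1)=(-\alpha_1\parallel_c-\alpha_1)$, which is $0$ by property (i); since $(-\alpha_1\parallel_c\alpha_1)=1$, the $c$-clusters are exactly $\{-\alpha_1\}$ and $\{\alpha_1\}$.

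For the inductive step I would first record three structural facts. (a) $-\Delta$ is a $c$-cluster: property (i) gives $(-\alpha_i\parallel_c-\alpha_j)=\max\{0,-\delta_{ij}\}=0$, so $-\Delta$ is $c$-compatible, and if $\beta=\sum_j b_j\alpha_j\in\apPhi$ is $c$-compatible with all of $-\Delta$ then $\max\{0,b_i\}=(-\alpha_i\parallel_c\beta)=0$ for every $i$, so no coefficient $b_i$ is positive, which is impossible unless $\beta\in-\Delta$. (b) By property (ii) the bijection $\tau_c$ of $\apPhi$ preserves $c$-compatibility, hence permutes the set of $c$-clusters, and by \cite{re-sp}*{Proposition 8.2} every root rotates into $-\Delta$ under a suitable power of $\tau_c^{-1}$; consequently, for any $c$-cluster $C$ and any $\alpha\in C$, after replacing $C$ by $\tau_c^{-R}(C)$ we may assume $\alpha$ is a negative simple root $-\alpha_i$. (c) The link of $-\alpha_i$ in the $c$-cluster complex of $\Phi$ is isomorphic to the $c'$-cluster complex of the parabolic root subsystem $\Phi'$ spanned by $\{\alpha_j:j\neq i\}$, where $c'$ is the Coxeter element of $W(\Phi')$ obtained by deleting $s_i$ from a reduced word for $c$: indeed $(-\alpha_i\parallel_c\beta)=0$ with $\beta\neq-\alpha_i$ forces the $\alpha_i$-coefficient of $\beta$ to vanish, i.e.\ $\beta\in\apPhi'$, and the $c$-compatibility degree of $\Phi$ restricts to the $c'$-compatibility degree of $\Phi'$ on $\apPhi'$. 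Granting (c), the inductive hypothesis makes the $c'$-cluster complex of $\Phi'$ pure of dimension $n-2$, so every facet of the $c$-cluster complex through $-\alpha_i$ has $n$ vertices; combined with (b), this proves assertion (1) for every $c$-cluster.

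Assertion (2) is the harder half, because the relevant ``link'' is that of the codimension-one face $C\setminus\{\alpha\}$ rather than of a single vertex. After rotating as in (b) so that $\alpha=-\alpha_i$, the face $C\setminus\{-\alpha_i\}$ is (by (c)) a $c'$-cluster of $\Phi'$, hence a maximal $c'$-compatible subset of $\apPhi'$; so any $\gamma$ completing it to a facet of the $c$-cluster complex of $\Phi$ must lie outside $\apPhi'$, and the task is to show that apart from $\gamma=-\alpha_i$ there is exactly one such $\gamma$. This requires a direct analysis, via the rotation $\tau_c$, of which almost positive roots outside $\apPhi'$ are $c$-compatible with a given $c'$-cluster of $\Phi'$. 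A cleaner alternative that settles both assertions at once is to identify the $c$-clusters with the facets of a suitable subword complex of $W(\Phi)$ and invoke that subword complexes are shellable balls or spheres, hence pure pseudomanifolds; this is the route of \cite{ce-la-st}, and for a bipartite Coxeter element the entire argument is already in \cite{fzii}.

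The main obstacle is thus the pair of ``restriction to a parabolic'' statements: that $(\cdot\parallel_c\cdot)$ restricts to $(\cdot\parallel_{c'}\cdot)$ on $\apPhi'$ (for the link identification in (c)), and the codimension-one exchange count needed for assertion (2). The delicate point is that $\tau_c$ does not restrict to $\tau_{c'}$ on $\apPhi'$, so neither statement can be read off the defining recursions directly; one instead verifies the two characterizing properties of $(\cdot\parallel_{c'}\cdot)$ for the restricted degree and appeals to the uniqueness in \cite{cp}*{Theorem 3.1}, and proves the exchange count by tracking $\tau_c$-orbits. The subword-complex description sidesteps all of this, at the cost of importing the topology of subword complexes.
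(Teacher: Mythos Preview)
The paper does not prove this theorem; it is quoted from \cite{fzii} (the bipartite case) and \cite{ce-la-st} (arbitrary Coxeter element) and used as a black box. There is therefore no proof in the paper to compare your attempt against.

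That said, your sketch accurately reflects how the cited sources establish the result. The inductive/parabolic-link strategy is essentially Fomin--Zelevinsky's argument in \cite{fzii}, and you correctly identify that carrying it over to a general $c$ hinges on the restriction statement $(\cdot\parallel_c\cdot)\big|_{\apPhi'}=(\cdot\parallel_{c'}\cdot)$, which cannot be read off the defining recursion because $\tau_c$ does not restrict to $\tau_{c'}$. You are also right that assertion (2) is the more delicate half and that the subword-complex realization of \cite{ce-la-st} delivers both purity and the pseudomanifold property simultaneously.

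One point in your link identification (c) deserves to be made explicit. To conclude that the link of $-\alpha_i$ has vertex set exactly $\apPhi'\setminus\{-\alpha_i\}$ you need two inclusions. You argue one: if $\beta$ is $c$-compatible with $-\alpha_i$ then $(-\alpha_i\parallel_c\beta)=0$, hence the $\alpha_i$-coefficient of $\beta$ is nonpositive, hence $\beta\in\apPhi'$. The converse, however, requires that every $\beta\in\apPhi'$ satisfy $(\beta\parallel_c-\alpha_i)=0$; property (i) only gives $(-\alpha_i\parallel_c\beta)=0$ directly, and the symmetric vanishing (equivalently, that $(\alpha\parallel_c\gamma)=0\Leftrightarrow(\gamma\parallel_c\alpha)=0$) is not automatic from the axioms. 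This is precisely part of the parabolic-restriction package you already flag as the main obstacle, so your assessment of where the genuine work lies is correct; the proposal is an honest outline with the hard steps located rather than a self-contained proof.
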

We will define a quiver whose vertex set is the set of $c$-clusters. For $\alpha\in \apPhi$, we set
\[R_c(\alpha)=\min\{n\in\ZZ_{\geq 0}\mid \tau_c^{-n}(\alpha)\in -\Delta\}.\]
\begin{definition}[Quiver of $c$-clusters]\label{c-cluster-lattice}
Let $\Phi$ be a Dynkin-type root system of rank $n$. We fix a Coxeter element $c$ of $W(\Phi)$. We consider the following quiver:
\begin{itemize}
    \item The vertex set of $\overrightarrow{\Gamma}(\Phi_{\geq -1},c)$ is the set of $c$-clusters;
    \item For $c$-clusters $C_1$ and $C_2$, if there exists $C'$ such that $C_1=C'\cup \{\alpha\}, C_2=C'\cup \{\beta\}$ and $R_c(\alpha)>R_c(\beta)$, then $C_1\to C_2$ in $\overrightarrow{\Gamma}(\Phi_{\geq -1},c)$. We refer to it as the \emph{quiver of $c$-clusters}.
\end{itemize}
\end{definition}

\begin{remark}
In Definition \ref{c-cluster-lattice}, it is impossible for $R_c(\alpha) = R_c(\beta)$. Indeed, assuming $R_c(\alpha) = R_c(\beta)$, we have
\[(\alpha\parallel_c\beta)=(\tau_c^{-R_c(\alpha)}(\alpha)\parallel_c\tau_c^{-R_c(\alpha)}(\beta))=(-\alpha_i\parallel_c -\alpha_j)=0\]
and $C'\cup\{\alpha,\beta\}$ is a $c$-compatible subset. This conflicts with Theorem \ref{cluster-cardinality} (1).
\end{remark}

\begin{example}[Type $A_2$]\label{c-cluster-lattice-A2}
Let $\Phi$ be a root system of $A_2$ type, and $c=s_2s_1$. Then we have
\[\Phi_{\geq -1}=\{\alpha_1,\alpha_2,\alpha_1+\alpha_2, -\alpha_1,-\alpha_2\}.\]
The number of orbits of $\tau_c^{-1}$ is 1, indeed, we have
\[
-\alpha_1\overset{\tau_c^{-1}}{\mapsto}\alpha_1\mapsto \alpha_2\mapsto-\alpha_2\mapsto \alpha_1+\alpha_2\mapsto-\alpha_1\mapsto\cdots
\]
The quiver of $c$-clusters is as follows:
 \[\begin{tikzpicture}
      \node (1) at (0,5) {$\{\alpha_1,\alpha_1+\alpha_2\}$};
      \node(2) at (-4,4.25) {$\{\alpha_2,\alpha_1+\alpha_2\}$};
      \node (3) at (4,3.5) {$\{\alpha_1,-\alpha_2\}$};
      \node (4) at (-4,2.75) {$\{-\alpha_1,\alpha_2\}$};
      \node (5) at (0,2) {$\{-\alpha_1,-\alpha_2\}.$};
      \draw[->] (1)--(2);
      \draw[->] (1)--(3);
      \draw[->] (2)--(4);
      \draw[->] (3)--(5);
      \draw[->] (4)--(5);
    \end{tikzpicture}\]
\end{example}
We can see that this quiver is anti-isomorphic to the one in Example \ref{ex-exchange-quiver-A2}.

\subsection{Support $\tau$-tilting quiver}
In this subsection, we recall $\tau$-tilting theory \cite{air}. For more information, refer to survey \cite{ir}. Let $K$ be a field. We fix a finite dimensional basic $K$-algebra $A$. The category of finitely generated right $A$-modules is denoted by $\mod A$ and the Auslander-Reiten translation in $\mod A$ is denoted by $\tau$.

For a full subcategory $\mathcal C$ of $\mod A$, we define the following subcategories.
\begin{itemize}
\item $\add\mathcal C$ is the additive closure of $\mathcal C$;
\item $\Fac\mathcal C$ consists of the factor modules of objects in $\add\mathcal C$;
\item $\mathcal C^\bot$ is the full subcategories given by
\begin{eqnarray}
 \mathcal C^\bot&:=&\{X\in \mod A| \Hom_{A}(\mathcal C, X)=0 \},\nonumber
\end{eqnarray}
\end{itemize}
Given a module $M\in\mod A$, we state that:
\begin{itemize}
    \item $M$ is a \emph{brick} if $\Hom_A(M,M)$ is a division ring;
    \item $M$ is \emph{$\tau$-rigid} if $\Hom_A(M,\tau M)=0$; and
    \item $|M|$ denotes the number of non-isomorphic indecomposable direct summands of $M$.
\end{itemize}

\begin{definition}[$\tau$-rigid and $\tau$-tilting pair]
Let $M$ be a module in $\mod A$ and $P$ be a projective module in $\mod A$. The pair $(M,P)$ is referred to as \emph{$\tau$-rigid} if $M$ is $\tau$-rigid and $\Hom_A(P,M)=0$.
It is referred to as \emph{$\tau$-tilting} (resp., \emph{almost $\tau$-tilting}) if it satisfies $|M|+|P|=n=|A|$ (resp., $|M|+|P|=n-1=|A|-1$). We state that $A$ is \emph{$\tau$-tilting-finite} if $\mod A$ has finitely large number of isomorphism classes of indecomposable $\tau$-rigid modules.
\end{definition}

Note that a module $M\in \mod A$ is called a \emph{support $\tau$-tilting module} if there exists a basic projective module $P$ such that $(M,P)$ is a $\tau$-tilting pair, or equivalently, if there exists an idempotent $e$ such that $M$ is a $\tau$-tilting $A/AeA$-module. In fact, such a basic projective module $P$ (or $eA$) is unique up to isomorphisms \cite{air}*{Proposition 2.3}.
A $\tau$-rigid pair $(M,P)$ is \emph{indecomposable} if $M\oplus P$ is indecomposable in $\mod A$, and it is \emph{basic} if both $M$ and $P$ are basic in $\mod A$. In the sequel, we always assume that $\tau$-rigid pairs and support $\tau$-tilting modules are basic and consider them up to isomorphisms.

\begin{definition}[Support $\tau$-tilting quiver]
The set of isomorphism classes of $\tau$-tilting pairs is denoted by $\stautilt A$. For $(M,P),(M',P')\in \stautilt A$, we set the arrow $(M,P)\to(M',P')$ if $\Fac M\supsetneq \Fac M'$ and there is no $(M'',P'')$ such that $\Fac M\supsetneq\Fac M''\supsetneq \Fac M'$. This quiver is denoted by $\overrightarrow{\Gamma}(\stautilt A)$ and is called the \emph{support $\tau$-tilting quiver of $A$}. 
\end{definition}

The key characterizations for two $\tau$-tilting pairs connected by an arrow on $\overrightarrow{\Gamma}(\stautilt A)$ are given below.

\begin{theorem}[\cite{air}*{Theorem 2.18}] \label{thmair}
Any basic almost $\tau$-tilting pair $(U,Q)$ in $\mod A$ is a direct summand of exactly two basic $\tau$-tilting pairs $(M,P)$ and $(M^\prime,P^\prime)$ in $\mod A$. Moreover, we have $$\{\Fac M,\Fac M^\prime\}=\{\Fac U,\prescript{\bot}{}{(\tau U)}\cap Q^\bot\}.$$
Particularly, either $\Fac M\subsetneq \Fac M^\prime$ or
$\Fac M^\prime\subsetneq\Fac M$ holds.
\end{theorem}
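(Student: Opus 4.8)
The plan is to recast the statement in terms of functorially finite torsion classes and then analyse a one-step $\tau$-tilting mutation. Throughout I would freely use the standard facts of $\tau$-tilting theory: the map $M\mapsto\Fac M$ is a bijection from basic support $\tau$-tilting pairs of $A$ to functorially finite torsion classes in $\mod A$; for $\tau$-rigid $U$ the subcategory $\Fac U$ is a functorially finite torsion class contained in ${}^{\bot}(\tau U)$; and for a support $\tau$-tilting pair $(M,P)$ one has $\Fac M={}^{\bot}(\tau M)\cap P^{\bot}$, with $\add M$ the subcategory of $\Ext$-projective objects of $\Fac M$. Set $\mathcal T_{0}:=\Fac U$ and $\mathcal T_{1}:={}^{\bot}(\tau U)\cap Q^{\bot}$. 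Since $Q$ is projective, $\Hom_{A}(Q,-)$ is exact, so $\Hom_{A}(Q,U)=0$ gives $\Hom_{A}(Q,\Fac U)=0$, i.e. $\mathcal T_{0}\subseteq Q^{\bot}$; together with $\mathcal T_{0}\subseteq{}^{\bot}(\tau U)$ this yields $\mathcal T_{0}\subseteq\mathcal T_{1}$. The goal then splits into: (a) a basic support $\tau$-tilting pair $(M,P)$ has $(U,Q)$ as a direct summand if and only if $\mathcal T_{0}\subseteq\Fac M\subseteq\mathcal T_{1}$; and (b) the only functorially finite torsion classes lying between $\mathcal T_{0}$ and $\mathcal T_{1}$ are these two themselves, and they are distinct.

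For (a), first suppose $(U,Q)\mid(M,P)$. Then $U\mid M$ gives $\mathcal T_{0}=\Fac U\subseteq\Fac M$, while $U\mid M$ and $Q\mid P$ give $\Fac M={}^{\bot}(\tau M)\cap P^{\bot}\subseteq{}^{\bot}(\tau U)\cap Q^{\bot}=\mathcal T_{1}$. Conversely, let $(M,P)$ realise a functorially finite torsion class $\mathcal T$ with $\mathcal T_{0}\subseteq\mathcal T\subseteq\mathcal T_{1}$. Then $U\in\mathcal T_{0}\subseteq\mathcal T=\Fac M$; moreover $\mathcal T\subseteq{}^{\bot}(\tau U)$, so $\Hom_{A}(N,\tau U)=0$ for every $N\in\mathcal T$, hence $\Ext^{1}_{A}(U,N)=0$ by the Auslander--Reiten formula, i.e. $U$ is $\Ext$-projective in $\Fac M$ and therefore $U\in\add M$. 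Likewise $\mathcal T\subseteq Q^{\bot}$ gives $\Hom_{A}(Q,\mathcal T)=0$, which puts $Q$ in the projective part $\add P$. Hence $(U,Q)\mid(M,P)$. Combining this with the torsion-class bijection, the basic $\tau$-tilting pairs having $(U,Q)$ as a summand correspond bijectively to the functorially finite torsion classes in the interval $[\mathcal T_{0},\mathcal T_{1}]$.

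Next I would produce two such pairs explicitly. One is the Bongartz completion $(M^{B},P^{B})$ of the $\tau$-rigid pair $(U,Q)$: this is a support $\tau$-tilting pair with $P^{B}=Q$, $M^{B}=U\oplus E_{B}$ for a single indecomposable $E_{B}\notin\add U$ (a length count, since $|U|+|Q|=n-1$), and $\Fac M^{B}={}^{\bot}(\tau U)\cap Q^{\bot}=\mathcal T_{1}$; in particular $\mathcal T_{1}$ is functorially finite. The other is obtained by a $\tau$-tilting mutation of $(M^{B},P^{B})$ at the summand $E_{B}$: take a minimal left $(\add U)$-approximation $E_{B}\xrightarrow{f}U'$, and set $M':=U\oplus\operatorname{coker}f$, $P':=Q$ if $f$ is a monomorphism, or $M':=U$, $P':=Q\oplus Y$ with $Y$ a suitable indecomposable projective if $f$ is not a monomorphism. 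One checks that $(M',P')$ is a basic $\tau$-rigid pair with $|M'|+|P'|=n$, hence a $\tau$-tilting pair by definition, that it contains $(U,Q)$ as a summand, and that $(M',P')\neq(M^{B},P^{B})$. This already gives at least two completions, so $[\mathcal T_{0},\mathcal T_{1}]$ has at least two elements; in particular $\mathcal T_{0}\neq\mathcal T_{1}$, and for any two completions $(M,P),(M',P')$ the torsion classes $\Fac M,\Fac M'$ are comparable with one strictly contained in the other, since both lie in $[\mathcal T_{0},\mathcal T_{1}]$.

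The remaining --- and, I expect, the hardest --- point is (b): there is no functorially finite torsion class \emph{strictly} between $\mathcal T_{0}$ and $\mathcal T_{1}$; equivalently, by (a), the only completions of $(U,Q)$ are $(M^{B},P^{B})$ and its mutation $(M',P')$. This is precisely the uniqueness part of $\tau$-tilting mutation, and it is here that the approximation-theoretic machinery does the real work. Given any completion $(M,P)$ of $(U,Q)$, a length count forces either $M=U\oplus X$ with $X$ indecomposable and $P=Q$, or $M=U$ and $P=Q\oplus Y$ with $Y$ indecomposable projective; in the first case one shows $X$ is recovered from $U$ as the cokernel of a minimal left $(\add U)$-approximation of some indecomposable, and in the second that $Y$ is likewise determined, so that $(M,P)$ must coincide with $(M^{B},P^{B})$ or with $(M',P')$. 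The delicate steps are the verification that these approximation constructions stay within $\tau$-rigid pairs --- checking the relevant $\Hom$- and $\tau$-vanishing --- and that the monomorphism / non-monomorphism dichotomy exhausts the possibilities. Granting this, $[\mathcal T_{0},\mathcal T_{1}]=\{\mathcal T_{0},\mathcal T_{1}\}$; pulling back through the bijection gives exactly the two pairs $(M,P),(M',P')$ with $\{\Fac M,\Fac M'\}=\{\Fac U,\ {}^{\bot}(\tau U)\cap Q^{\bot}\}$, which is the assertion.
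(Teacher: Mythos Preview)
The paper does not prove this statement: it is quoted verbatim as \cite{air}*{Theorem 2.18} and used as background for the definition of mutation. There is therefore no ``paper's own proof'' to compare your attempt against.

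That said, your outline is essentially the strategy of the original Adachi--Iyama--Reiten proof: translate completions of $(U,Q)$ into functorially finite torsion classes in the interval $[\Fac U,\ {}^{\bot}(\tau U)\cap Q^{\bot}]$, identify the endpoints with the co-Bongartz and Bongartz completions, and then show the interval has exactly two elements via the exchange sequence coming from a minimal left $(\add U)$-approximation. Your part (a) is correct and cleanly argued. You are also right that part (b) --- the uniqueness of the second completion --- is where the real work lies; in \cite{air} this is done by analysing the approximation sequence and showing any further completion would violate the description of $\Ext$-projectives in $\Fac U$.

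One point to tighten: you assert that the Bongartz completion satisfies $P^{B}=Q$ with a single new indecomposable summand landing in $M^{B}$, justified only by ``a length count''. The length count tells you one indecomposable is added, but not on which side; a priori the new summand could be projective. That $P^{B}=Q$ is a consequence of the explicit construction in \cite{air} (their Theorem 2.10), where the completing summand is produced as a module via an approximation, so you should invoke or reproduce that rather than leave it implicit. Similarly, in your construction of the second completion $(M',P')$, the claim that the result is $\tau$-rigid with the correct number of summands is exactly the content of the mutation theorem you are trying to prove, so at that stage you should carry out the $\Hom$- and $\tau$-vanishing checks rather than assert them.
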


\begin{definition}[Left and right mutation]
Retain the notations in Theorem \ref{thmair}. We consider the operation $(M,P)\mapsto(M^\prime,P^\prime)$ a \emph{mutation} of $(M,P)$. If $\Fac M\subsetneq \Fac M^\prime$ holds, we call $(M^\prime, P^\prime)$ a {\em right mutation} of $(M,P)$. If $\Fac M^\prime \subsetneq \Fac M$ holds, we call $(M^\prime, P^\prime)$ a {\em left mutation} of $(M,P)$.
\end{definition}

\begin{theorem}[\cite{air}*{Theorem 2.33}] \label{mutation-charactorization}
For $\tau$-tilting pairs $(M,P)$ and $(M',P')$ the following conditions are equivalent.
\begin{itemize}
    \item[(i)] $(M,P)$ is a right mutation of $(M',P')$,
    \item[(ii)] $(M',P')$ is a left mutation of $(M,P)$,
    \item[(iii)] $\Fac M \supsetneq \Fac M'$ and there is no $\tau$-tilting pair $(M'',P'')$ such that $\Fac M \supsetneq \Fac M'' \supsetneq \Fac M'$.
\end{itemize}
\end{theorem}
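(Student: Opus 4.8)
The plan is to handle the three implications separately, using Theorem \ref{thmair} as the main tool throughout.

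\textbf{(i) $\Leftrightarrow$ (ii).} This should be immediate from the definitions. By Theorem \ref{thmair} the notion of mutation is symmetric: a basic almost $\tau$-tilting pair $(U,Q)$ has exactly two $\tau$-tilting completions, and each is the mutation of the other. Unravelling the definitions of left and right mutation, the statement ``$(M,P)$ is a right mutation of $(M',P')$'' and the statement ``$(M',P')$ is a left mutation of $(M,P)$'' both assert that $(M,P)$ and $(M',P')$ are the two completions of a common $(U,Q)$ and that $\Fac M' \subsetneq \Fac M$. Hence (i) and (ii) are literally the same condition, and the real content is the equivalence with (iii).

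\textbf{(i) $\Rightarrow$ (iii).} Assume $(M,P)$ is a right mutation of $(M',P')$ with common almost $\tau$-tilting summand $(U,Q)$, so $\Fac M' \subsetneq \Fac M$. Since $U$ is a direct summand of both $M$ and $M'$ we get $\Fac U \subseteq \Fac M' \subsetneq \Fac M$, so the dichotomy $\{\Fac M,\Fac M'\}=\{\Fac U,\ {}^{\bot}(\tau U)\cap Q^{\bot}\}$ of Theorem \ref{thmair} forces $\Fac M'=\Fac U$ and $\Fac M={}^{\bot}(\tau U)\cap Q^{\bot}$. Now let $(M'',P'')$ be a $\tau$-tilting pair with $\Fac M \supseteq \Fac M'' \supseteq \Fac M'$. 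If $(U,Q)$ were a direct summand of $(M'',P'')$ one checks the elementary inclusions $\Fac U \subseteq \Fac M''$ and $\Fac M'' \subseteq {}^{\bot}(\tau U)\cap Q^{\bot}$, using that $M''$ is $\tau$-rigid (so a surjection $M''^{m}\twoheadrightarrow X$ injects $\Hom(X,\tau M'')$ into $\Hom(M'',\tau M'')^{m}=0$) and that $P''$ is projective; the converse, that these torsion-class inclusions force $(U,Q)$ to be a summand of $(M'',P'')$, is the standard characterisation of direct summands of $\tau$-tilting pairs by their torsion classes. Granting this, every such $(M'',P'')$ contains $(U,Q)$ as a direct summand, hence by the ``exactly two completions'' clause of Theorem \ref{thmair} equals $(M,P)$ or $(M',P')$; so no $\tau$-tilting pair lies strictly between, which is (iii).

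\textbf{(iii) $\Rightarrow$ (i).} This is the substantive direction. Assume $\Fac M \supsetneq \Fac M'$ is a cover in the poset of support $\tau$-tilting pairs. First $\add M \not\subseteq \add M'$, since otherwise $\Fac M \subseteq \Fac M'$, contradicting strictness. Take $(U,Q)$ to be a maximal common direct summand of $(M,P)$ and $(M',P')$; it is a basic $\tau$-rigid pair, say with $|U|+|Q|=n-k$, and the goal is to show $k=1$, because then $(U,Q)$ is almost $\tau$-tilting, $(M,P)$ and $(M',P')$ are its two completions, and $\Fac M' \subsetneq \Fac M$ makes $(M,P)$ the right mutation of $(M',P')$, i.e.\ (i). If $k\ge 2$, the strategy is to manufacture a $\tau$-tilting pair $(M'',P'')$ with $\Fac M \supsetneq \Fac M'' \supsetneq \Fac M'$, contradicting the cover hypothesis: one adjoins to $(U,Q)$ a carefully chosen indecomposable summand of $(M,P)$ or of $(M',P')$ not already in $(U,Q)$, keeping $\tau$-rigidity, and completes to a $\tau$-tilting pair by the Bongartz-type completion for $\tau$-rigid pairs; monotonicity of $\Fac$ together with the bijectivity of $M''\mapsto\Fac M''$ on support $\tau$-tilting modules should then pin the completion strictly inside the interval $[\Fac M',\Fac M]$. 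The main obstacle is precisely this construction: choosing the extra summand so that the completion lands \emph{strictly} between $\Fac M'$ and $\Fac M$ at \emph{both} ends, rather than hitting an endpoint or leaving the interval. Verifying that such a choice always exists when $k\ge 2$ is where the real work lies and where the full strength of Theorem \ref{thmair} and the correspondence with functorially finite torsion classes is needed.
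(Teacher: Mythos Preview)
The paper does not prove this theorem: it is quoted verbatim from \cite{air}*{Theorem 2.33} and used as a black box, with no argument given. So there is no ``paper's own proof'' to compare against.

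As for your attempt itself: the equivalence (i)$\Leftrightarrow$(ii) is indeed immediate from the definitions. But both remaining implications contain genuine gaps that you explicitly flag rather than resolve. In (i)$\Rightarrow$(iii), the step ``these torsion-class inclusions force $(U,Q)$ to be a summand of $(M'',P'')$'' is the heart of the matter, and calling it ``the standard characterisation'' is not a proof; in fact what is needed is closer to \cite{air}*{Theorem 2.35} (the characterisation of $\tau$-rigid summands via the interval $[\Fac U,\ {}^{\bot}(\tau U)\cap Q^{\bot}]$), which is logically downstream of the result you are trying to prove in \cite{air}'s development. In (iii)$\Rightarrow$(i), you correctly identify the strategy (enlarge the common summand and Bongartz-complete) but then concede that ensuring the completion lands strictly between the two torsion classes ``is where the real work lies''. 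That work is not optional: without it there is no proof. If you want a self-contained argument, you would need to import the machinery of \cite{air}*{\S2.4}, in particular the interplay between Bongartz completion and the order on torsion classes.
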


Theorem \ref{mutation-charactorization} states that there exists an arrow $(M,P)\to (M',P')$ in $\overrightarrow{\Gamma}(\stautilt A)$ if and only if $(M',P')$ is a left mutation of $(M,P)$.

By definition of the support $\tau$-tilting module, we can consider the vertices of the support $\tau$-tilting quiver as support $\tau$-tilting modules instead of corresponding $\tau$-tilting pairs (this identification is used in Section \ref{gls-path-condition}). 

\begin{example}\label{ex:supp-tau-tilt-quiver-A2}
We set $A=K(1\leftarrow 2)$. Then $\overrightarrow{\Gamma}(\stautilt A)$ is as follows:
\[\begin{tikzpicture}
      \node (1) at (0,5) {\large $\left(\sst{2 \\ 1}\oplus \sst{1},0\right)$};
      \node(2) at (-4,4.25) {\large $\left(\sst{2 \\ 1}\oplus \sst{2},0\right)$};
      \node (3) at (4,3.5) {\large $\left(\sst{1},\sst{2\\1}\right)$};
      \node (4) at (-4,2.75) {\large$\left(\sst{2 },\sst{1}\right)$};
      \node (5) at (0,2) {\large $\left(0,\sst{2 \\ 1}\oplus \sst{1}\right)$};
      \draw[->] (1)--(2);
      \draw[->] (1)--(3);
      \draw[->] (2)--(4);
      \draw[->] (3)--(5);
      \draw[->] (4)--(5);
    \end{tikzpicture}\]
We can see that this quiver is isomorphic to the one in Example \ref{c-cluster-lattice-A2}.
\end{example}
\subsection{GLS path algebra}
We introduce the GLS path algebra, which is defined by \cite{gls-i}. We fix a Dynkin-type root system $\Phi$ and a Coxeter element $c\in W(\Phi)$. Let $D_{\Phi}=\text{diag}(d_1,\dots,d_n)$ be a symmetrizer of the Cartan matrix $C_\Phi$, i.e., a matrix such that $D_\Phi C_\Phi$ is symmetric, where each $d_i$ is a positive integer. Clearly, $D_{\Phi}$ is not unique for each $C_\Phi$. We will define the \emph{GLS path algebra} $H(C_\Phi,D_{\Phi},c)$. Let $Q^c$ represent the following quiver:
\begin{align*}
    (Q^c)_0&=\{1,\dots,n\},\\
    (Q^c)_1&=\{\alpha_{ij}\colon i\to j\mid c=\cdots s_i\cdots s_j\cdots,\ s_is_j\neq s_js_i\}\cup\{\varepsilon_i\colon i \to i\mid 1\leq i\leq n\}.
\end{align*}
Let $I$ represent an ideal $KQ^c$ generated by the following relations:
\begin{itemize}
    \item [(H1)] For any $i\in (Q^c)_0$, $\varepsilon_i^{d_i}=0$,
    \item[(H2)] For any $\alpha_{ij} \in (Q^c)_1$, $\varepsilon_i^{|C_{ji}|}\alpha_{ij}=\alpha_{ij}\varepsilon_j^{|C_{ij}|}$.
\end{itemize}
We define the GLS path algebra associated with $(C_\Phi,D_{\Phi},c)$ as follows: \[H:=H(C_\Phi,D_{\Phi},c):=KQ^c/I.\]
It is a finite dimensional $K$-algebra. Moreover, if $\Phi$ is simply-laced and $D_\Phi=I_n$, then $H$ is isomorphic to a representation-finite path algebra for $\Phi$ and $c$.

Let $\{e_1,\dots,e_n\}$ be the idempotents associated with the $Q^c$ vertices. We define $H_i:=e_iHe_i$ and $M_i=Me_i$ for $M\in \mod H$.
\begin{definition}[Locally free]
For $M\in \mod H$, $M$ is \emph{locally free} if $M_i$ is a free $H_i$-module for each $i$.
\end{definition}

Let $M\in \mod H$ be a locally free module. For each $i$, the rank of $M_i$ as a $H_i$ module is denoted by $r_i$. We denote this vector as $\underline{\text{rank}}_HM:=(r_1,\dots,r_n)$ and refer to it as the \emph{rank vector} of $M$.

Listed below are some of the properties of $H$ demonstrated by Gei\ss, Leclerc, and Schr\"oer that are required in this paper.

\begin{theorem}[(1):\cite{gls-i}*{Theorem 1.2}, (2):\cite{gls-reg}*{Corollary 6.3}]\label{facts-generalized-path-alg}\noindent
\begin{itemize}
    \item [(1)]The algebra $H$ is $1$-Iwanaga-Gorenstein, i.e., $\mathrm{inj. dim~}H\leq 1$ and $\mathrm{proj. dim~}DH\leq 1$. Moreover, the following are equivalent for $M\in \mod H$:
    \begin{itemize}
        \item [(i)] $\mathrm{proj. dim~} M\leq 1$,
        \item [(ii)] $\mathrm{inj. dim~} M\leq 1$,
        \item [(iii)] $\mathrm{proj. dim~} M< \infty$,
        \item [(iv)] $\mathrm{inj. dim~} M< \infty$,
        \item [(v)] $M$ is locally free,
    \end{itemize}
    \item[(2)] $M\in \mod H$ is $\tau$-rigid (resp. $\tau$-tilting) if and only if $M$ is rigid and locally free (resp. tiltng).
\end{itemize}
\end{theorem}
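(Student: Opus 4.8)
Theorem~\ref{facts-generalized-path-alg} is quoted verbatim from \cite{gls-i} and \cite{gls-reg}, so a complete proof is available there; I describe the route I would follow.

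For part~(1) everything is ultimately local. First I would record that each vertex algebra $H_i=e_iHe_i$ is, by (H1) and (H2), isomorphic to the truncated polynomial ring $K[\varepsilon_i]/(\varepsilon_i^{d_i})$ --- a commutative local Frobenius $K$-algebra --- so that over $H_i$ the notions ``free'', ``projective'' and ``injective'' coincide and any short exact sequence with a free outer term splits. Next I would compute the pieces $e_iHe_j$ explicitly from the relations; this shows that every indecomposable projective $P_i=e_iH$ and every indecomposable injective $I_i=D(He_i)$ is locally free, and it yields their rank vectors $\underline{\text{rank}}_HP_i$ and $\underline{\text{rank}}_HI_i$, which are read off from the Cartan matrix $C_\Phi$ (the symmetrizer $D_\Phi$ entering the formula). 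I would then exhibit, for each vertex $j$, a short exact sequence $0\to P'\to P''\to I_j\to0$ with $P',P''$ projective, giving $\mathrm{proj.dim}\,DH\le1$, and dually a two-term injective coresolution of each $P_i$, giving $\mathrm{inj.dim}\,H\le1$; hence $H$ is $1$-Iwanaga-Gorenstein.

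Granting this, the equivalences among (i)--(iv) are standard homological algebra over a $1$-Iwanaga-Gorenstein algebra: finite projective or injective dimension is forced to be $\le1$, and finiteness of either one is equivalent to finiteness of the other. For (v)$\Rightarrow$(i), a locally free $M$ has $\mathrm{proj.dim}\,M<\infty$: restricting a projective cover $P^0\twoheadrightarrow M$ to each $H_i$ gives a surjection of free $H_i$-modules, which splits, so the first syzygy $\Omega M$ is again locally free, and one concludes by induction on $\dim_K M$ together with the Gorenstein bound. Conversely, for (i)$\Rightarrow$(v), if $\mathrm{proj.dim}\,M\le1$ then $\mathrm{inj.dim}\,M\le1$ by the Gorenstein property, so $M$ embeds into an injective with locally free cokernel; restricting to each $H_i$ and using once more that a surjection of free modules splits shows that $Me_i$ is free. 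Thus (i)$\Leftrightarrow$(v), and the real labour of part~(1) lies in the two explicit computations behind ``projectives and injectives are locally free'' and ``$H$ is $1$-Gorenstein''.

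For part~(2), once $M$ is locally free we have $\mathrm{proj.dim}_HM\le1$ by part~(1), so the Auslander-Reiten formula specializes to $\Hom_H(M,\tau M)\cong D\,\Ext^1_H(M,M)$; hence $M$ is $\tau$-rigid exactly when $\Ext^1_H(M,M)=0$, that is, exactly when $M$ is rigid and locally free --- GLS's ``locally rigid free''. The reverse inclusion is the delicate half: I would invoke the fact that every $\tau$-rigid (a fortiori every rigid) $H$-module is automatically locally free --- informally, a module whose restriction to some $H_i$ is non-free has infinite projective dimension and therefore a Gorenstein-projective part, which forces a nonzero self-extension and contradicts rigidity. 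The $\tau$-tilting statement then follows by transporting the numerical condition $|M|+|P|=n=|H|$ through this description. The main obstacle throughout is precisely this passage between the module-theoretic notion of local freeness and the homological notion of finite projective dimension --- most acutely, establishing the $1$-Iwanaga-Gorenstein property and the implication ``rigid $\Rightarrow$ locally free''; the rest reduces to linear algebra over the Frobenius rings $H_i$ or to standard Gorenstein and Auslander-Reiten theory.
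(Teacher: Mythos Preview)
The paper does not prove Theorem~\ref{facts-generalized-path-alg} at all: the statement is imported wholesale from \cite{gls-i} and \cite{gls-reg}, with no argument given in the body of the paper. You recognise this yourself in your opening line, so there is no ``paper's own proof'' against which to compare your proposal.

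That said, your sketch is a faithful outline of how Gei\ss--Leclerc--Schr\"oer actually proceed. The observations that $H_i\cong K[\varepsilon_i]/(\varepsilon_i^{d_i})$ is self-injective, that projectives and injectives are locally free, and that the Gorenstein property is established by producing explicit two-term (co)resolutions of the $I_j$ and $P_i$, are exactly the ingredients in \cite{gls-i}. One minor caution: your inductive argument for (v)$\Rightarrow$(i) via syzygies needs a termination mechanism --- as written, $\dim_K\Omega M$ need not drop below $\dim_K M$. The route taken in \cite{gls-i} is instead to filter a locally free $M$ by the generalised simples $E_i$ and to verify $\mathrm{proj.dim}\,E_i\le1$ directly; you may want to substitute that step. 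Your argument for (i)$\Rightarrow$(v) via the injective coresolution is fine.

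For part~(2) your identification of the hard direction is correct: showing that every $\tau$-rigid (equivalently rigid) $H$-module is automatically locally free is the substance of \cite{gls-reg}, and your one-line gloss (``a non-locally-free module has a Gorenstein-projective piece forcing a self-extension'') is morally right but hides real work --- in \cite{gls-reg} this is done by a careful analysis passing through the subalgebras $H_i$ and the bimodule structure of $e_iHe_j$. Flagging it as the main obstacle, as you do, is appropriate for a sketch.
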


The notation $H$ shall represent $H(C_\Phi,D_{\Phi},c)$ wherever used as the symbol for an algebra in the remainder of this paper.

\section{Three graph isomorphisms}
\subsection{Graph isomorphism $\tilde{\theta}_c$}
The graph isomorphism between the quiver of $c$-clusters and a certain exchange quiver is provided in \cite{cp}. We fix a Dynkin-type root system $\Phi$ and a Coxeter element $c\in W(\Phi)$. We obtain a skew-symmetrizable matrix $B^c=(b_{ij})$ from the Cartan matrix $C_\Phi=(C_{ij})$ and $c$ as follows:
\[b_{ij}=\begin{cases}
0 & \quad \text {if $i=j$},\\
C_{ij} & \quad \text {if $c=\cdots s_j\cdots s_i\cdots$},\\
-C_{ij} & \quad \text {if $c=\cdots s_i\cdots s_j\cdots$}.
\end{cases}\]

We note that $-B^c=B^{c^{-1}}$.

\begin{theorem}[\cites{cp}]
We fix a Dynkin-type root system $\Phi$ of rank $n$ and a Coxeter element $c\in W(\Phi)$. Let $\Xcal(B^c)$ represent the set of cluster variables in $\Acal(B^c;t_0)$. Then the map
\[\theta_c\colon \Xcal(B^c)\to \apPhi,\quad \dfrac{f(x_1,\dots,x_n)}{x_1^{d_1}\cdots x_n^{d_n}}\mapsto \sum_{i=1}^n d_i\alpha_i\]
is well-defined and bijective, where $f(x_1,\dots,x_n)$ is a polynomial in $n$ variables with $\mathbb{ZP}$ coefficients. Moreover, this map induces the bijection between the set of clusters of $\Acal(B^c;t_0)$ and the set of $c$-clusters in $\apPhi$.
\end{theorem}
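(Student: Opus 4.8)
The plan is to get well-definedness from the Laurent phenomenon, and then to deduce the bijection---including the clause about clusters---by transporting the cluster mutation structure of $\Acal(B^c;t_0)$ onto the $c$-cluster mutation structure of $\apPhi$ through the denominator map. For well-definedness: by Proposition~\ref{proindepen} the coincidences among cluster variables and their expansions in $x_1,\dots,x_n$ are insensitive to the coefficient semifield, so one may take $\PP=\{1\}$, whence $\ZZ\PP=\ZZ$. By the Laurent phenomenon \cite{fzi} each $x_{k;t}$ lies in $\ZZ[x_1^{\pm1},\dots,x_n^{\pm1}]$, a UFD, so it has a unique expression $f/(x_1^{d_1}\cdots x_n^{d_n})$ with $f\in\ZZ[x_1,\dots,x_n]$ divisible by no $x_i$. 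This defines the denominator vector $\mathbf{d}(x_{k;t})=(d_1,\dots,d_n)$ and hence $\theta_c$ as a function from $\Xcal(B^c)$ to the root lattice $\bigoplus_i\ZZ\alpha_i$; for an initial variable $x_j$ the unique such expression is $1/(x_j^{-1})$, so $\theta_c(x_j)=-\alpha_j$.

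Next I would show the image lies in $\apPhi$ and that $\theta_c$ intertwines mutation. First, $B^c$ is acyclic: its quiver orients each Dynkin edge $\{i,j\}$ according to which of $s_i,s_j$ occurs first in the Coxeter word $c$, and this is a total order, so no oriented cycle exists. I would then induct on the distance of $t$ from $t_0$ in $\TT_n$, proving simultaneously that $\theta_c(x_{k;t})\in\apPhi$ for every $k$ and that $\{\theta_c(x_{1;t}),\dots,\theta_c(x_{n;t})\}$ is a $c$-cluster, with base case the initial cluster (whose image is $-\Delta$). The heart of the inductive step, and the place where acyclicity is indispensable, is the claim that along any mutation path issuing from $t_0$ the $\max$-type recursion for denominator vectors collapses to the piecewise-linear recursion assembled from the operators $\sigma_{c_1},\dots,\sigma_{c_n}$ that defines $\tau_c$ on $\apPhi$. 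I would establish this either by a direct sign-tracking induction along edges of $\TT_n$, or by expressing $\mathbf{d}(x_{k;t})$ in terms of the $g$-vectors $\gg^{B^c;t_0}_{k;t}$ and invoking sign-coherence (Theorem~\ref{thm:signs-ci}) together with the $C$--$G$ duality (Theorem~\ref{c-g-duality}). Matching the two recursions against the common base case then yields both claims and shows that a seed mutation at $k$ corresponds precisely to the $c$-cluster mutation replacing $\theta_c(x_{k;t})$.

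For bijectivity: surjectivity follows from the previous step, since every almost positive root lies in some $c$-cluster (being $c$-compatible with itself, because the compatibility degree is $\tau_c$-invariant and vanishes on negative simple roots), every $c$-cluster is reached from $-\Delta$ by $c$-cluster mutations \cite{re-sp}, and each such mutation is realized by a seed mutation. As $\Xcal(B^c)$ and $\apPhi$ are finite with $\#\Xcal(B^c)=|\Phi^+|+n=\#\apPhi$ by \cite{fzii}, $\theta_c$ is a bijection. The ``moreover'' clause is then immediate: $\theta_c$ is a bijection on cluster variables taking the initial cluster to $-\Delta$ and intertwining the two mutation structures, and both clusters and $c$-clusters are mutation-connected, so it induces a bijection between clusters and $c$-clusters.

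The main obstacle is the collapse-to-piecewise-linear assertion in the second step. Denominator vectors do not, in general, mutate piecewise-linearly with respect to an arbitrary initial seed, and the acyclic case requires genuine input---either a dedicated combinatorial induction or the $g$-vector/$c$-vector machinery of \cite{nz} and \cite{GHKK}. A categorical alternative would identify $\Acal(B^c;t_0)$ with the cluster combinatorics of the GLS path algebra $H(C_\Phi,D_\Phi,c)$ and denominator vectors with rank vectors of locally free modules, but that imports considerably more machinery than the combinatorial route sketched here.
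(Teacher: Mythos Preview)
The paper does not prove this theorem at all: it is quoted verbatim from \cite{cp} (with roots in \cite{fzii} for the bipartite orientation) and immediately used as a black box to obtain Corollary~\ref{graph-iso-phi}. There is therefore no argument in the paper against which to compare your attempt.

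That said, your sketch is a reasonable reconstruction of how such a proof goes, and you correctly isolate the only non-formal step. One formulation issue: you describe the goal as showing that the denominator recursion ``collapses to the piecewise-linear recursion assembled from the operators $\sigma_{c_1},\dots,\sigma_{c_n}$ that defines $\tau_c$.'' But a single mutation does not implement any $\sigma_{c_i}$; what is true (and what the present paper records as Proposition~\ref{tau-interpretation}) is that the full Coxeter sequence $\mu_{c_n}\circ\cdots\circ\mu_{c_1}$ realizes $\tau_c$ on denominator vectors, while an individual mutation realizes the $c$-cluster exchange of Theorem~\ref{cluster-cardinality}(2). The usual argument therefore proceeds either by inducting on $R_c$ via the Coxeter mutation sequence, or (as in \cite{cp}) by showing directly that the denominator-vector tropical exchange relation in finite type agrees with the $c$-compatibility-degree exchange on $\apPhi$. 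Your fallback via $g$-vectors and sign-coherence would also succeed, at the cost of importing Theorem~\ref{thm:signs-ci} and Theorem~\ref{c-g-duality}, which is heavier than the cited combinatorial proof.
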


From construction the exchange quiver of $B^c$ and the quiver of $c$-clusters, the following corollary is clearly demonstrated:

\begin{corollary}\label{graph-iso-phi}
 The map $\theta_c$ induces the graph isomorphism $\tilde\theta_c \colon \overrightarrow{\Gamma}(B^c)^{\mathrm{op}}\to\overrightarrow{\Gamma}(\apPhi,c)$, where $\overrightarrow{\Gamma}(B^c)^{\mathrm{op}}$ is the opposite quiver of $\overrightarrow{\Gamma}(B^c)$.
\end{corollary}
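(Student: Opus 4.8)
The plan is to read the corollary off from the preceding theorem together with the definitions of the two quivers; this is the sense in which it is ``clear''. The theorem gives a bijection $\theta_c$ between the cluster variables of $\Acal(B^c;t_0)$ and the almost positive roots $\apPhi$ which carries clusters bijectively onto $c$-clusters, so on vertices $\tilde\theta_c$ is already a bijection. For the edges I would argue by regularity. A mutation $\mu_x([\xx])$ of a non-labeled cluster replaces exactly one variable, and the $n$ mutations of a fixed cluster give $n$ pairwise distinct neighbours: if $\mu_x([\xx])=\mu_{x'}([\xx])$ with $x\ne x'$, this common cluster would contain both $[\xx]\setminus\{x\}$ and $[\xx]\setminus\{x'\}$, hence all of $[\xx]$, hence equal $[\xx]$ since cardinalities match, contradicting that a mutation changes the cluster; this also makes ``$[\xx]$ and $[\xx']$ share $n-1$ variables'' coincide with ``$[\xx],[\xx']$ are joined by an edge of $\Gamma(B^c)$''. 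By Theorem \ref{cluster-cardinality}(2) the analogous statements hold for $c$-clusters. Since $\theta_c$ is bijective on variables and roots it sends a pair of clusters meeting in $n-1$ variables to a pair of $c$-clusters meeting in $n-1$ roots, i.e. it sends edges to edges; as both graphs are $n$-regular on equally many vertices, $\tilde\theta_c$ must then be a bijection on edges, and since passing to the opposite quiver leaves the underlying graph unchanged, $\tilde\theta_c\colon\overrightarrow{\Gamma}(B^c)^{\mathrm{op}}\to\overrightarrow{\Gamma}(\apPhi,c)$ is a graph isomorphism.

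Since the superscript $\mathrm{op}$ in the statement anticipates the later refinement in which the arrows also match, I would at the same time set up the dictionary identifying the two orientations, this being the substantive point. Fix an edge $[\xx']=\mu_x([\xx])$ with exchanged variables $x\in[\xx]$, $x'\in[\xx']$, and put $\alpha=\theta_c(x)$, $\beta=\theta_c(x')$. The $c$-vector recursion of Remark \ref{c-recursion-remark} gives $\cc^{B^c;t_0}_{x',[\xx']}=-\cc^{B^c;t_0}_{x,[\xx]}$, so by sign-coherence (Theorem \ref{thm:signs-ci}) exactly one of the two inverse mutations is green; thus $\overrightarrow{\Gamma}(B^c)$ orients the edge by the sign of $\cc^{B^c;t_0}_{x,[\xx]}$, while $\overrightarrow{\Gamma}(\apPhi,c)$ orients it by comparing $R_c(\alpha)$ with $R_c(\beta)$. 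One must check these rules agree under $\theta_c$, i.e. that greenness of $\mu_x([\xx])$ matches the prescribed comparison of $R_c$-values. The base case is the initial cluster $[\xx_{t_0}]$: there every $c$-vector equals $\ee_i$, so every mutation is green, while $[\xx_{t_0}]$ is sent to $-\Delta$, which is exactly the set of roots with $R_c=0$, the minimal value; both rules then orient every incident edge the same way, and $[\xx_{t_0}]\leftrightarrow-\Delta$ is the common extreme vertex. The general case I would get by propagating along $\TT_n$, matching the $C$-matrix recursion against the $\tau_c$-dynamics governing $R_c$ — precisely the compatibility around which $\theta_c$ was built in \cite{cp}.

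The step I expect to be the real obstacle is exactly this dictionary between the sign of a $c$-vector of $\Acal(B^c;t_0)$ and the comparison of the $R_c$-values of the exchanged roots; the vertex bijection and the edge correspondence are formal once the theorem is granted. If one prefers not to invoke the internals of \cite{cp}, the route I would take is to use the $C$-matrix recursion and Theorems \ref{thm:signs-ci} and \ref{c-g-duality} to identify $\cc^{B^c;t_0}_{x,[\xx]}$ with $\pm$ the real root of $\Phi$ attached to $x$ inside $[\xx]$, to show its sign is positive exactly when $x$ has not yet been mutated out along a shortest path from $t_0$ to $[\xx]$ in $\TT_n$, and to recognise that condition as placing $R_c(\theta_c(x))$ on the correct side of $R_c(\theta_c(x'))$; this is a bookkeeping induction along $\TT_n$ that uses no idea beyond the theorem.
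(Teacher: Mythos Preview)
Your first paragraph is correct and is essentially what the paper does: the corollary is stated as an immediate consequence of the preceding theorem together with the definitions of the two quivers, and the paper says only that it ``clearly'' follows. Your argument that the underlying graphs are $n$-regular and that $\theta_c$ carries adjacent pairs to adjacent pairs is exactly the content behind that word; nothing more is needed, and the remark following the corollary confirms that the $\mathrm{op}$ is irrelevant at this stage.

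Your second and third paragraphs, however, are not part of a proof of this corollary: you are sketching the later Theorem \ref{lattice-op-iso}, the quiver-isomorphism refinement, which the paper treats as a separate and substantially harder result. It is worth noting that the paper's route to that refinement is quite different from yours. You propose to match the sign of $\cc^{B^c;t_0}_{x,[\xx]}$ directly against the comparison of $R_c$-values by a propagation along $\TT_n$, relying on the compatibility of the $C$-matrix recursion with the $\tau_c$-dynamics from \cite{cp}. The paper instead routes through the GLS path algebra: it first shows that $\tilde\psi_c$ is a quiver isomorphism (Section 4), then compares $\overrightarrow{\Gamma}(\stautilt H)$ with $\overrightarrow{\Gamma}(\stautilt H^{\mathrm{op}})$ via the standard duality (Theorem \ref{condition-tilting-true}), translates this into a statement relating the $c$-vectors of $\Acal(B^c;t_0)$ and $\Acal(-B^c;t_0)$ (Theorem \ref{condition-true}), and only then closes the loop using the key identity $\cc^{B^c;t_0}_{\tau_c^{-1}(x),\tau_c^{-1}([\xx])}=-\cc^{-B^c;t_0}_{x,[\xx]}$ (Lemma \ref{c-mat-trick}). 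Your direct approach would avoid the detour through representation theory, but the ``bookkeeping induction'' you allude to is precisely the hard step, and you have not carried it out; the paper's approach, while longer, reduces the orientation question to a concrete tilting-theoretic fact (Lemma \ref{ex-seq-equiv}) that it can verify cleanly.
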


\begin{remark}
Corollary \ref{graph-iso-phi} holds even if $\tilde\theta_c$ is a map from $\overrightarrow{\Gamma}(B^c)$ to $\overrightarrow{\Gamma}(\apPhi,c)$ since the graph of $\overrightarrow{\Gamma}(B^c)$ is the same as one of $\overrightarrow{\Gamma}(B^c)^{\mathrm{op}}$. The reason for using $\overrightarrow{\Gamma}(B^c)^{\mathrm{op}}$ is to show later that $\tilde\theta_c$ is a quiver isomorphism.
\end{remark}

\subsection{Graph isomorphisms $\tilde{\phi}_c$ and $\tilde{\psi}_c$}
Next, we will explain graph isomorphisms between the support $\tau$-tilting quiver and the exchange quiver, as well as the support $\tau$-tilting quiver and the quiver of $c$-clusters. We begin with the following fact provided by \cite{gls-v}:
\begin{theorem}[\cite{gls-v}*{Theorem 1.2 (c)}]\label{psi+}
The correspondence $M \mapsto \sum_{i=1}^n r_i\alpha_i$ induces a bijection between isomorphism classes of indecomposable $\tau$-rigid modules in $H=H(C_\Phi,D_\Phi,c)$ and roots in $\Phi^+$. Particularly, $H$ is $\tau$-tilting finite.
\end{theorem}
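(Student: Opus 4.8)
The plan is to deduce the statement from the locally-free representation theory of $H$. By Theorem~\ref{facts-generalized-path-alg}(2) a module $M\in\mod H$ is $\tau$-rigid if and only if it is locally free with $\Ext^1_H(M,M)=0$ (a \emph{rigid locally free} module), and for such $M$ the element in question is precisely $\sum_{i=1}^n r_i\alpha_i$, where $(r_1,\dots,r_n)=\underline{\mathrm{rank}}_H M$. Hence it suffices to show that $M\mapsto \underline{\mathrm{rank}}_H M$ is a bijection from isomorphism classes of indecomposable rigid locally free $H$-modules onto $\Phi^+$. The last assertion is then automatic: $\Phi^+$ being finite, there are only finitely many indecomposable $\tau$-rigid modules, and every $\tau$-rigid module is a direct sum of pairwise non-isomorphic indecomposable ones, so $H$ is $\tau$-tilting finite.

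For the bijection I would use two inputs. First, a homological interpretation of the symmetrised Tits form: since $H$ is $1$-Iwanaga--Gorenstein and locally free modules have projective dimension at most $1$ (Theorem~\ref{facts-generalized-path-alg}(1)), for locally free $M,N$ with rank vectors $\mathbf r,\mathbf s$ the Euler characteristic
\[
\langle \mathbf r,\mathbf s\rangle_{H}:=\dim_K\Hom_H(M,N)-\dim_K\Ext^1_H(M,N)
\]
depends only on $\mathbf r$ and $\mathbf s$, and a direct computation with the minimal projective presentations of the generalised simple modules $E_i$ (locally free of rank vector $\mathbf e_i$, the $i$-th coordinate vector) identifies the associated quadratic form $q_\Phi(\mathbf r):=\langle\mathbf r,\mathbf r\rangle_{H}$ with the Tits form attached to $(C_\Phi,D_\Phi)$; this form is positive definite because $\Phi$ is of Dynkin type. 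Second, reflection functors: at a sink (resp.\ source) $i$ of $Q^c$ there are Bernstein--Gelfand--Ponomarev-type functors between $\mod H(C_\Phi,D_\Phi,c)$ and the algebra with the orientation at $i$ reversed, which preserve local freeness, send indecomposables to indecomposables except that they annihilate (resp.\ create) $E_i$, preserve $\Ext^1$-rigidity, and act on rank vectors by the simple reflection $s_i$; composing them around a Coxeter element yields a Coxeter functor acting on rank vectors by $c$.

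With these in hand the argument proceeds as in Gabriel's theorem. Given an indecomposable rigid locally free $M$, either some reflection functor annihilates it, which forces $\underline{\mathrm{rank}}_H M=\mathbf e_i=\alpha_i$; or one keeps applying reflection functors, and positive definiteness of $q_\Phi$ together with finiteness of $W(\Phi)$ guarantees that after finitely many steps an $E_i$ is produced, so $\underline{\mathrm{rank}}_H M$ lies in the $W(\Phi)$-orbit of a simple root and hence, being a nonnegative integral vector, in $\Phi^+$. Conversely every positive root is $W(\Phi)$-conjugate to a simple root, and pulling the corresponding $E_i$ back along reflection functors realises it as the rank vector of an indecomposable rigid locally free module. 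Injectivity of $\underline{\mathrm{rank}}_H$ on these modules is handled by the geometry of module varieties: the locally free $H$-modules with a fixed rank vector form an irreducible variety (a fact due to Gei\ss--Leclerc--Schr\"oer), and a module without self-extensions spans an open, hence dense, $\mathrm{GL}$-orbit in it, so there is at most one rigid locally free module of each rank vector up to isomorphism.

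The main obstacles I anticipate are the bookkeeping required to run the reflection-functor argument across the changing data $(C_\Phi,D_\Phi,c)$ — exactly as orientation changes are handled in the classical proof of Gabriel's theorem — and the irreducibility of the locally-free module varieties underlying the uniqueness step; by contrast, the reduction in the first paragraph and the positive definiteness of $q_\Phi$ are formal.
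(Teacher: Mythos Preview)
The paper does not give its own proof of this statement: Theorem~\ref{psi+} is quoted verbatim from \cite{gls-v}*{Theorem~1.2(c)} and used as a black box throughout. There is therefore no in-paper argument to compare your proposal against.

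That said, your sketch is a faithful outline of the Gei\ss--Leclerc--Schr\"oer approach itself. The reduction to rigid locally free modules via Theorem~\ref{facts-generalized-path-alg}(2), the identification of the Euler form on rank vectors with the symmetrised Tits form of $(C_\Phi,D_\Phi)$, the reflection-functor calculus generalising Bernstein--Gelfand--Ponomarev, and the open-orbit argument in the (irreducible) variety of locally free modules of a fixed rank vector are precisely the pieces assembled across the GLS series to establish this bijection. The obstacles you flag --- tracking the data $(C_\Phi,D_\Phi,c)$ through orientation changes, and the irreducibility of the locally-free module varieties underpinning uniqueness --- are genuine and are handled in separate papers of that series, so a self-contained proof along your lines would essentially reproduce substantial portions of \cite{gls-i} and its sequels rather than a short argument internal to the present paper.
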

The bijection from Theorem \ref{psi+} is denoted by $\phi^+_c$. We set $\psi^+_c:=\theta_c^{-1}\circ \phi^+_c$. Then, $\psi^+_c$ has the following bijective correspondence:
\[\psi^+_c\colon \ind (\textrm{$\tau$-\textsf{rigid}}\ H) \to \Xcal^+(B^c),\quad M\mapsto \dfrac{f(x_1,\dots,x_n)}{x_1^{\mathrm{rank}_{H_1 }M_1}\cdots x_n^{\mathrm{rank}_{H_n} M_n}},\]
where $\Xcal^+(B^c)=\Xcal(B^c)\setminus\{x_1,\dots,x_n\}$ and $\ind (\textrm{$\tau$-\textsf{rigid}}\ H)$ is the set of isomorphism classes of indecomposable $\tau$-rigid modules of $H$.

Gei\ss, Leclerc and Schr\"oer presented the following theorem about $\psi^+_c$;

\begin{theorem}[\cite{gls-v}*{Theorem 1.2 (d)}]\label{psi+bijection}
The map $\psi^+_c$ induces a bijection $\tilde\psi^+_c$ between the isomorphism classes of basic support $\tau$-tilting modules of $H$ and the maximal subsets consisting of non-initial cluster variables of non-labeled clusters in $\Acal(B^c;t_0)$.
\end{theorem}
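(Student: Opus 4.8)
The plan is to establish the slightly sharper statement that $\psi^+_c$ extends to a bijection $\Psi_c$ from the set $\stautilt H$ of (isomorphism classes of basic) support $\tau$-tilting pairs of $H$ onto the set of non-labeled clusters of $\Acal(B^c;t_0)$: to a pair $(M,P)$ with $M=\bigoplus_i M_i$ ($M_i$ indecomposable) and $P=\bigoplus_{j\in J}e_jH$ we assign the $n$-element set $\Psi_c(M,P):=\{\psi^+_c(M_i)\mid i\}\cup\{x_j\mid j\in J\}$. Granting this, Theorem \ref{psi+bijection} follows at once: since the $\psi^+_c(M_i)$ are non-initial and the $x_j$ are initial, $\{\psi^+_c(M_i)\}=\Psi_c(M,P)\cap\Xcal^+(B^c)$ is exactly the maximal subset of non-initial cluster variables of the cluster $\Psi_c(M,P)$, so $\tilde\psi^+_c$ is obtained from $\Psi_c$ by intersecting with $\Xcal^+(B^c)$; it is surjective onto the maximal non-initial subsets because $\Psi_c$ is surjective, and injective because $\psi^+_c$ is injective on indecomposable $\tau$-rigid modules, so that a support $\tau$-tilting module is recovered from its set of non-initial cluster variables.

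To see $\Psi_c$ is well defined, i.e.\ that $\Psi_c(M,P)$ really is a cluster, I would use that --- via $\theta_c$ --- a cluster of $\Acal(B^c;t_0)$ is the same thing as an $n$-element set of pairwise compatible cluster variables: $c$-clusters are by definition the maximal $c$-compatible subsets of $\apPhi$, they all have cardinality $n$ (Theorem \ref{cluster-cardinality}), and every $c$-compatible subset is contained in a $c$-cluster. So it is enough to check pairwise compatibility inside $\Psi_c(M,P)$. Two initial variables lie together in the initial cluster. For $j\in J$, $\psi^+_c(M_i)$ and $x_j$ are compatible: $(M,P)$ being a $\tau$-rigid pair gives $\Hom_H(e_jH,M_i)=(M_i)_j=0$, so $\phi^+_c(M_i)$ has vanishing $\alpha_j$-coordinate (recall $M_i$ is locally free by Theorem \ref{facts-generalized-path-alg}), and a positive root with vanishing $\alpha_j$-coordinate is $c$-compatible with $-\alpha_j$. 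Finally $\psi^+_c(M_i)$ and $\psi^+_c(M_{i'})$ are compatible: $M_i\oplus M_{i'}$, being a summand of the $\tau$-rigid module $M$, is $\tau$-rigid, and $\tau$-rigidity of a pair of indecomposable $H$-modules corresponds under $\phi^+_c$ to $c$-compatibility of the associated positive roots.

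Injectivity of $\Psi_c$ is easy: from $\Psi_c(M,P)=\Psi_c(M',P')$, separating non-initial and initial parts yields $\{\psi^+_c(M_i)\}=\{\psi^+_c(M'_i)\}$, hence $M\cong M'$, hence $P\cong P'$ since the projective part of a support $\tau$-tilting pair is determined by its module part (\cite{air}*{Proposition 2.3}). For surjectivity I would use the mutation structures. Since $H$ is $\tau$-tilting finite (Theorem \ref{psi+}), $\overrightarrow{\Gamma}(\stautilt H)$ is connected; the exchange graph of $\Acal(B^c;t_0)$ is connected; and $\Psi_c(0,H)$ is the initial cluster. By Theorem \ref{thmair}, the edges of $\overrightarrow{\Gamma}(\stautilt H)$ at a vertex $(M,P)$ are in bijection with the indecomposable summands of $M\oplus P$ --- delete one summand to get a basic almost $\tau$-tilting pair, whose other completion is the neighbour, and distinct summands give distinct neighbours --- hence with the $n$ elements of the cluster $\Psi_c(M,P)$. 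If $(M,P)$ and $(M',P')$ are the two completions of such an almost $\tau$-tilting pair $(U,Q)$, then $\Psi_c(M,P)$ and $\Psi_c(M',P')$ are clusters, are distinct, and both contain the $(n-1)$-element set $\Psi_c(U,Q)$; since an $(n-1)$-element subset of a cluster of $\Acal(B^c;t_0)$ lies in exactly two clusters (Theorem \ref{cluster-cardinality}(2), carried over by $\theta_c$), $\Psi_c$ sends this edge to the edge at $\Psi_c(M,P)$ in the corresponding mutation direction. Thus $\Psi_c$ restricts to a bijection on the star of every vertex; together with connectedness of the exchange graph and non-emptiness of $\overrightarrow{\Gamma}(\stautilt H)$, this forces $\Psi_c$ to be surjective (its image is both open and closed), and being injective it is a bijection.

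The step I expect to be the real obstacle is the dictionary used in the well-definedness argument --- that $\tau$-rigidity of a pair of indecomposable $H$-modules is equivalent to $c$-compatibility of the corresponding positive roots. This is precisely where the structure of the GLS path algebra $H(C_\Phi,D_\Phi,c)$ and its relation to $B^c$ must be used: one has to match the homological pairing governing $\tau$-rigidity in $\mod H$ (a symmetrized Euler form) with the $c$-compatibility degree on $\apPhi$, and this is essentially the content of \cite{gls-v}. The other ingredients --- the statement that a positive root with zero $\alpha_j$-coordinate is $c$-compatible with $-\alpha_j$, connectedness of the two quivers, the count of edges at a vertex, and transporting the ``each codimension-one face lies in two clusters'' property through $\theta_c$ --- are routine.
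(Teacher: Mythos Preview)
The paper does not give its own proof of this statement; it is quoted directly from \cite{gls-v}*{Theorem 1.2 (d)} and used as a black box. So there is no proof in the paper to compare your proposal against.

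That said, your framework is sound and, interestingly, runs the paper's own logic in reverse. Your extended map $\Psi_c$ is exactly the paper's $\tilde\psi_c$ (Corollary \ref{graphiso-psi}); the paper obtains $\tilde\psi_c$ \emph{from} the cited Theorem \ref{psi+bijection} (via Corollary \ref{theta+bijection} and Theorem \ref{graphiso-theta}), whereas you propose to construct $\Psi_c$ directly and then read off $\tilde\psi^+_c$ by intersecting with $\Xcal^+(B^c)$. Your well-definedness argument (pairwise $c$-compatibility via $\theta_c$), the injectivity step, and the covering-map style surjectivity argument are all fine. You are also right that the crux is the dictionary ``$M_i\oplus M_{i'}$ is $\tau$-rigid $\Leftrightarrow$ $\phi^+_c(M_i)$ and $\phi^+_c(M_{i'})$ are $c$-compatible'': this is the substantial input from \cite{gls-v}, so your proposal does not circumvent the external reference but rather isolates which part of it is doing the real work. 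In short: correct as a reduction, but not an independent proof --- and the paper itself makes no attempt at one either.
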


From this theorem, we have the following corollary:

\begin{corollary}\label{theta+bijection}
The map $\phi^+_c$ induces a bijection $\tilde\phi^+_c$ between the isomorphism classes of basic support $\tau$-tilting modules of $H$ and the maximal subsets consisting of positive roots of $c$-clusters.
\end{corollary}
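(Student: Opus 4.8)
The plan is to simply compose the two bijections that have already been established and check that the resulting correspondence lands on the stated target set. Recall from Theorem \ref{psi+bijection} that $\tilde\psi^+_c = \widetilde{\theta_c^{-1}\circ\phi^+_c}$ is a bijection between isomorphism classes of basic support $\tau$-tilting modules of $H$ and maximal subsets consisting of non-initial cluster variables of non-labeled clusters of $\Acal(B^c;t_0)$. On the other hand, by the theorem of \cite{cp} recalled just above Corollary \ref{graph-iso-phi}, the map $\theta_c\colon\Xcal(B^c)\to\apPhi$ is a bijection that induces a bijection between non-labeled clusters of $\Acal(B^c;t_0)$ and $c$-clusters of $\apPhi$. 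Since $\theta_c$ sends the initial cluster variable $x_i$ (whose $g$-vector data gives $f/x_i$ with $f$ trivial, i.e. $x_i = x_i/1$, so all $d_j=0$ except... ) — more to the point, $\theta_c(x_i) = -\alpha_i$, because $x_i$ is the initial cluster variable and its associated root under $\theta_c$ is the negative simple root $-\alpha_i$. Hence $\theta_c$ restricts to a bijection $\Xcal^+(B^c) = \Xcal(B^c)\setminus\{x_1,\dots,x_n\} \to \Phi^+$, and therefore carries ``the maximal subset of non-initial cluster variables of a non-labeled cluster $[\xx]$'' precisely onto ``the maximal subset of positive roots of the corresponding $c$-cluster $\theta_c([\xx])$''.

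First I would record the identity $\phi^+_c = \theta_c\circ\psi^+_c$ at the level of indecomposable objects, which is immediate from the definition $\psi^+_c := \theta_c^{-1}\circ\phi^+_c$ given in the excerpt. Then I would note that this identity is compatible with taking direct sums and passing to maximal subsets: for a basic support $\tau$-tilting module $M = \bigoplus_j M_j$ with $M_j$ the indecomposable $\tau$-rigid summands, $\tilde\phi^+_c(M) := \{\phi^+_c(M_j)\}_j = \{\theta_c(\psi^+_c(M_j))\}_j = \theta_c\big(\tilde\psi^+_c(M)\big)$. Since $\tilde\psi^+_c(M)$ is a maximal subset of non-initial cluster variables of some non-labeled cluster $[\xx]$ (Theorem \ref{psi+bijection}), and $\theta_c$ carries that to the maximal subset of positive roots of the $c$-cluster $\theta_c([\xx])$ (by the previous paragraph), we conclude that $\tilde\phi^+_c(M)$ is exactly a maximal subset of positive roots of a $c$-cluster. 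Bijectivity of $\tilde\phi^+_c$ then follows because it is the composite of the bijection $\tilde\psi^+_c$ of Theorem \ref{psi+bijection} with the bijection on non-labeled clusters / $c$-clusters induced by $\theta_c$, restricted appropriately.

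The one point requiring genuine (if small) justification is the claim that $\theta_c(x_i) = -\alpha_i$, i.e.\ that $\theta_c$ sends the initial cluster variables to the negative simple roots and hence restricts to a bijection $\Xcal^+(B^c)\to\Phi^+$ matching the sets ``non-initial cluster variables'' and ``positive roots'' consistently across every (non-labeled) cluster. This I expect to be the main obstacle, though it is really just an unwinding of definitions: by the formula for $\theta_c$, an initial cluster variable $x_i = x_i/(x_1^{0}\cdots x_n^{0})$ is written with $f = x_i$ and exponents $d_j = -\delta_{ij}$... one must be careful with the sign convention, but the intended reading is that $\theta_c$ sends $x_i$ to $-\alpha_i\in -\Delta$, which is forced by Theorem \ref{cluster-cardinality}(1) together with the fact that $\theta_c$ is a bijection $\Xcal(B^c)\to\apPhi$ carrying non-labeled clusters to $c$-clusters: the unique cluster containing all of $x_1,\dots,x_n$ is the initial cluster $[\xx_{t_0}]$, it is sent to a $c$-cluster of cardinality $n$, and the only $c$-cluster that can receive the ``most negative'' cluster is $-\Delta$ (alternatively, invoke the $g$-vector description: $\gg_{x_i} = \ee_i$ forces $\theta_c(x_i)$ to be the corresponding negative simple root under the identification of $g$-vectors with almost positive roots used in \cite{cp}). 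Granting this, every non-labeled cluster $[\xx]$ decomposes uniquely as (initial cluster variables it contains) $\sqcup$ (non-initial ones), and $\theta_c$ carries this decomposition to (negative simple roots in $\theta_c([\xx])$) $\sqcup$ (positive roots in $\theta_c([\xx])$); intersecting with the maximal such subsets on each side yields exactly the correspondence asserted in the corollary.
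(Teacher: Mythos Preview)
Your argument is correct and is exactly the approach the paper intends: the corollary is stated immediately after Theorem \ref{psi+bijection} with the words ``From this theorem, we have the following corollary,'' and the passage is precisely via the identity $\phi^+_c=\theta_c\circ\psi^+_c$ together with the fact that $\theta_c$ carries non-labeled clusters to $c$-clusters and initial variables to negative simple roots. Your worry about $\theta_c(x_i)=-\alpha_i$ is not a genuine obstacle: in the formula for $\theta_c$ the expression $f/x_1^{d_1}\cdots x_n^{d_n}$ is the reduced Laurent form (denominator vector convention), so $x_i$ has $d_j=-\delta_{ij}$ and is sent to $-\alpha_i$; no indirect argument via $g$-vectors or cardinality is needed.
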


We will demonstrate that Theorem \ref{psi+bijection} and Corollary \ref{theta+bijection} provide graph isomorphisms for $\overrightarrow{\Gamma}(\stautilt H)$, $\overrightarrow{\Gamma}(\apPhi,c)$ and $\overrightarrow{\Gamma}(B^c)^{\mathrm{op}}$. The bijection $\phi^+_c$ is extended as follows:
\begin{align*}
\phi_c\colon \ind (\textrm{$\tau$-\textsf{rigid}-\textsf{pair}}\ H) \to \apPhi,\quad (M,0)\mapsto \phi^+_c(M), \quad (0,P_i) \mapsto- \alpha_i,
\end{align*}
where $P_i=e_iH$ and $\ind (\textrm{$\tau$-\textsf{rigid}-\textsf{pair}}\ H)$ is the set of isomorphism classes of indecomposable $\tau$-rigid pairs of $H$. We assign $\psi_c:=\theta_c^{-1}\circ \phi_c$. Then, $\psi_c$ has the following correspondence:
\begin{align*}\psi_c\colon \ind (\textrm{$\tau$-\textsf{rigid}-\textsf{pair}}\ H) \to \Xcal(B^c),\quad (M,0)\mapsto \psi^+_c(M), \quad (0,P_i) \mapsto x_i.
\end{align*}

The following lemma holds for any $K$-algebra $A$:
\begin{lemma}\label{proj-M_i}
The module $P_i$ is a direct summand of $P$ for any $\tau$-tilting pair $(M,P)$ of $A$ if and only if $M_i=0$.
\end{lemma}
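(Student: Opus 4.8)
The statement to prove is Lemma~\ref{proj-M_i}: for a $\tau$-tilting pair $(M,P)$ of a $K$-algebra $A$, the indecomposable projective $P_i = e_iA$ is a direct summand of $P$ if and only if $M_i = Me_i = 0$. The plan is to exploit the fact recorded just before the lemma that the projective part $P$ of a $\tau$-tilting pair $(M,P)$ is, up to isomorphism, the unique basic projective module with $\Hom_A(P,M)=0$, equivalently $P = eA$ for the (unique) idempotent $e$ with $M$ a $\tau$-tilting $A/AeA$-module, and that $|M|+|P| = |A| = n$.

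First I would establish the ``only if'' direction. Suppose $P_i \mid P$. Then $\Hom_A(P,M) = 0$ forces $\Hom_A(P_i, M) = 0$; but $\Hom_A(P_i, M) \cong Me_i = M_i$ as $K$-vector spaces (this is the standard identification $\Hom_A(e_iA, M)\cong Me_i$ for a right module $M$), so $M_i = 0$.

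For the ``if'' direction, suppose $M_i = 0$, i.e.\ $\Hom_A(P_i,M)=0$. I want to conclude $P_i \mid P$. Here I would use the characterization of $P$ via the idempotent $e$: $M$ is a $\tau$-tilting $A/AeA$-module, and in particular $M$ is a module over $A/AeA$, so $Me = 0$; conversely, $P$ being the maximal basic projective with $\Hom_A(P,M) = 0$, every indecomposable projective $P_j$ with $\Hom_A(P_j,M)=0$ must occur as a summand of $P$. Concretely: the set of indices $j$ with $Me_j = 0$ is exactly the support-complement determined by $e$, and $eA = \bigoplus_{j: Me_j = 0} P_j$ by the uniqueness in \cite{air}*{Proposition 2.3}. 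Since $M_i = Me_i = 0$, $i$ belongs to this index set, hence $P_i \mid eA = P$. Alternatively, and perhaps cleaner, one can argue by counting: let $S$ be the set of $j$ with $M_j = 0$; the ``only if'' direction already shows that the summands $P_j$ of $P$ all have $j \in S$, so $|P| \le |S|$; and since $M$ is a module over $A/AeA$ with $e = \sum_{j\in S'} e_j$ for the summand-index set $S'$ of $P$, and $M$ is $\tau$-tilting over this quotient, one shows $|S| \le |S'| = |P|$ using $|M| = |A/AeA| = n - |S'|$ together with $M_j = 0 \iff j \in S' $ being forced; hence $S = S'$ and $P_i \mid P$.

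The main obstacle is pinning down precisely why, in the ``if'' direction, $M_i = 0$ \emph{forces} $P_i$ to appear in $P$ rather than merely being compatible with it — a priori one might imagine a $\tau$-tilting pair where $M_i = 0$ but $P_i \nmid P$. This cannot happen because of the maximality/uniqueness built into the definition of the projective part of a $\tau$-tilting pair: $(M,P)$ being $\tau$-tilting means $|M|+|P|=n$, and if $M_i=0$ while $P_i\nmid P$, then $(M, P\oplus P_i)$ would still be $\tau$-rigid (as $\Hom_A(P\oplus P_i, M) = \Hom_A(P,M)\oplus\Hom_A(P_i,M) = 0$) with $|M| + |P\oplus P_i| = n+1 > n$, and a $\tau$-rigid pair cannot have more than $n$ indecomposable summands — contradiction. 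So the real content is the bound $|M|+|P|\le n$ for $\tau$-rigid pairs, which is part of $\tau$-tilting theory \cite{air} and may be cited. With that, the ``if'' direction is immediate, and the lemma follows.
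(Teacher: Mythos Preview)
Your proposal is correct and, in its final form, is essentially the paper's own argument: the ``only if'' direction via $\Hom_A(P_i,M)\cong M_i$, and the ``if'' direction by observing that if $M_i=0$ but $P_i\nmid P$ then $(M,P\oplus P_i)$ is a $\tau$-rigid pair with $n+1$ indecomposable summands, contradicting maximality of the $\tau$-tilting pair. The earlier alternative routes you sketch (via the idempotent $e$ or via counting the support-complement) are fine but unnecessary detours; the clean maximality argument you end with is exactly what the paper does.
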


\begin{proof}
Since $\Hom_A(P,M)=0$, we have $\Hom_A(P_i,M)=0$ if $P_i$ is a direct summand of $P$. Then, $\Hom_A(P_i,M)\simeq M_i=0$ holds true. Conversely, if $M_i=0$ and $P_i$ is not a direct summand of $P$, then $\Hom_A(P\oplus P_i,M)=0$ and $(M,P\oplus P_i)$ is $\tau$-rigid. It conflicts with the maximality of the $\tau$-tilting pair. Therefore, $M_i=0$ implies $P_i$ is a direct summand of $P$.
\end{proof}

\begin{theorem}\label{graphiso-theta}
The map $\phi_c$ induces a bijection $\tilde\phi_c$ between isomorphism classes of basic support $\tau$-tilting pairs and $c$-clusters. Particularly, $\tilde\phi_c\colon\overrightarrow{\Gamma}(\stautilt H)\to\overrightarrow{\Gamma}(\apPhi,c)$ is a graph isomorphism.
\end{theorem}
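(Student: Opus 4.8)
The plan is to build the vertex map from the three bijections already recorded, check that its image lies in the set of $c$-clusters, and then compare the two quivers at the level of underlying graphs — the compatibility of orientations, i.e.\ that $\tilde\phi_c$ is a \emph{quiver} isomorphism, being deferred to a later section. For a basic $\tau$-tilting pair $(M,P)$, decompose $M=\bigoplus_j N_j$ and $P=\bigoplus_{i\in S}P_i$ into indecomposables. Since $\phi^+_c$ is a bijection onto $\Phi^+$ (Theorem \ref{psi+}) and the indecomposable $\tau$-rigid pairs of $H$ are exactly the $(N,0)$ with $N$ indecomposable $\tau$-rigid together with the $(0,P_i)$, the extension $\phi_c$ is a bijection onto $\apPhi$; hence $\tilde\phi_c(M,P):=\{\phi^+_c(N_j)\}_j\cup\{-\alpha_i:i\in S\}$ has exactly $n$ elements, one for each indecomposable summand of $M\oplus P$. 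To see it is a $c$-cluster, note that its positive part $\{\phi^+_c(N_j)\}_j=\tilde\phi^+_c(M)$ is, by Corollary \ref{theta+bijection}, the set of positive roots of \emph{some} $c$-cluster, and that such a $c$-cluster is unique: because $(-\alpha_i\parallel_c-\alpha_j)=0$ for all $i,j$ by property (i) of the $c$-compatibility degree, two $c$-clusters $C_1,C_2$ with the same positive part $A$ would make $A\cup(C_1\cap-\Delta)\cup(C_2\cap-\Delta)$ a $c$-compatible set, forcing its cardinality to be at most $n$ by Theorem \ref{cluster-cardinality}(1) and hence $C_1=C_2$. Write $C_M$ for this $c$-cluster; it remains to identify $C_M\cap(-\Delta)$ with $\{-\alpha_i:i\in S\}$.

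By Lemma \ref{proj-M_i}, $S=\{i:M_i=0\}$, and since $|S|=n-|M|$ is exactly the number of negative simple roots in $C_M$, it is enough to show $-\alpha_i\in C_M$ whenever $M_i=0$. Transport this through $\theta_c$: by Theorem \ref{psi+bijection} the set $\tilde\psi^+_c(M)=\theta_c^{-1}(\{\phi^+_c(N_j)\}_j)$ is the set of non-initial cluster variables of a non-labeled cluster $[\xx_M]$ — unique, by the cardinality argument above applied through $\theta_c$ — and $\theta_c([\xx_M])=C_M$; as $\theta_c(x_i)=-\alpha_i$, the claim becomes $x_i\in[\xx_M]$ whenever $M_i=0$. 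Now $\psi^+_c(N_j)=f_j/(x_1^{r^{(j)}_1}\cdots x_n^{r^{(j)}_n})$ in lowest terms, with $(r^{(j)}_1,\dots,r^{(j)}_n)=\underline{\mathrm{rank}}_H N_j$; if $M_i=0$ then $(N_j)_i=0$, hence $r^{(j)}_i=0$, for every $j$, so $x_i$ occurs in the denominator of no member of $[\xx_M]$. In a cluster algebra of finite type this makes $x_i$ compatible with every member of $[\xx_M]$, whence $x_i\in[\xx_M]$ since a cluster is a maximal set of pairwise compatible cluster variables. This last step — the ``absent from the denominator $\Rightarrow$ compatible'' half of the denominator-vector description of compatibility in finite type, equivalently the formula $(-\alpha_i\parallel_c\beta)=\max\{0,b_i\}$ from property (i) together with the symmetry of the vanishing of the $c$-compatibility degree — is the point I expect to require the most care; the rest is formal.

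Finally, $(M,P)\mapsto C_M=\tilde\phi_c(M,P)$ is a bijection: injective, because $C_M$ recovers $C_M\cap\Phi^+=\tilde\phi^+_c(M)$ and $\tilde\phi^+_c$ is injective; surjective, because for any $c$-cluster $C$ the set $C\cap\Phi^+$ equals $\tilde\phi^+_c(M)$ for the unique support $\tau$-tilting module $M$, and then $\tilde\phi_c(M,P)=C$ for the (unique) $\tau$-tilting pair $(M,P)$ with first component $M$, by the previous paragraph. For the graph isomorphism, recall that an undirected edge of $\overrightarrow{\Gamma}(\stautilt H)$ joins two distinct $\tau$-tilting pairs precisely when they have a common basic almost $\tau$-tilting pair as a summand, i.e.\ when $M\oplus P$ and $M'\oplus P'$ share exactly $n-1$ indecomposable summands (Theorem \ref{thmair}, Theorem \ref{mutation-charactorization}, and the definition of the quiver), while an undirected edge of $\overrightarrow{\Gamma}(\apPhi,c)$ joins $C_1,C_2$ precisely when $|C_1\cap C_2|=n-1$ (Definition \ref{c-cluster-lattice} and Theorem \ref{cluster-cardinality}(2)). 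As $\tilde\phi_c$ is induced by the injective map $\phi_c$ on the $n$ indecomposable summands of a $\tau$-tilting pair, it preserves the size of the intersection of the corresponding $n$-element sets, hence sends edges to edges and non-edges to non-edges; therefore $\tilde\phi_c$ is a graph isomorphism.
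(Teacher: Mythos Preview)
Your proof is correct and follows essentially the same route as the paper's: both hinge on Lemma~\ref{proj-M_i} together with the compatibility-degree formula $(-\alpha_i\parallel_c\beta)=\max\{0,b_i\}$ to show that the negative simple roots appearing in $\tilde\phi_c(M,P)$ are exactly those of the unique $c$-cluster $C_M$ with positive part $\tilde\phi^+_c(M)$. The only substantive difference is that you detour through $\theta_c$ to the cluster-variable side and invoke the denominator criterion for compatibility, whereas the paper argues directly in $\apPhi$: since $\phi^+_c(N_j)=\sum_k r^{(j)}_k\alpha_k$ with $r^{(j)}_k=\mathrm{rank}_{H_k}(N_j)_k$, one has $M_i=0$ $\Leftrightarrow$ every $r^{(j)}_i=0$ $\Leftrightarrow$ $\alpha_i$ does not occur in any positive root of $C_M$ $\Leftrightarrow$ $-\alpha_i\in C_M$. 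Your detour is harmless, but you could drop it entirely --- the step you flag as ``requiring the most care'' reduces to property~(i) plus the symmetry of vanishing you already name, and using that directly in $\apPhi$ avoids appealing to maximal-compatible characterizations of clusters in $\Acal(B^c;t_0)$. On the other hand, your explicit uniqueness argument for $C_M$, your verification of bijectivity, and your ``same $n-1$ summands $\Leftrightarrow$ same $n-1$ roots'' argument for the underlying graph isomorphism are more carefully spelled out than in the paper, which treats these points tersely.
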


\begin{proof}
We fix a $\tau$-tilting pair $(M,P)=\bigoplus( N_i, Q_i)$ of $H$, where $(N_i,Q_i)$ are indecomposable. Corollary \ref{theta+bijection} states that the maximal subset of positive roots of a certain $c$-cluster $C$ is $\bigcup_i\phi_c(N_i,0)$. Due of the bijectivity of $\tilde\phi^+$, it is sufficient to demonstrate that $\bigcup_i\phi_c(0,Q_i)=C\setminus\bigcup_i\phi_c(N_i,0)$. According to Lemma \ref{proj-M_i}, $\bigcup\phi_c (0,Q_i)$ is the set of negative versions of the simple roots that do not appear in linear representations by simple roots of all $\phi_c(N_i,0)$. By definition of the compatibility degree and $c$-cluster, this set coincides with $C\setminus\bigcup_i\phi_c(N_i,0)$.
\end{proof}
Since $\tilde\theta_c$ is a graph isomorphism, we have the following corollary:
\begin{corollary}\label{graphiso-psi}
The map $\psi_c$ induces a bijection $\tilde\psi_c$ between isomorphism classes of basic support $\tau$-tilting pairs and clusters in $\Acal(B^c;t_0)$. Particularly, $\tilde\psi_c\colon \overrightarrow{\Gamma}(\stautilt H)\to \overrightarrow{\Gamma}(B^c)^{\mathrm{op}}$ is a graph isomorphism.
\end{corollary}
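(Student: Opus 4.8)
The plan is to obtain $\tilde\psi_c$ with no new work at all, simply by composing the two graph isomorphisms already established, exploiting the factorization $\psi_c=\theta_c^{-1}\circ\phi_c$ that defines $\psi_c$.

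First I would recall from Theorem \ref{graphiso-theta} that $\phi_c$ induces a bijection $\tilde\phi_c$ between isomorphism classes of basic support $\tau$-tilting pairs of $H$ and $c$-clusters, and that this bijection is a graph isomorphism $\overrightarrow{\Gamma}(\stautilt H)\to\overrightarrow{\Gamma}(\apPhi,c)$. Next, by the theorem of \cite{cp} recalled above, $\theta_c$ restricts to a bijection between the clusters of $\Acal(B^c;t_0)$ and the $c$-clusters in $\apPhi$, so $\theta_c^{-1}$ induces a bijection from $c$-clusters to clusters; moreover, by Corollary \ref{graph-iso-phi}, the induced map $\tilde\theta_c\colon\overrightarrow{\Gamma}(B^c)^{\mathrm{op}}\to\overrightarrow{\Gamma}(\apPhi,c)$ is a graph isomorphism, hence so is $\tilde\theta_c^{-1}$.

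Then I would observe that, because $\psi_c=\theta_c^{-1}\circ\phi_c$ as a map $\ind(\textrm{$\tau$-rigid-pair}\ H)\to\Xcal(B^c)$, the assignment it induces on maximal collections — sending a basic support $\tau$-tilting pair $\bigoplus(N_i,Q_i)$ to $\{\psi_c(N_i,Q_i)\}_i=\theta_c^{-1}\bigl(\{\phi_c(N_i,Q_i)\}_i\bigr)$ — is exactly the composite $\tilde\theta_c^{-1}\circ\tilde\phi_c$. Since the codomain of $\tilde\phi_c$ is the domain of $\tilde\theta_c^{-1}$, this composite $\tilde\psi_c:=\tilde\theta_c^{-1}\circ\tilde\phi_c$ is a bijection from basic support $\tau$-tilting pairs to clusters of $\Acal(B^c;t_0)$, and it is a graph isomorphism $\overrightarrow{\Gamma}(\stautilt H)\to\overrightarrow{\Gamma}(B^c)^{\mathrm{op}}$ as a composite of two graph isomorphisms. (Recall that $\overrightarrow{\Gamma}(B^c)$ and $\overrightarrow{\Gamma}(B^c)^{\mathrm{op}}$ have the same underlying graph, so the statement is insensitive to which of the two we write.)

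The only routine point is checking that $\tilde\psi_c$, defined directly through $\psi_c$, coincides with $\tilde\theta_c^{-1}\circ\tilde\phi_c$; this is immediate from the elementwise identity $\psi_c=\theta_c^{-1}\circ\phi_c$ together with the fact that $\theta_c$ carries the positive roots of a $c$-cluster to the non-initial cluster variables of the corresponding cluster and each $-\alpha_i$ to $x_i$. There is no genuine obstacle here — the corollary is purely a matter of transport of structure. The one thing to be mildly careful about is that at this stage $\tilde\phi_c$ and $\tilde\theta_c$ are claimed only as graph (not quiver) isomorphisms, so no matching of arrow orientations is required; the promotion of $\tilde\psi_c$ to a quiver isomorphism is postponed to Section 4.
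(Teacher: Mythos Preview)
Your proposal is correct and is exactly the paper's approach: the corollary is deduced immediately from Theorem \ref{graphiso-theta} and Corollary \ref{graph-iso-phi} via the defining relation $\psi_c=\theta_c^{-1}\circ\phi_c$, so that $\tilde\psi_c=\tilde\theta_c^{-1}\circ\tilde\phi_c$ is a composite of graph isomorphisms. The paper's own proof is the single sentence ``Since $\tilde\theta_c$ is a graph isomorphism, we have the following corollary,'' which your write-up simply unpacks.
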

Now, we have three graph isomorphisms that appear in the commutative diagram below:
\[\begin{xy}
   (-10,0)*{\overrightarrow{\Gamma}(B^c)^{\mathrm{op}}}="A",(30,15)*{\overrightarrow{\Gamma}(\stautilt H) }="B",(30,-15)*{\overrightarrow{\Gamma}(\apPhi,c)}="D",\ar^{\tilde\psi_c}@{<-} "A";"B"\ar_{\tilde\theta_c}@{->} "A";"D"\ar^{\tilde\phi_c}@{->}"B";"D" 
\end{xy}\]

\begin{remark}\label{previous-study-remark}
There are previous studies discussing the relation between exchange quivers and support $\tau$-tilting quivers of finite dimensional hereditary algebras. Corollary \ref{graphiso-psi} can be recovered from these studies via the isomorphism between the support $\tau$-tilting quiver of a GLS path algebra and that of the corresponding finite dimensional hereditary algebra. The graph isomorphism between the support $\tau$-tilting quiver of a finite dimensional hereditary algebra and the corresponding exchange quiver of cluster algebras is provided in \cite{rupel15} (though the preprint \cite{Hubery} appears to be the first to mention this, but the link to this article seems to be broken at the moment). The graph isomorphism between the support $\tau$-tilting quivers of a GLS path algebra and the corresponding finite dimensional hereditary algebra is given in \cite{gls-reg}*{Theorem 1.1}. 
\end{remark}

\section{$\tilde\psi_c$ is quiver isomorphism}
In this section, we prove the following theorem:
\begin{theorem}\label{lattice-op-iso-psi}
The graph isomorphism $\tilde\psi_c$ is a quiver isomorphism.
\end{theorem}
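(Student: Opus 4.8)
The plan is to upgrade the graph isomorphism $\tilde\psi_c$ of Corollary \ref{graphiso-psi} to a quiver isomorphism by checking that it respects orientations. I would first observe that in each of the two quivers every edge carries exactly one arrow. On the $\tau$-tilting side this is Theorem \ref{thmair} together with Theorem \ref{mutation-charactorization}: of two adjacent support $\tau$-tilting pairs, exactly one is a left mutation of the other. On the cluster side, if $[\xx']=\mu_x([\xx])$ then the recursion in Remark \ref{c-recursion-remark} gives $\cc^{B^c;t_0}_{x',[\xx']}=-\cc^{B^c;t_0}_{x,[\xx]}$, a nonzero vector, so by sign-coherence (Theorem \ref{thm:signs-ci}) exactly one of $\mu_x([\xx])$ and $\mu_{x'}([\xx'])$ is green. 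Since $\tilde\psi_c$ is already a bijection on edges, it then suffices to show that $\tilde\psi_c$ maps the unique arrow along each edge of $\overrightarrow{\Gamma}(\stautilt H)$ to the unique arrow along the image edge of $\overrightarrow{\Gamma}(B^c)^{\mathrm{op}}$.

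So fix an arrow $(M,P)\to(M',P')$ of $\overrightarrow{\Gamma}(\stautilt H)$, that is, $\Fac M\supsetneq\Fac M'$, and set $[\xx]=\tilde\psi_c(M,P)$ and $[\xx']=\tilde\psi_c(M',P')$. The exchanged indecomposable summand of $(M,P)$ must lie in the module part $M$: AIR mutation at a projective summand of $P$ produces $M'=M\oplus N$ for an indecomposable module $N$, whence $\Fac M'\supseteq\Fac M$, contradicting $\Fac M\supsetneq\Fac M'$. Call that summand $N$, so that $x:=\psi_c(N,0)$ is the element of $[\xx]$ mutated to obtain $[\xx']$; it is non-initial, since $\psi_c$ carries genuine modules into $\Xcal^+(B^c)$. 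By the sign flip just noted, the assertion that $[\xx]\to[\xx']$ is an arrow of $\overrightarrow{\Gamma}(B^c)^{\mathrm{op}}$ is equivalent to the assertion that $\mu_x\colon[\xx]\to[\xx']$ is a red mutation, i.e. that $\cc^{B^c;t_0}_{x,[\xx]}$ is non-positive; this is what remains to be proved.

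For the sign I would transport the $c$-vector to rank vectors. The arrow of $\overrightarrow{\Gamma}(\stautilt H)$ at $N$ has a canonically associated locally free brick $B$ of $H$ (the relevant term of the exchange sequence of Theorem \ref{thmair}, equivalently the brick labelling the Hasse arrow), and the left-mutation hypothesis forces $B$ to be a genuine $H$-module, so that $\underline{\mathrm{rank}}_H B$ is a non-negative vector. The crux is the identity $\cc^{B^c;t_0}_{x,[\xx]}=-\underline{\mathrm{rank}}_H B$. One route is to invoke the Gei\ss--Leclerc--Schr\"oer description of the $c$-vectors of $\Acal(B^c;t_0)$ as signed rank vectors of locally free bricks of $H$ \cite{gls-v}; another is to prove it by induction along $\TT_n$, matching the $c$-vector recursion of Remark \ref{c-recursion-remark} against the recursion that governs these rank vectors under $\tau$-tilting mutation, with the Nakanishi--Zelevinsky duality $((G_t^{B^c;t_0})^{\mathrm T})^{-1}=S\,C_t^{B^c;t_0}\,S^{-1}$ of Theorem \ref{c-g-duality} bridging $c$- and $g$-data and with the base case $\tilde\psi_c(0,H)=[\xx_{t_0}]$, where both sides reduce to the standard basis vectors. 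Granting the identity, $\cc^{B^c;t_0}_{x,[\xx]}=-\underline{\mathrm{rank}}_H B\le 0$, so $\mu_x$ is red and $[\xx]\to[\xx']$ is an arrow of $\overrightarrow{\Gamma}(B^c)^{\mathrm{op}}$; together with the first paragraph this proves the theorem.

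The step I expect to be the main obstacle is exactly the identity $\cc^{B^c;t_0}_{x,[\xx]}=-\underline{\mathrm{rank}}_H B$ with its sign: it amounts to pairing the purely homological description of a left mutation of support $\tau$-tilting pairs (via $\Fac$, or via the brick produced by Theorem \ref{thmair}) with the coefficient-theoretic definition of a green mutation, and verifying that the single sign discrepancy between them is precisely the one absorbed in passing to the opposite quiver $\overrightarrow{\Gamma}(B^c)^{\mathrm{op}}$. As a consistency check at the extreme seeds: the sink $(0,H)$ of $\overrightarrow{\Gamma}(\stautilt H)$ maps to $[\xx_{t_0}]$, all of whose $c$-vectors equal $\ee_i\ge 0$, so $[\xx_{t_0}]$ is a source of $\overrightarrow{\Gamma}(B^c)$ and a sink of $\overrightarrow{\Gamma}(B^c)^{\mathrm{op}}$; dually the source $(H,0)$ of $\overrightarrow{\Gamma}(\stautilt H)$ maps to the unique sink of $\overrightarrow{\Gamma}(B^c)$.
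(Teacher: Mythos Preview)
Your overall strategy coincides with the paper's: reduce the orientation question to a sign comparison between the $\tau$-tilting $c$-vector and the cluster-algebraic $c$-vector, and mediate that comparison through $g$-vectors via Nakanishi--Zelevinsky-type duality. The paper packages this as Lemma \ref{lemright} plus Corollary \ref{c-matrix-correspondence}.

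Two points where your version is less sharp than the paper's. First, the identity you posit, $\cc^{B^c;t_0}_{x,[\xx]}=-\underline{\mathrm{rank}}_H B$, is not what the paper proves and is not obviously correct as stated: the paper's $\tau$-tilting $c$-vector is the \emph{dimension} vector $\underline{\dim}_K S_{(N,Q),(M,P)}$, the brick is not asserted to be locally free, and the actual relation (Corollary \ref{c-matrix-correspondence}) is $C_{(M,P)}=-D^{-1}SC^{B^c;t_0}_{\tilde\psi_c(M,P)}S^{-1}D'$ with nontrivial diagonal factors coming from Theorem \ref{c-g-duality-tilting}. Only the sign survives cleanly, which is all that is needed.

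Second, and more substantively, the step you flag as ``the main obstacle'' is exactly where the paper does its work, and it does not proceed by the induction you sketch. The paper first establishes the $g$-vector identity $G_{(M,P)}=-G^{B^c;t_0}_{\tilde\psi_c(M,P)}$ (Theorem \ref{g-matrix-correspondence}). The input from \cite{gls-v} (Theorem \ref{g-matrix-correspondence-gls}) is phrased via \emph{injective} presentations and yields $\hat\gg(M)=\gg^{-B^c;t_0}_{\psi^+_c(M)}$; to convert this to the projective-presentation $g$-vector $\gg(N)$ one applies the standard duality $D$, which requires showing that $D$ preserves $\tau$-rigidity for GLS algebras (Lemma \ref{DM-tau-tilting}, using Theorem \ref{facts-generalized-path-alg}) and intertwines $\psi_c^+$ with $\psi_{c^{-1}}^+$ (Lemma \ref{minus-dual}). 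Only after this $g$-vector identity is in hand do the two dualities, Theorem \ref{c-g-duality} and Theorem \ref{c-g-duality-tilting}, give the $c$-vector sign match. Your proposal gestures at Nakanishi--Zelevinsky but does not identify the GLS $g$-vector input or the duality passage $H\leftrightarrow H^{\mathrm{op}}$ that makes it usable here.
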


This theorem is demonstrated by the fact that the sign of the $c$-vector of a cluster variable and the $c$-vector introduced in the corresponding $\tau$-rigid pair are inverted. The absolute values of the two $c$-vectors do not generally coincide, but their dual, the $g$-vector, is known to coincide up to a factor of $-1$. The consistency of $g$-vectors have been essentially demonstrated in \cite{gls-v}, it cannot be applied in its original form in this work, so it is modified into a form that is applicable before use.

\subsection{$C$-matrix and $G$-matrix of $\tau$-tilting pair}
To prove Theorem \ref{lattice-op-iso-psi}, we introduce the $G$-matrices and the $C$-matrices of $\tau$-tilting pairs. For $M\in\mod A$, we take the minimal projective presentation of $M$
\[\xymatrix{\bigoplus\limits_{i=1}^nP_i^{b_i}\ar[r]&\bigoplus\limits_{i=1}^nP_i^{a_i}\ar[r]&M\ar[r]&0}.\]
The column vector $\mathbf{ g}(M):=(a_1-b_1,\cdots,a_n-b_n)^\mathrm{T}\in\mathbb Z^n$ is called the \emph{$g$-vector} of $M$.
For a $\tau$-rigid pair $(M,P)$, its \emph{$g$-vector} is defined as $$\mathbf{ g}_{(M,P)}:=\mathbf{ g}(M)-\mathbf{ g}(P) \in \ZZ^n.$$

A basic $\tau$-tilting pair $(M,P)$ is expressed using the form $(M,P)=\bigoplus\limits_{i=1}^n(N_i,Q_i)$ to indicate that each pair $(N_i,Q_i)$ is an indecomposable $\tau$-rigid pair.

For a basic $\tau$-tilting pair $(M,P)=\bigoplus\limits_{i=1}^n(N_i,Q_i)$, its \emph{$G$-matrix} is the following integer matrix:

\[G_{(M,P)}:=(\mathbf{ g}_{(N_1,Q_1)},\cdots,\mathbf{ g}_{(N_n,Q_n)}).\] The following theorem demonstrates that $G_{(M,P)}$ is a regular matrix.
\begin{theorem}[\cite{air}*{Theorem 5.1}] Let $(M,P)=\bigoplus\limits_{i=1}^n(M_i,Q_i)$ be a basic $\tau$-tilting pair in $\mod A$. Then, $\mathbf{ g}_{(M_1,Q_1)},\cdots,\mathbf{ g}_{(M_n,Q_n)}$ constitute the $\mathbb Z$-basis of $\mathbb Z^n$.
\end{theorem}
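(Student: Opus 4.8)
The plan is to recognise each $g$-vector as a class in the Grothendieck group of the bounded homotopy category $K^{\mathrm b}(\opname{proj}A)$ and then to reduce to the corresponding statement for two-term silting complexes. Write $\mathcal{T}:=K^{\mathrm b}(\opname{proj}A)$. Since $A$ is basic finite-dimensional with $A=\bigoplus_{i=1}^{n}P_i$ for pairwise non-isomorphic indecomposable projectives $P_i$, the group $K_0(\mathcal{T})$ is free abelian of rank $n$ with basis $[P_1],\dots,[P_n]$, and the class of a two-term complex $(P^{-1}\to P^{0})$ equals $[P^{0}]-[P^{-1}]$. First I would invoke \cite{air}*{Theorem 3.2}: the rule sending a basic $\tau$-tilting pair $(M,P)$, with minimal projective presentation $P^{1}_M\to P^{0}_M\to M\to 0$, to the complex $T_{(M,P)}:=(P^{1}_M\to P^{0}_M)\oplus P[1]$ is a bijection from basic $\tau$-tilting pairs onto basic two-term silting complexes of $\mathcal{T}$, and it carries indecomposable summands to indecomposable summands. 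Under the identification $K_0(\mathcal{T})\cong\ZZ^{n}$, $[P_i]\mapsto\mathbf{e}_i$, the class $[T_{(M,P)}]=[P^{0}_M]-[P^{1}_M]-[P]$ is exactly $\mathbf{g}(M)-\mathbf{g}(P)=\mathbf{g}_{(M,P)}$, and likewise $[T_{(N_i,Q_i)}]=\mathbf{g}_{(N_i,Q_i)}$ for the indecomposable constituents $(N_i,Q_i)$ of $(M,P)$. So it suffices to prove the purely triangulated assertion: \emph{the classes of the $n$ indecomposable summands of a basic two-term silting complex of $\mathcal{T}$ form a $\ZZ$-basis of $K_0(\mathcal{T})$}.

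For that assertion, let $T=\bigoplus_{i=1}^{n}T_i$ be such a complex. Since $T$ is silting we have $\opname{thick}(T)=\mathcal{T}$, and a standard property of silting objects — this reflects the bounded co-$t$-structure on $\mathcal{T}$ with co-heart $\add T$ — is that $[T_1],\dots,[T_n]$ then generate $K_0(\mathcal{T})$ as an abelian group; here the presilting condition $\Hom_{\mathcal{T}}(T,T[i])=0$ for all $i>0$ is what prevents the formation of direct summands inside $\opname{thick}(T)$ from producing classes outside the span of the $[T_i]$. Consequently the $\ZZ$-linear map $\ZZ^{n}\to K_0(\mathcal{T})\cong\ZZ^{n}$, $\mathbf{e}_i\mapsto\mathbf{g}_{(N_i,Q_i)}$, is surjective; because a surjective endomorphism of $\ZZ^{n}$ is automatically an isomorphism, this map is bijective, and therefore $\mathbf{g}_{(N_1,Q_1)},\dots,\mathbf{g}_{(N_n,Q_n)}$ is a $\ZZ$-basis of $\ZZ^{n}$, as claimed.

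I expect the delicate part to be the silting reduction, and inside it the generation statement for silting complexes: making this fully rigorous means either appealing to the co-$t$-structure formalism or distilling the argument from \cite{air}, while carefully tracking the two-term and presilting conditions. In the situation actually used in this paper the algebra $H$ is $\tau$-tilting finite (Theorem \ref{psi+}), so a lighter route is available: the support $\tau$-tilting quiver is then connected, so every basic $\tau$-tilting pair is joined to $(A,0)$ by a chain of mutations, and the exchange triangle attached to a single mutation shows that the new $g$-vector is $-\mathbf{g}_{(N_k,Q_k)}+\sum_{i\neq k}m_i\,\mathbf{g}_{(N_i,Q_i)}$ for some $m_i\in\ZZ$; thus one mutation multiplies $G_{(M,P)}$ on the right by an integer matrix of determinant $-1$, and since $G_{(A,0)}=I_n$ lies in $\mathrm{GL}_n(\ZZ)$ so does every $G_{(M,P)}$, which again yields the basis statement. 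Either way, no new idea beyond the silting/mutation dictionary is needed; the work is bookkeeping.
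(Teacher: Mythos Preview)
The paper does not supply its own proof of this statement; it is quoted verbatim from \cite{air}*{Theorem 5.1} and used as a black box. Your argument is correct and is in fact the proof given in \cite{air}: pass from $(M,P)$ to the two-term silting complex $T_{(M,P)}$ via \cite{air}*{Theorem 3.2}, identify each $g$-vector with the class of the corresponding indecomposable summand in $K_0(K^{\mathrm b}(\proj A))$, and then invoke the Aihara--Iyama result that the indecomposable summands of any silting object form a $\ZZ$-basis of the Grothendieck group. The mutation-based alternative you sketch for the $\tau$-tilting finite case is also valid and already covers every use made in this paper, since $H$ is $\tau$-tilting finite by Theorem~\ref{psi+}.
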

Next, we define the $c$-vectors and the $C$-matrices. We begin with the following theorem:
\begin{theorem}[\cites{asai}]\label{thmtref}
Let $(M,P)$ be a basic $\tau$-tilting pair in $\mod A$. Suppose the left mutation $(M^\prime,P^\prime)$ of $(M,P)$ is obtained by replacing an indecomposable $\tau$-rigid pair $(N,Q)$ with $(N',Q')$. Then, there exists a unique brick $\tilde{S}_{(N,Q),(M,P)}$ up to isomorphisms such that $\tilde{S}_{(N,Q),(M,P)}$ belongs to $\Fac M\cap (\Fac M')^\perp$.
\end{theorem}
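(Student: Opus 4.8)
The plan is to reformulate the statement as the brick labelling of a Hasse arrow in the lattice of functorially finite torsion classes, and to prove existence and uniqueness by hand. Recall from \cite{air} that $(M,P)\mapsto\Fac M$ identifies basic $\tau$-tilting pairs with functorially finite torsion classes; combined with Theorems \ref{thmair} and \ref{mutation-charactorization}, the mutation $(M,P)\leftrightarrow(M',P')$ at $(N,Q)$ corresponds to a cover relation in this lattice. Write $\mathcal T$ for the larger and $\mathcal T'$ for the smaller of $\Fac M,\Fac M'$, so $\mathcal T'\subsetneq\mathcal T$ with no torsion class strictly in between, let $\mathcal F'=(\mathcal T')^{\perp}$ be the corresponding torsion-free class, and set $\mathcal C:=\mathcal T\cap(\mathcal T')^{\perp}=\mathcal T\cap\mathcal F'$. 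Since $\mathcal T$ is closed under factor modules and $\mathcal F'$ under submodules, $\mathcal C$ is closed under images of its own morphisms, and, both being extension-closed, so is $\mathcal C$. The claim to prove is that $\mathcal C$ contains, up to isomorphism, exactly one brick, which is then $S_{(N,Q),(M,P)}$; the summand $(N,Q)$ serves only to single out which of the two pairs is $(M',P')$, i.e.\ which cover is meant.

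For existence I would first show $\mathcal C\neq 0$ and then that a nonzero object of $\mathcal C$ of least dimension is a brick. Non-triviality: for $X\in\mathcal T$ the $\mathcal T'$-trace gives $0\to\mathrm{tr}_{\mathcal T'}X\to X\to \bar X\to 0$ with $\mathrm{tr}_{\mathcal T'}X\in\mathcal T'$ and $\bar X\in(\mathcal T')^{\perp}\cap\mathcal T=\mathcal C$; were $\mathcal C=0$, every $X\in\mathcal T$ would lie in $\mathcal T'$, contradicting $\mathcal T\supsetneq\mathcal T'$. If a minimal nonzero $S\in\mathcal C$ were not a brick it would admit a nonzero non-invertible endomorphism, which (for a finite-length module) is non-injective, so its image is a nonzero object of $\mathcal C$ of strictly smaller dimension — impossible. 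Hence $S$ is a brick.

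Uniqueness is the real work. Fix this minimal brick $S$ and let $S'\in\mathcal C$ be any brick. First I would show $\Hom_A(S,S')\neq 0$: the torsion class $\mathrm T(\mathcal T'\cup\{S\})$ generated by $\mathcal T'$ and $S$ satisfies $\mathcal T'\subsetneq\mathrm T(\mathcal T'\cup\{S\})\subseteq\mathcal T$ (proper because $0\neq S\in(\mathcal T')^{\perp}$ cannot lie in $\mathcal T'$), hence equals $\mathcal T$ by minimality of the cover; thus, by the standard description of a generated torsion class over a finite-dimensional algebra, $S'$ admits a finite filtration whose layers are factor modules of finite direct sums of objects of $\mathcal T'\cup\{S\}$, and its bottom layer $F\subseteq S'$ is a nonzero factor module of some $T\oplus S^{\oplus k}$ with $T\in\mathcal T'$. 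Since $F\subseteq S'\in(\mathcal T')^{\perp}$ the composite $T\to F$ vanishes, so $F$ is a nonzero factor module of $S^{\oplus k}$, and composing $S\twoheadrightarrow F\hookrightarrow S'$ gives a nonzero map. Exchanging the roles of $S$ and $S'$ gives $\Hom_A(S',S)\neq 0$ as well. To finish, note that any nonzero $h\colon S'\to S$ has image in $\mathcal C$, hence of dimension $\ge\dim S$, hence equal to $S$, so $h$ is surjective; similarly any nonzero $g\colon S\to S'$ has image in $\mathcal C$ of dimension $\le\dim S$ and, being a quotient of $S$, this forces $g$ injective. Picking such $g,h$: the composite $hg$ cannot vanish (if $hg=0$, then considering $gh$ one finds $gh\neq 0$ is an isomorphism of the brick $S'$, making $h$ injective hence an isomorphism and thus $g=0$, while $gh=0$ gives $S=\operatorname{im}h\subseteq\ker g$ and again $g=0$); so $hg$ is a nonzero, hence invertible, endomorphism of the brick $S$, whence $g$ is a split monomorphism and $S'\cong\operatorname{im}(g)\oplus\ker(h)\cong S\oplus\ker(h)$. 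Indecomposability of $S'$ and $S\neq 0$ force $\ker h=0$ and $S'\cong S$, giving uniqueness.

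The step I expect to be the main obstacle is the non-vanishing $\Hom_A(S,S')\neq 0$: that is precisely where minimality of the cover enters, via the description of the torsion class generated by $\mathcal T'$ together with a single brick, and it is the substantive content of Asai's theorem; the rest — non-triviality of $\mathcal C$, extracting a brick, and the endomorphism-ring bookkeeping at the end — is routine. An alternative route would be to read $S_{(N,Q),(M,P)}$ off directly from the AIR exchange sequence attached to the mutation at $(N,Q)$ and then identify it with the unique brick in $\mathcal C$, but the torsion-theoretic argument above seems the most self-contained given the tools available here.
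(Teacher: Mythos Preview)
The paper does not prove this theorem; it is stated with a citation to \cite{asai} and used as a black box. Your proposal is essentially the standard brick-labelling argument for cover relations in $\mathsf{tors}\,A$ (as in Demonet--Iyama--Reading--Reiten--Thomas or Barnard--Carroll--Zhu), and the bookkeeping in your existence step and in the final endomorphism argument is correct. You also rightly sidestep the left/right confusion in the paper's statement by working with ``the larger'' $\mathcal T$ and ``the smaller'' $\mathcal T'$; as written, the paper's convention (``$(M',P')$ is a right mutation of $(M,P)$'' means $\Fac M\subsetneq\Fac M'$) makes $\Fac M\cap(\Fac M')^\perp=0$, so the intended interval is indeed the one you use.

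There is, however, a genuine gap at the point you yourself flag as the crux. When you argue that $\mathrm T(\mathcal T'\cup\{S\})=\mathcal T$ ``by minimality of the cover'', you are using that $\mathcal T'\lessdot\mathcal T$ is a cover in the full lattice $\mathsf{tors}\,A$. The tools you invoke (Theorems \ref{thmair} and \ref{mutation-charactorization}, together with \cite{air}) only give a cover relation in $\mathsf{f\text{-}tors}\,A$, and there is no reason a priori for the generated torsion class $\mathrm T(\mathcal T'\cup\{S\})$ to be functorially finite, so the intermediate-value argument does not close. This is not a fatal obstruction---it is true that a mutation arrow is a cover in $\mathsf{tors}\,A$---but it requires an additional argument: using Theorem \ref{thmair} one has $\mathcal T'=\Fac U$ and $\mathcal T={}^\perp(\tau U)\cap Q^\perp=\Fac(U\oplus N)$ for the common almost $\tau$-tilting pair $(U,Q)$ and the new indecomposable $N$, and one must then show that any torsion class strictly containing $\Fac U$ inside $\Fac(U\oplus N)$ already contains $N$ (equivalently, that the ``perpendicular interval'' is governed by a local algebra, as in Jasso's reduction or the direct arguments in \cite{asai}). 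Without this step, neither $\Hom_A(S,S')\neq 0$ nor its symmetric counterpart is justified, and the uniqueness proof does not go through.
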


Let $(M,P)$ be a basic $\tau$-tilting pair in $\mod A$. Suppose the mutation $(M^\prime,P^\prime)$ of $(M,P)$ is obtained by replacing an indecomposable $\tau$-rigid pair $(N,Q)$ with $(N',Q')$. We define the \emph{brick $S_{(N,Q),(M,P)}$ associated with $(N,Q)$ in $(M,P)$} as:
\[S_{(N,Q),(M,P)}=\begin{cases}
    \tilde{S}_{(N,Q),(M,P)}&\text{if $(M',P')$ is a left mutation of $(M,P)$,}\\
    \tilde{S}_{(N',Q'),(M',P')} &\text{otherwise.}
\end{cases}\]

We define the \emph{$c$-vector associated with $(N,Q)$ in $(M,P)$} as:
\[\cc_{(N,Q),(M,P)}=\begin{cases}
\underline{\dim}_KS_{(N,Q),(M,P)} &\text{if $(M',P')$ is a left mutation of $(M,P)$,}\\
-\underline{\dim}_KS_{(N,Q),(M,P)} &\text{otherwise.}
\end{cases}\]
Moreover, when $(M,P)=\bigoplus\limits_{i=1}^n(N_i,Q_i)$, its \emph{$C$-matrix} is the following integer matrix
\[C_{(M,P)}:=(\mathbf{ c}_{(N_1,Q_1),(M,P)},\dots,\mathbf{ c}_{(N_n,Q_n),(M,P)}).\]

By definition of the sign of the $c$-vector and Theorem \ref{mutation-charactorization}, we have the following lemma:

\begin{lemma}[\cite{cgy}*{Lemma 4.5}]\label{lemright}
Let $(M,P)$ be a basic $\tau$-tilting pair in $\mod A$. Suppose the mutation $(M^\prime,P^\prime)$ of $(M,P)$ is obtained by replacing an indecomposable $\tau$-rigid pair $(N,Q)$ with $(N',Q')$. Then the following statements are equivalent:
\begin{itemize}
\item[(i)] $(M^\prime, P^\prime)$ is a left mutation of $(M,P)$;
\item[(ii)] $(M,P)\to (M',P')$ in $\overrightarrow{\Gamma}(\stautilt A)$;
\item[(iii)] $\mathbf{ c}_{(N,Q),(M,P)}>0$.
\end{itemize}
\end{lemma}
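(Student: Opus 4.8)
The plan is to obtain all three equivalences by directly unwinding the relevant definitions, using only Theorems \ref{thmair}, \ref{mutation-charactorization} and \ref{thmtref}; this is why the statement was flagged as essentially immediate. First I would prove $\mathrm{(i)}\Leftrightarrow\mathrm{(ii)}$. By the definition of $\overrightarrow{\Gamma}(\stautilt A)$, there is an arrow $(M,P)\to(M',P')$ exactly when $\Fac M\supsetneq\Fac M'$ and no $\tau$-tilting pair $(M'',P'')$ satisfies $\Fac M\supsetneq\Fac M''\supsetneq\Fac M'$. This is word for word condition (iii) of Theorem \ref{mutation-charactorization}, which that theorem equates with its condition (ii), namely that $(M',P')$ is a left mutation of $(M,P)$. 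Hence (i) and (ii) are both equivalent to this common condition, and so to each other.

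Next I would prove $\mathrm{(i)}\Leftrightarrow\mathrm{(iii)}$ by inspecting the two branches in the definition of $\cc_{(N,Q),(M,P)}$. Since $(M',P')$ is a mutation of $(M,P)$, Theorem \ref{thmair} guarantees that exactly one of $\Fac M\subsetneq\Fac M'$ and $\Fac M'\subsetneq\Fac M$ holds; equivalently, $(M',P')$ is either a right mutation or a left mutation of $(M,P)$, but not both, so the case split in the definition of the $c$-vector is exhaustive and mutually exclusive. In the branch corresponding to (i) — where $(M',P')$ is a left mutation of $(M,P)$ — one has $\cc_{(N,Q),(M,P)}=\underline{\dim}_K S_{(N,Q),(M,P)}$, the dimension vector of the brick furnished by Theorem \ref{thmtref}; a brick is a nonzero module and a dimension vector has non-negative entries, so this $c$-vector is nonzero and non-negative, \ie $\cc_{(N,Q),(M,P)}>0$. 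In the complementary branch — where $(M',P')$ is a right mutation of $(M,P)$, equivalently $(M,P)$ is a left mutation of $(M',P')$, so that the brick $S_{(N',Q'),(M',P')}$ exists by Theorem \ref{thmtref} applied at $(M',P')$ — one has $\cc_{(N,Q),(M,P)}=-\underline{\dim}_K S_{(N',Q'),(M',P')}$, which is nonzero and non-positive, hence not $>0$. Combining the two branches, $\cc_{(N,Q),(M,P)}>0$ holds if and only if $(M',P')$ is a left mutation of $(M,P)$, which is (i).

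I do not expect a genuine obstacle: the lemma is a dictionary entry relating the mutation-theoretic language of Theorem \ref{mutation-charactorization} to the brick/dimension-vector language underlying $c$-vectors. The two points that want care are (a) checking that the case split in the definition of $\cc_{(N,Q),(M,P)}$ is indexed precisely by the left/right dichotomy of Theorem \ref{thmair}, so that the argument above is a true dichotomy rather than merely an implication in one direction; and (b) recording that a brick is a nonzero module, which is exactly what upgrades "$\geq 0$" to a strict "$>0$" and makes condition (iii) non-vacuous.
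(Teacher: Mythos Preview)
Your proposal is correct and matches the paper's own treatment: the paper does not give a separate proof but simply records that the lemma follows ``by definition of the sign of the $c$-vector and Theorem \ref{mutation-charactorization}'', which is exactly what you unwind. Your two points of care---that Theorem \ref{thmair} makes the left/right dichotomy exhaustive and exclusive, and that a brick is nonzero so its dimension vector is strictly positive---are precisely the substance behind that one-line justification.
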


The following theorem is the $\tau$-tilting version of Theorem \ref{c-g-duality}:

\begin{theorem}[\cite{asai}*{Section 3.4}]\label{c-g-duality-tilting}
For any basic $\tau$-tilting pair $(M,P)=\bigoplus_i(N_i,Q_i)$ be in $\mod A$, we define $D$ (resp. $D'$) as the diagonal matrix so that its $(i, i)$ entry is $\dim_K \End_A(e_i(A/\mathsf{rad} A))$ (resp. $\dim_K \End_A S_{(N_i,Q_i),(M,P)}$). Then we have 
\[(G_{(M,P)}^\mathrm{T})^{-1}=DC_{(M,P)}(D')^{-1}.\]
\end{theorem}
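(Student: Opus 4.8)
The plan is to follow \cite{asai}*{Section 3.4}, whose argument is the $\tau$-tilting analogue of the Nakanishi--Zelevinsky proof of Theorem~\ref{c-g-duality}. Since $G_{(M,P)}$ is invertible, the asserted equality $(G_{(M,P)}^{\mathrm T})^{-1}=D\,C_{(M,P)}\,(D')^{-1}$ is equivalent to $G_{(M,P)}^{\mathrm T}\,D\,C_{(M,P)}=D'$. Writing $(M,P)=\bigoplus_i(N_i,Q_i)$, fixing a minimal projective presentation $P^1_i\to P^0_i\to N_i\to 0$ of each $N_i$, and letting $S_j$ and $\epsilon_j=\pm1$ denote the brick and the sign occurring in the definition of $\cc_{(N_j,Q_j),(M,P)}$, this amounts to the scalar identities $\gg_{(N_i,Q_i)}^{\mathrm T}\,D\,\cc_{(N_j,Q_j),(M,P)}=\delta_{ij}\dim_K\End_A(S_j)$ for all $i,j$. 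The matrix $D$ is precisely the Gram matrix, in the bases of indecomposable projectives and of simples, of the natural pairing $\langle[P],[X]\rangle:=\dim_K\Hom_A(P,X)$ on $K_0(\proj A)\times K_0(\mod A)$ (this is the analogue of the skew-symmetrizer $S$ in Theorem~\ref{c-g-duality}), so that $\gg_{(N_i,Q_i)}^{\mathrm T}\,D\,\cc_{(N_j,Q_j),(M,P)}=\epsilon_j\,\langle[P^0_i]-[P^1_i]-[Q_i]\,,\,[S_j]\rangle$. Applying the Auslander--Reiten formula $\langle[P^0]-[P^1],[X]\rangle=\dim_K\Hom_A(N,X)-\dim_K\overline{\Hom}_A(X,\tau N)$ (the homological identity underlying the $g$-vector formalism of \cite{air}) to the contribution of $N_i$, the statement reduces to proving
\[
\epsilon_j\bigl(\dim_K\Hom_A(N_i,S_j)-\dim_K\overline{\Hom}_A(S_j,\tau N_i)-\dim_K\Hom_A(Q_i,S_j)\bigr)=\delta_{ij}\dim_K\End_A(S_j).
\]

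First I would dispose of the off-diagonal entries. The brick $S_j$ labels the Hasse edge of $\overrightarrow{\Gamma}(\stautilt A)$ between $(M,P)$ and its mutation $(M'_j,P'_j)$ at the $j$-th summand, so it lies in the larger of the two torsion classes $\Fac M,\Fac M'_j$ and in the torsion-free class of the smaller one. Two standard facts do the work: (a) for a $\tau$-tilting pair $(M,P)$ the projective part $P$ annihilates $\Fac M$, because every module in $\Fac M$ lies in $\mod A/AeA$ with $P=eA$; and (b) if $M$ is $\tau$-rigid then $\Fac M\subseteq{}^{\bot}(\tau M)$, hence $\Hom_A(\Fac M,\tau N)=0$ for every direct summand $N$ of $M$. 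If $i\neq j$, then $(N_i,Q_i)$ is a common indecomposable summand of $(M,P)$ and of $(M'_j,P'_j)$; therefore $N_i$ belongs to the smaller of the two torsion classes, forcing $\Hom_A(N_i,S_j)=0$, while $Q_i$ is a summand of both projective parts, so $\Hom_A(Q_i,S_j)=0$ by (a), and $\overline{\Hom}_A(S_j,\tau N_i)=0$ by (b). Hence the left-hand side vanishes when $i\neq j$. For $i=j$, facts (a) and (b) again kill the $\overline{\Hom}_A(S_i,\tau N_i)$ and $\Hom_A(Q_i,S_i)$ terms (using $N_i\mid M$ and $S_i\in\Fac M$ when $\cc_{(N_i,Q_i),(M,P)}$ is read off from $(M,P)$, and the symmetric statement inside $(M'_i,P'_i)$ in the other orientation), leaving $\epsilon_i\dim_K\Hom_A(N_i,S_i)$ (respectively $\epsilon_i\dim_K\overline{\Hom}_A(S_i,\tau N_i)$).

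It then remains to identify this surviving term with $\dim_K\End_A(S_i)$ and to determine the sign, and I expect this to be the main obstacle. It relies on the finer structure of the exchange sequences behind Theorem~\ref{thmtref}: the brick $S_i$ is a quotient of the exchanged summand $N_i$, and in fact $\Hom_A(M,S_i)=\Hom_A(N_i,S_i)$ is a free $\End_A(S_i)$-module of rank one, generated by the canonical surjection --- the ``multiplicity-one'' property of brick labels (cf.\ \cite{air}*{Section 2.3}, and \cite{asai}). Thus $\dim_K\Hom_A(N_i,S_i)=\dim_K\End_A(S_i)$, and since this number is positive the sign must be $\epsilon_i=+1$ in that orientation, in accordance with Lemma~\ref{lemright}; the other orientation is handled dually via $\overline{\Hom}_A(S_i,\tau N_i)$. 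Over an algebraically closed field all the division rings $\End_A(e_k(A/\mathsf{rad}A))$ and $\End_A(S_i)$ equal $K$, so $D=D'=I$ and the statement collapses to the familiar $(G_{(M,P)}^{\mathrm T})^{-1}=C_{(M,P)}$; in general one genuinely needs the brick-label theory, both to control the two scalar matrices and to secure the rank-one Hom-spaces, and some care is required in tracing the sign conventions of $\cc$ through the two mutation directions. (A more computational alternative avoids this step by setting up mutation recursions for $G_{(M,P)}$ and $C_{(M,P)}$ that mirror the $g$- and $c$-vector recursions of Section~2, checking the base case at the pair $(A,0)$ --- where $G=I$, $C=I$ and $D'=D$ --- and propagating the identity along $\overrightarrow{\Gamma}(\stautilt A)$ using Theorem~\ref{mutation-charactorization}.)
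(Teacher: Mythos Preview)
The paper does not give its own proof of this statement: it is quoted from \cite{asai}*{Section 3.4} with attribution in the theorem header and is used as a black box in the proof of Corollary~\ref{c-matrix-correspondence}. There is therefore nothing in the paper to compare your proposal against.

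That said, your sketch reproduces the argument from the cited source and is essentially correct. Two small points of care. First, your identification of $D$ as the Gram matrix of the pairing $\langle[P],[X]\rangle=\dim_K\Hom_A(P,X)$ on $K_0(\proj A)\times K_0(\mod A)$ is only compatible with the displayed formula if the $c$-vector $\underline{\dim}_K S$ is recorded in the basis of simple composition factors rather than as $(\dim_K Se_k)_k$; over a non-algebraically-closed field these differ exactly by the matrix $D$, so you should verify which convention is in force before writing $\gg^{\mathrm T}D\cc=\langle\,\cdot\,,\,\cdot\,\rangle$. Second, the diagonal step you flag as the obstacle is genuinely the substantive input: the identification $\Hom_A(N_i,S_i)\cong\End_A(S_i)$ (free of rank one) is precisely the content of the brick-label theory in \cite{asai}, and your handling of the sign via Lemma~\ref{lemright} is the right mechanism. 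The recursion alternative you mention at the end also works but is not needed once that machinery is in place.
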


\subsection{Proof of Theorem \ref{lattice-op-iso-psi}}
In this subsection, for a $\tau$-tilting pair $(M,P)=\bigoplus(N_i,Q_i)$, we regard the $G$-matrix $G^{B^c;t_0}_{\tilde\psi_c(M,P)}$ as a matrix
\[(\mathbf{ g}_{\psi_c(N_1,Q_1)},\dots,\mathbf{ g}_{\psi_c(N_n,Q_n)}).\] Similarly, we regard the $C$-matrix $C^{B^c;t_0}_{\tilde\psi_c(M,P)}$ as a matrix
\[(\mathbf{ c}_{\psi_c(N_1,Q_1),\tilde\psi_c(M,P)},\dots,\mathbf{ c}_{\psi_c(N_n,Q_n),\tilde\psi_c(M,P)}).\]

The proof of Theorem \ref{lattice-op-iso-psi} employs an important theorem on $g$-vectors from \cite{gls-v}. Since there are some differences in the notations in \cite{gls-v} and those in this paper, we will start by restating the theorem using our notations. From this subsection, we assume the semifield $\PP$ of cluster algebras is trivial, that is, $\PP=\{1\}$ (According to Proposition \ref{proindepen}, $\overrightarrow{\Gamma}(B)$ is independent of $\PP$). For the sake of readability, let $\xx^{B;t_0}_t$ denote $\xx_t$ in $\Acal(B;t_0)$. We have the following proposition:

\begin{proposition}\label{same-graph}
For any skew-symmetrizable matrix $B$ and for any $t\in \TT_n$, we have $\xx_t^{B;t_0}=\xx_t^{-B;t_0}$. Particularly, $\Xcal(B)=\Xcal(-B)$ and the exchange graph ${\Gamma}(B)$ is identical to ${\Gamma}(-B)$.
\end{proposition}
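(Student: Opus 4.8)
The plan is to prove the equality $\xx_t^{B;t_0}=\xx_t^{-B;t_0}$ by induction on the distance from $t_0$ to $t$ in $\TT_n$, using the cluster-variable mutation rule \eqref{eq:x-mutation} together with the fact that with the trivial semifield $\PP=\{1\}$ every coefficient $y_k$ equals $1$, so that $y_k\oplus 1=1$ as well. The base case $t=t_0$ is immediate since both seeds have the same initial cluster $\xx_{t_0}=(x_1,\dots,x_n)$. For the inductive step, suppose \begin{xy}(0,1)*+{t}="A",(10,1)*+{t'}="B",\ar@{-}^k"A";"B" \end{xy} with $t$ closer to $t_0$, and assume $\xx_t^{B;t_0}=\xx_t^{-B;t_0}$; I must also carry along, as part of the induction hypothesis, that the two exchange matrices at $t$ are negatives of each other, i.e.\ $B_t^{-B;t_0}=-B_t^{B;t_0}$.

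The matrix statement $B_t^{-B;t_0}=-B_t^{B;t_0}$ follows by a separate (simultaneous) induction: it holds at $t_0$ by definition, and one checks directly from the matrix mutation rule that $\mu_k(-A)=-\mu_k(A)$ for any skew-symmetrizable $A$ — indeed, negating the $(i,k)$ and $(k,j)$ entries exchanges the roles of $[a_{ik}]_+$ with $-[-a_{ik}]_+$ etc., and a short sign bookkeeping shows the mutated entry changes sign. Granting this, write $B_t=B_t^{B;t_0}=(b_{ij;t})$, so that $B_t^{-B;t_0}=(-b_{ij;t})$. Now apply \eqref{eq:x-mutation} at the edge labeled $k$. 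For $j\neq k$ nothing changes and the equality is inherited from the hypothesis. For $j=k$, in the $B$-pattern we get
\[
x'_k=\frac{y_k\prod_i x_i^{[b_{ik;t}]_+}+\prod_i x_i^{[-b_{ik;t}]_+}}{(y_k\oplus 1)x_k}
=\frac{\prod_i x_i^{[b_{ik;t}]_+}+\prod_i x_i^{[-b_{ik;t}]_+}}{x_k},
\]
using $y_k=1$ and $y_k\oplus 1=1$ in the trivial semifield. In the $(-B)$-pattern the exchange-matrix entry in direction $k$ is $-b_{ik;t}$, so the same computation yields
\[
x'_k=\frac{\prod_i x_i^{[-b_{ik;t}]_+}+\prod_i x_i^{[b_{ik;t}]_+}}{x_k},
\]
which is literally the same rational function, since addition is commutative and by the inductive hypothesis the $x_i$'s appearing in both numerators agree. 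Hence $\xx_{t'}^{B;t_0}=\xx_{t'}^{-B;t_0}$, completing the induction.

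The ``Particularly'' clause is then formal: since the two cluster patterns assign the same cluster to every $t\in\TT_n$, the sets of cluster variables coincide, $\Xcal(B)=\Xcal(-B)$, and the exchange graphs $\Gamma(B)$ and $\Gamma(-B)$ — whose vertices are non-labeled clusters and whose edges are non-labeled mutations, both of which depend only on the clusters as unordered sets (Theorem \ref{non-labeled-cluster-thm} and the discussion following it) — are identical. I do not expect a genuine obstacle here; the only point requiring care is to state and use the auxiliary matrix identity $\mu_k(-A)=-\mu_k(A)$, so that the induction hypothesis is strong enough to run. (Note the exchange \emph{quiver}, as opposed to the exchange \emph{graph}, is \emph{not} preserved — the arrow orientations are governed by $c$-vectors, and indeed Remark after Corollary \ref{graph-iso-phi} and the discussion of $B^c$ versus $B^{c^{-1}}$ show the orientations get reversed; but that is not part of the present statement.)
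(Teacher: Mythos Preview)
Your proposal is correct and follows essentially the same approach as the paper: the paper's one-line proof cites precisely the two ingredients you spell out, namely the identity $\mu_k(-B)=-\mu_k(B)$ and the symmetry of the exchange relation \eqref{eq:x-mutation} under $b_{ik}\mapsto -b_{ik}$ when $\PP=\{1\}$. You have simply made the induction explicit and carried the matrix identity along as an auxiliary hypothesis, which is exactly what is needed.
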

\begin{proof}
It arises from the fact that $-\mu_k(B)=\mu_k(-B)$ and the symmetry of cluster mutation recurrences \eqref{eq:x-mutation} when $\PP=\{1\}$.
\end{proof}

According to this proposition, the notation $\gg^{-B^c;t_0}_{\psi^+_{c}(M)}$ makes sense because the cluster variable set of $\Acal(B^c;t_0)$ and $\Acal(-B^c;t_0)$ are identical.

The following fact is provided in \cite{gls-v}.
\begin{theorem}[\cite{gls-v}*{Proposition 7.2}]\label{g-matrix-correspondence-gls}
We consider the minimal injective presentation for any indecomposable $\tau$-rigid module $M\in \mod H$.
\[\xymatrix{0\ar[r]&M\ar[r]&\bigoplus\limits_{i=1}^nI_i^{a_i}\ar[r]&\bigoplus\limits_{i=1}^nI_i^{b_i}}.\]
Then, the column vector $\hat{\gg}(M):=(b_1-a_1,\dots,b_n-a_n)$ coincides with $\gg^{-B^c;t_0}_{\psi^+_{c}(M)}$.
\end{theorem}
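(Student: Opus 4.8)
The plan is to deduce the statement from \cite{gls-v}*{Proposition 7.2}: the content is not a new argument but a careful translation of that result into the conventions of the present paper. Recall that Gei\ss--Leclerc--Schr\"oer work with an algebra $H(C,D,\Omega)$ attached to a symmetrizable Cartan matrix $C$, a symmetrizer $D$, and an acyclic orientation $\Omega$ of the underlying graph, and with a cluster algebra whose initial exchange matrix is an explicit matrix $B(C,\Omega)$ built from $C$ and $\Omega$; their Proposition~7.2 identifies a homological $g$-vector of an indecomposable locally free $\tau$-rigid $H$-module $M$ --- by Theorem~\ref{facts-generalized-path-alg}(2) the same thing as an indecomposable $\tau$-rigid $H$-module --- with the $g$-vector, in their convention, of the cluster variable furnished by the Caldero--Chapoton-type function $C_H(M)$. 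So the work splits into three matchings: (i) the data $(C,D,\Omega)$ versus $(C_\Phi,D_\Phi,c)$, and $B(C,\Omega)$ versus one of $\pm B^c$, $\pm (B^c)^{\mathrm T}$; (ii) the cluster variable $C_H(M)$ versus $\psi^+_c(M)$; (iii) GLS's homological $g$-vector versus our $\hat\gg(M)$, including the sign/transpose convention used for $g$-vectors of cluster variables.

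For (i): an acyclic orientation of the Dynkin graph of $\Phi$ and a Coxeter element of $W(\Phi)$ carry the same information, and under the present dictionary $B(C,\Omega)$ becomes $-B^c=B^{c^{-1}}$ rather than $B^c$ --- this opposite sign is exactly what forces $-B^c$ into the statement. By Proposition~\ref{same-graph} we have $\Xcal(B^c)=\Xcal(-B^c)$ and $\Gamma(B^c)=\Gamma(-B^c)$, so $\psi^+_c(M)\in\Xcal^+(B^c)$ is also a non-initial cluster variable of $\Acal(-B^c;t_0)$ and the symbol $\gg^{-B^c;t_0}_{\psi^+_c(M)}$ is meaningful. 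For (ii): the Caldero--Chapoton-type function of GLS has denominator $x_1^{r_1}\cdots x_n^{r_n}$ with $\underline{\mathrm{rank}}_H M=(r_1,\dots,r_n)$; since $\theta_c$ sends a cluster variable with denominator $x_1^{d_1}\cdots x_n^{d_n}$ to $\sum_i d_i\alpha_i$ and $\phi^+_c(M)=\sum_i r_i\alpha_i$, this forces $C_H(M)=\theta_c^{-1}(\phi^+_c(M))=\psi^+_c(M)$ (and since $M\neq 0$ this is a non-initial cluster variable, so there is no clash with $x_1,\dots,x_n$). For (iii): GLS read their $g$-vector off a minimal injective presentation, which is verbatim our $\hat\gg(M)$, and their sign convention for cluster-variable $g$-vectors becomes, after the identification in (i), the one we write $\gg^{-B^c;t_0}$. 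Combining (i)--(iii) with \cite{gls-v}*{Proposition 7.2} yields $\hat\gg(M)=\gg^{-B^c;t_0}_{\psi^+_c(M)}$.

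The only real difficulty is the bookkeeping of conventions: the literature uses several mutually incompatible conventions for $g$-vectors (projective versus injective presentations, $B$ versus $B^{\mathrm T}$, and an overall sign), so the point is to be sure that composing GLS's conventions with the dictionary above lands on $\gg^{-B^c;t_0}$ and not on $\gg^{B^c;t_0}$, $\gg^{(B^c)^{\mathrm T};t_0}$, or $\gg^{-(B^c)^{\mathrm T};t_0}$. I would pin this sign down by evaluating both sides in controlled cases --- the locally free simple modules and the projective-injective modules of $H(C_\Phi,D_\Phi,c)$ in small types such as $A_2$ and $B_2$ --- reading the left-hand side from the explicit injective presentations and the right-hand side from the recursion~\eqref{g-recursion} near $t_0$; if extra leverage is needed, the $c$--$g$ duality of Theorem~\ref{c-g-duality} (with skew-symmetrizer $D_\Phi$) together with sign-coherence (Theorem~\ref{thm:signs-ci}) can be used to exclude the wrong alternatives. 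Once the sign is fixed on these test cases, \cite{gls-v}*{Proposition 7.2} propagates the identity to every indecomposable $\tau$-rigid module.
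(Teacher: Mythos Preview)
Your proposal is appropriate and aligns with the paper's treatment: the paper does not give a proof of this statement at all, but simply records it as a fact imported from \cite{gls-v}*{Proposition~7.2}, with a brief remark afterward explaining that the appearance of $-B^c$ stems from GLS's opposite sign convention for the exchange matrix and that $\hat\gg(M)$ is what GLS call the $g$-vector. Your translation (i)--(iii) is exactly this dictionary spelled out in more detail, so you are doing the same thing as the paper, only more explicitly; the additional verification via small-rank test cases is unnecessary but harmless.
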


\begin{remark}
The notation in Proposition \ref{g-matrix-correspondence-gls} may seem strange. This is due to the fact that the skew-symmetrizable matrix corresponding to the Coxeter element $c$ in \cite{gls-v} is $-B^c(=B^{c^{-1}})$ instead of $B^c$. Furthermore, $\hat{\gg}(M)$ is referred to as the \emph{$g$-vector} in \cite{gls-v}.
\end{remark}

Theorem \ref{g-matrix-correspondence-gls} examines the relation between minimal injective presentation and $g$-vectors in cluster algebra, but in our paper, the $g$-vector of a module is defined by its minimal projective presentation. Therefore, we would like to apply the Theorem \ref{g-matrix-correspondence-gls} after applying the standard dual $D(-):=\Hom_K(-,K)$ to the minimal projective presentation. To accomplish this, however, the standard dual must preserve the $\tau$-rigidity of $M$. The following lemma provides proof for it.

\begin{lemma}\label{DM-tau-tilting}
For any Dynkin-type root system $\Phi$ and any Coxeter element $c\in W(\Phi)$, if $M\in \mod H$ is $\tau$-rigid (resp. support $\tau$-tilting), then $DM\in \mod H^{\mathrm{op}}$ is also $\tau$-rigid (resp. support $\tau$-tilting).
\end{lemma}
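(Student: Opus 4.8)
The plan is to reduce everything to the characterization of $\tau$-rigidity and support $\tau$-tilting-ness provided by Theorem \ref{facts-generalized-path-alg}(2), namely that $M \in \mod H$ is $\tau$-rigid (resp.\ $\tau$-tilting) if and only if $M$ is locally free and rigid (resp.\ and additionally a tilting module in the appropriate sense). Since the standard duality $D = \Hom_K(-,K)\colon \mod H \to \mod H^{\mathrm{op}}$ is an exact contravariant equivalence, it preserves indecomposability, direct sums, and the number of non-isomorphic indecomposable summands, and it sends $\Ext^1_H$-groups to $\Ext^1_{H^{\mathrm{op}}}$-groups with the roles of the two arguments exchanged. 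So the work is to check that each of the conditions appearing in Theorem \ref{facts-generalized-path-alg}(2) is self-dual for the algebra $H$; the key input that makes this possible is that $H$ is $1$-Iwanaga-Gorenstein (Theorem \ref{facts-generalized-path-alg}(1)), which is itself a self-dual property since $\mathrm{inj.dim}\, H = \mathrm{proj.dim}\, DH^{\mathrm{op}}$ and vice versa, so $H^{\mathrm{op}}$ is again $1$-Iwanaga-Gorenstein.

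First I would record that $DH \cong$ the injective cogenerator of $\mod H^{\mathrm{op}}$ and more precisely $D(e_iH) $ is the indecomposable injective $H^{\mathrm{op}}$-module at vertex $i$, while $D$ carries projectives of $H$ to injectives of $H^{\mathrm{op}}$ and injectives to projectives. Consequently, for $M \in \mod H$, a minimal projective presentation $P_1 \to P_0 \to M \to 0$ is taken by $D$ to a minimal injective copresentation $0 \to DM \to DP_0 \to DP_1$ of $DM$ in $\mod H^{\mathrm{op}}$; hence $\mathrm{proj.dim}_H M \le 1 \iff \mathrm{inj.dim}_{H^{\mathrm{op}}} DM \le 1$. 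By the equivalence of (i)--(v) in Theorem \ref{facts-generalized-path-alg}(1) applied on both sides, $M$ is locally free over $H$ if and only if $DM$ is locally free over $H^{\mathrm{op}}$. Next, since $D$ is a duality, $\Hom_H(M, \tau_H M) = 0$ translates into a vanishing condition on $DM$; here the cleanest route is to use the Auslander-Reiten formula / the fact that over a $1$-Iwanaga-Gorenstein algebra $\tau$-rigidity of a locally free module $M$ is equivalent to $\Ext^1_H(M,M) = 0$ (this is exactly "locally free rigid" in the terminology of Theorem \ref{facts-generalized-path-alg}(2)), and $\Ext^1_H(M,M) = 0 \iff \Ext^1_{H^{\mathrm{op}}}(DM, DM) = 0$ because $D$ is an exact contravariant equivalence. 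Combining the two equivalences with Theorem \ref{facts-generalized-path-alg}(2) gives: $M$ is $\tau$-rigid over $H$ $\iff$ $M$ locally free and $\Ext^1_H(M,M)=0$ $\iff$ $DM$ locally free and $\Ext^1_{H^{\mathrm{op}}}(DM,DM)=0$ $\iff$ $DM$ is $\tau$-rigid over $H^{\mathrm{op}}$.

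For the support $\tau$-tilting (equivalently, locally free tilting) statement, I would pass through the pair picture: $M$ is support $\tau$-tilting over $H$ iff there is an idempotent $e$ with $M$ a $\tau$-tilting $H/HeH$-module. Note $D(H/HeH) \cong H^{\mathrm{op}}/H^{\mathrm{op}}\bar e H^{\mathrm{op}}$ for the corresponding idempotent $\bar e$ in $H^{\mathrm{op}}$, and these quotient algebras are again GLS-type (or at least again $1$-Iwanaga-Gorenstein with the same combinatorics), so it suffices to show $D$ sends $\tau$-tilting modules to $\tau$-tilting modules. A $\tau$-rigid $M$ is $\tau$-tilting precisely when $|M| = |H|$; since $D$ preserves the count of indecomposable summands and $|H^{\mathrm{op}}| = |H| = n$, and we have just shown $DM$ is $\tau$-rigid, we conclude $DM$ is $\tau$-tilting over $H^{\mathrm{op}}$. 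Unwinding, $M$ support $\tau$-tilting over $H$ implies $DM$ support $\tau$-tilting over $H^{\mathrm{op}}$, and applying the same argument to $D\colon \mod H^{\mathrm{op}} \to \mod H$ gives the converse.

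The main obstacle I anticipate is the bookkeeping around "locally free": one must verify that $D$ genuinely interchanges local freeness over $H$ and over $H^{\mathrm{op}}$, since the naive worry is that $D(Me_i)$ is an $e_iH^{\mathrm{op}}$-module on the "wrong side." The resolution is that $H_i = e_iHe_i \cong K[\varepsilon_i]/(\varepsilon_i^{d_i})$ is a commutative local (indeed symmetric) Frobenius algebra, so freeness of $M_i$ over $H_i$ is unambiguous and is preserved by $K$-linear duality; alternatively, one sidesteps the issue entirely by using the homological characterization (i)$\Leftrightarrow$(v) of Theorem \ref{facts-generalized-path-alg}(1) instead of the definition, which is why invoking the $1$-Iwanaga-Gorenstein property is the cleanest path. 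Everything else is a formal consequence of $D$ being a contravariant exact equivalence together with Theorem \ref{facts-generalized-path-alg}.
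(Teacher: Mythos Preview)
Your proposal is correct and follows essentially the same strategy as the paper: reduce via Theorem \ref{facts-generalized-path-alg}(2) to the tilting/rigid characterization, use that $D$ preserves $\Ext^1$-vanishing and interchanges projective with injective dimension, reduce support $\tau$-tilting to $\tau$-tilting via an idempotent quotient, and count summands. The only substantive point you handle less explicitly than the paper is why Theorem \ref{facts-generalized-path-alg} applies on the $H^{\mathrm{op}}$ side: the paper simply observes $H^{\mathrm{op}}\simeq H(C_\Phi,D_\Phi,c^{-1})$ (and likewise $H/HeH\simeq H(C_{\Phi'},D_{\Phi'},c')$ for some Dynkin sub-datum), so both are again GLS path algebras and Theorem \ref{facts-generalized-path-alg}(2) applies verbatim; you instead argue via self-duality of the $1$-Iwanaga-Gorenstein property and the Frobenius structure of $H_i$. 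Both routes work, but making the isomorphism $H^{\mathrm{op}}\simeq H(C_\Phi,D_\Phi,c^{-1})$ explicit is cleaner and is worth stating, since it is exactly what licenses invoking Theorem \ref{facts-generalized-path-alg}(2) for $H^{\mathrm{op}}$.
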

\begin{proof}
We only prove the $\tau$-tilting part (the $\tau$-rigid part is proved in the same way). By definition of support $\tau$-tilting, there exists an idempotent $e$ such that $M$ is a $\tau$-tilting module of $H/HeH$-module. Since there exist a Dynkin-type root system $\Phi'$ and a Coxeter element $c'\in W(\Phi')$ such that $H/HeH\simeq H(C_{\Phi'},D_{\Phi'},c')$, we can assume that $M$ is a $\tau$-tilting module. According to Theorem \ref{facts-generalized-path-alg} (2), $M$ is tilting. We will prove that $DM$ is a tilting module of $H^{\textrm{op}}$. By considering the group of extensions, $\Ext^1_H(M,M)=0$ if and only if $\Ext^1_{H^{\textrm{op}}}(DM,DM)=0$. Moreover, according to Theorem \ref{facts-generalized-path-alg} (1) and $H^\mathrm{op}\simeq H(C_\Phi,D_\Phi,c^{-1})$, $\mathrm{proj.dim} M\leq 1$ if and only if $\mathrm{proj.dim} DM\leq 1$. Furthermore, as $|H|=|H^{\mathrm{op}}|$ and $|M|=|DM|$, we have that $|H|=|M|$ if and only if $|H^{\mathrm{op}}|=|DM|$. Therefore, $DM$ is a tilting module of $H^{\mathrm{op}}$. According to Theorem \ref{facts-generalized-path-alg} (2) again, $DM$ is a $\tau$-tilting module.
\end{proof}

\begin{lemma}\label{minus-dual}
We have the following commutative diagram:
\[\begin{xy}
   (0,5)*{\Xcal^+(B^c)}="B",(60,5)*{\ind (\textrm{$\tau$-$\mathsf{rigid}$}\ H)}="C",(0,-10)*{\Xcal^+(-B^c)=\Xcal^+(B^{c^{-1}})}="D",(60,-10)*{\ind (\textrm{$\tau$-$\mathsf{rigid}$}\ H^{\mathrm{op}})
   }="E",\ar^{\hspace{-5mm}\psi^+_c}@{<-} "B";"C"\ar_{\psi^+_{c^{-1}}}@{<-}"D";"E",\ar@{=}"B";"D",\ar^{D(-)}@{->}"C";"E"
\end{xy}\]
Particularly, if $(\psi^+_c)^{-1}(x)=N$, then we have $(\psi^+_{c^{-1}})^{-1}(x)=DN$.
\end{lemma}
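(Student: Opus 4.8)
The plan is to reduce commutativity of the square to the single identity $\phi^+_c=\phi^+_{c^{-1}}\circ D$ on indecomposable $\tau$-rigid $H$-modules, and then to prove that identity by comparing rank vectors.

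First I would dispose of the left edge and of the maps $\theta_c,\theta_{c^{-1}}$. By Proposition~\ref{same-graph} applied to $B=B^c$ we have $\Xcal(B^c)=\Xcal(-B^c)=\Xcal(B^{c^{-1}})$ as subsets of $\Fcal$, and since the initial cluster $\{x_1,\dots,x_n\}$ is common to $\Acal(B^c;t_0)$ and $\Acal(B^{c^{-1}};t_0)$ this restricts to $\Xcal^+(B^c)=\Xcal^+(B^{c^{-1}})$; this is precisely the equality labelling the left vertical arrow. Moreover the rule defining $\theta_\bullet$ --- expand a cluster variable uniquely as $f(x_1,\dots,x_n)/(x_1^{d_1}\cdots x_n^{d_n})$ and return $\sum_i d_i\alpha_i$ --- refers only to the Laurent expansion of the given rational function and not to the Coxeter element. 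Hence $\theta_c$ and $\theta_{c^{-1}}$ coincide as maps on the common domain $\Xcal^+(B^c)=\Xcal^+(B^{c^{-1}})$. Since $\psi^+_c=\theta_c^{-1}\circ\phi^+_c$ and $\psi^+_{c^{-1}}=\theta_{c^{-1}}^{-1}\circ\phi^+_{c^{-1}}$, the square commutes if and only if $\phi^+_c(N)=\phi^+_{c^{-1}}(DN)$ for every indecomposable $\tau$-rigid $N\in\mod H$.

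Next I would verify this root identity. By Lemma~\ref{DM-tau-tilting}, $DN$ is a $\tau$-rigid $H^{\mathrm{op}}$-module, so by Theorem~\ref{facts-generalized-path-alg}(2) both $N$ and $DN$ are locally free; write $r_i=\underline{\mathrm{rank}}_H(N)_i$ and $r_i'=\underline{\mathrm{rank}}_{H^{\mathrm{op}}}(DN)_i$, so that $\phi^+_c(N)=\sum_i r_i\alpha_i$ and $\phi^+_{c^{-1}}(DN)=\sum_i r_i'\alpha_i$. It remains to check $r_i=r_i'$. Through the isomorphism $H^{\mathrm{op}}\simeq H(C_\Phi,D_\Phi,c^{-1})$, which identifies the vertex idempotent $e_i$ of $H$ with that of $H(C_\Phi,D_\Phi,c^{-1})$, we get $H_i^{\mathrm{op}}=(e_iHe_i)^{\mathrm{op}}$, so $\dim_K H_i^{\mathrm{op}}=\dim_K H_i=d_i$. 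On the other hand the standard duality is compatible with the Peirce decomposition $1=\sum_i e_i$: one has $(DN)e_i\cong D(Ne_i)$ as $H_i^{\mathrm{op}}$-modules, whence $\dim_K(DN)_i=\dim_K N_i$. Comparing dimensions of free modules, $r_i'd_i=\dim_K(DN)_i=\dim_K N_i=r_id_i$, so $r_i=r_i'$ for all $i$, giving $\phi^+_c(N)=\phi^+_{c^{-1}}(DN)$ and hence commutativity of the square. The final assertion --- if $(\psi^+_c)^{-1}(x)=N$ then $(\psi^+_{c^{-1}})^{-1}(x)=DN$ --- is just this commutativity read on the inverse bijections.

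The genuinely substantial input, that $D$ preserves $\tau$-rigidity over the GLS path algebra, is already available as Lemma~\ref{DM-tau-tilting}, so the remaining work is light. The only points requiring care --- and thus the closest thing to an obstacle --- are the left/right-module bookkeeping needed to identify $(DN)e_i$ with $D(N_i)$ as a module over $H_i^{\mathrm{op}}$, and the (easy but load-bearing) observation that $\theta_c$ and $\theta_{c^{-1}}$ are literally the same function on $\Xcal^+(B^c)=\Xcal^+(B^{c^{-1}})$, which rests on Proposition~\ref{same-graph}.
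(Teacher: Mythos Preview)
Your proof is correct and follows essentially the same route as the paper's: both arguments reduce to the observation that $\psi^+_c$ (via $\phi^+_c$) is determined by the rank vector, invoke Lemma~\ref{DM-tau-tilting} to know $DN$ is $\tau$-rigid, use that $\underline{\mathrm{rank}}_H N=\underline{\mathrm{rank}}_{H^{\mathrm{op}}}DN$, and conclude by bijectivity. Your write-up is simply more explicit where the paper is terse --- you spell out why $\theta_c=\theta_{c^{-1}}$ on $\Xcal^+(B^c)=\Xcal^+(B^{c^{-1}})$ and justify the rank-vector equality by a dimension count, whereas the paper asserts both without further comment.
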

\begin{proof}
By definition of $\psi^+_c$, if $x=\dfrac{f(x_1,\dots,x_n)}{x^{d_1}\cdots x_n^{d_n}}$, then
\[\underline{\mathrm{rank}}_H((\psi^+_c)^{-1}(x))=\underline{\mathrm{rank}}_{H^{\mathrm{op}}}((\psi^+_{c^{-1}})^{-1}(x))=(d_1,\dots,d_n).\]
By Lemma \ref{DM-tau-tilting}, $DN$ is an indecomposable $\tau$-rigid module. As $\underline{\mathrm{rank}}_HN=\underline{\mathrm{rank}}_{H^{\mathrm{op}}}DN$ and the bijectivity of $\psi^+_{c}$ , we have $(\psi^+_{c^{-1}})^{-1}(x)=DN$.
\end{proof}

The following theorem is our notation version of Theorem \ref{g-matrix-correspondence-gls}.

\begin{theorem}\label{g-matrix-correspondence}
For any indecomposable $\tau$-rigid pair $(N,Q)$ in $H$, we have $\gg_{(N,Q)}=-\gg^{B^c;t_0}_{\psi_c(N,Q)}$. Particularly, for any basic $\tau$-tilting pair $(M,P)$ in $H$, we have $G_{(M,P)}=-G^{B^c;t_0}_{\tilde\psi_c(M,P)}$.
\end{theorem}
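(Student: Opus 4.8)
The plan is to reduce the statement about $g$-vectors of $\tau$-rigid pairs in $H$ to the already-available Theorem~\ref{g-matrix-correspondence-gls}, which concerns the \emph{injective} presentation in $H$ and the cluster algebra $\Acal(-B^c;t_0)=\Acal(B^{c^{-1}};t_0)$. The bridge between ``injective presentation in $H$'' and ``projective presentation in $H$'' is the standard duality $D(-)=\Hom_K(-,K)\colon\mod H\to\mod H^{\mathrm{op}}$, which we are entitled to use because Lemma~\ref{DM-tau-tilting} guarantees it preserves $\tau$-rigidity. So the first step is purely formal bookkeeping: spell out what $D$ does to a minimal injective presentation of $N$, namely it produces a minimal projective presentation of $DN$ in $\mod H^{\mathrm{op}}$, with the roles of the two terms swapped. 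Concretely, if $0\to N\to \bigoplus I_i^{a_i}\to\bigoplus I_i^{b_i}$ is the minimal injective presentation in $\mod H$, then applying $D$ gives $\bigoplus (DI_i)^{b_i}\to\bigoplus(DI_i)^{a_i}\to DN\to 0$, and $DI_i$ is exactly the indecomposable projective $P_i^{\mathrm{op}}$ of $H^{\mathrm{op}}$ at vertex $i$. Hence $\gg(DN)=(a_1-b_1,\dots,a_n-b_n)^{\mathrm{T}}=-\hat{\gg}(N)$ in the notation of Theorem~\ref{g-matrix-correspondence-gls}.

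Next I would combine three facts. First, Theorem~\ref{g-matrix-correspondence-gls} says $\hat{\gg}(N)=\gg^{-B^c;t_0}_{\psi^+_c(N)}$. Second, by Proposition~\ref{same-graph}, $\Xcal(B^c)=\Xcal(-B^c)$, so it even makes sense to compare $g$-vectors in the two cluster algebras; but here I actually want to pass to $H^{\mathrm{op}}=H(C_\Phi,D_\Phi,c^{-1})$ and use Lemma~\ref{minus-dual}, which identifies $(\psi^+_{c^{-1}})^{-1}$ with the composite of $(\psi^+_c)^{-1}$ and $D(-)$. That is, writing $x=\psi^+_c(N)$, we have $\psi^+_{c^{-1}}(DN)=x$ as well. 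Third, applying Theorem~\ref{g-matrix-correspondence-gls} to the algebra $H^{\mathrm{op}}$ with Coxeter element $c^{-1}$ and its associated matrix $B^{c^{-1}}=-B^c$: the minimal injective presentation of $DN$ in $\mod H^{\mathrm{op}}$, which by another application of $D$ is the dual of the minimal \emph{projective} presentation of $N$ in $\mod H$, yields $\hat{\gg}(DN)=\gg^{-B^{c^{-1}};t_0}_{\psi^+_{c^{-1}}(DN)}=\gg^{B^c;t_0}_x$. But $\hat{\gg}(DN)=-\gg(N)$ by the first step (now read in the opposite direction). Chaining these gives $\gg(N)=-\gg^{B^c;t_0}_{\psi^+_c(N)}$, which is the claim for a $\tau$-rigid module $(N,0)$.

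Then I would handle the two boundary cases that make it a statement about $\tau$-rigid \emph{pairs} rather than modules. For $(N,0)$ with $N$ a nonzero indecomposable $\tau$-rigid module, $\psi_c(N,0)=\psi^+_c(N)\in\Xcal^+(B^c)$ and $\gg_{(N,0)}=\gg(N)$, so the previous paragraph applies verbatim. For $(0,P_i)$ we have $\psi_c(0,P_i)=x_i$, whose $g$-vector in $\Acal(B^c;t_0)$ is $\gg^{B^c;t_0}_{x_i}=\ee_i$ (initial cluster variable), while by definition $\gg_{(0,P_i)}=\gg(0)-\gg(P_i)=-\gg(P_i)=-\ee_i$; these agree with the asserted sign flip. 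Finally the ``particularly'' clause about $G$-matrices is immediate: both $G_{(M,P)}$ and $G^{B^c;t_0}_{\tilde\psi_c(M,P)}$ are by definition (as set up at the start of this subsection) the matrices whose columns are indexed by the indecomposable summands $(N_i,Q_i)$ of $(M,P)$, with $i$th column $\gg_{(N_i,Q_i)}$ respectively $\gg^{B^c;t_0}_{\psi_c(N_i,Q_i)}$, so the column-wise identity $\gg_{(N_i,Q_i)}=-\gg^{B^c;t_0}_{\psi_c(N_i,Q_i)}$ assembles into $G_{(M,P)}=-G^{B^c;t_0}_{\tilde\psi_c(M,P)}$.

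I do not expect a serious obstacle here; the content is really just a careful dualization. The one point demanding genuine care is making sure the two applications of the duality $D$ compose correctly and that minimality of presentations is preserved under $D$ (it is, since $D$ is an exact contravariant equivalence sending indecomposable injectives to indecomposable projectives and vice versa, so it sends minimal injective presentations to minimal projective presentations). A secondary bookkeeping hazard is keeping straight which Coxeter element ($c$ versus $c^{-1}$) and which matrix ($B^c$ versus $-B^c=B^{c^{-1}}$) is attached to $H$ versus $H^{\mathrm{op}}$ at each invocation of Theorem~\ref{g-matrix-correspondence-gls} and Lemma~\ref{minus-dual}; the Remark following Theorem~\ref{g-matrix-correspondence-gls} is the guide, and the two sign flips (one from $c\leftrightarrow c^{-1}$, i.e.\ $B^c\leftrightarrow -B^c$, and one from injective-versus-projective presentation) must be tracked so that they combine into the single net sign $-1$ in the statement rather than cancelling.
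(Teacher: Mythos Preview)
Your proposal is correct and follows essentially the same route as the paper: dualize to pass from projective presentations in $\mod H$ to injective presentations in $\mod H^{\mathrm{op}}$, apply Theorem~\ref{g-matrix-correspondence-gls} to $DN$ over $H^{\mathrm{op}}=H(C_\Phi,D_\Phi,c^{-1})$ (using Lemma~\ref{DM-tau-tilting} to ensure $\tau$-rigidity), and then invoke Lemma~\ref{minus-dual} to identify $\psi^+_{c^{-1}}(DN)=\psi^+_c(N)$. The paper's proof is just a terser version of your second paragraph; note that your first paragraph's computation $\gg(DN)=-\hat{\gg}(N)$ and your ``First'' fact $\hat{\gg}(N)=\gg^{-B^c;t_0}_{\psi^+_c(N)}$ are never actually used in the final chain, so you could streamline by dropping them.
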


\begin{proof}
The claim is followed by the definition of the $g$-vector of $(0,Q)$ if $N=0$. We assume that $N\neq 0$ (and $Q=0$). Then we have $\gg_{(N,0)}=\gg(N)$. By definition of $g$-vector, we have $\gg(N)=-\hat\gg(DN)$. By Lemma \ref{DM-tau-tilting}, $DN$ is an indecomposable $\tau$-rigid module of $H^{\textrm{op}}$. Therefore, by Theorem \ref{g-matrix-correspondence-gls}, we have $\hat\gg(DN)=\gg^{B^c;t_0}_{\psi^+_{c^{-1}}(DN)}$. By Lemma \ref{minus-dual}, we have $\gg^{B^c;t_0}_{\psi^+_{c^{-1}}(DN)}=\gg^{B^c;t_0}_{\psi^+_{c}(N)}$. From the above discussion, we have $\gg_{(N,0)}=-\gg^{B^c;t_0}_{\psi_{c}(N,0)}$.
\end{proof}

By using Theorem \ref{g-matrix-correspondence}, we have the following corollary:
\begin{corollary}\label{c-matrix-correspondence}
We fix any basic $\tau$-tilting pair $(M,P)=\bigoplus\limits_{i=1}^n(N_i,Q_i)$ in $H$. If necessary, rearrange the order of the indices in $(N_i,Q_i)$ such that the order is obtained by applying mutations from $((P_1,0),(P_2,0),...,(P_n,0))$. Then, we have \[C_{(M,P)}=-D^{-1}SC^{B^c;t_0}_{\tilde\psi_c(M,P)}S^{-1}D',\] where $S$ is the skew-symmetrizer of $B^c$ and $D,D'$ are matrices in Theorem \ref{c-g-duality-tilting}. In particular, for any indecomposable $\tau$-rigid pair $(N,Q)$ and basic $\tau$-tilting pair $(M,P)$ contains $(N,Q)$ in $H$, $\cc_{(N,Q),(M,P)}>0$ if and only if $\cc^{B^c;t_0}_{\psi_c(N,Q),\tilde\psi_c(M,P)}<0$.
\end{corollary}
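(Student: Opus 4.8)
\textbf{Proof proposal for Corollary \ref{c-matrix-correspondence}.}

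The plan is to combine the two ``$C$--$G$ duality'' statements (Theorem \ref{c-g-duality} on the cluster-algebra side and Theorem \ref{c-g-duality-tilting} on the $\tau$-tilting side) with the $g$-vector comparison of Theorem \ref{g-matrix-correspondence}. First I would record the two matrix identities: for the $\tau$-tilting pair $(M,P)$ we have $(G_{(M,P)}^{\mathrm T})^{-1}=D\,C_{(M,P)}\,(D')^{-1}$, and on the cluster side, since $\tilde\psi_c(M,P)$ corresponds to some seed $t$ of $\Acal(B^c;t_0)$, Theorem \ref{c-g-duality} gives $((G^{B^c;t_0}_{\tilde\psi_c(M,P)})^{\mathrm T})^{-1}=S\,C^{B^c;t_0}_{\tilde\psi_c(M,P)}\,S^{-1}$. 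The reordering of the summands $(N_i,Q_i)$ so that the tuple is reached from $((P_1,0),\dots,(P_n,0))$ by a sequence of mutations is precisely what makes the columns of $G_{(M,P)}$, $C_{(M,P)}$ match up, column by column, with those of $G^{B^c;t_0}_{\tilde\psi_c(M,P)}$, $C^{B^c;t_0}_{\tilde\psi_c(M,P)}$ under $\psi_c$; this is the content of the conventions set at the start of the subsection together with Theorem \ref{non-labeled-cluster-thm}, so I would just invoke those.

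Next I would substitute Theorem \ref{g-matrix-correspondence}, $G_{(M,P)}=-G^{B^c;t_0}_{\tilde\psi_c(M,P)}$, into the $\tau$-tilting duality. Since transposition and inversion are insensitive to the global sign and $(-X)^{-1}=-X^{-1}$, we get
\[
-\bigl((G^{B^c;t_0}_{\tilde\psi_c(M,P)})^{\mathrm T}\bigr)^{-1}=D\,C_{(M,P)}\,(D')^{-1}.
\]
Comparing with the cluster-algebra identity $((G^{B^c;t_0}_{\tilde\psi_c(M,P)})^{\mathrm T})^{-1}=S\,C^{B^c;t_0}_{\tilde\psi_c(M,P)}\,S^{-1}$ yields $-S\,C^{B^c;t_0}_{\tilde\psi_c(M,P)}\,S^{-1}=D\,C_{(M,P)}\,(D')^{-1}$, and multiplying on the left by $D^{-1}$ and on the right by $D'$ gives exactly
\[
C_{(M,P)}=-D^{-1}\,S\,C^{B^c;t_0}_{\tilde\psi_c(M,P)}\,S^{-1}\,D',
\]
which is the first assertion. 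Here I should check that the $D$ appearing in Theorem \ref{c-g-duality-tilting} (whose $(i,i)$ entry is $\dim_K\End_A(e_i(A/\operatorname{\mathsf{rad}}A))$) agrees with the skew-symmetrizer $S$ of $B^c$ for $A=H$; this is the one genuinely algebra-specific point, and it follows from the fact that for the GLS algebra $H(C_\Phi,D_\Phi,c)$ one has $\dim_K\End_H(e_i(H/\operatorname{\mathsf{rad}}H))=d_i$, so the matrix $D$ of Theorem \ref{c-g-duality-tilting} is literally $D_\Phi$, which is a skew-symmetrizer of $B^c$ (this is how $B^c$ was built from $C_\Phi$). I would state this as a short remark rather than belabor it.

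For the last sentence, fix an indecomposable summand $(N,Q)=(N_i,Q_i)$ of $(M,P)$ and look at the $i$-th columns of the matrix identity just proved: $\cc_{(N,Q),(M,P)}=-(D')^{-1}_{ii}\,D^{-1}\,S\,\cc^{B^c;t_0}_{\psi_c(N,Q),\tilde\psi_c(M,P)}$. The matrices $D^{-1}S$ and $(D')^{-1}$ are diagonal with strictly positive entries, so the linear map $v\mapsto -(D')^{-1}_{ii}D^{-1}S\,v$ sends each strictly positive vector to a strictly negative one and each strictly negative vector to a strictly positive one; combined with sign-coherence (Theorem \ref{thm:signs-ci}) for $c$-vectors on the cluster side and the analogous dichotomy for $c$-vectors of $\tau$-tilting pairs (each $\cc_{(N,Q),(M,P)}$ is, up to sign, the dimension vector of a brick, hence is either $>0$ or $<0$ componentwise), this gives precisely: $\cc_{(N,Q),(M,P)}>0$ iff $\cc^{B^c;t_0}_{\psi_c(N,Q),\tilde\psi_c(M,P)}<0$. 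I expect the main obstacle to be purely bookkeeping — making sure the column orderings on the two sides are genuinely matched by the prescribed reordering so that the two duality theorems can be compared entrywise, rather than just up to an unknown permutation; once that is pinned down the rest is the sign manipulation above.
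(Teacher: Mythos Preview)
Your argument is correct and is exactly what the paper's one-line proof (``inferred from Theorems \ref{g-matrix-correspondence}, \ref{c-g-duality-tilting}, and \ref{c-g-duality}'') unpacks to: substitute $G_{(M,P)}=-G^{B^c;t_0}_{\tilde\psi_c(M,P)}$ into the $\tau$-tilting duality and compare with the cluster duality. One small note: your check that $D$ coincides with the skew-symmetrizer $S$ is unnecessary, since the displayed identity keeps $D$ and $S$ separate and the sign conclusion only requires that $D^{-1}S$ and $S^{-1}D'$ (not $(D')^{-1}$, a minor slip in your column extraction) are positive diagonal.
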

\begin{proof}
It is inferred from Theorems \ref{g-matrix-correspondence}, \ref{c-g-duality-tilting}, and \ref{c-g-duality}.
\end{proof}

We are ready to prove Theorems \ref{lattice-op-iso-psi}.

\begin{proof}[Proof of Theorem \ref{lattice-op-iso-psi}]
By Corollary \ref{c-matrix-correspondence} and Lemma \ref{lemright}, $(M,P)\to (M',P')$ in $\overrightarrow{\Gamma}(\stautilt H)$ if and only if $\tilde{\psi}_c(M',P')\to \tilde{\psi}_c(M,P)$ in $\overrightarrow{\Gamma}(B^c)$. Therefore, $\tilde{\psi}_c\colon \overrightarrow{\Gamma}(\stautilt H)\to \overrightarrow{\Gamma}(B^c)^{\mathrm{op}}$ is a quiver isomorphism.
\end{proof}

\section{Relation between quivers of one algebra and its opposite}
In this section, we compare $\overrightarrow{\Gamma}(\stautilt H)$ to $\overrightarrow{\Gamma}(\stautilt H^{\mathrm{op}})$, and $\overrightarrow{\Gamma}(B^c)$ to $\overrightarrow{\Gamma}(-B^c)$. This observation is key in proving that $\tilde{\theta}_c$ is a quiver isomorphism.  
\subsection{Relation between support $\tau$-tilting quivers of $H$ and $H^{\mathrm{op}}$}\label{gls-path-condition}

In this subsection, we regard vertices of the support $\tau$-tilting quiver as support $\tau$-tilting modules instead of corresponding $\tau$-tilting pairs. In other words, we identify a $\tau$-tilting pair $(M,P)$ with the corresponding support $\tau$-tilting module $M$.

From the discussion in Sections 3 and 4, we obtain the following facts:

\begin{proposition}\label{graph-iso-dual}
Let $H$ be a GLS path algebra. The standard duality $D\colon \mod H\to \mod H^{\mathrm{op}}$ induces the graph isomorphism $\tilde{D}\colon \overrightarrow{\Gamma}(\stautilt H)\to \overrightarrow{\Gamma}(\stautilt H^{\mathrm{op}})$.
\end{proposition}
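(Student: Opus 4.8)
The plan is to transport the graph isomorphism $\tilde\psi_c$ (and its $c^{-1}$-version) through the standard duality $D$, using the commutativity already established between the various maps. First I would note that by Corollary \ref{graphiso-psi} applied to $H = H(C_\Phi, D_\Phi, c)$ and to $H^{\mathrm{op}} \simeq H(C_{\Phi}, D_\Phi, c^{-1})$, we have graph isomorphisms
\[
\tilde\psi_c\colon \overrightarrow{\Gamma}(\stautilt H)\to \overrightarrow{\Gamma}(B^c)^{\mathrm{op}},
\qquad
\tilde\psi_{c^{-1}}\colon \overrightarrow{\Gamma}(\stautilt H^{\mathrm{op}})\to \overrightarrow{\Gamma}(B^{c^{-1}})^{\mathrm{op}} = \overrightarrow{\Gamma}(-B^c)^{\mathrm{op}}.
\]
By Proposition \ref{same-graph}, the exchange graph $\Gamma(B^c)$ equals $\Gamma(-B^c)$, and since the exchange quiver and its opposite share the same underlying graph, $\overrightarrow{\Gamma}(B^c)^{\mathrm{op}}$ and $\overrightarrow{\Gamma}(-B^c)^{\mathrm{op}}$ coincide as graphs, with the identity on vertices (identifying a non-labeled cluster of $\Acal(B^c;t_0)$ with the equal cluster of $\Acal(-B^c;t_0)$). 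Thus the composite
\[
\tilde D := \tilde\psi_{c^{-1}}^{-1}\circ \tilde\psi_c\colon \overrightarrow{\Gamma}(\stautilt H)\to \overrightarrow{\Gamma}(\stautilt H^{\mathrm{op}})
\]
is a graph isomorphism.

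Next I would verify that this composite is genuinely induced by the standard duality $D$, i.e.\ that on vertices it sends the support $\tau$-tilting module $M$ (identified with its pair $(M,P)$) to $DM$. For an indecomposable $\tau$-rigid module $N$ with $(\psi^+_c)^{-1}$-preimage a cluster variable $x$, Lemma \ref{minus-dual} gives $(\psi^+_{c^{-1}})^{-1}(x) = DN$; extending to pairs, $\psi_c$ sends $(0,P_i)\mapsto x_i$ and $\psi_{c^{-1}}$ likewise sends $(0,P_i^{\mathrm{op}})\mapsto x_i$, while $D$ sends $P_i = e_iH$ to $e_iH^{\mathrm{op}} = P_i^{\mathrm{op}}$. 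Hence for a basic $\tau$-tilting pair $(M,P) = \bigoplus_i(N_i,Q_i)$ one has $\tilde\psi_{c^{-1}}(DM, DP) = \tilde\psi_c(M,P)$ as non-labeled clusters, so $\tilde D(M) = DM$ on vertices. Combined with the previous paragraph, $\tilde D$ is a graph isomorphism and agrees with the map on $\tau$-tilting modules induced by $D$; by Lemma \ref{DM-tau-tilting} this latter map is a well-defined bijection on support $\tau$-tilting modules, so the two descriptions are consistent.

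The main point to be careful about is the bookkeeping of the three identifications: $H^{\mathrm{op}} \simeq H(C_\Phi, D_\Phi, c^{-1})$ (so that $\tilde\psi_{c^{-1}}$ applies), $-B^c = B^{c^{-1}}$ (so that the targets of $\tilde\psi_c$ and $\tilde\psi_{c^{-1}}$ have a common underlying graph), and the vertex identification $\Xcal(B^c) = \Xcal(-B^c)$ from Proposition \ref{same-graph}. None of these is deep, but one must check they are mutually compatible — in particular that the isomorphism $H^{\mathrm{op}}\simeq H(C_\Phi,D_\Phi,c^{-1})$ is the one under which $D$ on idempotents matches the identification of vertex labels, so that the diagram of Lemma \ref{minus-dual} (which already encodes this compatibility) can be invoked. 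Once that is in place, the statement follows formally: $\tilde D = \tilde\psi_{c^{-1}}^{-1}\circ\tilde\psi_c$ is a composite of graph isomorphisms, hence a graph isomorphism, and it is induced by $D$. I expect the only real obstacle to be stating these identifications precisely enough that no sign or orientation is lost; the abstract argument itself is a one-line diagram chase.
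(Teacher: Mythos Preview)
Your proposal is correct and follows essentially the same route as the paper: compose $\tilde\psi_c$ with the identification $\overrightarrow{\Gamma}(B^c)=\overrightarrow{\Gamma}(-B^c)$ from Proposition~\ref{same-graph} and then with $\tilde\psi_{c^{-1}}^{-1}$, and invoke Lemma~\ref{minus-dual} to see that the composite is induced by the standard duality $D$. The paper's proof is precisely this one-line diagram chase, and your additional bookkeeping about the identifications is accurate but not needed beyond what Lemma~\ref{minus-dual} already encodes.
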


\begin{proof}
We have a composition of graph isomorphisms
\[\overrightarrow{\Gamma}(\stautilt H)\xrightarrow{\tilde{\psi}_c} \overrightarrow{\Gamma}(B^c)=\overrightarrow{\Gamma}(-B^c)\xrightarrow{\tilde{\psi}_{c^{-1}}^{-1}} \overrightarrow{\Gamma}(\stautilt H^\mathrm{op})\] by Corollary \ref{graphiso-psi} and Proposition \ref{same-graph}. By Lemma \ref{minus-dual}, this graph isomorphism is induced by the standard duality $D$.
\end{proof}

As demonstrated by Proposition \ref{graph-iso-dual}, $\overrightarrow{\Gamma}(\stautilt H)$ and $\overrightarrow{\Gamma}(\stautilt H^{\mathrm{op}})$ are graph-isomorphic, but not quiver-isomorphic. If an arrow $M\to M'$ exists in $\overrightarrow{\Gamma}(\stautilt H)$ and $M'\subsetneq M$, then we have $DM\to DM'$ in $\overrightarrow{\Gamma}(\stautilt H^{\mathrm{op}})$ as $DM'\subsetneq DM$. What about the other arrow directions? The subsequent theorem is its solution. The arrows in $\overrightarrow{\Gamma}(\stautilt H)$ and $\overrightarrow{\Gamma}(\stautilt H^{\mathrm{op}})$ are all opposite to each other.

\begin{theorem}\label{condition-tilting-true}
Let $H$ be a GLS path algebra. If there exists an arrow $M\to M'$ in $\overrightarrow{\Gamma}(\stautilt H)$ and $|M|=|M'|$ holds, then there is an arrow $DM'\to DM$ in $\overrightarrow{\Gamma}(\stautilt H^{\mathrm{op}})$.
\end{theorem}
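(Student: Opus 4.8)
The plan is to translate the statement entirely into the language of $c$-vectors, where the sign-flip between $H$ and $\Acal(B^c;t_0)$ is already under control, and then to compare $H$ with $H^{\mathrm{op}}$ through the chain of isomorphisms built in Sections 3 and 4. Concretely, suppose $M\to M'$ is an arrow in $\overrightarrow{\Gamma}(\stautilt H)$ obtained by replacing the indecomposable $\tau$-rigid pair $(N,Q)$ by $(N',Q')$, and assume $|M|=|M'|$. The first step is to record what $|M|=|M'|$ means: since the number of indecomposable summands of the module part changes by at most one under mutation, $|M|=|M'|$ forces the mutation to be ``projective-free-neutral'', i.e. neither $(N,Q)$ nor $(N',Q')$ is of the form $(0,P_i)$; equivalently, under $\psi_c$ this is precisely the case where $\tilde\psi_c(M)$ and $\tilde\psi_c(M')$ are connected by a mutation that does \emph{not} involve an initial cluster variable leaving or entering, which is the combinatorial content distinguishing the ``black'' from the ``red'' arrows in the introductory example.

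The second step is to run the comparison $\overrightarrow{\Gamma}(\stautilt H)\xrightarrow{\tilde\psi_c}\overrightarrow{\Gamma}(B^c)=\overrightarrow{\Gamma}(-B^c)\xrightarrow{\tilde\psi_{c^{-1}}^{-1}}\overrightarrow{\Gamma}(\stautilt H^{\mathrm{op}})$ from Proposition \ref{graph-iso-dual}, keeping track of orientations. By Theorem \ref{lattice-op-iso-psi}, $\tilde\psi_c$ and $\tilde\psi_{c^{-1}}$ each reverse arrows, so the composite $\tilde D$ preserves arrows of $\overrightarrow{\Gamma}(B^c)$ \emph{as unoriented edges} but what matters is the orientation in $\overrightarrow{\Gamma}(B^c)$ versus in $\overrightarrow{\Gamma}(-B^c)=\overrightarrow{\Gamma}(B^{c^{-1}})$: these two exchange quivers share the same underlying exchange graph (Proposition \ref{same-graph}) but the green/red designation is computed from $\cc^{B^c;t_0}$ and $\cc^{-B^c;t_0}$ respectively. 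So the heart of the matter is: when does an edge of the exchange graph that is green for $B^c$ stay green for $-B^c$, and when does it flip? The answer is governed by the $g$-vectors. Using $C^{-B^T;t_0}_t = SC^{B;t_0}_tS^{-1}$ (the relation quoted in the Remark after the definition of green/red mutations, with $B=B^c$ and noting $-B^c = B^{c^{-1}}$ is \emph{not} literally $-(B^c)^T$, so one must combine this with Theorem \ref{c-g-duality} and $-(B^c) = B^{c^{-1}}$, $S(B^{c^{-1}}) = -(S B^c)$, hence $S$ is also a skew-symmetrizer of $B^{c^{-1}}$), one shows that the $c$-vector for $B^{c^{-1}}$ along the mutation at cluster variable $x$ is, up to the positive diagonal conjugation, $\pm$ the $c$-vector for $B^c$, with the sign being $+$ exactly when the mutation is at a non-initial $\leftrightarrow$ non-initial exchange (the $|M|=|M'|$ case) and $-$ otherwise.

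Rather than doing that sign bookkeeping on the cluster-algebra side, the cleaner route — and the one I would actually carry out — is to stay on the algebra side and invoke Theorem \ref{condition-tilting-true}'s companion, Proposition \ref{comparing-dual}(3), in the form already proved: part (3)(i) is the easy ``$|M|\ne|M'|$'' case handled in the paragraph preceding the theorem statement, and the present theorem is exactly part (3)(ii). So the real work is: given the arrow $M\to M'$ with $|M|=|M'|$, realized by $(N,Q)\rightsquigarrow(N',Q')$, identify which indecomposable $\tau$-rigid pair of $DM$ is replaced to reach $DM'$, and show the replacement goes ``upward'' for $H^{\mathrm{op}}$. By Lemma \ref{DM-tau-tilting} and Proposition \ref{graph-iso-dual}, $DM$ and $DM'$ are adjacent support $\tau$-tilting $H^{\mathrm{op}}$-modules differing in one indecomposable summand, namely $D N$ replaced by $D N'$ (here both $Q$ and $Q'$ are zero in the $|M|=|M'|$ case after passing to the quotient algebra killing the common vanishing idempotent, so no projective bookkeeping is needed). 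It remains to determine the orientation, i.e. whether $\cc_{(DN,0),DM} > 0$ or $<0$ in the sense of Lemma \ref{lemright}. For this I would use Corollary \ref{c-matrix-correspondence} for $H$ and for $H^{\mathrm{op}}$ simultaneously, together with the equality $\cc^{B^c;t_0}_{x,[\xx]} + \cc^{B^{c^{-1}};t_0}_{x,[\xx]} = 0$ when $x$ is non-initial along this exchange (which is itself the $g$-vector duality Theorem \ref{c-g-duality} combined with $G^{B^c}_{[\xx]} = -G^{B^{c^{-1}}}_{[\xx]}$, valid precisely in the $|M|=|M'|$ regime because that is when Theorem \ref{g-matrix-correspondence} applies symmetrically to $H$ and $H^{\mathrm{op}}$): the arrow $M\to M'$ for $H$ gives, via Corollary \ref{c-matrix-correspondence}, that the relevant $B^c$-$c$-vector is negative; negating it makes the $B^{c^{-1}}$-$c$-vector positive; feeding that back through Corollary \ref{c-matrix-correspondence} for $H^{\mathrm{op}}$ and Lemma \ref{lemright} yields that $DM'$ is a \emph{right} mutation of $DM$, i.e. there is an arrow $DM'\to DM$ in $\overrightarrow{\Gamma}(\stautilt H^{\mathrm{op}})$, as claimed.

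The main obstacle I anticipate is not any single computation but the bookkeeping of indices and orientations across the four objects $\overrightarrow{\Gamma}(\stautilt H)$, $\overrightarrow{\Gamma}(B^c)$, $\overrightarrow{\Gamma}(-B^c)$, $\overrightarrow{\Gamma}(\stautilt H^{\mathrm{op}})$, and in particular verifying cleanly that in the $|M|=|M'|$ case one may pass to the quotient algebra $H/HeH$ (which is again a GLS path algebra by the argument in the proof of Lemma \ref{DM-tau-tilting}) so that $Q=Q'=0$ and the mutation is a genuine $\tau$-tilting (not merely support $\tau$-tilting) mutation on both sides; once that reduction is in place, the $g$-vector duality of Theorem \ref{g-matrix-correspondence} applies symmetrically to $H$ and $H^{\mathrm{op}}$ and the sign flip is forced. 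I would therefore structure the written proof as: (i) reduce to $Q=Q'=0$ via the quotient algebra; (ii) invoke Proposition \ref{graph-iso-dual} to get that $DM,DM'$ are adjacent with $DN\rightsquigarrow DN'$; (iii) compute the orientation using Corollaries \ref{c-matrix-correspondence} (for both $H$ and $H^{\mathrm{op}}$), Theorem \ref{g-matrix-correspondence}, Theorem \ref{c-g-duality}, and Lemma \ref{lemright}, tracking the single sign change.
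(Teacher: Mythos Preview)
Your proposal contains a genuine gap that makes the argument circular. The crux of your plan is the equality $\cc^{B^c;t_0}_{x,[\xx]} + \cc^{B^{c^{-1}};t_0}_{x,[\xx]} = 0$ for $x$ non-initial, which you justify via a putative relation $G^{B^c}_{[\xx]} = -G^{B^{c^{-1}}}_{[\xx]}$. Neither identity is true. For the $c$-vector claim, compare the two $A_2$ examples in the paper: at the cluster $\{(x_2+1)/x_1,\,(x_1+x_2+1)/(x_1x_2)\}$, the $c$-vector of $(x_2+1)/x_1$ is $(-1,0)^{\mathrm T}$ for $B^c$ but $(0,1)^{\mathrm T}$ for $-B^c$; these are not negatives of each other. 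For the $G$-matrix claim, already at the initial cluster one gets $I_n=-I_n$, which is absurd. The only thing that does hold is the \emph{sign} relation---non-initial green for $B^c$ becomes red for $-B^c$---but that is precisely Theorem \ref{condition-true}, and in this paper Theorem \ref{condition-true} is \emph{deduced from} Theorem \ref{condition-tilting-true} (see the proof of Theorem \ref{condition-true}, which invokes Theorem \ref{condition-tilting-true} explicitly). So your route would assume what is to be proved.

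The paper's proof avoids cluster combinatorics entirely and is purely representation-theoretic. After reducing to the case where $M,M'$ are genuine $\tau$-tilting (hence tilting, by Theorem \ref{facts-generalized-path-alg}(2)) via the quotient by the common idempotent---a reduction you correctly identified---one writes $M=L\oplus X$, $M'=L\oplus Y$ and invokes the Happel--Unger criterion (Lemma \ref{ex-seq-equiv}): the arrow $M\to M'$ is equivalent to the existence of a short exact sequence $0\to X\to \tilde L\to Y\to 0$ with $\tilde L\in\add L$. Applying $D$ yields $0\to DY\to D\tilde L\to DX\to 0$ with $D\tilde L\in\add DL$, and since $DM,DM'$ are again tilting by Lemma \ref{DM-tau-tilting}, a second application of Lemma \ref{ex-seq-equiv} gives $\Fac DM'\supsetneq\Fac DM$, i.e.\ the arrow $DM'\to DM$. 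The missing ingredient in your plan is exactly this Happel--Unger lemma; once you have it, no $c$-vector bookkeeping is needed at all.
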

Let us look at a concrete example.
\begin{example}
We set $H=K(1\leftarrow 2)$ (this is the same situation as Example \ref{ex:supp-tau-tilt-quiver-A2}), then we have $H^{\mathrm{op}}=K(1\rightarrow 2)$. Set $D\colon \mod H\to \mod H^\mathrm{op}$. Then, we have
\[D(\sst{2 \\ 1})=\sst{1 \\ 2},\ D(\sst{2})=\sst{2},\ D(\sst{1})=\sst{1}.\]
The quiver $\overrightarrow{\Gamma}(\stautilt H^{\mathrm{op}})$ is as follows:
\[\begin{tikzpicture}
      \node (1) at (0,5) {\large $\sst{1 \\ 2}\oplus \sst{1}$};
      \node(2) at (-4,4.25) {\large $\sst{1 \\ 2}\oplus \sst{2}$};
      \node (3) at (4,3.5) {\large $\sst{1}$};
      \node (4) at (-4,3) {\large$\sst{2}$};
      \node (5) at (0,2) {\large $0$};
      \draw[<-,red] (1)--(2);
      \draw[->] (1)--(3);
      \draw[->] (2)--(4);
      \draw[->] (3)--(5);
      \draw[->] (4)--(5);
    \end{tikzpicture}\]
Compared to the quiver $\overrightarrow{\Gamma}(\stautilt H^{\mathrm{op}})$ in Example \ref{ex:supp-tau-tilt-quiver-A2} (replace the vertex $(M,P)$ with $M$), it can be seen that if there exists an arrow $M\to M'$ in $\overrightarrow{\Gamma}(\stautilt H)$ and $|M|=|M'|$ holds, then there is an arrow $DM'\to DM$ in $\overrightarrow{\Gamma}(\stautilt H^{\mathrm{op}})$ (indicated by the red arrow).

\end{example}

We will prove Theorem \ref{condition-tilting-true}. The following lemma is provided by Happel-Unger \cite{haun3} for hereditary algebras, and it is possible to eliminate the heredity assumption. For examples of proofs, see \cite{eno-sak}*{Theorem A.7}.
\begin{lemma}[\cite{haun3}]\label{ex-seq-equiv}
Let $L\in \mod A$ be an almost complete tilting module. Suppose $L$ has two indecomposable tilting complements $X,Y$, where $X\not\simeq Y$. The following are equivalent:
\begin{itemize}
    \item [(i)] $\Fac(L\oplus X)\supsetneq\Fac(L\oplus Y)$,
    \item [(ii)] There exists an exact sequence
    \[0\rightarrow X \xrightarrow{f} \tilde{L} \xrightarrow{g} Y\rightarrow 0,\]
    where $\tilde L\in \add L$.
\end{itemize}
\end{lemma}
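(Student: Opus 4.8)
The plan is to derive the equivalence from two ingredients already available: the mutation theory for $\tau$-tilting pairs recalled above, and the dichotomy supplied by Theorem \ref{thmair}. Viewing $L$ as the almost $\tau$-tilting pair $(L,0)$ (so $|L|=n-1$), its two tilting complements give exactly the two completions $(L\oplus X,0)$ and $(L\oplus Y,0)$, which are distinct because $X\not\simeq Y$. Theorem \ref{thmair} then guarantees that \emph{exactly one} of the strict inclusions $\Fac(L\oplus X)\subsetneq\Fac(L\oplus Y)$ or $\Fac(L\oplus Y)\subsetneq\Fac(L\oplus X)$ holds. This exclusivity is the lever that lets me pin down the direction of every inclusion, so in each implication I only need to produce a non-strict inclusion or an exchange sequence and let Theorem \ref{thmair} supply the rest.

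For (ii)$\Rightarrow$(i): given $0\to X\xrightarrow{f}\tilde L\xrightarrow{g}Y\to 0$ with $\tilde L\in\add L$, the surjection $g$ exhibits $Y$ as a quotient of a module in $\add L$, so $Y\in\Fac L\subseteq\Fac(L\oplus X)$. Since $\Fac(L\oplus X)$ is closed under finite direct sums and quotients and contains $L$, it contains every quotient of $(L\oplus Y)^k=L^k\oplus Y^k$; hence $\Fac(L\oplus Y)\subseteq\Fac(L\oplus X)$. Because the two completions are distinct, Theorem \ref{thmair} forces one inclusion to be strict, and the inclusion just obtained pins it down as $\Fac(L\oplus Y)\subsetneq\Fac(L\oplus X)$, which is (i).

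For (i)$\Rightarrow$(ii): condition (i) says precisely that $(L\oplus Y,0)$ is the left mutation of $(L\oplus X,0)$ obtained by exchanging the indecomposable summand $X$ for $Y$ (this is the definition of left mutation together with Theorem \ref{mutation-charactorization}). I would then invoke the approximation description of left mutation: the new summand is read off from the minimal left $\add L$-approximation $f\colon X\to\tilde L$ of $X$, the mutated pair being $(L\oplus\operatorname{coker}f,0)$. Since both $L\oplus X$ and $L\oplus Y$ are genuine tilting modules with $n$ indecomposable summands and no projective summand is dropped, the mutation stays inside the tilting modules; this is exactly the case in which $f$ is injective and $\operatorname{coker}f$ is the second complement $Y$. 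Letting $g$ be the projection onto the cokernel yields the short exact sequence $0\to X\xrightarrow{f}\tilde L\xrightarrow{g}Y\to 0$, i.e. (ii).

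The hard part will be the last step: proving that the minimal left $\add L$-approximation $f$ is injective and that its cokernel is exactly $Y$, without the hereditarity that the original Happel--Unger argument uses freely. The clean route exploits that $L\oplus X$ is classical tilting, so every module in play has projective dimension at most $1$; one shows that a nonzero $\ker f$ would contradict either the rigidity $\Ext^1_A(L\oplus X,L\oplus X)=0$ or the minimality of $f$, and that $L\oplus\operatorname{coker}f$ is rigid with the correct number of summands, hence coincides with $L\oplus Y$ by the exclusivity in Theorem \ref{thmair}. The full verification in the general, non-hereditary setting is carried out in \cite{eno-sak}*{Theorem A.7}.
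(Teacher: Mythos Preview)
The paper does not prove this lemma: it is stated as a cited result, attributing the hereditary case to Happel--Unger \cite{haun3} and pointing to \cite{eno-sak}*{Theorem A.7} for the general case. Your proposal ultimately lands on exactly the same citation for the substantive step, so in that sense you and the paper agree completely.

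What you add beyond the paper is a correct sketch of the argument. Your (ii)$\Rightarrow$(i) is clean and self-contained: the surjection $\tilde L\twoheadrightarrow Y$ gives $Y\in\Fac L\subseteq\Fac(L\oplus X)$, whence $\Fac(L\oplus Y)\subseteq\Fac(L\oplus X)$, and Theorem \ref{thmair} upgrades this to strict. Your (i)$\Rightarrow$(ii) correctly identifies the mechanism (minimal left $\add L$-approximation of $X$) and is honest that the non-hereditary verification of injectivity of $f$ and the identification $\operatorname{coker}f\simeq Y$ is where the work lies; deferring that to \cite{eno-sak}*{Theorem A.7} is precisely what the paper does. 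So your write-up is strictly more informative than the paper's treatment while relying on the same external source for the hard direction.
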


\begin{proof}[Proof of Theorem \ref{condition-tilting-true}]
Let $M,M'\in \mod H$ be basic support $\tau$-tilting modules, and suppose there is an arrow $M\to M'$ in $\overrightarrow{\Gamma}(\stautilt H)$ and that $|M|=|M'|$ holds. We note that $M$ and $M'$ share the common support part $P$, that is, $(M,P)$ and $(M',P)$ are both $\tau$-tilting pairs. Now, $M$ and $M'$ can be assumed to be $\tau$-tilting for the same reasons in the proof of Lemma \ref{DM-tau-tilting}. Then, there exists an almost complete module $L$ and indecomposable modules $X$ and $Y$ such that $M\simeq L\oplus X$ and $M'\simeq L\oplus Y$. By definition of the support $\tau$-tilting quiver, we have $\Fac M\supsetneq \Fac M'$. According to Theorem \ref{facts-generalized-path-alg} (2), $M$ and $M'$ are tilting modules. Thus, according to Lemma \ref{ex-seq-equiv}, there exists an exact sequence
    \[0\rightarrow X \xrightarrow{f} \tilde{L} \xrightarrow{g} Y\rightarrow 0,\]
    where $\tilde L\in \add L$. Applying the standard duality $D(-)$ to this sequence, we have
    \[0\rightarrow DY \xrightarrow{Dg} D\tilde{L} \xrightarrow{Df} DX\rightarrow 0,\]
where $D\tilde L\in \add DL$.    
As $DM$ and $DM'$ are tilting modules, $DX$ and $DY$ are two tilting complements of $DL$. Therefore, by Lemma \ref{ex-seq-equiv}, we have $\Fac DM\subsetneq \Fac DM'$. Therefore, we have $DM'\to DM$ in $\overrightarrow{\Gamma}(\stautilt H^{\mathrm{op}})$. This concludes the proof.
\end{proof}

\subsection{Relation between exchange quivers of $B^c$ and $-B^c$}
Using the quiver isomorphism $\tilde{\psi}_c$, the relation between $\overrightarrow{\Gamma}(B^c)$ and $\overrightarrow{\Gamma}(-B^c)$ is the same as that between $\overrightarrow{\Gamma}(\stautilt H)$ and $\overrightarrow{\Gamma}(\stautilt H^{\mathrm{op}})$.

\begin{proposition}\label{green-initial}
If $x$ in $\Acal(B;t_0)$ is initial, then $\cc^{B;t_0}_{x,[\xx]}>0$ for any $[\xx]$ that contains $x$.
\end{proposition}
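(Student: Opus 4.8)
The plan is to reduce the claim to the $C$--$G$ duality (Theorem~\ref{c-g-duality}) together with sign-coherence (Theorem~\ref{thm:signs-ci}), using the observation that an initial cluster variable is exactly one whose $g$-vector is a canonical basis vector. Fix a representative $\xx_t$ of the non-labeled cluster $[\xx]$ and write $x = x_i = x_{j;t}$, so that by Definition~\ref{non-labeled-c} we have $\cc^{B;t_0}_{x,[\xx]} = \cc_{j;t}^{B;t_0}$; it then suffices to show that this column vector is non-negative.

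First I would record that the $g$-vector attached to $x$ is $\ee_i$: since $x=x_i$ is initial, $\gg^{B;t_0}_{x} = \gg_{i;t_0}^{B;t_0} = \ee_i$, while on the other hand $\gg^{B;t_0}_{x} = \gg_{j;t}^{B;t_0}$ by Definition~\ref{non-labeled-g}; hence $\gg_{j;t}^{B;t_0} = \ee_i$. Next I would extract from Theorem~\ref{c-g-duality} the bilinear pairing between the $j$-th columns of $G_t := G_t^{B;t_0}$ and $C_t := C_t^{B;t_0}$. Writing $S=\mathrm{diag}(s_1,\dots,s_n)$ for the skew-symmetrizer of $B$, the identity $((G_t)^{\mathrm{T}})^{-1}=S\,C_t\,S^{-1}$ rearranges (left-multiply by $(G_t)^{\mathrm T}$, then right-multiply by $S$) to $(G_t)^{\mathrm{T}}\,S\,C_t = S$, and comparing $(j,j)$-entries gives
\[
(\gg_{j;t}^{B;t_0})^{\mathrm{T}}\,S\,\cc_{j;t}^{B;t_0} = s_j .
\]
Substituting $\gg_{j;t}^{B;t_0}=\ee_i$ collapses the left-hand side to $s_i\cdot(\cc_{j;t}^{B;t_0})_i$, so the $i$-th entry of $\cc_{j;t}^{B;t_0}$ equals $s_j/s_i$, which is strictly positive. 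Finally, by sign-coherence (Theorem~\ref{thm:signs-ci}) the column $\cc_{j;t}^{B;t_0}$ is either entrywise non-negative or entrywise non-positive; since it has a positive entry, it is non-negative (in particular nonzero), i.e. $\cc^{B;t_0}_{x,[\xx]}>0$.

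I do not expect a real obstacle here: the substance is the identification $\gg_{j;t}^{B;t_0}=\ee_i$ for an initial variable plus the duality pairing, and the only place to be careful is the linear-algebra manipulation of Theorem~\ref{c-g-duality} (the transposes and the conjugation by $S$). As a sanity check I would verify the formula $(\cc_{j;t}^{B;t_0})_i = s_j/s_i$ against the type $A_2$ computation of Example~\ref{ex-exchange-quiver-A2}, where $S=I$ and this entry is $1$. An equivalent but more geometric phrasing is that the maximal cone of the $g$-fan attached to $[\xx]$ contains the ray $\RR_{\geq 0}\ee_i$ of the initial chamber, so duality forces the corresponding $c$-vector into the dual half-space; I would nonetheless keep the matrix computation above as the actual proof, as it is the shortest route.
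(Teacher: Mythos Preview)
Your argument is correct. The key observation that the $g$-vector of an initial variable is a canonical basis vector, combined with the pairing $(G_t)^{\mathrm T}SC_t=S$ and sign-coherence, gives the claim cleanly.

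The paper's own proof takes a different route. It invokes the \emph{second} $C$--$G$ duality, Lemma~\ref{c-g-duality2}, namely $C_t^{B;t_0}=(G_{t_0}^{B_t^{\mathrm T};t})^{\mathrm T}$, together with Lemma~\ref{connected-graph} (connectedness of the subgraph of clusters containing a fixed variable). The latter is used to find a mutation sequence from $\xx_{t_0}$ to a representative $\xx_t$ of $[\xx]$ that never touches direction $i$; this guarantees $x_i$ sits in position $i$ of $\xx_t$, and that the \emph{reverse} $g$-vector $\gg^{B_t^{\mathrm T};t}_{i;t_0}$ equals $\ee_i$. Then Lemma~\ref{c-g-duality2} reads off directly that the $(i,i)$-entry of $C_t^{B;t_0}$ is $1$. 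By contrast, you use Theorem~\ref{c-g-duality} and the intrinsic well-definedness of $g$-vectors of cluster variables (Definition~\ref{non-labeled-g}), which lets you work with an arbitrary position $j$ and avoids both the connectedness lemma and the change of base point, at the mild cost of carrying the symmetrizer $S$ (your diagonal entry comes out as $s_j/s_i$ rather than $1$). Either approach is fine; yours is a bit more self-contained, while the paper's makes the integrality of the relevant entry visible.
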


If $B=B^c$ in Proposition \ref{green-initial}, it is analogous to the fact that if $M$ and $M'$ are connected by an arrow in $\overrightarrow{\Gamma}(\stautilt H)$ and $M'\subsetneq M$, we have an arrow $M\to M'$ in $\overrightarrow{\Gamma}(\stautilt H)$. This is demonstrated by applying the quiver isomorphism $\tilde{\psi}_c$ to this observation. In the following lemma, we prove that Proposition \ref{green-initial} holds without making any assumptions about $B$ or addressing $\tau$-tilting theory.

\begin{lemma}[\cite{nz}*{(1.13)}]\label{c-g-duality2}
For any exchange matrix $B$ and $t_0,t\in \TT_n$, we have
\[C_t^{B;t_0}=(G_{t_0}^{B_t^{\mathrm{T}};t})^{\mathrm{T}}.\]
\end{lemma}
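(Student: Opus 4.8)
The identity is the Nakanishi--Zelevinsky tropical self-duality, \cite{nz}*{(1.13)}, so one option is simply to cite it; here is how I would reprove it from the recursions recalled above, by induction on the tree distance $d=d_{\TT_n}(t_0,t)$.

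The one structural input I would isolate first is that matrix mutation commutes with transposition, $\mu_k(A^{\mathrm T})=(\mu_kA)^{\mathrm T}$. Hence the exchange matrix at any vertex $v$ of the cluster pattern $\Acal(B^{\mathrm T};t_0)$ is $B_v^{\mathrm T}$, so the patterns $\Acal(B_t^{\mathrm T};t)$ occurring on the right for varying $t$ are all the single pattern $\Acal(B^{\mathrm T};t_0)$ with its root moved to $t$, and $G_{t_0}^{B_t^{\mathrm T};t}$ is just the $G$-matrix of $\Acal(B^{\mathrm T};t_0)$ at $t_0$ as computed from the root $t$. For $d=0$ both sides equal $I_n$. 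For the inductive step let $t'\xrightarrow{k}t$ be the last edge of the geodesic from $t_0$, so $B_t=\mu_k(B_{t'})$ and, by hypothesis, $C_{t'}^{B;t_0}=(G_{t_0}^{B_{t'}^{\mathrm T};t'})^{\mathrm T}$. The $c$-vector recursion of Remark \ref{c-recursion-remark} gives $C_t^{B;t_0}=C_{t'}^{B;t_0}\,E_k^{\varepsilon}(B_{t'})$, where $\varepsilon\in\{\pm1\}$ is the common sign of the entries of $\cc_{k;t'}^{B;t_0}$ --- well defined by sign-coherence (Theorem \ref{thm:signs-ci}) --- and $E_k^{\varepsilon}(B_{t'})$ is the identity matrix whose $k$-th row is replaced by $-1$ in column $k$ and by $[\varepsilon\,b_{kj;t'}]_+$ in each column $j\neq k$. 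On the other side, relocating the root of $\Acal(B^{\mathrm T};t_0)$ from $t$ to $t'$ along the same edge $k$ transforms $G_{t_0}$ by the companion $g$-matrix edge rule, which is again governed by a single tropical sign; transposing that rule and substituting the inductive hypothesis is a finite matrix manipulation that is meant to return exactly $C_t^{B;t_0}$.

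The delicate point --- what I expect to be the main obstacle --- is precisely this last comparison: one must write the $g$-matrix re-rooting rule across an edge explicitly, check that (after transposition and the inductive substitution) it equals $E_k^{\varepsilon}(B_{t'})$, and, above all, verify that the tropical sign occurring there is forced to be the sign of $\cc_{k;t'}^{B;t_0}$ recorded on the $C$-side; this matching of signs on the two sides is exactly what sign-coherence delivers and is the crux of the statement. As an alternative, combining the claim with Theorem \ref{c-g-duality} reduces it to the purely $C$-matrix identity $C_{t_0}^{B_t^{\mathrm T};t}=S\,(C_t^{B;t_0})^{-1}\,S^{-1}$ (with $S$ any skew-symmetrizer of $B$), which one proves along the same inductive lines; no finite-type hypothesis and no $\tau$-tilting input enters at any stage.
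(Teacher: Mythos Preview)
The paper does not prove this lemma at all; it is stated as a citation of \cite{nz}*{(1.13)} and used as a black box in the proof of Proposition \ref{green-initial}. Your opening sentence --- ``one option is simply to cite it'' --- is therefore exactly the paper's approach, and nothing further is required.

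Your inductive sketch goes well beyond what the paper offers. The outline is the right one (it is essentially how Nakanishi--Zelevinsky argue): your computation of the $C$-side transition matrix $E_k^{\varepsilon}(B_{t'})$ is correct, and you are right that sign-coherence is the structural input that pins down $\varepsilon$. You are also honest that the crux --- writing the re-rooting rule for $G_{t_0}^{B_{\bullet}^{\mathrm T};\bullet}$ across the edge $t'\!-\!t$ and checking that, after transposition, its governing sign agrees with the $\varepsilon$ on the $C$-side --- is left unverified. That step is genuinely the content of \cite{nz}; note that the $g$-vector recursion \eqref{g-recursion} recalled in this paper moves the \emph{target} vertex while keeping the root fixed, whereas what you need here is the opposite (root moving, target $t_0$ fixed), so an extra identity relating the two operations is needed before the matrix comparison can close. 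Your alternative reduction via Theorem \ref{c-g-duality} to a pure $C$-matrix identity is also a valid route, with the same sign-matching issue reappearing there.
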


\begin{lemma}[\cite{cl2}*{Theorem 10}]\label{connected-graph}
For any cluster algebra $\Acal(B;t_0)$, non-labeled clusters of $\Acal(B;t_0)$ containing a set $X$ consisting of certain cluster variables form a connected subgraph of $\Gamma(B)$.
\end{lemma}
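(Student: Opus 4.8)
The plan is to recast the statement in terms of $g$-vectors and the $g$-vector fan. Write $n$ for the rank; by hypothesis $X$ lies inside some non-labeled cluster, so after relabeling $X=\{x_{j;t_1}\mid j\in J\}$ for a subset $J\subseteq\{1,\dots,n\}$ of size $|X|$. First I would record the standard dictionary: the map $[\xx]\mapsto G^{B;t_0}_{[\xx]}$ identifies the non-labeled clusters of $\Acal(B;t_0)$ with the maximal cones $\cone(G^{B;t_0}_{[\xx]})$ of the $g$-vector fan $\mathcal G(B)$; the $g$-vectors of any single cluster form a basis of $\RR^n$ (invertibility of $G_t$ is standard, being implicit already in the $c$-$g$ duality, Theorem \ref{c-g-duality}); and a mutation $[\xx]\leftrightarrow[\xx']$ corresponds exactly to the two cones meeting along a common codimension-one face. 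Hence $\sigma_X:=\cone(\{\gg^{B;t_0}_x\mid x\in X\})$ is a genuine $|X|$-dimensional simplicial cone, a cluster $[\xx]$ contains $X$ if and only if $\sigma_X$ is a face of $\cone(G^{B;t_0}_{[\xx]})$, and the subgraph of $\Gamma(B)$ to be shown connected is exactly the graph whose vertices are the maximal cones of the star $\st(\sigma_X)$ of $\sigma_X$ in $\mathcal G(B)$, two of them joined when they share a codimension-one face --- equivalently, the analogous graph of the link $\lk(\sigma_X)$, a fan in $\RR^n/\mathrm{span}(\sigma_X)\cong\RR^{\,n-|X|}$.

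In the finite-type case --- the only one this paper needs --- I would finish at once: $\mathcal G(B^c)$ is the $c$-Cambrian fan, the normal fan of the generalized associahedron of $\Phi$, so $\st(\sigma_X)$ corresponds to a face $F_X$ of that polytope, and the graph in question is the $1$-skeleton of $F_X$, which is connected. (Alternatively: a complete simplicial fan is connected in codimension one, hence so is the star of any of its faces.) Read through the bijection $\theta_c$, this says that the $c$-clusters containing the $c$-compatible set $\theta_c(X)$, with mutation as adjacency, are the facets of the link of a face in the polytopal $c$-cluster complex, and so form a connected graph.

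For an arbitrary skew-symmetrizable $B$ the $g$-vector fan need not be complete, so the argument above collapses, and I would instead invoke the cluster scattering diagram of Gross--Hacking--Keel--Kontsevich \cite{GHKK}. The point is that at a generic point of the relative interior of $\sigma_X$ only the walls through $\sigma_X$ are visible, and, after the linear change identifying the local picture with a quotient lattice, these assemble into the cluster scattering diagram of a cluster algebra of rank $n-|X|$; under this identification the maximal cones of $\st(\sigma_X)$ become the cluster chambers of the smaller diagram, and codimension-one adjacency becomes mutation there, so their adjacency graph is the exchange graph of a rank-$(n-|X|)$ cluster algebra, which is connected by construction. I expect this local-structure input --- that the link of a face of the cluster scattering fan is again a cluster scattering fan of lower rank --- to be the only real obstacle; it is the genuine content of \cite{cl2}*{Theorem 10} and of the GHKK machinery beneath it, and it is precisely what evaporates in finite type, which is why I would present the associahedron argument as the primary route.
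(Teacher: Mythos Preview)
The paper does not prove this lemma; it is quoted as \cite{cl2}*{Theorem 10} with no argument supplied, so there is no in-paper proof to compare your attempt against. Your proposal is an attempt to furnish a proof where the paper is content to cite one.

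Your finite-type route via the $g$-vector fan is sound, modulo standard inputs you take for granted (completeness of $\mathcal G(B^c)$, the bijection between non-labeled clusters and maximal cones, injectivity of $x\mapsto\gg^{B;t_0}_x$ so that ``$[\xx]\supseteq X$'' really is the same as ``$\sigma_X$ is a face of $\cone(G^{B;t_0}_{[\xx]})$'', and that mutation is codimension-one wall-crossing). One small correction to your framing: Proposition~\ref{green-initial} is stated for an arbitrary skew-symmetrizable $B$, not only for $B^c$, so strictly speaking the lemma is invoked there in full generality --- even though every downstream use of Proposition~\ref{green-initial} in the paper is in finite type. Your associahedron argument therefore covers what the paper actually \emph{uses}, but not Proposition~\ref{green-initial} as written.

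For general $B$ your sketch is honest rather than independent: you correctly isolate the substantive step --- that the link of a cone in the cluster chamber structure is again a cluster chamber structure of lower rank --- and then defer it to \cite{GHKK} and to \cite{cl2} itself. That is not a gap so much as a recognition that the general statement genuinely requires that machinery; your reduction does not bypass it, and an independent proof for arbitrary $B$ would amount to reproducing the argument of the cited reference.
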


\begin{proof}[Proof of Proposition \ref{green-initial}]
If $x=x_i$, there exists a cluster $\xx_t$ and mutation sequence $\mu$ excluding $\mu_i$ such that $\xx_t=\mu(\xx_{t_0})$ (in particular, $x_i$ is the $i$th component of $\xx_t$) and $[\xx]=[\xx_t]$ by applying Lemma \ref{connected-graph} with $X=\{x_i\}$. Then $\gg^{B_t^T;t}_{i;t_0}=\ee_i$. According to Lemma \ref{c-g-duality2}, the $i$th component of $\cc^{B;t_0}_{i;t}=\cc^{B;t_0}_{x,[\xx]}$ must be $1$. Therefore, $\cc^{B;t_0}_{x,[\xx]}>0$. 
\end{proof}
As an analog of Theorem \ref{condition-tilting-true} in $\overrightarrow{\Gamma}(\stautilt A)$, we have the following theorem:
\begin{theorem}\label{condition-true}
We fix any Dynkin-type root system $\Phi$ and any Coxeter element $c\in W(\Phi)$. For a non-initial cluster variable $x$ and any cluster $[\xx]$ containing $x$ in $\Acal(B^c;t_0)$, $\cc^{B^c;t_0}_{x,[\xx]}>0$ implies that $\cc^{-B^c;t_0}_{x,[\xx]}<0$.
\end{theorem}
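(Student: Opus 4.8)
The plan is to derive Theorem~\ref{condition-true} from its module-theoretic analogue, Theorem~\ref{condition-tilting-true}, by transporting the statement through the quiver isomorphisms constructed in Sections~3 and~4. Concretely, I would fix a non-initial cluster variable $x$ and a cluster $[\xx]$ containing it, and set $[\xx']=\mu_x([\xx])$; this cluster contains exactly one new variable, $x'$, and mutation at $x'$ recovers $[\xx]$. By Remark~\ref{c-recursion-remark} one has $\cc^{B;t_0}_{x',[\xx']}=-\cc^{B;t_0}_{x,[\xx]}$ for $B=\pm B^c$, so the edge joining $[\xx]$ and $[\xx']$ in the common exchange graph $\Gamma(B^c)=\Gamma(-B^c)$ (Proposition~\ref{same-graph}) is oriented $[\xx]\to[\xx']$ in $\overrightarrow\Gamma(B)$ exactly when $\cc^{B;t_0}_{x,[\xx]}>0$. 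Thus the theorem amounts to: whenever this edge is oriented $[\xx]\to[\xx']$ in $\overrightarrow\Gamma(B^c)$, it is oriented $[\xx']\to[\xx]$ in $\overrightarrow\Gamma(-B^c)$ — that is, the edge is ``flipped''.

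First I would pass to the $\tau$-tilting side. Put $(M,P)=\tilde\psi_c^{-1}([\xx])$ and $(M',P')=\tilde\psi_c^{-1}([\xx'])$. Since $H^{\mathrm{op}}\simeq H(C_\Phi,D_\Phi,c^{-1})$ and $B^{c^{-1}}=-B^c$, both $\tilde\psi_c$ and $\tilde\psi_{c^{-1}}$ are quiver isomorphisms, onto $\overrightarrow\Gamma(B^c)^{\mathrm{op}}$ and $\overrightarrow\Gamma(-B^c)^{\mathrm{op}}$ respectively (Theorem~\ref{lattice-op-iso-psi} for $c$ and for $c^{-1}$), and $\tilde D$ of Proposition~\ref{graph-iso-dual} satisfies $\tilde\psi_{c^{-1}}\circ\tilde D=\tilde\psi_c$ on vertices (Lemma~\ref{minus-dual}). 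Hence $\tilde\psi_{c^{-1}}\circ\tilde D\circ\tilde\psi_c^{-1}$ is a graph isomorphism $\overrightarrow\Gamma(B^c)^{\mathrm{op}}\to\overrightarrow\Gamma(-B^c)^{\mathrm{op}}$ that is the identity on the underlying graph, and the edge of $\overrightarrow\Gamma(B^c)$ between $[\xx]$ and $[\xx']$ is flipped in $\overrightarrow\Gamma(-B^c)$ if and only if $\tilde D$ reverses the orientation of the corresponding edge between $M$ and $M'$ in $\overrightarrow\Gamma(\stautilt H)$. By the remark preceding Theorem~\ref{condition-tilting-true} this orientation is preserved by $\tilde D$ when $|M|\neq|M'|$, and by Theorem~\ref{condition-tilting-true} it is reversed when $|M|=|M'|$. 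Since $\psi_c$ carries the indecomposable projective pairs $(0,P_i)$ precisely to the initial cluster variables $x_i$ (and nothing else to an initial variable), a mutation along our edge has $|M|\neq|M'|$ exactly when one of $x,x'$ is an initial cluster variable, and $|M|=|M'|$ exactly when both $x$ and $x'$ are non-initial.

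It then remains to exclude the possibility that $x'$ is initial, and this is where Proposition~\ref{green-initial} enters: if $x'$ were initial we would have $\cc^{B^c;t_0}_{x',[\xx']}>0$, hence $\cc^{B^c;t_0}_{x,[\xx]}=-\cc^{B^c;t_0}_{x',[\xx']}<0$, contradicting the hypothesis $\cc^{B^c;t_0}_{x,[\xx]}>0$. Therefore $x$ and $x'$ are both non-initial, so $|M|=|M'|$, so the edge is flipped: it is oriented $[\xx']\to[\xx]$ in $\overrightarrow\Gamma(-B^c)$, i.e. $\cc^{-B^c;t_0}_{x,[\xx]}<0$, which is the assertion.

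I expect the main difficulty to be organizational rather than conceptual: the only genuinely new input is Theorem~\ref{condition-tilting-true} (itself proved via the exact-sequence characterisation of tilting mutation, Lemma~\ref{ex-seq-equiv}, together with the self-duality of such sequences under $D$), while the rest is bookkeeping along the chain of maps. The delicate points are keeping the opposite-quiver conventions straight, verifying that $\tilde\psi_{c^{-1}}\circ\tilde D\circ\tilde\psi_c^{-1}$ really acts as the identity on the underlying exchange graph — so that ``$\tilde D$ reverses an edge'' translates literally into ``that edge has opposite orientations in $\overrightarrow\Gamma(B^c)$ and $\overrightarrow\Gamma(-B^c)$'' — and matching the module-side dichotomy $|M|=|M'|$ versus $|M|\neq|M'|$ with the cluster-side dichotomy ``both exchanged variables non-initial'' versus ``one of them initial''.
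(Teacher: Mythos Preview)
Your proposal is correct and follows essentially the same route as the paper's proof: translate the edge $[\xx]\to[\xx']$ through the quiver isomorphism $\tilde\psi_c$, use Proposition~\ref{green-initial} to see that $x'$ is non-initial and hence $|M|=|M'|$, apply Theorem~\ref{condition-tilting-true} together with Lemma~\ref{minus-dual}, and translate back via $\tilde\psi_{c^{-1}}$. The paper's argument is terser but logically identical; your extra care with the opposite-quiver conventions and the identification $\tilde\psi_{c^{-1}}\circ\tilde D=\tilde\psi_c$ only makes explicit what the paper leaves implicit.
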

In other words, each position that is a green mutation that replaces a non-initial cluster variable with another in $\overrightarrow{\Gamma}(B)$ are red mutation in $\overrightarrow{\Gamma}(-B)$.  Using Proposition \ref{green-initial} and Theorem \ref{condition-true}, we can determine all signs of $c$-vectors in $\Acal(-B^c;t_0)$ from those in $\Acal(B^c;t_0)$.\begin{example}[Type $A_2$]
When $B=\begin{bmatrix}0&1\\-1&0\end{bmatrix}$, we have the following exchange quiver:
 \[\begin{tikzpicture}
      \node (1) at (0,9) {$\left\{\dfrac{x_2+1}{x_1},\dfrac{x_1+x_2+1}{x_1x_2}\right\}$};
       \node (1') at (0,7.9) {$\left\{\begin{bmatrix}0\\1\end{bmatrix},\begin{bmatrix}-1\\-1\end{bmatrix} \right\}$};
      \node(2) at (-6,7.5) {$\left\{\dfrac{x_1+1}{x_2},\dfrac{x_1+x_2+1}{x_1x_2}\right\}$};
      \node (2') at (-6,6.4) {$\left\{\begin{bmatrix}0\\-1\end{bmatrix},\begin{bmatrix}-1\\0\end{bmatrix} \right\}$};
      \node (3) at (6,5.6) {$\left\{\dfrac{x_2+1}{x_1},x_2\right\}$};
      \node (3') at (6,4.5) {$\left\{\begin{bmatrix}-1\\0\end{bmatrix},\begin{bmatrix}1\\1\end{bmatrix} \right\}$};
      \node (4) at (-6,3.5) {$\left\{x_1,\dfrac{x_1+1}{x_2}\right\}$};
      \node (4') at (-6,2.4) {$\left\{\begin{bmatrix}1\\0\end{bmatrix},\begin{bmatrix}0\\-1\end{bmatrix} \right\}$};
      \node (5) at (0,1) {$\{x_1,x_2\}$};
      \node (5') at (0,0) {$\left\{\begin{bmatrix}1\\0\end{bmatrix},\begin{bmatrix}0\\1\end{bmatrix} \right\}.$};
      \draw[red,->] (-2.2,8.5)--(2);
      \draw[<-] (2.2,8.5)--(3);
      \draw[<-] (2')--(4);
      \draw[<-] (3')--(5);
      \draw[<-] (4')--(5);
    \end{tikzpicture}\]
Compared to the exchange quiver in Example \ref{ex-exchange-quiver-A2}, it can be seen that only the direction of the arrow corresponding to the mutation in which another non-initial variable is exchanged for a non-initial variable is different (indicated by the red arrow).
\end{example}

By using the quiver isomorphism $\tilde{\psi_c}$, we will prove Theorem \ref{condition-true}.

\begin{proof}[Proof of Theorem \ref{condition-true}]
Let $x$ be a non-initial cluster variable and $[\xx]$ a cluster in $\Acal(B^c;t_0)$. We assume that $\cc^{B^c;t_0}_{x,[\xx]}>0$. Then, there is an arrow $[\xx]\to [\xx']$ in $\overrightarrow{\Gamma}(B^c)$, where $[\xx']=\mu_x([\xx])$. As $\tilde\psi_c$ is a quiver isomorphism, we have an arrow $\tilde\psi_c^{-1}([\xx'])\to \tilde\psi_c^{-1}([\xx])$ in $\overrightarrow{\Gamma}(\stautilt H)$. Let $x'$ be a cluster variable in $[\xx']$ that is exchanged with $x$ by mutation $\mu_x\colon[\xx]\mapsto[\xx']$. Now, $x'$ is also non-initial since $\cc^{B^c;t_0}_{x',[\xx']}<0$ (if $x'$ is initial, then it conflicts with Proposition \ref{green-initial}). Therefore, we have $\left|\tilde\psi_c^{-1}([\xx])\right|=\left|\tilde\psi_c^{-1}([\xx'])\right|$. According to Theorem \ref{condition-tilting-true} and Lemma \ref{minus-dual}, we have an arrow $\tilde\psi_{c^{-1}}^{-1}([\xx])\to \tilde\psi_{c^{-1}}^{-1}([\xx'])$ in $\overrightarrow{\Gamma}(\stautilt H^{\mathrm{op}})$. Therefore, we have an arrow $[\xx']\to [\xx]$ in $\overrightarrow{\Gamma}(-B^c)$ and $\cc^{-B^c;t_0}_{x,[\xx]}<0$.
\end{proof}

\section{$\tilde\theta_c$ is quiver isomorphism}
In this section, our goal is the following theorem:
\begin{theorem}\label{lattice-op-iso}
The graph isomorphism $\tilde\theta_c$ is a quiver isomorphism.
\end{theorem}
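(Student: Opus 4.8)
The plan is to prove Theorem~\ref{lattice-op-iso} by the same strategy used for $\tilde\psi_c$, namely by comparing orientations of arrows on both sides using the sign data that has just been set up for the cluster side and transporting it through $\tilde\theta_c$. Recall that $\tilde\theta_c$ is already known to be a graph isomorphism $\overrightarrow\Gamma(B^c)^{\mathrm{op}}\to\overrightarrow\Gamma(\apPhi,c)$ (Corollary~\ref{graph-iso-phi}), so it remains only to check that it respects arrow directions. Fix an arrow in $\overrightarrow\Gamma(B^c)^{\mathrm{op}}$, i.e.\ a pair of adjacent non-labeled clusters $[\xx]$, $[\xx']$ with $[\xx']=\mu_x([\xx])$ and $\cc^{B^c;t_0}_{x,[\xx]}<0$ (a \emph{red} mutation of $\overrightarrow\Gamma(B^c)$), and let $x'$ be the variable exchanged for $x$. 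Writing $C=\theta_c([\xx])$ and $C'=\theta_c([\xx'])$, with $C=C''\cup\{\alpha\}$, $C'=C''\cup\{\beta\}$ where $\alpha=\theta_c(x)$, $\beta=\theta_c(x')$, I must show $R_c(\alpha)<R_c(\beta)$.

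The key reduction is to split into the two cases of Proposition~\ref{comparing-dual}(3), governed by whether the exchanged variable is initial. If $x$ is initial, then $x'$ is non-initial (two initial variables $x_i,x_j$ are never exchanged, as one sees from $B^c$ having no $2$-cycles, or directly from connectivity results), and $\theta_c(x)=\theta_c(x_i)=-\alpha_i\in-\Delta$, so $R_c(\alpha)=0$, while $R_c(\beta)>0$ since $\beta\in\Phi^+$; hence the arrow goes the right way. If $x$ is non-initial, I want to use Theorem~\ref{condition-true}: from $\cc^{B^c;t_0}_{x,[\xx]}<0$ I cannot immediately conclude, but by symmetry of the setup the contrapositive combined with Proposition~\ref{green-initial} handles it. More precisely, a red mutation of $\overrightarrow\Gamma(B^c)$ exchanging two non-initial variables is, by Theorem~\ref{condition-true} applied with the roles of $B^c$ and $-B^c=B^{c^{-1}}$, a green mutation of $\overrightarrow\Gamma(B^{c^{-1}})$; transporting via $\tilde\theta_{c^{-1}}$ (which relates to the inverse Coxeter element and hence to the opposite orbit structure of $\tau_c$) gives the comparison $R_c(\alpha)<R_c(\beta)$. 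The bookkeeping point to nail down is the precise relation between $R_c$ and $R_{c^{-1}}$, or equivalently between the $\tau_c$- and $\tau_{c^{-1}}$-orbits of a root; this is where I expect to lean on the Reading--Speyer combinatorics cited for Definition~\ref{c-cluster-lattice} and on the fact that $\overrightarrow\Gamma(\apPhi,c)$ and $\overrightarrow\Gamma(\apPhi,c^{-1})$ have the same underlying graph.

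An alternative, and probably cleaner, route is to go around the commutative triangle. Since $\tilde\psi_c$ is now known to be a quiver isomorphism (Theorem~\ref{lattice-op-iso-psi}), if I can show $\tilde\phi_c$ is a quiver isomorphism then $\tilde\theta_c=\tilde\phi_c\circ\tilde\psi_c^{-1}$ (up to the opposite) is too; but $\tilde\phi_c$ is only proved later (Section~6 in the outline), so within Section~6's prerequisites I should instead argue directly. Thus the concrete plan is: first dispose of the initial-variable case using $R_c(-\alpha_i)=0$; second, in the non-initial case translate the sign condition on $c$-vectors into an arrow of $\overrightarrow\Gamma(\stautilt H)$ with $|M|=|M'|$ via Corollary~\ref{c-matrix-correspondence} and Lemma~\ref{lemright}, then feed this into Theorem~\ref{condition-tilting-true} and Proposition~\ref{comparing-dual}(3)(ii) exactly as in the proof of Theorem~\ref{condition-true}; third, read off the resulting arrow of $\overrightarrow\Gamma(\apPhi,c)$ through $\tilde\theta_c$ and check it matches the $R_c$-comparison defining that quiver. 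The main obstacle I anticipate is precisely this last matching step: verifying that the arrow direction predicted by the $\tau$-tilting/opposite-algebra argument coincides with the intrinsic order $R_c(\alpha)<R_c(\beta)$ on $c$-clusters, which requires carefully tracking how $\theta_c$ intertwines the mutation at an initial versus non-initial variable with the position of a root in its $\tau_c^{-1}$-orbit. Once that dictionary is set up the theorem follows formally.
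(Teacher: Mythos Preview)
Your plan has a genuine circularity problem that none of your three routes escapes.

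In your first route you want to apply Theorem~\ref{condition-true} with $B^c$ and $-B^c$ swapped, then ``transport via $\tilde\theta_{c^{-1}}$'' to get information about $R_{c^{-1}}$. But using $\tilde\theta_{c^{-1}}$ as a \emph{quiver} isomorphism is exactly Theorem~\ref{lattice-op-iso} for the Coxeter element $c^{-1}$, so you are assuming what you want to prove. Your attempt to relate $R_c$ and $R_{c^{-1}}$ would not help either: there is no simple order-reversing or order-preserving relation between $R_c(\alpha)<R_c(\beta)$ and $R_{c^{-1}}(\alpha)<R_{c^{-1}}(\beta)$ in general, so this bookkeeping cannot be ``nailed down'' by Reading--Speyer combinatorics alone. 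Your second route (go around the triangle via $\tilde\phi_c$) you correctly flag as circular. Your third route ends with ``read off the resulting arrow of $\overrightarrow\Gamma(\apPhi,c)$ through $\tilde\theta_c$'', which again presupposes that $\tilde\theta_c$ respects orientations. Also note that in your setup $\cc^{B^c;t_0}_{x,[\xx]}<0$ forces $x$ to be non-initial by Proposition~\ref{green-initial}, so your ``$x$ initial'' case is vacuous; the relevant dichotomy is whether $x'$ is initial.

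What the paper does instead is supply the missing dictionary you allude to at the end. It first reformulates the claim as the \emph{green-preservation condition}: if $[\xx]\to[\xx']$ in $\overrightarrow\Gamma(B^c)$ with $x$ non-initial, then $\tau_c^{-1}([\xx])\to\tau_c^{-1}([\xx'])$ (Lemma~\ref{green-preserve-lemma}). The new ingredient that makes this checkable is Lemma~\ref{c-mat-trick}: using Proposition~\ref{tau-interpretation} to realize $\tau_c^{-1}$ as the mutation sequence $\mu_{c_1}\circ\cdots\circ\mu_{c_n}$ (all sink mutations), one computes directly that
\[
\cc^{B^c;t_0}_{\tau_c^{-1}(x),\tau_c^{-1}([\xx])}=-\cc^{-B^c;t_0}_{x,[\xx]}.
\]
This is the bridge you are missing: it converts the question about positions in the $\tau_c^{-1}$-orbit (i.e.\ $R_c$) into a question about $c$-vector signs for $-B^c$, which is exactly where Theorem~\ref{condition-true} applies. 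The proof then finishes in one line: $\cc^{B^c;t_0}_{x,[\xx]}>0$, $x$ non-initial $\Rightarrow$ $\cc^{-B^c;t_0}_{x,[\xx]}<0$ (Theorem~\ref{condition-true}) $\Rightarrow$ $\cc^{B^c;t_0}_{\tau_c^{-1}(x),\tau_c^{-1}([\xx])}>0$ (Lemma~\ref{c-mat-trick}), which is the green-preservation condition.
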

We will prove Theorem \ref{lattice-op-iso} by using facts provided in the previous section. By abusing the notation for a cluster variable $x\in\Acal(B^c;t_0)$, we denote
\[\tau_c(x):=\theta_c^{-1}\tau_c(\theta_c(x)),\quad R_c(x):=R_c(\theta_c(x)).\]
It allows us to introduce the orientation of the $c$-cluster quiver to the exchange graph. In other words, we set $[\xx] \to [\xx']$ if $R(x)>R(x')$ for $[\xx]=X\cup\{x\},\ [\xx']=X\cup\{x'\}$. According to Theorem \ref{lattice-op-iso}, this quiver and the exchange quiver are inverted with respect to each other, i.e.,
\begin{align}\label{exchange-R}
[\xx] \to [\xx']\ \text{in}\ \overrightarrow{\Gamma}(B^c)\Leftrightarrow R(x)<R(x').
\end{align}
Furthermore, \eqref{exchange-R} can be rephrased by the following lemma:

\begin{lemma}\label{green-preserve-lemma}
The following condition is equivalent to \eqref{exchange-R}:
\begin{align}
&\text{For $[\xx]=X\cup\{x\},\ [\xx']=X\cup\{x'\}$ and $[\xx]\to[\xx']$ in $\overrightarrow{\Gamma}(B^c)$ with $x$ non-initial,}\label{green-preservation} \\&\text{then there exists an arrow $\tau^{-1}_c([\xx])\to \tau^{-1}_c([\xx'])$.}\nonumber
\end{align}
\end{lemma}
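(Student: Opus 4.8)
The plan is to establish the two implications \eqref{exchange-R}\,$\Rightarrow$\,\eqref{green-preservation} and \eqref{green-preservation}\,$\Rightarrow$\,\eqref{exchange-R} separately, after recording a few facts about how $\tau_c$ acts on clusters of $\Acal(B^c;t_0)$. Fact (i): since $\tau_c$ is a self-bijection of $\apPhi$ preserving the $c$-compatibility degree, it sends $c$-clusters to $c$-clusters and preserves cardinalities of intersections, so — transporting along $\theta_c$ — both $\tau_c$ and $\tau_c^{-1}$ permute the non-labeled clusters of $\Acal(B^c;t_0)$ and act as automorphisms of the exchange graph $\Gamma(B^c)$. Fact (ii): directly from $R_c(\alpha)=\min\{n\in\ZZ_{\geq 0}\mid\tau_c^{-n}(\alpha)\in-\Delta\}$ one gets $R(\tau_c^{-1}(x))=R(x)-1$ for every non-initial cluster variable $x$ (equivalently, every $x$ with $\theta_c(x)\in\Phi^+$). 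Fact (iii): by the remark following Definition~\ref{c-cluster-lattice}, adjacent clusters $[\xx]=X\cup\{x\}$ and $[\xx']=X\cup\{x'\}$ always satisfy $R(x)\neq R(x')$. Fact (iv): each edge of $\Gamma(B^c)$ carries exactly one arrow of $\overrightarrow{\Gamma}(B^c)$ (the two exchange directions have opposite, nonzero $c$-vectors), and by Proposition~\ref{green-initial} that arrow points out of whichever of $[\xx],[\xx']$ has its exchanged-out variable an initial cluster variable, if either does.

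For \eqref{exchange-R}\,$\Rightarrow$\,\eqref{green-preservation}: given an arrow $[\xx]\to[\xx']$ of $\overrightarrow{\Gamma}(B^c)$ with $[\xx]=X\cup\{x\}$, $[\xx']=X\cup\{x'\}$ and $x$ non-initial, \eqref{exchange-R} yields $R(x)<R(x')$, so $R(x')\geq 1$ and $x'$ is non-initial as well; hence by (ii), $R(\tau_c^{-1}(x))=R(x)-1<R(x')-1=R(\tau_c^{-1}(x'))$. By (i), $\tau_c^{-1}([\xx])$ and $\tau_c^{-1}([\xx'])$ are adjacent clusters differing by the exchange of $\tau_c^{-1}(x)$ for $\tau_c^{-1}(x')$, so \eqref{exchange-R} applied to this edge produces the arrow $\tau_c^{-1}([\xx])\to\tau_c^{-1}([\xx'])$, which is \eqref{green-preservation}.

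For \eqref{green-preservation}\,$\Rightarrow$\,\eqref{exchange-R}: by (iv) it suffices to prove that an arrow $[\xx]\to[\xx']$ of $\overrightarrow{\Gamma}(B^c)$ forces $R(x)<R(x')$, since the converse implication then follows by applying this to the reversed edge. If $x$ is initial this is immediate: $R(x)=0$ while $R(x')\neq 0$ by (iii). Suppose $x$ is non-initial, and put $a=R(x)\geq 1$, $b=R(x')$, $m=\min(a,b)$, noting $a\neq b$ by (iii). Each application of \eqref{green-preservation} is legitimate as long as the variable exchanged out of the current source cluster is non-initial, and by (ii) this holds through the first $m$ applications since $R(\tau_c^{-k}(x))=a-k\geq 1$ for $k<m$; iterating thus yields an arrow $\tau_c^{-m}([\xx])\to\tau_c^{-m}([\xx'])$ whose two differing variables are $\tau_c^{-m}(x)$ of $R$-value $a-m$ and $\tau_c^{-m}(x')$ of $R$-value $b-m$, exactly one of which is $0$. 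If the vanishing one were $\tau_c^{-m}(x')$, which lies in the target cluster $\tau_c^{-m}([\xx'])$, then by Proposition~\ref{green-initial} the arrow on this edge would point out of $\tau_c^{-m}([\xx'])$, contradicting the arrow just obtained; hence $a-m=0$, so $m=a<b$, that is $R(x)<R(x')$. This establishes \eqref{exchange-R}.

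I expect the second implication to be the main obstacle, and within it the bookkeeping around the iterated $\tau_c^{-1}$: one must verify that every invocation of \eqref{green-preservation} is licensed (the variable exchanged out of the running source cluster stays non-initial for exactly the first $\min(R(x),R(x'))$ steps, via (ii)) and that at the final step Proposition~\ref{green-initial} is applied to the cluster on the correct side of the arrow. The remaining ingredients are routine consequences of the material recalled in Sections~2 and~5.
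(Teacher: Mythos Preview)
Your argument is correct and follows essentially the same route as the paper. Both directions proceed by the same mechanism: for \eqref{exchange-R}$\Rightarrow$\eqref{green-preservation} one decrements the $R$-values by $1$ via a single application of $\tau_c^{-1}$ and reads off the new arrow from \eqref{exchange-R}; for the converse one iterates $\tau_c^{-1}$ until the smaller of $R(x),R(x')$ hits $0$ and then invokes Proposition~\ref{green-initial} to force $R(x)<R(x')$. The only cosmetic difference is that the paper phrases the second implication as a contrapositive (assuming an arrow with $R(x)>R(x')$ and iterating $R(x')$ times), whereas you argue directly with $m=\min(R(x),R(x'))$; the underlying computation is identical.
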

We refer to \eqref{green-preservation} the \emph{green-preservation condition}.

\begin{proof}[Proof of Lemma \ref{green-preserve-lemma}]
We assume that \eqref{exchange-R} holds for any $[\xx],[\xx']$. If $x$ is non-initial, then $x'$ is also non-initial (if not so, it conflicts with Proposition \ref{green-initial}). Thus $R(\tau_c^{-1}(x))=R(x)-1$ and $R(\tau_c^{-1}(x'))=R(x')-1$ hold, and we have $R(\tau_c^{-1}(x))<R(\tau_c^{-1}(x'))$. Therefore we have an arrow $\tau^{-1}_c([\xx])\to \tau^{-1}_c([\xx'])$ in $\overrightarrow{\Gamma}(B^c)$. Next, we assume that \eqref{exchange-R} does not hold for certain $[\xx],[\xx']$. Choose $[\xx],[\xx']$ such that $[\xx]\to [\xx']$ and $R(x)>R(x')$. If $x$ was initial, then \eqref{exchange-R} is always true. Therefore $x$ is non-initial. Moreover, $x'$ is also non-initial by Proposition \ref{green-initial}. Then, if the green-preservation condition was true, as $x,\tau^{-1}_c(x),\dots,\tau_c^{-R(x')+1}(x)$ are all non-initial, we have an arrow $\tau^{-R(x')}_c([\xx])\to\tau^{-R(x')}_c([\xx'])$. However, $\tau^{-R(x')}_c(x')$ is initial, and this conflicts with Proposition \ref{green-initial}.
\end{proof}

Now, we recall the interpretation of $\tau_c$ by mutations provided in \cite{cp}.
\begin{proposition}[\cite{cp}]\label{tau-interpretation}
We assume that $x\in\Acal(B^c;t_0)$ is the $k$th variable in $\mu(\xx_{t_0})$, where $\mu$ is a mutation sequence. Then, $\tau_c(x)$ (resp. $\tau_c^{-1}(x)$) is the $k$th variable of $\mu\circ\mu_{c_n}\circ\cdots\circ\mu_{c_1}(\xx_{t_0})$ (resp. $\mu\circ\mu_{c_1}\circ\cdots\circ\mu_{c_n}(\xx_{t_0})$).
\end{proposition}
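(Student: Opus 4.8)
The plan is to reduce the statement to a property of denominator vectors. Because $\theta_c$ sends a cluster variable $f/(x_1^{d_1}\cdots x_n^{d_n})$ to $\sum_i d_i\alpha_i$, and the cited bijectivity of $\theta_c$ says in particular that in $\Acal(B^c;t_0)$ a cluster variable is determined by its denominator vector relative to the initial cluster $\xx_{t_0}$, the proposition is equivalent to an assertion about how denominator vectors evolve under mutation. Abbreviate $\mu_c:=\mu_{c_n}\circ\cdots\circ\mu_{c_1}$, applied so that $\mu_{c_1}$ acts first. The first observation is that $\mu_c$ fixes $B^c$, i.e.\ $\mu_c(B^c)=B^c$: by construction the vertex $c_1$ is a source of the quiver of $B^c$ (the letter $s_{c_1}$ precedes every other letter of $c$), so $\mu_{c_1}$ is a source mutation and merely reverses the arrows incident to $c_1$; after it $c_1$ is a sink while $c_2$ is still a source, and inductively $\mu_{c_p}$ reverses precisely the arrows at $c_p$ and nothing else, so every arrow of $B^c$ gets reversed exactly twice. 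Hence for every mutation sequence $\mu$ the seeds $\mu(\Sigma_{t_0})$ and $\mu\circ\mu_c(\Sigma_{t_0})$ carry the same exchange matrix, and by Theorem~\ref{non-labeled-cluster-thm} the rule $\rho$ sending the $k$th entry of $\mu(\xx_{t_0})$ to the $k$th entry of $\mu\circ\mu_c(\xx_{t_0})$ is a well-defined bijection of $\Xcal(B^c)$, with inverse built the same way from $\mu_{c_1}\circ\cdots\circ\mu_{c_n}$. The proposition is now exactly the identity $\theta_c\circ\rho=\tau_c\circ\theta_c$ (and, equivalently, $\theta_c\circ\rho^{-1}=\tau_c^{-1}\circ\theta_c$).

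To prove this identity I would rely on the elementary fact, in the spirit of \cite{fzii}, that a mutation $\mu_i$ at a source or sink $i$ of an exchange matrix changes the denominator vector of every cluster variable exactly by the piecewise-linear map $\sigma_i$. Granting it, one reduces to the bipartite case: any Coxeter element $c$ is carried to a bipartite one by finitely many source-to-sink rotations $c\mapsto c':=s_{c_1}cs_{c_1}=s_{c_2}\cdots s_{c_n}s_{c_1}$ (with $c_1$ a source), each corresponding on the cluster side to $\mu_{c_1}(B^c)=B^{c'}$; since $\mu_{c_1}\circ\mu_c=\mu_{c'}\circ\mu_{c_1}$ and $\sigma_{c_1}\circ\tau_c=\tau_{c'}\circ\sigma_{c_1}$, the elementary fact transports $\theta_c\circ\rho=\tau_c\circ\theta_c$ between $c$ and $c'$, so it suffices to treat bipartite $c$ — and for bipartite $c$ the identity is essentially the dictionary between cluster variables and almost positive roots established in \cite{fzii}, applied in two halves. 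Alternatively one can argue directly by induction on the length of $\mu$, tracking denominator vectors through the mutation recursion; the essential case is $\mu=\mathrm{id}$, where one uses that, with $p$ the position of $i$ in $c$, the $i$th entry of $\mu_c(\xx_{t_0})$ equals the $i$th entry of $\mu_i\circ\mu_{c_{p-1}}\circ\cdots\circ\mu_{c_1}(\xx_{t_0})$, and matches its denominator vector against $\tau_c(-\alpha_i)=s_{c_1}\cdots s_{c_{p-1}}(\alpha_i)$.

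The main obstacle is establishing — and iterating — the elementary fact above in the non-simply-laced case. There the denominator-vector mutation rule $\mathbf d_k\mapsto-\mathbf d_k+\max\bigl(\sum_j[b_{jk}]_+\mathbf d_j,\ \sum_j[-b_{jk}]_+\mathbf d_j\bigr)$ is genuinely piecewise-linear, the denominator vectors having entries greater than $1$, so comparing it with the maps $\sigma_i$ (which equal $s_i$ on all of $\apPhi$ except that they fix $-\Delta\setminus\{-\alpha_i\}$) requires care, and one must also verify that the source-to-sink rotations never carry a denominator vector out of the region where the rule and $\sigma_i$ agree. Everything else in the argument is formal once this local compatibility is in place.
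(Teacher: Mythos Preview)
The paper does not prove this proposition; it is quoted from \cite{cp} with no argument. There is thus no in-paper proof to compare against.

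Your outline is essentially the strategy of the cited reference. The reduction to the identity $\theta_c\circ\rho=\tau_c\circ\theta_c$ is correct, and you rightly isolate the crux: that replacing the initial seed by a source or sink mutation at $i$ transforms every denominator vector by $\sigma_i$. Two remarks. First, once that local fact is available in general, the bipartite detour is unnecessary: $\mu_{c_1},\dots,\mu_{c_n}$ are successive source mutations of the current initial matrix, so iterating the local fact $n$ times gives $\theta_c(\rho(x))=\sigma_{c_1}\cdots\sigma_{c_n}(\theta_c(x))=\tau_c(\theta_c(x))$ directly, without any reduction. Second, Theorem~\ref{non-labeled-cluster-thm} is not the right citation for the well-definedness of $\rho$: that theorem says a cluster determines its seed, whereas what you need is simply that (with $\PP=\{1\}$) the field automorphism $x_i\mapsto(\mu_c(\xx_{t_0}))_i$ of $\Fcal$ carries $\Sigma_{t_0}$ to $\mu_c(\Sigma_{t_0})$ (same exchange matrix), and hence restricts to the desired self-bijection of $\Xcal(B^c)$.

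Your proposal does not actually prove the local fact --- you flag it yourself as the main obstacle in the non-simply-laced case --- so what you have is a sketch rather than a proof. But since the paper defers the entire statement to \cite{cp}, this is no worse than what the paper does.
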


The following lemma is important.

\begin{lemma}\label{c-mat-trick}
For any cluster variable $x$ of $\Acal(B^c;t_0)$ and any non-labeled cluster $[\xx]$ containing $x$, we have the following equation:
\[\cc^{B^c;t_0}_{\tau_c^{-1}(x),\tau_c^{-1}([\xx])}=-\cc^{-B^c;t_0}_{x,[\xx]}.\]
Particularly, $\cc^{B^c;t_0}_{\tau_c^{-1}(x),\tau_c^{-1}([\xx])}>0$ if and only if $\cc^{-B^c;t_0}_{x,[\xx]}<0$.
\end{lemma}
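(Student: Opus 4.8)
The plan is to reduce the identity to a statement purely about $C$-matrices of the principal-coefficient cluster algebras $\Acal_\bullet(B^c;t_0)$ and $\Acal_\bullet(-B^c;t_0)$, and then to combine two facts: matrix mutation of a stacked matrix commutes with a global sign change, and the Coxeter mutation sequence $\nu:=\mu_{c_1}\circ\cdots\circ\mu_{c_n}$ turns the initial stacked matrix $\left[\begin{smallmatrix}B^c\\ I_n\end{smallmatrix}\right]$ of $\Acal_\bullet(B^c;t_0)$ into $\left[\begin{smallmatrix}B^c\\ -I_n\end{smallmatrix}\right]$. For the reduction, choose a mutation sequence $\mu$ and an index $k$ with $\xx_t=\mu(\xx_{t_0})$, $[\xx]=[\xx_t]$ and $x=x_{k;t}$. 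By Proposition \ref{tau-interpretation}, $\tau^{-1}(x)$ is the $k$th variable of the seed $\mu\circ\nu(\xx_{t_0})$ and, applying this to every component, $\tau^{-1}([\xx])=[\mu\circ\nu(\xx_{t_0})]$; by Proposition \ref{same-graph} the labeled seeds produced by $\mu$, and by $\mu\circ\nu$, are literally the same in $\Acal(B^c;t_0)$ as in $\Acal(-B^c;t_0)$. Unwinding Definition \ref{non-labeled-c}, the asserted equation becomes: the $k$th $c$-vector of the seed $\mu\circ\nu(\xx_{t_0})$ in $\Acal_\bullet(B^c;t_0)$ equals minus the $k$th $c$-vector of $\mu(\xx_{t_0})$ in $\Acal_\bullet(-B^c;t_0)$. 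By Remark \ref{c-recursion-remark} (the bottom block of a mutated stacked matrix is its $C$-matrix), it suffices to prove $\mu\bigl(\nu(\tilde B)\bigr)=-\,\mu(\tilde B')$, where $\tilde B=\left[\begin{smallmatrix}B^c\\ I_n\end{smallmatrix}\right]$ and $\tilde B'=\left[\begin{smallmatrix}-B^c\\ I_n\end{smallmatrix}\right]$.

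For the first ingredient I would check directly from the mutation formula that $\mu_k(-\tilde A)=-\mu_k(\tilde A)$ for every stacked matrix $\tilde A$ and direction $k$: the entries the rule simply negates pose no problem, and the two quadratic correction terms differ by $([a_{ik}]_+-[-a_{ik}]_+)\,b_{kj}+a_{ik}([-b_{kj}]_+-[b_{kj}]_+)=a_{ik}b_{kj}-a_{ik}b_{kj}=0$, using $[a]_+-[-a]_+=a$. Iterating, $\mu(-\tilde A)=-\mu(\tilde A)$ for any mutation sequence $\mu$. Granting the second ingredient $\nu(\tilde B)=\left[\begin{smallmatrix}B^c\\ -I_n\end{smallmatrix}\right]$, this gives
\[
\mu\bigl(\nu(\tilde B)\bigr)=\mu\!\left(\left[\begin{smallmatrix}B^c\\ -I_n\end{smallmatrix}\right]\right)=\mu\!\left(-\left[\begin{smallmatrix}-B^c\\ I_n\end{smallmatrix}\right]\right)=-\,\mu(\tilde B'),
\]
which is exactly the required identity. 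The concluding ``in particular'' statement is then immediate from sign-coherence (Theorem \ref{thm:signs-ci}): a $c$-vector is either $>0$ or $<0$, and negation swaps the two cases.

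The real work, and the main obstacle, is the second ingredient $\nu(\tilde B)=\left[\begin{smallmatrix}B^c\\ -I_n\end{smallmatrix}\right]$. I would prove it by downward induction on $j=n,n-1,\dots,0$, showing that after performing the first $n-j$ mutations of $\nu$ — that is, $\mu_{c_n},\mu_{c_{n-1}},\dots,\mu_{c_{j+1}}$, in that order, starting from $\tilde B$ — the current exchange matrix equals $B^c$ with exactly the rows and columns indexed by $c_{j+1},\dots,c_n$ negated, the $c$-vectors are $-\ee_{c_l}$ for $l>j$ and $\ee_{c_l}$ for $l\le j$, and $c_j$ is a \emph{sink}: its row in the current exchange matrix is entrywise $\le 0$, while $\cc_{c_j}=\ee_{c_j}$. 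The sink property holds because in $B^c$ the row-$c_j$ entries in columns $c_1,\dots,c_{j-1}$ are $\le 0$ (as $c_j$ follows those letters in $c$) while those in columns $c_{j+1},\dots,c_n$ are $\ge 0$ but have just been negated. The induction is driven by two elementary observations: mutating a skew-symmetrizable matrix at a sink negates only that sink's row and column, because the cross-terms $[a_{ik}]_+a_{kj}+a_{ik}[-a_{kj}]_+$ vanish when row $k$ is $\le 0$; and the corresponding mutation of the stacked matrix, since the mutated $c$-vector is still $\ge 0$, only flips the sign of that one $c$-vector. This both advances the induction and exhibits $c_{j-1}$ as the next sink. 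At $j=0$ each row and each column has been negated exactly once, so every off-diagonal entry $b_{c_ic_l}$ is negated twice and the exchange matrix returns to $B^c$, while every $c$-vector has been flipped once, yielding the $C$-matrix $-I_n$. The only subtlety is to align the handedness of $\nu$ in Proposition \ref{tau-interpretation} (where $\mu_{c_n}$ is applied first) with the Coxeter order so that $c_n$ is genuinely a sink of $B^c$; this is immediate from the definition of $B^c$.
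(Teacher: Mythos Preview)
Your argument is correct and follows exactly the paper's approach: show $\nu(\tilde B^c_{t_0})=\left[\begin{smallmatrix}B^c\\ -I_n\end{smallmatrix}\right]$ via the sink-mutation sequence, use $\mu(-A)=-\mu(A)$, then read off the $C$-matrices via Remark~\ref{c-recursion-remark} and Proposition~\ref{tau-interpretation}. The paper compresses your downward induction into the single phrase ``because $\mu_{c_n},\dots,\mu_{c_1}$ in this sequence are all sink mutations''; your explicit bookkeeping of how the exchange matrix and $C$-matrix evolve is just a spelled-out version of that claim.
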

\begin{proof}
We have $\mu_{c_1}\circ\cdots\circ\mu_{c_n}(B^c)=B^c$ and the $C$-matrix of $\mu_{c_1}\circ\cdots\circ\mu_{c_n}(\xx_{t_0})$ is $-I_n$, because $\mu_{c_n},\dots, \mu_{c_1}$ in this sequence are all sink mutations. Therefore, we have \[\mu_{c_1}\circ\cdots\circ\mu_{c_n}(\tilde{B}^c_{t_0})=\begin{bmatrix}B^c\\-I_n\end{bmatrix}=-(\widetilde{-B}^c_{t_0}).\]
As $\mu(-A)=-\mu(A)$ for any mutation sequence $\mu$, we inductively conclude that
\begin{align}\label{extended-eq}
    \mu\circ\mu_{c_1}\circ\cdots\circ\mu_{c_n}(\tilde{B}^c_{t_0})=-\mu(\widetilde{-B}^c_{t_0}).
\end{align}   
According to Remark \ref{c-recursion-remark} and Proposition \ref{tau-interpretation}, if $\xx=\mu(\xx_{t_0})$, the bottom $n\times n$ matrix of the left-hand side matrix of \eqref{extended-eq} coincides with $C_{\tau^{-1}_c([\xx])}^{B^c;t_0}$. On the other hand, the bottom $n\times n$ matrix of the right-hand side matrix of \eqref{extended-eq} coincides with $-C_{[\xx]}^{-B^c;t_0}$ according to Proposition \ref{same-graph}. Therefore, we have $C_{\tau^{-1}_c([\xx])}^{B^c;t_0}=-C_{[\xx]}^{-B^c;t_0}$ and this concludes the proof.
\end{proof}

Now, we are ready to prove Theorem \ref{lattice-op-iso}.

\begin{proof}[Proof of Theorem \ref{lattice-op-iso}]
We prove that the green-preservation condition holds. We set $[\xx]=X\cup\{x\},\ [\xx']=X\cup\{x'\}$ and $[\xx]\to[\xx']$ in $\overrightarrow{\Gamma}(B^c)$ and assume that $x$ is non-initial. Then we have $\cc^{B^c;t_0}_{x,[\xx]}>0$. According to Theorem \ref{condition-true}, we have $\cc^{-B^c;t_0}_{x,[\xx]}<0$. According to Lemma \ref{c-mat-trick}, we have $\cc^{B^c;t_0}_{\tau_c^{-1}(x),\tau_c^{-1}([\xx])}>0$. Therefore, we have an arrow $\tau^{-1}_c([\xx])\to \tau^{-1}_c([\xx'])$.
\end{proof}
\section{$\tilde{\phi}_c$ is quiver isomorphism}

Next, we discuss the isomorphism $\tilde{\phi}_c$.

\begin{theorem}\label{lattice-iso-phi}
The graph isomorphism $\tilde{\phi}_c$ is a quiver isomorphism.
\end{theorem}

This theorem follows from Theorems \ref{lattice-op-iso-psi} and \ref{lattice-op-iso}. However, since the representation-theoretic interpretation of $\tau_c$ is important, we provide a more direct proof of Theorem \ref{lattice-iso-phi} using it, rather than relying on cluster algebra theory.

\subsection{Interpreting the transformation $\tau_c$ in terms of $\tau$-tilting theory of GLS-path algebras}
We denote $\tau^{-}:=\mathrm{Tr}D$. We say that $M\in \mod A$ is a \emph{$\tau^{-}$-rigid module} if $\Hom_A(\tau^{-}M,M)=0$.

\begin{definition}[$\tau^-$-rigid and $\tau^-$-tilting pair]
Let $M$ be a module in $\mod A$ and $I$ be an injective module in $\mod A$. The pair $(M,I)$ is referred to as \emph{$\tau^{-}$-rigid} if $M$ is $\tau^{-}$-rigid and $\Hom_A(M,I)=0$. It is referred to as \emph{$\tau^{-}$-tilting} (resp., \emph{almost $\tau^{-}$-tilting}) if it satisfies $|M|+|I|=n=|A|$ (resp., $|M|+|I|=n-1=|A|-1$).
\end{definition}

We note that $(M,I)$ is $\tau^-$-rigid (resp., $\tau^-$-tilting) if and only if $(DM,DI)$ is $\tau$-rigid (resp., $\tau$-tilting). 

\begin{theorem}[\cite{air}]
Let $A$ be a $K$-algebra. Then we have a bijection
\[\stautilt A\to \stautiltminus A,\ (M,P)\mapsto(\tau M\oplus \nu P,\nu M_{\mathrm{pr}}),\]
where $\nu$ is the Nakayama functor and $M_{\mathrm{pr}}$ is the maximal projective summand of $M$.
\end{theorem}

If $A$ is a GLS path algebra $H$, then we have the following lemma:

\begin{lemma}\label{tau-tau-equivalent}
For $M\in \mod H$, $M$ is a $\tau$-rigid module if and only if $M$ is a $\tau^{-}$-rigid module.
\end{lemma}

\begin{proof}
We assume that $M$ is $\tau^{-}$-rigid. Then, $DM$ is $\tau$-rigid. By Lemma \ref{DM-tau-tilting}, we have that $M$ is $\tau$-rigid. The converse case is proved in the same way.
\end{proof}

We remark that this fact is also pointed out in \cite{gls-i}*{Corollary 3.6}.

By Lemma \ref{tau-tau-equivalent}, $(\tau M\oplus \nu P, M_{\mathrm{pr}})$ is a $\tau$-tilting pair.

In fact, the transformation $(M,P)\mapsto (\tau M\oplus \nu P, M_{\mathrm{pr}})$ is the ``$\tau_c$-transformation" in $\overrightarrow{\Gamma}(\stautilt H)$. We will show it by the following theorem. For a $c$-cluster $\{\beta_1,\dots,\beta_n\}$, we denote by
\[\tau_c(\{\beta_1,\dots,\beta_n\}):=\{\tau_c(\beta_1),\dots,\tau_c(\beta_n)\}.\]

\begin{theorem}\label{description-tauc-rep}
  We have the following:
 \begin{itemize}
     \item [(1)] For an indecomposable $\tau$-rigid pair $(N,0)$, we have \[\tau_c(\phi_c(N,0))=\begin{cases}
         \phi_c(\tau N,0) &\text{if $N$ is non-projective,}\\
         \phi_c(0,N) &\text{if $N$ is projective},
     \end{cases}\]
     \item [(2)] for an indecomposable $\tau$-rigid pair $(0,Q)$, we have \[\tau_c(\phi_c(0,Q))=
         \phi_c(\nu Q,0), 
     \] 
 \end{itemize}
 in particular, for a $\tau$-tilting pair $(M,P)$ of $H$, we have
 \[\tau_c(\tilde{\phi}_c(M,P))=\tilde{\phi}_c(\tau M\oplus \nu P, M_{\mathrm{pr}}).\]

 Dually, 
\begin{itemize}
 \item [(3)] For an indecomposable $\tau$-rigid pair $(N,0)$, we have \[\tau^{-1}_c(\phi_c(N,0))=\begin{cases}
         \phi_c(\tau^{-} N,0) &\text{if $M$ is non-projective,}\\
         \phi_c(0,\nu^{-1}N) &\text{if $N$ is injective},
     \end{cases}\]
     \item [(4)] for an indecomposable $\tau$-rigid pair $(0,Q)$, we have \[\tau_c(\phi_c(0,Q))=
         \phi_c(Q,0), 
     \] 
 \end{itemize}
 in particular, for a $\tau$-tilting pair $(M,P)$ of $H$, we have
 \[\tau^{-1}_c(\tilde{\phi}_c(M,P))=\tilde{\phi}_c(\tau^{-} M\oplus P, \nu^{-1}M_{\mathrm{in}}),\]
 where $M_{\mathrm{in}}$ is the maximal injective summand of $M$.  
\end{theorem}

Let us prepare to show Theorem \ref{description-tauc-rep}. Let $F^{\pm}_k$ be reflection functors, and 
\[C^{+}:=F^+_{i_1}\circ F^+_{i_2}\circ\cdots\circ F^+_{i_n}, \ C^{-}:=F^-_{i_n}\circ F^-_{i_{n-1}}\circ\cdots\circ F^-_{i_1}\] be Coxeter functors on $H$ (where the coressponding Coxeter element is $c=s_{i_1}\cdots s_{i_n}$). These are introduced in \cite{gls-i}. For details of these functors, see \cite{gls-i}*{Sections 9 and 10}. In this paper, we extract some properties we need of them. In the rest of this section, we sometimes identify the rank vector of $M$ with a root corresponding to $M$.  
The following lemma is the special case of \cite{gls-i}*{Proposition 9.4}:
\begin{lemma}\label{rank-reflection}
Let $M\in \mod H$ be an indecomposable locally free module. 
\begin{itemize}
    \item [(1)]
 Let $i$ be a sink vertex in $Q^c$. Then we have the following equation:
\[\underline{\mathrm{rank}}_HF^+_i(M)=\begin{cases}0&\text{if $M\simeq H_i$ as a right $H$-module,}\\
    s_i(\underline{\mathrm{rank}}_HM) & \text{otherwise.}\end{cases}\]
\item [(2)]
Let $i$ be a source vertex in $Q^c$. Then we have the following equation:
\[\underline{\mathrm{rank}}_HF^-_i(M)=\begin{cases}0&\text{if $M\simeq H_i$ as a right $H$-module,}\\
    s_i(\underline{\mathrm{rank}}_HM) & \text{otherwise.}\end{cases}\]
\end{itemize}
\end{lemma}

\begin{lemma}[\cite{gls-i}*{Proposition 9.6}]\label{reflectedlocallyfree}
 For each a $\tau$-rigid module $M\in \mod H$, $F^{\pm}_k(M)$ are $\tau$-rigid again.   
\end{lemma}
\begin{lemma}[\cite{gls-i}*{Theorem 10.1}]\label{TC-tau}
For each locally free module $M\in \mod H$, we have $TC^{+}(M)\simeq \tau(M)$ and $TC^{-}(M)\simeq \tau^-(M)$, where $T$ is the twist automorphism.
\end{lemma}

By the three lemmas mentioned above, we can make the following important observation:

\begin{theorem}\label{coxeter-tau}
For an indecomposable $\tau$-rigid module $M\in \mod H$, we have $C^\pm(M)\simeq\tau^\pm M$ respectively, and $\tau^{\pm} M$ are $\tau$-rigid and locally free. Moreover, if $\tau M\neq 0$ (i.e., $M$ is non-projective), we have
$\underline{\mathrm{rank}}_{H}\tau M=c(\underline{\mathrm{rank}}_{H} M)$. Dually, if $\tau^{-} M\neq 0$ (i.e., $M$ is non-injective), we have $\underline{\mathrm{rank}}_{H}\tau^{-} M=c^{-1}(\underline{\mathrm{rank}}_{H} M)$.
\end{theorem}

\begin{proof}
Since $T$ preserves rigidity and locally freeness, if $M$ is $\tau$-rigid, then $C^{\pm}(M)$ and $TC^{\pm}(M)$ are $\tau$-rigid by Lemma \ref{reflectedlocallyfree} and Theorem \ref{facts-generalized-path-alg} (2). By the uniqueness of the rank vector of $\tau$-rigid modules, we have $C^{\pm}(M)\simeq TC^{\pm}(M)$. Therefore, by Lemma \ref{TC-tau}, we have $C^\pm(M)\simeq\tau^\pm M$. We will now prove the latter part. Since we have $\tau M\simeq C^+(M)=F^+_{i_1}\circ F^+_{i_2}\circ\cdots\circ F^+_{i_n}(M)$, if $\tau M\neq 0$, then $F^+_{i_k}(F^+_{i_{k+1}}\circ\cdots\circ F^+_{i_n}(M))\neq 0$ for each $k$. Therefore, by Lemma \ref{rank-reflection}, we have $\underline{\mathrm{rank}}_{H}\tau M=c(\underline{\mathrm{rank}}_{H} M)$. The dual statement is proven in the same way.
\end{proof}

\begin{proof}[Proof of Theorem \ref{description-tauc-rep}]
 We will prove (1) and (2) only. We will show (1). First, we prove the case that $N$ is projective. We assume that $N=P(i)$, where $P(i)$ is the projective module associated with the vertex $i$ in $Q^c$. It suffices to show that $\tau_c(\underline{\mathrm{rank}}_{H}P(i))=-\alpha_i$. Since $\tau_c$ is a bijection, it is enough to show
\begin{align}\label{proj-tau-}
    \underline{\mathrm{rank}}_{H}P(i)=\tau_c^{-1}(-\alpha_i).
\end{align}

For the left hand side of the above equation, if $i=i_k$, we have
\begin{align*}
    \tau_c^{-1}(-\alpha_i)&=\sigma_{i_n}\circ\dots\circ\sigma_{i_1}(-\alpha_i)\\
    &=\sigma_{i_n}\circ\dots\circ\sigma_{i_k}(-\alpha_i)\\
    &=\sigma_{i_n}\circ\dots\circ\sigma_{i_{k+1}}(\alpha_i)\\
    &=s_{i_n}\circ\dots\circ s_{i_{k+1}}(\alpha_i).
\end{align*}
This root is given as follows: for a vertex $j\in Q^c$,
\begin{itemize}
    \item if there is not a path from $i$ to $j$ in $Q^c$, then the coefficient $b_j$ of $\alpha_j$ in the simple root decomposition of $\tau_c^{-1}(-\alpha_i)$ is $0$,
    \item the coefficient  $b_i$ of $\alpha_i$ is $1$,
    \item if there is a (unique) path $i=i_0\to i_1\to \dots\to i_{m-1}\to i_{m}=j$ from $i$ to $j$ in $Q^c$, then the coefficient $b_j$ of $\alpha_j$ is given by $|C_{ji_{m-1}}|b_{i_{m-1}}$ inductively. 
\end{itemize}
On the other hand, $\underline{\mathrm{rank}}_{H}P(i)$ is given by using the charactorization of projective modules in representation theory of finite dimensional $K$-algebra as follows (cf. \cite{ass}*{Chapter III}):
\begin{itemize}
    \item if there is not a path from $i$ to $j$ in $Q^c$, then we have $P(i)_j=0$, and thus $\text{rank}_{H_j}P(i)_j=0$,
    \item the $K$-basis of $P(i)_i$ is $\{e_i,\varepsilon_i,\varepsilon_i^2,\dots,\varepsilon_i^{d_i-1}\}$. Therefore, $\dim_K P(i)_i=d_i$ and thus $\text{rank}_{H_i}P(i)_i=1$,
    \item if there is a (unique) path $i=i_0\to i_1\to \dots\to i_{m-1}\to i_{m}=j$ from $i$ to $j$ in $Q^c$, then the $K$-basis of $P(i)_j$ is
    \begin{align*}
    \{\varepsilon_i^{a_0}\alpha_{ii_1}\varepsilon_{i_1}^{a_{1}}\alpha_{i_1i_2}\varepsilon_{i_2}^{a_{2}}\dots \varepsilon_{i_{m-1}}^{a_{m-1}}&\alpha_{i_{m-1}j}\varepsilon_j^b \mid \\ &0\leq a_k \leq |C_{i_{k+1}i_k}|-1\text{ for each $k$, } 0\leq b \leq d_j-1\}.
    \end{align*}
    Therefore, the rank of $(P_i)_j$ as $H_j$-module is given by $|C_{ji_{m-1}}|\cdot\text{rank}_{H_{i_{m-1}}}{P(i)_{i_{m-1}}}$ inductively. 
\end{itemize}
By comparing these descriptions, we have \eqref{proj-tau-}. Next, we prove the case that $M$ is non-projective. By the above discussion, a root $\alpha$ maps to one of negative simple roots by $\tau_c$ if and only if $\phi_c^{-1}(\alpha)$ is a projective module. Therefore, if $N$ is non-projective, then we have $\tau_c(\phi_c(N,0))=c(\phi_c(N,0))$ and the desired equation obtain from Theorem \ref{coxeter-tau}.
We will show (2). By duality of \eqref{proj-tau-}, we have
\[\underline{\text{rank}}_HI(i)=\tau_c(-\alpha_i),\]
where $I(i)$ is the injective module associated with a vertex $i$. Since $I(i)=\nu P(i)$, it finishes the proof of (2).
\end{proof}

\begin{remark}
 In the situation that $H$ is isomorphic to a path algebra $KQ$ associated with a simply-laced Dynkin quiver $Q$, Theorem \ref{description-tauc-rep} implies that $\tau_c$ can be regarded as the AR-translation $\tau$ of the cluster category $\mathcal{C}(Q):=D^{b}(KQ)/\tau^{-1}[1]$ via the Adachi-Iyama-Reiten bijection \cite{air}*{Theorem 4.1}. 
\end{remark}

\subsection{Direct proof of Theorem \ref{lattice-iso-phi}}In this subsection, we prove Theorem \ref{lattice-iso-phi}. By abusing the notation for an indecomposable $\tau$-tilting pair $(N,Q)$ in $\stautilt H$, we denote
\[\tau_c(N,Q):=\phi_c^{-1}\tau_c(\phi_c(N,Q)),\quad R_c(N,Q):=R_c(\phi_c(N,Q)).\]

By Theorem \ref{description-tauc-rep}, we have
\begin{align*}
    \tau_c(N,0)&=\begin{cases}
         (\tau N,0) &\text{if $N$ is non-projective,}\\
         (0,N) &\text{if $N$ is projective},
     \end{cases}\\
     \tau_c(0,Q)&=(\nu Q,0), \\
     \tau^{-1}_c(N,0)&=\begin{cases}
         (\tau^- N,0) &\text{if $N$ is non-injective,}\\
         (0,\nu^{-1}N) &\text{if $N$ is injective},
     \end{cases}\\
     \tau^{-1}_c(0,Q)&=(Q,0).
         \end{align*}

According to Theorem \ref{lattice-iso-phi}, it suffices to show that
\begin{align}\label{exchange-R2}
(M,P) \to (M',P')\ \text{in}\ \overrightarrow{\Gamma}(\stautilt H)\Leftrightarrow R(N,Q)>R(N',Q'),
\end{align}
where $(N,Q)$ (resp., $(N',Q')$) is an indecomposable summand of $(M,P)$ (resp., $(M',P')$) that is not included in $(M',P')$ (resp., $(M,P)$).
Furthermore, \eqref{exchange-R2} can be rephrased by the following lemma:

\begin{lemma}\label{green-preserve-lemma2}
The following condition is equivalent to \eqref{exchange-R2}:
\begin{align}
&\text{For $(M,P)\to(M',P)$ in $\overrightarrow{\Gamma}(\stautilt H)$, }\text{there exists an arrow $\tau^{-1}_c(M,P)\to \tau^{-1}_c(M',P)$.}\label{green-preservation2} 
\end{align}
\end{lemma}

We remark that in Lemma \ref{green-preserve-lemma2}, the two $\tau$-tilting pairs $(M,P)$ and $(M',P)$ have the same projective support $P$ (that is, $(M',P)$ is not a typo for $(M',P')$).

To facilitate understanding of the proof, we shall prove the following lemma.

\begin{lemma}\label{change-components}
We assume that $(M,P)$ is connected with $(M',P')$
in ${\Gamma}(\stautilt H)$, and let $(N,Q)$ (resp., $(N',Q')$) be an indecomposable summand of $(M,P)$ (resp., $(M',P')$) that is not included in $(M',P')$ (resp., $(M,P)$). Then, $Q'\neq 0$ holds if and only if there is an arrow $(M,P) \to (M',P')$
in $\overrightarrow{\Gamma}(\stautilt H)$ and $P'\neq P$ holds. In this situation, we have $Q=0$.
\end{lemma}

\begin{proof}
First, we prove (1). We assume that $Q'\neq 0$. If $Q$ was nonzero, then we have $M=M'$. Since $P$ is determined by $M$, we have $P=P'$. It is a conflict with the assumption that $Q'$ is not a summand of $P$. Therefore, we have $Q=0$ and $P'\neq P$. Moreover, since $(N,Q)$ is indecomposable and $Q=0$, we have $N\neq 0$. Therefore, we have $M\simeq M'\oplus N$ and $(M,P)\to(M',P')$ in $\overrightarrow{\Gamma}(\stautilt H)$. We show the converse. We assume that there is an arrow $(M,P) \to (M',P')$
in $\overrightarrow{\Gamma}(\stautilt H)$ and $P'\neq P$. For $P$ and $P'$, there are three possible cases: (i) $P\subset P'$, (ii) $P\supset P'$, (iii) there is no inclusion relation between $P$ and $P'$. In fact, (ii) and (iii) cannot happen. Indeed, if (ii) held, then we have $Q\neq0$ and $N'\neq 0$, and we have $M'\simeq M\oplus N'$. It conflicts with $(M,P) \to (M',P')$
in $\overrightarrow{\Gamma}(\stautilt H)$. If (iii) held, we have $Q\neq 0$ and $Q'\neq 0$. Then, we have $M=M'$ and $P=P'$. It conflicts with $P\neq P'$. Therefore, we have (i) and $Q'\neq 0$ and $Q=0$.
\end{proof}

\begin{proof}[Proof of Lemma \ref{green-preserve-lemma2}]
We assume that \eqref{exchange-R2} holds for any $(M,P),(M',P')$. We consider $(M,P)\to (M',P)$ in $\overrightarrow{\Gamma}(\stautilt H)$. By Lemma \ref{change-components}, we have $Q=Q'=0$, and we have $R(N,Q)\neq 0$ and $R(N',Q')\neq 0$. Thus $R(\tau_c^{-1}(N,Q))=R(N,Q)-1$ and $R(\tau_c^{-1}(N',Q'))=R(N',Q')-1$ hold, and we have $R(\tau_c^{-1}(N,Q))>R(\tau_c^{-1}(N',Q'))$. Therefore we have an arrow $\tau^{-1}_c(M,P)\to \tau^{-1}_c(M',P)$ in $\overrightarrow{\Gamma}(\stautilt H)$. Next, we assume that \eqref{exchange-R2} does not hold for some $(M,P), (M',P')$. Choose $(M,P), (M',P')$ such that $(M,P)\to (M',P')$ and $R(N,Q)<R(N',Q')$. If $Q'\neq 0$ held, then we have $(M,P)\to (M',P')$ and $R(N,Q)>R(N',Q')=0$ by Lemma \ref{change-components}, and it is a contradiction. Therefore $Q'=0$ must hold. Moreover, we have $Q= 0$ and $P'=P$ by Lemma \ref{change-components}. Then, if the green-preservation condition \eqref{green-preservation2} was true, since the second components of $(N',Q'),\tau^{-1}_c(N',Q'),\dots,\tau_c^{-R(N,Q)+1}(N',Q')$ are $0$, the second components of $\tau^{-i}_c(M,P)$ and $\tau^{-i}_c(M',P)$ coincide for $0\leq i\leq R(N,Q)-1$ by Lemma \ref{change-components}, and we have an arrow $\tau^{-R(M,P)}_c(M,P)\to\tau^{-R(M,P)}_c(M',P)$. However, the second component of $\tau^{-R(M,P)}_c(M,P)$ is non-zero, and it conflicts with Lemma \ref{change-components}.
\end{proof}

The following lemma is used in the proof of Theorem \ref{lattice-iso-phi}:

\begin{lemma}[\cite{air}*{Lemma 2.25}]\label{equiv-arrow}
Let $A$ be a $K$-algebra. The following are equivalent:
\begin{itemize}
    \item [(i)] $(M,P)\to(M',P')$ in $\overrightarrow{\Gamma}(\stautilt A)$
    \item[(ii)] $\Hom_H(M',\tau M)=0$ and $\add P\subset \add P'$
\end{itemize}
\end{lemma}

We are ready to prove Theorem \ref{lattice-iso-phi}.

\begin{proof}[Proof of Theorem \ref{lattice-iso-phi}]
By Lemma \ref{green-preserve-lemma2}, it suffices to show \eqref{green-preservation2}. By Theorem \ref{description-tauc-rep}, we have $\tau_c^{-1}(M,P)=(\tau^-M\oplus P,\nu^{-1} M_{\text{in}})$.  By Lemma \ref{equiv-arrow}, it is enough to show that if $\Hom_H (M',\tau M)=0$, then we have $\Hom_H (\tau^{-} M',\tau\tau^{-} M)=0$ and $\add \nu^{-1}M_{\mathrm{in}}\subset \add \nu^{-1}M'_{\mathrm{in}}$. We will show that $\Hom_H (\tau^{-} M',\tau\tau^{-} M)=0$. Since $\mathrm{proj. dim~} M\leq 1$ and $\mathrm{proj. dim~} M'\leq 1$ by Theorem \ref{facts-generalized-path-alg} (1) and (2), we have 
\begin{align*}
    0=\Hom_H(M',\tau M)\simeq \Hom_H(\tau^{-}M',M). 
\end{align*}
Since $\tau\tau^-M$ is a summand of $M$, we have $\Hom_H (\tau^{-} M',\tau\tau^{-} M)=0$. Next, we prove $\add \nu^{-1}M_{\mathrm{in}}\subset \add \nu^{-1}M'_{\mathrm{in}}$. It suffices to show $\add M_{\mathrm{in}}\subset \add M'_{\mathrm{in}}$. Let $N$ be a (unique) indecomposable summand of $M$ which is not a summand of $M'$. It suffices to show that $N$ is not injective. We assume that $N$ is injective. We set $M_0$ as a module satisfying $M\simeq M_0\oplus N$. Using the same argument as in the proof of Theorem \ref{condition-tilting-true}, we can regard $M$ and $M'$ as tilting $\Lambda/\Lambda e\Lambda$-modules (where $e$ is the idempotent satisfying $P\simeq e\Lambda$). Therefore, we have an exact sequence
\[0\rightarrow N \xrightarrow{f} \tilde{M_0} \xrightarrow{g} N'\rightarrow 0,\]
where $\tilde{M_0}\in \add M_0$ and $M'\simeq M_0\oplus N'$. Since $N$ is injective, this sequence splits. Therefore, we have $\tilde{M_0}\simeq N\oplus N'$. This implies that $N$ is a summand of $M_0$, and it is contradiction. Therefore, we have $\add M_{\mathrm{in}}\subset \add M'_{\mathrm{in}}$.
\end{proof}
\section{Isomorphism between Cambrian and torsion lattice}

In this section, we construct the lattice isomorphism between the torsion lattice of $H$ and the $c$-Cambrian lattice after introducing the torsion lattice and $c$-Cambrian lattice. Refer \cite{ass} for more information on the torsion class, and \cite{dirrt} for a detailed explanation of how the Cambrian lattice and representation theory are related.

\subsection{Torsion lattice}

This section describes the torsion lattice. A \emph{torsion pair} $(\mathcal{T},\mathcal{F})$ in $\mod A$ is a pair of subcategories satisfying the following conditions:
\begin{itemize}
\item[(i)] $\Hom_A(T,F)=0$ for any $T\in \mathcal{T}$ and $F\in \mathcal{F}$;
\item[(ii)] For any module $X\in \mod A$, there exists a short exact sequence
 $$0\rightarrow X^{\mathcal{T}}\rightarrow X\rightarrow X^{\mathcal{F}}\rightarrow0$$ with $X^{\mathcal{T}}\in\mathcal{T}$ and $X^{\mathcal{F}}\in \mathcal{F}$. Due to the criterion (i), such a sequence is unique up to isomorphisms. This short exact sequence is called the \emph{canonical sequence} of $X$ with respect to $(\mathcal{T},\mathcal{F})$.
\end{itemize}

The subcategory $\mathcal T$ (resp., $\mathcal F$) in a torsion pair $(\mathcal T,\mathcal F)$ is called a \emph{torsion class} (resp., \emph{torsionfree class}) of $\mod A$. A torsion class $\mathcal T$ is said to be \emph{functorially finite} if it is functorially finite as a subcategory of $\mod A$, that is, $\mathcal T$ is both contravariantly and covariantly finite in $\mod A$, cf. \cite{AS_1981}.

\begin{definition}[Torsion lattice]
Let $\mathsf{tors} A$ be the set of torsion classes of $A$. The set $\mathsf{tors} A$ forms a lattice structure by inclusion. This lattice is called the \emph{torsion lattice}.
\end{definition}

\begin{example}[Type $A_2$]
We set $A=K(1\leftarrow 2)$. Then the torsion lattice of $A$ is as follows:
\[\begin{tikzpicture}
      \node (1) at (0,6) {\Huge $\sst{\bullet\\ \bullet\ \bullet}$};
      \node(2) at (-4,5) {\Huge $\sst{\bullet\\ \circ\ \bullet}$};
      \node (3) at (4,4) {\Huge $\sst{\circ\\ \bullet\ \circ}$};
      \node (4) at (-4,3) {\Huge $\sst{\circ\\ \circ\ \bullet}$};
      \node (5) at (0,2) {\Huge $\sst{\circ\\ \circ\ \circ}$};
      \draw[->] (1)--(2);
      \draw[->] (1)--(3);
      \draw[->] (2)--(4);
      \draw[->] (3)--(5);
      \draw[->] (4)--(5);
    \end{tikzpicture}\]
where the three cycles in each vertex represents the AR-quiver of $A$,
\begin{center}
$\begin{tikzpicture}
 \node (0) at (0,0) {${\substack{1}}$};
 \node (1) at (1,1){$\substack{2 \\ 1}$};
 \node (2) at (2,0){$\substack{2},$};
 \draw[->](0)--(1);
 \draw[->](1)--(2);
 \draw[dashed](0)--(2);
\end{tikzpicture}$
\end{center}
and black circles indicate that the corresponding module belongs to the torsion class.
\end{example}

Next, we recall the relation between torsion classes and $\tau$-tilting pairs.

\begin{theorem}[\cite{air}*{{Proposition 1.2 (b) and Theorem 2.7}}]\label{proorder}
There exists a well-defined map $\Psi_A$ from $\tau$-rigid pairs to functorially finite torsion classes in $\mod A$,
given by $(M,P)\mapsto \Fac M$.
Moreover, $\Psi_A$ is a bijection if we restrict it to basic $\tau$-tilting pairs.
\end{theorem}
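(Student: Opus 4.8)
The plan is to check three things in turn: that $\Fac M$ is always a functorially finite torsion class (well-definedness of $\Psi_A$), that the restriction of $\Psi_A$ to basic $\tau$-tilting pairs is injective, and that this restriction is surjective onto the functorially finite torsion classes. For well-definedness, $\Fac M$ is closed under quotients by construction, so the only issue is closure under extensions. I would first recall the classical fact (Auslander--Smal{\o}) that for a module $M$ the subcategory $\Fac M$ is closed under extensions if and only if $\Ext^1_A(M,\Fac M)=0$: one direction is immediate by applying $\Hom_A(M,-)$ to a short exact sequence with outer terms in $\Fac M$, and the converse is a standard pullback argument. The vanishing $\Ext^1_A(M,\Fac M)=0$ is then deduced from $\tau$-rigidity: for $N\in\Fac M$ choose an epimorphism $M^{\oplus k}\to N$, and use the Auslander--Reiten formula $\Ext^1_A(M,N)\cong D\,\overline{\Hom}_A(N,\tau M)$ together with $\Hom_A(M,\tau M)=0$ to kill the relevant $\Hom$-group (handling the passage between $\Hom$ and $\overline{\Hom}$ and the epimorphism). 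Functorial finiteness then follows from Smal{\o}'s theorem that $\Fac\mathcal X$ is a functorially finite torsion class whenever $\mathcal X$ is a functorially finite subcategory, applied to $\mathcal X=\add M$, which is functorially finite because $M$ is a finite-dimensional module; the projective summand $P$ plays no role here, so $\Psi_A$ is well-defined on all $\tau$-rigid pairs.

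For injectivity, suppose $(M,P)$ and $(M',P')$ are basic $\tau$-tilting pairs with $\Fac M=\Fac M'=:\mathcal T$. By the computation above, every indecomposable summand of $M$ is $\Ext$-projective in $\mathcal T$; conversely, if $E\in\mathcal T$ is indecomposable and $\Ext$-projective, the epimorphism $M^{\oplus k}\to E$ (which exists since $M$ generates $\mathcal T$) splits, so $E$ is a summand of $M$. Thus $\add M$ is exactly the additive closure of the indecomposable $\Ext$-projectives of $\mathcal T$, and likewise for $M'$, whence $M\cong M'$ as both are basic. Finally $P\cong P'$ because, by Lemma \ref{proj-M_i} together with the uniqueness of the support part \cite{air}*{Proposition 2.3}, $P$ is determined by $M$ as the direct sum of those indecomposable projectives $P_i$ with $M_i=0$.

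For surjectivity, let $\mathcal T$ be a functorially finite torsion class. Covariant finiteness of $\mathcal T$ gives a left $\mathcal T$-approximation $P_i\to T_i$ of each indecomposable projective $P_i$; the $T_i$ account for all indecomposable $\Ext$-projectives of $\mathcal T$, and letting $M$ be their direct sum one obtains a module with $\Fac M=\mathcal T$ which is $\tau$-rigid since $\Ext$-projectivity in $\mathcal T$ forces $\Hom_A(M,\tau M)=0$ via the Auslander--Reiten formula. Setting $P$ to be the sum of the $P_i$ not lying in $\mathcal T$ (after identifying the relevant quotient with a smaller algebra), one checks $\Psi_A(M,P)=\mathcal T$. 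The one genuinely nontrivial point is the equality $|M|+|P|=|A|$, i.e.\ that the $\Ext$-progenerator of a functorially finite torsion class is actually a support $\tau$-tilting module and cannot be enlarged inside $\tau$-rigid pairs; I expect this to be the main obstacle, and it is settled by a Bongartz-type completion argument as in \cite{air}, not by a formal manipulation. The homological identifications in the other two steps, by contrast, are routine once the Auslander--Reiten formula and Smal{\o}'s results are in place.
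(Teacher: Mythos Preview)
The paper does not supply its own proof of this statement: it is quoted verbatim from \cite{air}*{Proposition 1.2 (b) and Theorem 2.7} and used as a black box, so there is nothing here to compare your argument against. Your sketch is a faithful outline of the original Adachi--Iyama--Reiten proof (Auslander--Smal{\o} for closure under extensions, identification of $\add M$ with the $\Ext$-projectives of $\Fac M$ for injectivity, and a Bongartz-type completion for surjectivity), and you correctly flag the completion step as the only substantive one.
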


Generally, the set of functorially finite torsion classes of $A$ does not coincide with the set of torsion classes of $A$. However, the following theorem demonstrates that they do coincide in special circumstances.

\begin{theorem}[\cite{dij}*{Theorem 3.1}]\label{tors=f-tors}
The algebra $A$ is $\tau$-tilting finite if and only if the set of functorially finite torsion classes coincides with the set of torsion classes.
\end{theorem}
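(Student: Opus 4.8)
The plan is to treat the two implications separately, in both cases reducing to a counting question via the bijection $\Psi_A$ of Theorem~\ref{proorder}: since $\Psi_A$ restricts to a bijection between basic support $\tau$-tilting modules and functorially finite torsion classes, and since every indecomposable $\tau$-rigid module is a summand of a basic support $\tau$-tilting module (\cite{air}) --- each of which has at most $n$ indecomposable summands --- the algebra $A$ is $\tau$-tilting finite if and only if it has finitely many functorially finite torsion classes. So the assertion becomes: \emph{there are finitely many functorially finite torsion classes if and only if every torsion class is functorially finite.}

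Suppose every torsion class is functorially finite. The heart of this direction is that $\mathsf{tors}\,A$ then satisfies the ascending chain condition. Given $\mathcal T_1\subseteq\mathcal T_2\subseteq\cdots$, the union $\mathcal T:=\bigcup_i\mathcal T_i$ is again a torsion class: it is visibly closed under quotients, and closed under extensions because any short exact sequence has finite-length terms and hence lies in a single $\mathcal T_i$. By hypothesis $\mathcal T=\Fac M$ for a basic support $\tau$-tilting module $M$; the finitely many indecomposable summands of $M$ lie in some $\mathcal T_N$, so $\mathcal T=\Fac M\subseteq\mathcal T_N$ and the chain stabilises. Dually --- pairing a torsion class with its torsionfree class, using that functorial finiteness is enjoyed by a torsion class if and only if by its torsionfree class, and running the same argument on torsionfree classes --- $\mathsf{tors}\,A$ also satisfies the descending chain condition. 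Finally one shows a torsion lattice with both chain conditions is finite: by the mutation theory of \cite{air} the Hasse quiver of $\mathsf{tors}\,A$ is $n$-regular, the chain conditions make it connected, and an infinite such quiver is excluded by a K\"onig's lemma argument on the signs along an infinite ray. Hence there are only finitely many (functorially finite) torsion classes, i.e.\ $A$ is $\tau$-tilting finite.

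Conversely, suppose $A$ is $\tau$-tilting finite, so there are finitely many functorially finite torsion classes (a basic support $\tau$-tilting module is a sum of at most $n$ of the finitely many indecomposable $\tau$-rigid modules). Let $\mathcal T$ be an arbitrary torsion class and choose a functorially finite $\mathcal U\subseteq\mathcal T$ that is maximal with this property; the candidates form a finite nonempty set (it contains $0=\Fac 0$). Write $\mathcal U=\Fac M$ with $(M,P)$ the corresponding basic support $\tau$-tilting pair. If $\mathcal U\subsetneq\mathcal T$, I would use Theorems~\ref{thmair} and~\ref{mutation-charactorization} together with the finer description of the Hasse quiver of $\mathsf{tors}\,A$ to exhibit a functorially finite torsion class $\mathcal U'$ with $\mathcal U\subsetneq\mathcal U'\subseteq\mathcal T$ --- concretely, a cover of $\mathcal U$ inside $\mathcal T$, which one wants to show is realised by a right mutation of $(M,P)$ --- contradicting the maximality of $\mathcal U$; hence $\mathcal T=\mathcal U$ is functorially finite.

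The main obstacle is exactly this last point: that every cover of a functorially finite torsion class $\Fac M$ in the full lattice $\mathsf{tors}\,A$ is again functorially finite, equivalently that the covers of $\Fac M$ are precisely the $\Fac M'$ with $(M',P')$ a right mutation of $(M,P)$. Maximality of $\mathcal U$ on its own only rules out functorially finite competitors, so one genuinely needs the brick labelling of the Hasse quiver of $\mathsf{tors}\,A$ and the lattice-theoretic navigation it supports; this --- and the companion ``chain conditions imply finiteness'' input used above --- is the substance of \cite{dij}*{Theorem~3.1}, which I would invoke rather than reprove from scratch.
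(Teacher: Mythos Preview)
The paper does not prove this statement: it is quoted verbatim from \cite{dij}*{Theorem~3.1} and used as a black box, so there is no ``paper's own proof'' to compare against. Your proposal is therefore not in conflict with the paper --- you are attempting to supply what the paper simply imports.

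On the substance of your sketch: the reduction via $\Psi_A$ to ``finitely many functorially finite torsion classes $\Leftrightarrow$ every torsion class is functorially finite'' is correct, and your ACC/DCC argument for the direction ``all torsion classes functorially finite $\Rightarrow$ $\tau$-tilting finite'' is essentially sound, though the final finiteness step deserves a cleaner statement: once you have ACC, every $\mathcal T$ reaches $\mod A$ by a finite chain of covers, so the rooted tree of left-mutation sequences from $\mod A$ surjects onto $\mathsf{tors}\,A$; this tree has branching bounded by $n$, and if it were infinite K\"onig's lemma would yield an infinite strictly descending chain, contradicting DCC. Your phrase ``signs along an infinite ray'' obscures rather than clarifies this.

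For the converse direction you correctly isolate the genuine difficulty --- that a cover of a functorially finite torsion class inside $\mathsf{tors}\,A$ is again functorially finite --- and then defer to \cite{dij} for it. That is honest, but it means your proof of this implication is not self-contained: the maximal-$\mathcal U$ argument needs precisely this input, and it is the nontrivial content of \cite{dij}*{Theorem~3.1}. Since the paper itself is content to cite the result, your write-up is adequate for the paper's purposes; just be aware that you have not actually proved the harder implication, only located where its difficulty lies.
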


By Theorems \ref{proorder} and \ref{tors=f-tors}, we have the following theorem:

\begin{theorem}\label{supp-tau-torsion}
If $A$ is $\tau$-tilting finite, then $\Psi_A$ is a quiver isomorphism from $\overrightarrow{\Gamma}(\stautilt A)$ to $\Hasse(\mathsf{tors} A)$.
\end{theorem}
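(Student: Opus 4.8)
The plan is to obtain the statement as a formal consequence of Theorems \ref{proorder}, \ref{tors=f-tors} and \ref{mutation-charactorization}: the real content is that, under $\tau$-tilting finiteness, $\Psi_A$ is an isomorphism of the underlying posets, and an isomorphism of finite posets restricts to an isomorphism of Hasse quivers.

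First I would set up the vertex-set bijection. By Theorem \ref{proorder}, $\Psi_A\colon (M,P)\mapsto \Fac M$ restricts to a bijection from the set of basic $\tau$-tilting pairs onto the set of functorially finite torsion classes of $A$, and by construction this bijection is order-preserving when the source is equipped with the order $(M,P)\le(M',P')$ iff $\Fac M\subseteq \Fac M'$ and the target with inclusion. Since $A$ is $\tau$-tilting finite, Theorem \ref{tors=f-tors} identifies the functorially finite torsion classes with all of $\mathsf{tors} A$, so $\Psi_A$ is a bijection from basic $\tau$-tilting pairs onto $\mathsf{tors} A$; in particular it is a bijection on the vertex sets of $\overrightarrow{\Gamma}(\stautilt A)$ and $\Hasse(\mathsf{tors} A)$.

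Next I would match the arrows. By the definition of $\overrightarrow{\Gamma}(\stautilt A)$ together with Theorem \ref{mutation-charactorization}, there is an arrow $(M,P)\to(M',P')$ precisely when $\Fac M\supsetneq \Fac M'$ and there is no $\tau$-tilting pair $(M'',P'')$ with $\Fac M\supsetneq \Fac M''\supsetneq \Fac M'$. Writing $\mathcal T=\Psi_A(M,P)$, $\mathcal T'=\Psi_A(M',P')$, the first condition is $\mathcal T\supsetneq \mathcal T'$. For the second, the key observation is that every torsion class $\mathcal T''$ with $\mathcal T\supsetneq \mathcal T''\supsetneq \mathcal T'$ is of the form $\Fac M''$ for some basic $\tau$-tilting pair $(M'',P'')$ — this is exactly where $\tau$-tilting finiteness enters, through the combination of Theorems \ref{proorder} and \ref{tors=f-tors}. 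Hence the absence of such a $(M'',P'')$ is equivalent to the absence of such a $\mathcal T''$, i.e. to $\mathcal T$ covering $\mathcal T'$ in $\mathsf{tors} A$, which is the existence of an arrow $\mathcal T\to\mathcal T'$ in $\Hasse(\mathsf{tors} A)$. Thus $\Psi_A$ carries arrows to arrows and non-arrows to non-arrows, so it is a quiver isomorphism.

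The only step that is not purely formal is the reduction, in the paragraph above, of cover relations in $\mathsf{tors} A$ to cover relations among torsion classes in the image of $\Psi_A$; this is precisely the property that can fail for algebras that are not $\tau$-tilting finite, so I would make a point of invoking Theorem \ref{tors=f-tors} at exactly that juncture. Everything else follows from the general principle that an order isomorphism of finite posets induces an isomorphism of their Hasse quivers, applied to the order isomorphism $\Psi_A$ constructed above.
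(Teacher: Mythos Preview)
Your argument is correct and is exactly the approach the paper takes: the paper simply records the theorem as an immediate consequence of Theorems \ref{proorder} and \ref{tors=f-tors}, and you have spelled out that deduction in detail. The appeal to Theorem \ref{mutation-charactorization} is harmless but unnecessary here, since in this paper the arrows of $\overrightarrow{\Gamma}(\stautilt A)$ are \emph{defined} as cover relations for the order by $\Fac$, so matching arrows with Hasse arrows is literally the definition once $\Psi_A$ is known to be a bijection onto all of $\mathsf{tors} A$.
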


\subsection{Cambrian lattice}
We define the Cambrian lattice. We fix a root system $\Phi$ of the Dynkin-type. Let $C_\Phi$ be the Cartan matrix and $W(\Phi)$ the Weyl group corresponding to $\Phi$. We denote by $\{s_1,\dots,s_n\}$ the simple reflections of $W(\Phi)$. Let $c=c_1\cdots c_n \in W(\Phi)$ be a Coxeter element, where $\{c_1,\dots,c_n\}=\{s_1,\dots,s_n\}$. We denote by
\[c^{\infty}=c_1\cdots c_nc_1 \cdots c_n c_1 \cdots\]
the half-infinity words in which $c$ appears repeatedly.

To define the $c$-sorting word, we define a total order $\leq_c$ associated with a Coxeter element $c$ over the set of finite words consisting of the letters $s_1,\dots,s_n$. For any two words $w_1=s_{{w_1}1}\cdots s_{{w_1}m}$ and $w_2=s_{{w_2}1}\cdots s_{{w_2}m'}$ with $w_1\neq w_2$, we define the size relation of $w_1$ and $w_2$ as follows:
\begin{itemize}
\item [(1)] If $w_2$ (resp. $w_1$) is represented by $w_2=w_1s_{{w_2}i}\cdots s_{{w_2}m'}$ (resp. $w_1=w_2s_{{w_1}i}\cdots s_{{w_1}m}$), then we set $w_1<_cw_2$ (resp. $w_2<_cw_1$). 
\item[(2)] We assume that $w_1$ and $w_2$ does not satisfy the case (1). Then, there exists a natural number $k$ such that $s_{w_1i}=s_{w_2i}$ for any $i<k$ and $s_{w_1k}\neq s_{w_2k}$. We find $s_{w_11},s_{w_12},\dots,s_{w_1k-1}$ in order from the left in $c^\infty$, and in the string after this in $c^{\infty}$, if $s_{w_1k}$ (resp. $s_{w_2k}$) appears before $s_{w_2k}$ (resp. $s_{w_1k}$), then we set $w_1<_cw_2$ (resp. $w_2<_cw_1$).
\end{itemize}
\begin{example}
    We set a Coxeter element $c=s_1s_2s_3s_4$, 
and two words $w_1=\textcolor{violet}{s_1s_3s_2}\textcolor{blue}{s_1}$ and $w_2=\textcolor{violet}{s_1s_3s_2}\textcolor{red}{s_4}$. Since $c^{\infty}=\textcolor{violet}{s_1}s_2\textcolor{violet}{s_3}s_4s_1\textcolor{violet}{s_2}s_3\textcolor{red}{s_4}\textcolor{blue}{s_1}s_2s_3s_4\cdots$, we have $w_2<_cw_1$. 
\end{example}
\begin{definition}[$c$-sorting word]
We fix a Coxeter element $c=c_1\cdots c_n \in W(\Phi)$. The \emph{$c$-sorting word} for $w \in W(\Phi)$ is a minimal reduced form of $w$ with respect to the order $\leq_c$.
\end{definition}

By inserting a divider for each word representing $c$ in $c^{\infty}$,
\[c_1\cdots c_n|c_1 \cdots c_n|c_1 \cdots,\]
a word $w$ is considered as a sequence of a subset of $\{s_1,\dots,s_n\}$, i.e., sets of letters that appear between adjacent dividers.

\begin{example}
We set a Coxeter element $c=s_1s_2s_3s_4$ and a word $w=s_1s_3s_2s_4s_1$. Then, since $c^{\infty}=|\textcolor{red}{s_1}s_2\textcolor{red}{s_3}s_4|s_1\textcolor{red}{s_2}s_3\textcolor{red}{s_4}|\textcolor{red}{s_1}s_2s_3s_4|\cdots$, the sequence of subsets of $\{s_1,s_2,s_3,s_4\}$ corresponding to $w$ is
\[(\{s_1,s_3\},\{s_2,s_4\},\{s_1\}).\]
\end{example}

\begin{definition}[$c$-sortable]
An element $w \in W(\Phi)$ is \emph{$c$-sortable} if its $c$-sorting word defines a gradually diminishing sequence of subsets under inclusion.
\end{definition}
We note that the set of $c$-sortable elements is independent of the word choice used to truncate $c$.

\begin{example}[Type $A_2$]\label{ex-c-sortable}
Let $\Phi$ be of $A_2$ type, and $c=s_2s_1$. Then, the $c$-sortable elements of $W(\Phi)$ are $1, s_1,s_2,s_2s_1,s_2s_1s_2$. An element $s_1s_2$ is not $c$-sortable.
\end{example}

\begin{definition}[Inversion set]
For any $w\in W(\Phi)$, we denote by
\[\mathrm{I}(w)=\{\alpha\in \Phi^+ \mid w^{-1}(\alpha) \in -\Delta\},\]
where $-\Delta$ is the set of negative roots in $\Phi$ and $\Phi^+$ is the set of positive roots. The set $\mathrm{I}(w)$ is called the \emph{inversion set of $w$}.
\end{definition}

\begin{definition}(Cambrian lattice)
We fix a Dynkin-type root system $\Phi$ and a Coxeter element $c\in W(\Phi)$. The set of $c$-sortable elements in $W(\Phi)$ exhibits a lattice structure when ordered by inclusion of inversion sets. This lattice is called a \emph{$c$-Cambrian lattice} and is denoted by $\mathfrak C_c$.
\end{definition}

\begin{example}[Type $A_2$]
Consider the same situation as in Example \ref{ex-c-sortable}. The inversion set for each $c$-sortable element is as follows:
\begin{align*}
    \mathrm{I}(1)=\emptyset,\ \mathrm{I}(s_1)=\{\alpha_1\},\ \mathrm{I}(s_2)=\{\alpha_2\},\
    \mathrm{I}(s_2s_1)=\{\alpha_2,\alpha_1+\alpha_2\},
     \mathrm{I}(s_2s_1s_2)=\{\alpha_1,\alpha_2,\alpha_1+\alpha_2\}.
\end{align*}
Therefore, the Hasse quiver of $c$-Cambrian lattice is as follows:
\[\begin{tikzpicture}
      \node (1) at (0,5) {$s_2s_1s_2$};
      \node(2) at (-4,4.25) {$s_2s_1$};
      \node (3) at (4,3.5) {$s_1$};
      \node (4) at (-4,3) {$s_2$};
      \node (5) at (0,2) {$1.$};
      \draw[->] (1)--(2);
      \draw[->] (1)--(3);
      \draw[->] (2)--(4);
      \draw[->] (3)--(5);
      \draw[->] (4)--(5);
    \end{tikzpicture}\]
\end{example}
We examine the relation between the Hasse quiver of $c$-Cambrian lattice and the quiver of $c$-clusters.

\begin{definition}[Positive root associated with last reflection]
Let $a=a_1a_2\cdots a_k$ be the $c$-sorting word for $w$. If $s_i$ occurs in $a$, then the \emph{positive root associated with the last reflection of $s_i$} in $w$ is $a_1a_2\cdots a_{j-1}(\alpha_i)$, where $a_j$ is the rightmost occurrence of $s_i$ in $a$.
\end{definition}

The set $\textrm{cl}_c(w)$ is comprised of the set of all positive roots associated with the last reflection of $w$ and the set of negative simple roots for each $s_i$ that do not appear in $a$.

\begin{theorem}[\cite{re-sp}*{Corollary 8.1 and Proposition 8.3}]\label{cambrian-cluster}
The map $\mathrm{cl}_c\colon \Hasse(\mathfrak C)\to \overrightarrow{\Gamma}(\apPhi,c)$ is a quiver isomorphism.
\end{theorem}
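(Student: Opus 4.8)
This statement is \cite{re-sp}*{Corollary 8.1 and Proposition 8.3}, so the plan is to assemble the relevant parts of Reading's theory of $c$-sortable elements and the Cambrian fan rather than to argue from scratch. The first step is to verify that $\mathrm{cl}_c$ sends the set of $c$-sortable elements bijectively onto the set of $c$-clusters. I would argue by induction on $\mathrm{rank}\,\Phi+\ell(w)$ via the sortable recursion: writing $c=s\cdot c'$ with $s$ an initial letter of $c$, a $c$-sortable element $w$ either has $s\notin\mathrm{I}(w)$, in which case $w$ is a sortable element of the parabolic subgroup $W_{\langle s\rangle}$ for the restricted Coxeter element, or has $s$ as its first letter, in which case $sw$ is $scs$-sortable. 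The set of $c$-clusters of $\Phi_{\geq -1}$ carries a matching two-branch recursion, governed by $\tau_c$ and by the standard identification of the almost positive roots of $\Phi$ other than $-\alpha_s$ with the almost positive roots attached to $W_{\langle s\rangle}$ (as in \cite{fzii}), and one checks that $\mathrm{cl}_c$ intertwines the two recursions branch by branch; Theorem \ref{cluster-cardinality}(1) then gives $|\mathrm{cl}_c(w)|=n$ for each $w$. Together with Reading's result this already identifies $\Hasse(\mathfrak C_c)$, as an unoriented graph, with the exchange graph of $c$-clusters.

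It remains to match the orientations, which is the content of \cite{re-sp}*{Proposition 8.3}. Let $w\lessdot w'$ be a covering relation in $\mathfrak C_c$, so $\mathrm{I}(w)\subsetneq\mathrm{I}(w')$. The maximal cones of the $c$-Cambrian fan labelled by $\mathrm{cl}_c(w)$ and $\mathrm{cl}_c(w')$ share a facet and are separated by a single reflecting hyperplane; consequently $\mathrm{cl}_c(w)$ and $\mathrm{cl}_c(w')$ have a common $(n-1)$-subset $C'$ and differ by an exchange pair $\{\alpha,\beta\}$, say $\mathrm{cl}_c(w)=C'\cup\{\alpha\}$ and $\mathrm{cl}_c(w')=C'\cup\{\beta\}$, which by Theorem \ref{cluster-cardinality}(2) are adjacent $c$-clusters. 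One then identifies $\alpha$ and $\beta$ as the last-reflection roots in which the $c$-sorting words of $w$ and $w'$ disagree, and checks that enlarging the inversion set moves the exchanged root one step earlier along its $\tau_c$-orbit toward $-\Delta$, hence strictly decreases $R_c$; in particular $R_c(\alpha)\neq R_c(\beta)$, recovering the Remark after Definition \ref{c-cluster-lattice}, and the Hasse arrow is sent to the arrow of $\overrightarrow{\Gamma}(\apPhi,c)$ oriented exactly as prescribed in Definition \ref{c-cluster-lattice}. Combined with the vertex bijection of the first step, this makes $\mathrm{cl}_c$ a quiver isomorphism.

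The crux, and the step I expect to be hardest, is the orientation matching: translating $\mathrm{I}(w)\subsetneq\mathrm{I}(w')$ into the correct inequality between $R_c(\alpha)$ and $R_c(\beta)$. A purely word-combinatorial treatment is delicate because a Cambrian cover need not lift to a single cover in the weak order, so the last-reflection roots can shift in a non-transparent way. The route I would take, following \cite{re-sp}, is to keep everything inside the $c$-Cambrian fan: $\mathrm{cl}_c$ is the restriction of the bijection between maximal Cambrian cones and cluster cones, and the orientation is read off from the side of the separating hyperplane on which the dominant chamber lies, which is precisely the information encoded by $R_c$ (distance to $-\Delta$ along $\tau_c$-orbits). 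The compatibility of this Cambrian-fan picture with the mutation combinatorics of $c$-clusters used elsewhere in this paper is supplied by \cite{cp}.
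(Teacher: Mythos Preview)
The paper does not prove this theorem; it is quoted verbatim from Reading--Speyer \cite{re-sp}, so there is no in-paper argument to compare against. Your two-step outline---bijection via the sortable recursion on an initial letter $c=s\cdot c'$, then orientation matching through the Cambrian fan---is indeed the shape of the argument in the cited source.

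There is, however, a concrete sign error in your orientation step. You assert that enlarging the inversion set \emph{decreases} $R_c$ of the exchanged root; the opposite holds. In the running $A_2$ example with $c=s_2s_1$ (Example~\ref{c-cluster-lattice-A2}), the Cambrian cover $1\lessdot s_1$ sends $\mathrm{cl}_c(1)=\{-\alpha_1,-\alpha_2\}$ to $\mathrm{cl}_c(s_1)=\{\alpha_1,-\alpha_2\}$, exchanging $\alpha=-\alpha_1$ (with $R_c=0$) for $\beta=\alpha_1$ (with $R_c=2$); and in general $\mathrm{cl}_c(1)=-\Delta$ sits at the bottom of $\mathfrak C_c$ with every root at $R_c=0$, so moving up can only raise $R_c$. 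Your final conclusion still comes out right because the inequality in Definition~\ref{c-cluster-lattice} as printed is itself reversed relative to Example~\ref{c-cluster-lattice-A2}, the stated anti-isomorphism with the exchange quiver, and \eqref{exchange-R}: the convention consistent with the rest of the paper is $C_1\to C_2\iff R_c(\alpha)>R_c(\beta)$, and the two sign slips cancel. You should rewrite the orientation paragraph with ``enlarging $\mathrm{I}(w)$ increases $R_c$'' and check it against that corrected arrow convention rather than the one literally printed in Definition~\ref{c-cluster-lattice}.
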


According to Theorem \ref{cambrian-cluster}, the set of $c$-clusters has a lattice structure whose Hasse quiver is the quiver of $c$-clusters; this structure is called the \emph{$c$-cluster lattice} (\cite{re-sp}*{Corollary 8.5}).

\subsection{Proof of Theorem \ref{main2-intro}}

It is sufficient to create a quiver isomorphism between their Hasse quivers. This facts is supported by the following lemma:

\begin{lemma}\label{tors-silt-iso}\noindent
    The map $\Psi_H\colon \overrightarrow{\Gamma}(\stautilt H)\to \Hasse(\mathsf{tors} H)$ is a quiver isomorphism.
\end{lemma}
\begin{proof}
The statement follows from Theorems \ref{supp-tau-torsion} and \ref{psi+}.
\end{proof}

Thus, we have the following corollary:

\begin{corollary}\label{main}
The map $\Psi_H\circ \tilde\phi_c^{-1}\circ \mathrm{cl}_c\colon \Hasse(\mathfrak C_c)\to \Hasse(\mathsf{tors}H)$ is a quiver isomorphism. Particularly, the lattice $\mathfrak C_c$ is isomorphic to $\mathsf{tors}H$.
\end{corollary}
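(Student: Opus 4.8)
The plan is to build the stated map as a composition of quiver isomorphisms that are all available by this point in the paper, and then to promote the resulting isomorphism of Hasse quivers to an isomorphism of lattices by invoking finiteness on both sides.

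First I would recall the three ingredients and compose them. By Theorem~\ref{cambrian-cluster}, $\mathrm{cl}_c\colon\Hasse(\mathfrak C_c)\to\overrightarrow{\Gamma}(\apPhi,c)$ is a quiver isomorphism; by Lemma~\ref{two-iso}~(2) (which rests on Theorems~\ref{lattice-op-iso-psi} and~\ref{lattice-op-iso}), $\tilde\phi_c\colon\overrightarrow{\Gamma}(\stautilt H)\to\overrightarrow{\Gamma}(\apPhi,c)$ is a quiver isomorphism, hence so is its inverse $\tilde\phi_c^{-1}$; and by Lemma~\ref{two-iso}~(1) (which rests on Theorems~\ref{supp-tau-torsion} and~\ref{psi+}), $\Psi_H\colon\overrightarrow{\Gamma}(\stautilt H)\to\Hasse(\mathsf{tors}H)$ is a quiver isomorphism. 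Since a composition of quiver isomorphisms is again a quiver isomorphism, $\Psi_H\circ\tilde\phi_c^{-1}\circ\mathrm{cl}_c\colon\Hasse(\mathfrak C_c)\to\Hasse(\mathsf{tors}H)$ is a quiver isomorphism, which gives the first assertion of the corollary.

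For the second assertion I would argue that a quiver isomorphism between the Hasse quivers of two \emph{finite} lattices is automatically a lattice isomorphism, and then check the finiteness. The lattice $\mathfrak C_c$ is finite because its underlying set of $c$-sortable elements is contained in the finite Weyl group $W(\Phi)$, and $\mathsf{tors}H$ is finite because $H$ is $\tau$-tilting finite by Theorem~\ref{psi+}: by Theorem~\ref{tors=f-tors} every torsion class of $H$ is then functorially finite, and by Theorem~\ref{proorder} the functorially finite torsion classes are in bijection with the finitely many basic support $\tau$-tilting pairs of $H$. In a finite poset the partial order coincides with the reflexive--transitive closure of its covering relation, so an isomorphism of Hasse quivers induces an isomorphism of the underlying posets; and any order isomorphism between lattices automatically preserves meets and joins. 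Applying this to the quiver isomorphism just constructed yields $\mathfrak C_c\cong\mathsf{tors}H$ as lattices.

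The whole argument is essentially bookkeeping on top of Sections~3--6, and I do not expect a genuine obstacle; the one point that deserves explicit attention is the final paragraph, where finiteness of both lattices (through $\tau$-tilting finiteness of $H$ and finiteness of $W(\Phi)$) is what lets one reconstruct the order --- and hence the lattice structure --- from the bare covering relation recorded by the Hasse quiver.
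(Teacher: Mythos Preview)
Your argument is correct and matches the paper's own proof, which simply cites Theorem~\ref{cambrian-cluster} and Lemma~\ref{two-iso} and displays the composition in a commutative diagram. You have added a helpful justification for the passage from Hasse-quiver isomorphism to lattice isomorphism via finiteness, which the paper leaves implicit.
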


\begin{proof}
It follows from Theorems \ref{cambrian-cluster}, \ref{lattice-iso-phi} and Lemma \ref{tors-silt-iso}. See the below diagram:

\[\begin{xy}
  (30,15)*{\overrightarrow{\Gamma}(\stautilt H)}="B",(80,15)*{\Hasse(\mathsf{tors}H)}="C",(30,-15)*{\overrightarrow{\Gamma}(\apPhi,c)}="D",(80,-15)*{\Hasse(\mathfrak C_c)}="E",\ar^{\Psi_H}@{->} "B";"C"\ar_{\mathrm{cl}_c}@{<-}"D";"E",\ar^{\tilde\phi_c}@{->}"B";"D",\ar@{<.}"C";"E"
\end{xy}\]
\end{proof}

\begin{remark}
When $\Phi$ is simply-laced and $D_\Phi=I_n$, $\Psi_H\circ \tilde\phi_c^{-1}\circ \mathrm{cl}_c$ is given by $w\mapsto \add((\phi^+_c)^{-1}(\mathrm{I}(w)))$ \cite{ing-tho}. This is not generally true. Indeed, for any $c$-sortable element $w$, we have \[\add((\phi^+_c)^{-1}(\mathrm I(w)))\subset\add(\tau\textsf{-rigid}H).\] On the other hand, $\mod H$ is a torsion class, therefore there exists $c$-sortable element $w$ such that $\Psi_H\circ \tilde\phi_c^{-1}\circ \mathrm{cl}_c(w)=\mod H$. However, except for the above situation, we have $\add(\tau\textsf{-rigid}H)\subsetneq \mod H$ because $H$ is $\tau$-tilting-finite but not representation-finite. Therefore, we have $\add((\phi^+_c)^{-1}(\mathrm I(w)))\neq \Psi_H\circ \tilde\phi_c^{-1}\circ \mathrm{cl}_c(w)$.
\end{remark}

\begin{question}
How is the direct description of the bijective correspondence from $\mathfrak C_c$ to $\mathsf{tors}H$ given?
\end{question}

\begin{remark}\label{important-remark}
Corollary \ref{main} can also be proved using some previous studies, which do not rely on Theorem \ref{lattice-iso-phi}, as follows. The following facts are known:
\begin{itemize}
    \item[(1)] (\cite{eno22}*{Theorem C}) In an abelian length category with a finite number of torsion classes, the lattice given by the poset of all wide subcategories is isomorphic to the lattice given by the set of all torsion classes ordered by the shard intersection order.
    \item[(2)] (\cite{Gratzer}*{Theorem 10-6.34}) The lattice given by the poset of all non-crossing partitions is isomorphic to the lattice given by the set of all $c$-sortable elements ordered by the shard intersection order.
\end{itemize}
By these facts, to prove Corollary \ref{main}, it suffices to show the lattice of all wide subcategories isomorphic to the lattice of all non-crossing partitions. Since the isomorphism between the lattice of all wide subcategories of a finite dimensional hereditary algebra and the corresponding lattice of all non-crossing partitions is given by \cite{hub-kra}*{Theorem 1.2}, this claim can be proved via the relation between GLS path algebras and finite dimensional hereditary algebras given by \cite{gls-reg}*{Theorem 1.1} and Theorem \ref{supp-tau-torsion}. Moreover, by using this fact, we can give another proof of Theorem \ref{lattice-iso-phi}.
\end{remark}

\bibliography{myrefs}
\end{document}